\newcommand{\figVer}[1]{#1}
\newcommand{\Bem}[1]{}
\newcommand{\Niepotrzebne}[1]{}
\newcommand{\x}{\mathbf{x}}
\newcommand{\z}{\mathbf{z}}
\newcommand{\eref}[1]{(\ref{#1})}
\newcommand{\waga}{\mathfrak{w}}
\newcommand{\figaddr}[1]{#1}
\newcommand{\ImplicationsForSpectralClustering}[1]{} 
\newtheorem{theorem}{Theorem}
\begin{document}
\newcommand{\MovjTytulv}{Spectral Analysis of Laplacians of an Unweighted and Weighted Multidimensional Grid Graph\\ - Combinatorial  versus Normalized and Random Walk Laplacians }
\newcommand{\MaInstytucja}{Institute of Computer Science of the Polish Academy of Sciences\\ul. Jana Kazimierza 5, 01-248 Warszawa
Poland}
\title{
\MovjTytulv  }
\author{
Mieczys{\l}aw A. K{\l}opotek (klopotek\@ipipan.waw.pl)  
\\ \MaInstytucja
}
 
\maketitle

\begin{abstract}
In this paper we generalise the results on eigenvalues and eigenvectors 
of unnormalized (combinatorial) Laplacian of 
 two-dimensional grid presented by \cite{Edwards:2013} 
first to a grid graph of any dimension, and second also to other types of Laplacians, that is unoriented Laplacians, normalized Laplacians, and random walk Laplacians. 

While the closed-form or nearly closed form solutions to the eigenproblem 
of multidimensional grid graphs 
constitute a good test suit for spectral clustering algorithms for the case of no structure in the data, 
the multidimensional weighted grid graphs, presented also in this paper 
can serve as testbeds for these algorithms as graphs with some predefined cluster structure.  The weights permit to simulate node clusters not perfectly separated from each other.

This fact opens new possibilities for exploitation of closed-form or nearly closed form solutions 
eigenvectors and eigenvalues of graphs while testing and/or developing such algorithms and exploring their theoretical properties.   

Besides, the differences between the weighted and unweighted case allow for new insights into the nature of normalized and unnormalized Laplacians

 \end{abstract}
\noindent

\section{Introduction}
%

%

The concept of Graph Laplacians is in use for a long time now. 
An extensive overview of early research can be found in the paper \cite{Merris:1994} by Merris from the year 1994. 
That paper summarizes research on \emph{combinatorial Laplacians} 
$L=D-S$ of graphs $G$, where $S$ is the adjacency matrix of $G$, and $D$ is the (diagonal) degree matrix of $G$,
on \emph{unoriented Laplacians} $K=D+S$,   
on \emph{normalized Laplacians} 
 $\mathfrak{L}=D^{-1/2}L D^{-1/2}= I -D^{-1/2}S D^{-1/2}$ 
(called there correlation matrices) and on 
  \emph{random walk Laplacians} 
 $\mathbb{L}=LD^{-1}  = I -SD^{-1} $. 
A recent survey can be found in the booklet \cite{Gallier:2017} by Gallier, with a particular orientation towards applications in graph clustering\footnote{For another overview of spectral clustering methods, see e.g. Chapter 5 of the book \cite{STWMAKSpringer:2018}. 
}. 

Regular graph structures and their properties are of interest 
for a number of reasons, mostly for derivation of analytical graph properties \cite{Notarstefano:2012}. 
In particular Ramachandran 
and  Berman \cite{Ramachandran:2016} exploit 
  a priori knowledge of Laplacians 
of rectangular grid  in investigations 
of properties of robotic  swarms. 
Stankiewicz \cite{Stankiewicz:2017} discusses relation between
  the orientable
genus of a graph
(the minimum number of handles  to be added to the plane in order to  embed this graph without crossings)
 and the spectrum of its Laplacian. 
Cornelissen et al.  \cite{Cornelissen:2015}
investigate  gonality of   curves using grid Laplacians. 
Merris \cite{Merris:1994} reviews numerous properties of grid graph Laplacians 
from the point of view of  chemical applications. 
Cetkovic et al. \cite{Cvetkovic:1980} write about application in mechanics (membrane vibration). 
Cheung et al. \cite{Cheung:2018} elaborate applications in image processing, with a particular interest in grid structures. 


Grid graphs may have further applications. 
When developing graph clustering methods, especially those based on spectral analysis, it is good to have a well investigated class of graphs for which the exact form of eigenvalues and eigenvectors is known. 
Grid graphs can be considered as graphs without a definite structure. 
They can be therefore used as a kind of negative testbed for graph clustering methods. 
 In particular,   compressive  spectral clustering CSC \cite{Tremblay:2016} exploits the assumption of uniform distribution of eigenvalues of normalized Laplacian. This assumption can be tested extensively using the grid graphs. 

Generally, complex network research (covering areas of social network analysis, transportation network properties and many other) takes advantage of computation of various Laplacian  types for a grid structure.

This fact motivated the current research. 
In particular we developed closed-form or nearly closed-form formulas for eigenvalues and eigenvectors for the four aforementioned types of Laplacians (combinatorial, unoriented, normalized and random walk) for multidimensional grid graphs. 

A starting point for this research was the paper \cite{Edwards:2013}  by 
Edwards who 
elaborated an explicit analytical solution to the problem of 
eigenvalues and eigenvectors of 
a Laplacian of a two-dimensional rectangular grid. 
There exists a body of earlier research on the topic of closed-form solutions to the eigen-problem of Laplacians, at which we will point in Section 
\ref{sec:prework}.

In Section \ref{sec:notation} we introduce our notation.
In Section \ref{sec:CLtheorems} we present theorems describing our generalisation for unweighted combinatorial Laplacians to higher dimensional grids.
In Section \ref{secW:CLtheorems} we extend these results to weighted grid graphs.
Sections \ref{sec:UOLgeneralization} and \ref{secW:UOLgeneralization} describe  our generalisation to resp. unweighted and weighted unoriented Laplacians. 
Sections \ref{sec:NLtheorems} and \ref{secW:NLtheorems} handle  the generalization for normalized Laplacians of resp. unweighted and weighted graphs. 
Sections \ref{sec:RWLtheorems} and \ref{secW:RWLtheorems} explain  briefly the generalization to random walk Laplacians in resp. unweighted and weighted case. 
Sections \ref{sec:COLproperties} and \ref{sec:NOLproperties} are devoted to some discussions of the elaborated closed-form and nearly-closed-form solutions to the Laplacian eigen-problems in unweighted graphs. 
Section \ref{sec:conclusions} contains some final remarks.

\subsection{Our Contribution}
\begin{itemize}
\item A new proof of formulas for eigenvalues and eigenvectors of combinatorial Laplacian of multidimensional grid graph,
based on specific features of these eigenvectors, like zero sum of components of eigenvector, and the unlimited space of indexes of eigenvalues and eigenvectors. 
\item Closed form solution of the eigen-problem for  unoriented Laplacian of multidimensional grid graph.
\item Nearly closed form solution of the eigen-problem for  normalized Laplacian of multidimensional grid graph.
\item Nearly closed form solution of the eigen-problem for  random walk Laplacian of multidimensional grid graph.
\item Showing that the eigenvalue distribution of normalized Laplacians  of multidimensional grid graph is not uniform.
\item Showing similarities between eigenvalues of   combinatorial and normalized Laplacians  of multidimensional grid graph and  pointing at differences between corresponding eigenvectors. 
\item Handling both unweighted and weighted grid graphs.
\end{itemize}

\section{Previous Work}\label{sec:prework}

Grid graphs have been subject of investigation from the point of view of eigen-problem of their Laplacians for a considerable amount of time. 

Burden and Hedstrom \cite{Burden:1972} were interested in 
the eigenvalue spectrum of combinatorial Laplacians of grid graphs 
and derived them from the continuous Laplacian equations.

Fiedler \cite{Fiedler:1973} established bounds for the second lowest eigenvalue of the combinatorial Laplacian (currently called Fiedler eigenvalue), while mentioning the formula of the Fiedler eigenvalue for the path graph. 
He also provided with a theorem allowing to combine product graph eigenvalues from component graphs. 
Based on that paper, 
Anderson and Morey 
\cite{Anderson:1985}
derived  explicit formulas for combinatorial Laplacian eigenvalues of grid graphs, without referring to the continuous analogue.

The book  \cite{Cvetkovic:1980}  by  Cetkovic et al. presents explicit solutions to the combinatorial Laplacian eigen-problem (eigenvalues and eigenvectors) of the path-graph and as a consequence by the virtue of the construction of the two-dimensional grid graph as a product of path graphs also a solution to the rectangular grid graph combinatorial Laplacian. 
No explicit solution is provided there to normalized Laplacian eigen-problem for grid graphs. 

Merris \cite{Merris:1994} recalls a number of previous results relevant to grid graphs, including e.g. his Theorem 2.21 (due to Fiedler \cite{Fiedler:1973}) on combinatorial Laplacian eigenvalue composition for graph products (a grid graph being a product of path graphs), and also for other special graphs, like tree graphs.  
No explicit solution is provided there to normalized Laplacian eigen-problem for grid graphs. The author states only that the eigenvalues are real.

Spielman \cite{Spielman:2004} proves explicit formulas for eigenvalues and eigenvectors for path graphs and grid graphs, without, however, caring about eigenvalues with multiplicity.

Fan et al. \cite{Fan:2008} tackle the issue of unoriented Laplacians for bicyclic graphs. 
Though not directly connected to the problem of multidimensional grid graphs, the paper nonetheless points at the way how the eigen-problem for unoriented Laplacians may be decomposed. We took advantage of this idea. 

Edwards \cite{Edwards:2013} investigated two-dimensional grid graphs 
and developed  an explicit analytical solution to the problem of 
eigenvalues and eigenvectors without referring to the continuous Laplacian.  He has demonstrated that his formulas identify an orthogonal basis also in case of ties in the set of eigenvalues. 
In this paper we generalise his formulas in two directions: first for grids of higher dimensionality and second for other types of Laplacians, that is unoriented, normalized and random walk Laplacians.

In the current paper we are interested in both the unweighted and the a weighted version of the grid graphs, 
that is also in graphs with different edge weights along various dimensions of the grid (but with the same weight in a given direction). 

While these new types of grid graphs are still very restrictive in structure, they nevertheless allow for providing graphs with some kind of predefined clustering 
so that they are more suitable for consideration when testing clustering algorithms. 

Note that weighted graphs are considered as an important research topic in the area of spectral clustering. 
For example, 
Ng et al. \cite{Ng:2002} consider cases when the low weight  edges can be neglected. 
Belkin et al. \cite{Belkin:2001} consider graphs with weights driven by so-called "heat kernel". 
Jordan et al. \cite{Jordan:2003} 
discuss the issue of interpreting $k$-means  cost function as 
a weighted distortion measure for $k$-means spectral relaxation.  
Dhillon et al. \cite{Dhillon:2005, Dhillon:2007} consider graph weighting as a way to formulate generalized transitions between 
application of kernel $k$-means, spectral clustering and graph cuts algorithms. 

Therefore, the weighted grid graphs may serve as a valuable support in testing the assumptions and the performance of such algorithms and similar ones.


\section{Notation}\label{sec:notation}

A neighbourhood  matrix $S$ of any graph 
shall be defined as a matrix with entries 
$s_{jk}>0$ if there is a link between nodes $j,k$, 
and otherwise it is equal $0$. 
We assume that always $s_{jj}=0$. 
$s_{jk}$ is considered as a weight of the link (edge) between nodes $j,k$, being deemed as a kind of similarity between the nodes. 
However, by setting $s_{jj}=0$, this is not strictly a similarity measure.
If either  $s_{jk}=1$ or $s_{jk}=0$, we will talk about unweighted graph, otherwise about a weighted one.

An unnormalised (combinatorial) Laplacian $L$ of the same graph is defined as 
$$L=D-S$$ 
where $D$ is the diagonal matrix with $d_{jj}=\sum_{k=1}^ns_{jk}$ for each $j=1,\dots n$. 
An unoriented Laplacian $K$ of a graph is defined as: 
$$K=D+S$$

A normalized Laplacian $\mathfrak{L}$ of a graph  is defined as 
$$\mathfrak{L}=D^{-1/2}L D^{-1/2}= I -D^{-1/2}S D^{-1/2}$$

A random walk Laplacian $\mathbb{L}$  of a graph  is defined as 
$$\mathbb{L}=LD^{-1}  = I -SD^{-1} $$

Note that in general eigenvalues of $\mathbb{L}$ and $\mathfrak{L}$ are identical, while they differ from those of  $L$. 
On the other hand,  the eigenvectors differ in each case. 
However, eigenvectors of random walk Laplacian can be easily derived from those of normalized Laplacian. Let $\mathbf{v}$ be the eigenvector of $\mathfrak{L}$ with eigenvalue $\lambda$. 
$$\lambda \mathbf{v}= \mathfrak{L}  \mathbf{v}$$
$$\lambda \mathbf{v}= D^{-1/2}L D^{-1/2}  \mathbf{v}$$
$$\lambda \mathbf{v}= D^{-1/2}L D^{-1} D^{1/2}  \mathbf{v}$$
$$\lambda  D^{1/2} \mathbf{v}=L D^{-1} D^{1/2}  \mathbf{v}$$
$$\lambda  D^{1/2}   \mathbf{v}=\mathbb{L} D^{1/2}  \mathbf{v}$$
Hence   $ D^{1/2}\mathbf{v}$ is the eigenvector of $\mathbb{L}$ for the eigenvalue $\lambda$. 
Therefore we will not consider them separately.   

The eigenvalues of $L$ and $K$ will also differ unless we have to do with a bipartite graph which is the case with a grid graph. We will exploit this fact also.  

 A two-dimensional grid graph \cite{Wolfram:2017},
(called also a square grid graph, or rectangular grid graph, or $m\times n$ grid)
is  an $m\times  n$ lattice graph $G_{(m,n)}$,
meaning  the graph Cartesian product $P_m \times   P_n$
 of path graphs on $m$ and $n$ vertices resp. 
A generalized unweighted grid graph can also be defined as 
$G_{(n_1)}$ being a path graph of $n_1$ vertices,
and the $d$ dimensional grid graph
$G_{(n_1,\dots,n_{d})}$ being 
the graph Cartesian product 
$G_{(n_1,\dots,n_{d-1})}  \times   P_{n_d}$

So a $d$-dimensional unweighted grid graph is uniquely defined by a \emph{grid graph identity vector} 
$[n_1,...,n_d]$ where $n_j$ is the number of layers in the $j$th dimension. 

In this paper we go beyond the concept of unweighted grid graphs. 
Let us define a weighted  
  generalized grid graph   as 
$G_{(n_1)(\waga_1)}$ being a weighted path graph of $n_1$ vertices with weight $\waga_1$ for any link in this graph,
and the $d$ dimensional weighted grid graph
$G_{(n_1,\dots,n_{d})(\waga_1,\dots,\waga_{d})}$ being 
the weighted graph Cartesian product 
$G_{(n_1,\dots,n_{d-1})(\waga_1,\dots,\waga_{d-1})}  \times   G_{(n_d)(\waga_d)}$

Thus a $d$-dimensional weighted grid graph is uniquely defined by a \emph{weighted grid graph identity vector pair} 
$[n_1,...,n_d][\waga_1,...,\waga_d]$ where $n_j$ is the number of layers in the $j$th dimension and   $\waga_j$ is the weight of linkis between layers in the $j$th dimension.

Following  \cite{Edwards:2013}, 
let us introduce a special way of assigning (integer) identities to unweighted
grid graph $G_{(n_1,\dots,n_{d})}$ nodes
or weighted grid graph $G_{(n_1,\dots,n_{d})(\waga_1,\dots,\waga_{d})}$ nodes. 
The \emph{node identity numbers} run consecutively from 1 to 
$\prod_{j=1}^d n_j$.
Each node identity number $i$ is uniquely associated with 
a \emph{node identity vector} $\x=[x_1,\dots,x_d]$ 
via the (invertible) formula:
$$i=1+\sum_{j=1}^d (x_j-1) \cdot \prod_{k=j+1}^d n_k$$
Let $\mathbf{i}(i)$ be a function turning the node identity number $i$ to the corresponding \emph{node identity vector} $\x$.

A node with identity vector $[x_1,\dots,x_d]$ is connected for each $j$ 
with the node $[x_1,\dots,x_j-1,x_d]$ if $x_j>1$ 
and with node $[x_1,\dots,x_j+1,x_d]$ if $x_j<n_j$ and there are no other connections in the graph. 

We will index the eigenvalues and the corresponding eigenvectors with an \emph{eigen identity vector} of $d$ integers $\z=[z_1,\dots,z_d]$. 
You will easily see, however, that in all cases increasing/decreasing a $z_j$ by $2n$ will leave any eigenvalue and eigenvector unchanged. 
Also replacing $z_j$ with $-z_j$ (occasionally together with replacing the corresponding shift $\delta$ with $-\delta$ to be explained later) will leave eigenvalue and eigenvector unchanged. So the value range of $z_j$  can be easily reduced to the range $[0,n_j]$. 
For some technical reasons,  to be visible later,
we are subsequently interested only in the range $[-n_j+1,2n_j-1]$ for $z_j$.

Consider  the similarity matrix $S$ of the unweighted grid graph 
$G_{(n_1,\dots,n_{d})}$. It is a  
$(\prod_{j=1}^d n_j)\times(\prod_{j=1}^d n_j)$ matrix 
with $s_{il}=1$ if nodes with identities $i,l$ are connected and $s_{il}=0$ otherwise. 
The similarity matrix $S$ of the weighted grid graph 
$G_{(n_1,\dots,n_{d})(\waga_1,\dots,\waga_{d})}$ differs from this as follows:  It is a  
$(\prod_{j=1}^d n_j)\times(\prod_{j=1}^d n_j)$ matrix 
with $s_{il}=\waga_j $ if nodes with identities $i,l$ are connected and their connection is in dimension $j$ and $s_{il}=0$ otherwise.

Let $n=\prod_{j=1}^d n_j$ for simplicity.

\section{Combinatorial Laplacians of Unweighted Grid Graphs}\label{sec:CLtheorems}
In this section we will demonstrate that, in case of Combinatorial Laplacians of a $d$-dimensional unweighted grid graph, the eigenvalues are of the form described by formula \eref{eq:lambdaC} and the corresponding eigenvecvtors have the form \eref{eq:coLvector}, that is that there exists a closed-form solution for the eigen-problem of combinatorial Laplacian.
We will proceed as follows: 
With Theorem \ref{th:reductiontoeigenvectors}, we show that the components of the eigen identity vector can be reduced to the range of $[0,n_j-1]$, because outside of this range the vectors described by formula  \eref{eq:coLvector} are identical up to the sign to the vectors within this range so that they cannot constitute valid alternative eigenvectors. 
With Theorem \ref{th:eigenpairs}, we demonstrate that indeed the numbers described by formula \eref{eq:lambdaC} are eigenvalues and the vectors of the form \eref{eq:coLvector} are the corresponding eigenvectors.The proof of this theorem is based on the idea of grid graph adjacency matrix decomposition into (additive) parts related to individual directions and the auxiliary Theorem \ref{th:componenteigenpairs} is used to prove eigenvalue and eigenvectior properties for these parts. 
Finally, we need to demonstrate that we have identified all the eigenvalues and eigenvectors. As you can easily deduce, the number of eigenvalues and eigenvectors in the desired ranges of $z_j\in [0,n_j-1]$ is identical with the number of nodes in the grid graph. However, several eigenvalues can turn out to be identical for distinct eigen identity vectors. So we need to prove that these vectors are orthogonal to each other.  
We prove therefore the auxiliary Theorem \ref{th:zerosum} before proving the proper orthogonality with Theorem \ref{th:orthogonality}. We broadly exploit the trigonometric properties of the sine and cosine functions.  

The presentation below may appear as a straight forward extension of known results. However, the proofs in the presented form are worth studying because one can easily then extend them to unoriented Laplacians for which such results were not explicitly published to my knowledge. 

Let us define 
\begin{equation}\label{eq:lambdaC}
\lambda_{[z_1,\dots,z_d]}=\sum_{j=1}^d 
\left(2 \sin\left(\frac{\pi  z_j}{2 n_j}\right)\right)^2
\end{equation}
\noindent
where for each $j=1,\dots,d$ $z_j$ is an integer such that $0\le z_j\le n_j-1$.
Define 
 $\lambda_{(j,z_j)}=  
\left(2 \sin\left(\frac{\pi  z_j}{2 n_j}\right)\right)^2
$. Then 
$\lambda_{[z_1,\dots,z_d]}=\sum_{j=1}^d 
\lambda_{(j,z_j)}
$. 
Define furthermore 
\begin{equation}\label{eq:eigenvectorcomponentC}
\nu_{[z_1,\dots,z_d],[x_1,\dots,x_d]}= 
\prod_{j=1}^d  \cos\left(\frac{\pi z_j}{n_j} \left(x_j-0.5\right)\right) 
\end{equation}
\noindent
where for each $j=1,\dots,d$ $x_j$ is an integer such that $1\le x_j\le n_j$.

And finally define the $n$ dimensional vector 
 $\mathbf{v}_{[z_1,\dots,z_d]}$
such that 
 \begin{equation}\label{eq:coLvector}
 \mathbf{v}_{[z_1,\dots,z_d],i}=\nu_{[z_1,\dots,z_d],[x_1,\dots,x_d] }
\end{equation}

\begin{theorem}\label{th:reductiontoeigenvectors}
\begin{itemize}
\item
If $z_j\in  [-n_j+1,-1]$, then 
$$\mathbf{v}_{[z_1,\dots,z_j,\dots,z_d]}
= \mathbf{v}_{[z_1,\dots,z'_j,\dots,z_d]}$$
where $z'_j\in  [0,n_j-1]$, and $z'_j=-z_j$.
\item
If $z_j=   n_j$, then 
$$\mathbf{v}_{[z_1,\dots,z_j,\dots,z_d]}
= \mathbf{0}$$
\item
If $z_j\in  [n_j+1,2n_j-1]$, then 
$$\mathbf{v}_{[z_1,\dots,z_j,\dots,z_d]}
= -\mathbf{v}_{[z_1,\dots,z'_j,\dots,z_d]}$$
where $z'_j\in  [0,n_j-1]$, and $z'_j=2n_j-z_j$.
\end{itemize} 
\end{theorem}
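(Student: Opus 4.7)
The plan is to reduce everything to a single factor, since the eigenvector component \eref{eq:eigenvectorcomponentC} is a product over the $d$ dimensions. Fix any node index $i$ with corresponding node identity vector $[x_1,\dots,x_d]$ and observe that if only the $j$-th coordinate $z_j$ is modified to $z'_j$, then
$$\frac{\nu_{[z_1,\dots,z_j,\dots,z_d],[x_1,\dots,x_d]}}{\nu_{[z_1,\dots,z'_j,\dots,z_d],[x_1,\dots,x_d]}}=\frac{\cos\!\left(\frac{\pi z_j}{n_j}(x_j-0.5)\right)}{\cos\!\left(\frac{\pi z'_j}{n_j}(x_j-0.5)\right)}$$
whenever the denominator does not vanish, and the vanishing behaviour of the numerator may be analysed in isolation. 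Hence all three items of the theorem reduce to a trigonometric identity on the single factor $\cos\!\left(\frac{\pi z_j}{n_j}(x_j-0.5)\right)$, holding for every integer $x_j\in[1,n_j]$.

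For the first item, with $z'_j=-z_j$, I simply invoke the evenness of the cosine, $\cos(-\theta)=\cos(\theta)$, so the factor is unchanged at every $x_j$ and therefore the whole eigenvector is unchanged.

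For the second item, with $z_j=n_j$, the factor becomes $\cos(\pi(x_j-0.5))=\cos(\pi x_j)\cos(\pi/2)+\sin(\pi x_j)\sin(\pi/2)$. Both terms vanish for integer $x_j$ (the first because $\cos(\pi/2)=0$, the second because $\sin(\pi x_j)=0$). Thus every component of the vector is zero, regardless of the values of $x_1,\dots,x_d$ and of the other $z_k$'s.

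For the third item, substitute $z_j=2n_j-z'_j$ with $z'_j\in[1,n_j-1]$ and expand:
$$\frac{\pi z_j}{n_j}(x_j-0.5)=2\pi(x_j-0.5)-\frac{\pi z'_j}{n_j}(x_j-0.5)=2\pi x_j-\pi-\frac{\pi z'_j}{n_j}(x_j-0.5).$$
Dropping the $2\pi x_j$ by periodicity and using $\cos(-\pi-\theta)=\cos(\pi+\theta)=-\cos(\theta)$ together with the evenness of cosine gives
$$\cos\!\left(\frac{\pi z_j}{n_j}(x_j-0.5)\right)=-\cos\!\left(\frac{\pi z'_j}{n_j}(x_j-0.5)\right),$$
so the single affected factor in every product changes sign, flipping the sign of every component of the eigenvector. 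No step here presents real difficulty; the only subtlety worth flagging is that in items 1 and 3 the argument must hold componentwise for every integer $x_j$ without any "denominator is nonzero" caveat, which is why I phrase the reduction directly at the level of the $j$-th cosine factor rather than as a ratio of whole eigenvectors.
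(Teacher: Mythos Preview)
Your proof is correct. Items 1 and 2 coincide with the paper's argument (evenness of cosine; direct evaluation of $\cos(\pi(x_j-0.5))$). For item 3 you take a shorter route than the paper: you substitute $z_j=2n_j-z'_j$ directly, pull out the integer multiple $2\pi x_j$ by periodicity, and finish with $\cos(\pi+\theta)=-\cos\theta$. The paper instead splits $z_j=n_j+(z_j-n_j)$, expands via the angle-addition formula, passes through intermediate sine terms involving $(-1)^{x_j}$, and then performs a second substitution $z_j-n_j=n_j-(2n_j-z_j)$ with another angle-addition expansion before collapsing back to cosine. Both reach the same identity; your argument is simply the more economical of the two, while the paper's decomposition via the midpoint $n_j$ makes the alternating-sign structure $(-1)^{x_j}$ briefly visible, which is of no use here but echoes the unoriented-Laplacian eigenvectors appearing later in the paper.
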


\begin{proof}
If $z_j\in  [-n_j+1,-1]$, then obviously the transformation 
$z'_j=-z_j$ will bring $\mathbf{v}_{[z_1,\dots,z_d]}$ to the required range of interest with indexes  $[0,n_j-1]$, as 
$$\cos\left(\frac{\pi z_j}{n_j} \left(x_j-0.5\right)\right) 
= \cos\left(\frac{\pi -z_j}{n_j} \left(x_j-0.5\right)\right) 
= \cos\left(\frac{\pi z'_j}{n_j} \left(x_j-0.5\right)\right) $$

If $z_j=  n_j $, then all the entries of  $\mathbf{v}_{[z_1,\dots,z_d],[x_1,\dots,x_d]}=0$,
because
$$\cos\left(\frac{\pi z_j}{n_j} \left(x_j-0.5\right)\right) 
= \cos\left(\pi \left(x_j-0.5\right)\right) 
= 0 $$

If $z_j\in  [n_j+1,2n_j-1]$, then  the transformation 
$z'_j=2n_j-z_j$ will do the job of bringing the indexes of $\mathbf{v}_{[z_1,\dots,z_d]}$  into the desired range $[0,n_j-1]$ as 

$$\cos\left(\frac{\pi z_j}{n_j} \left(x_j-0.5\right)\right) 
= \cos\left(\frac{\pi (n_j+(z_j-n_j))}{n_j} \left(x_j-0.5\right)\right) 
$$ $$
= \cos\left(\frac{\pi  (z_j-n_j)}{n_j} \left(x_j-0.5\right)+\pi \left(x_j-0.5\right) \right) 
$$ $$
= \cos\left(\frac{\pi  (z_j-n_j)}{n_j} \left(x_j-0.5\right)\right)\cdot \cos\left(\pi \left(x_j-0.5\right) \right) 
$$ $$
- \sin\left(\frac{\pi  (z_j-n_j)}{n_j} \left(x_j-0.5\right)\right)\cdot \sin\left(\pi \left(x_j-0.5\right) \right) 
$$ $$
= - \sin\left(\frac{\pi  (z_j-n_j)}{n_j} \left(x_j-0.5\right)\right)\cdot \sin\left(\pi \left(x_j-0.5\right) \right) 
$$ $$
=  \sin\left(\frac{\pi  (z_j-n_j)}{n_j} \left(x_j-0.5\right)\right)\cdot  \left(-1\right)^{x} 
$$ $$
=  \sin\left(\frac{\pi (n_j- (2n_j-z_j))}{n_j} \left(x_j-0.5\right)\right)\cdot  \left(-1\right)^{x} 
$$ $$
=  \sin\left(\frac{\pi (- (2n_j-z_j))}{n_j} \left(x_j-0.5\right)+\pi \left(x_j-0.5\right) \right)\cdot  \left(-1\right)^{x} 
$$ $$
=  \sin\left(\frac{\pi (- (2n_j-z_j))}{n_j} \left(x_j-0.5\right)\right)\cdot\cos\left(\pi \left(x_j-0.5\right) \right)\cdot  \left(-1\right)^{x} 
$$ $$
+  \cos\left(\frac{\pi (- (2n_j-z_j))}{n_j} \left(x_j-0.5\right)\right)\cdot\sin\left(\pi \left(x_j-0.5\right) \right)\cdot  \left(-1\right)^{x} 
$$ $$
=   \cos\left(\frac{\pi (- (2n_j-z_j))}{n_j} \left(x_j-0.5\right)\right)\cdot\sin\left(\pi \left(x_j-0.5\right) \right)\cdot  \left(-1\right)^{x} 
$$ $$
=   \cos\left(\frac{\pi (- (2n_j-z_j))}{n_j} \left(x_j-0.5\right)\right)\cdot \left(-1\right)^{x+1}  \cdot  \left(-1\right)^{x} 
$$ $$
=   -\cos\left(\frac{\pi (2n_j-z_j)}{n_j} \left(x_j-0.5\right)\right) 
 $$
\end{proof}

The above  we will need later.

We claim the following
\begin{theorem}\label{th:eigenpairs}
For the Laplacian $L$
of the grid graph $G_{(n_1,\dots,n_{d})}$
for 
each 
vector of integers 
$[z_1,\dots,z_d]$ such that for each $j=1,\dots,d$ 
$0\le z_j\le n_j-1$, 
the $\lambda_{[z_1,\dots,z_d]}$ is an eigenvalue of $L$
and $\mathbf{v}_{[z_1,\dots,z_d]}$ is a corresponding eigenvector. 
\end{theorem}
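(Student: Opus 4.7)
The plan is to exploit the Cartesian product structure of the grid graph to decompose the Laplacian $L$ into a sum of operators that each act along a single coordinate axis, and then to recognise $\mathbf{v}_{[z_1,\dots,z_d]}$ as a tensor product of path-graph eigenvectors. With the node identity numbering from Section \ref{sec:notation}, I would first verify that the adjacency matrix $S$ of $G_{(n_1,\dots,n_{d})}$ admits the Kronecker sum decomposition
$$S = \sum_{j=1}^d I_{n_1}\otimes\cdots\otimes I_{n_{j-1}} \otimes A_j \otimes I_{n_{j+1}}\otimes\cdots\otimes I_{n_d},$$
where $A_j$ is the adjacency matrix of $P_{n_j}$. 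The analogous identity for the degree matrix is immediate from the fact that the grid-degree at a vertex is the sum of its path-degrees along the $d$ axes, and so
$$L = \sum_{j=1}^d I_{n_1}\otimes\cdots\otimes L_j \otimes\cdots\otimes I_{n_d},$$
where $L_j$ is the combinatorial Laplacian of $P_{n_j}$. This is precisely the ``decomposition into parts related to individual directions'' mentioned in the section overview.

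Next I would observe that the vector defined by \eref{eq:coLvector} is itself a tensor product. Letting $\mathbf{u}_{(j,z_j)}$ denote the $n_j$-vector with entries $\cos\!\left(\pi z_j (x_j - 0.5)/n_j\right)$ for $x_j = 1,\dots,n_j$, the product form \eref{eq:eigenvectorcomponentC} gives
$$\mathbf{v}_{[z_1,\dots,z_d]} = \mathbf{u}_{(1,z_1)}\otimes\mathbf{u}_{(2,z_2)}\otimes\cdots\otimes\mathbf{u}_{(d,z_d)}.$$
Invoking the auxiliary Theorem \ref{th:componenteigenpairs}, which handles the one-dimensional case, each $\mathbf{u}_{(j,z_j)}$ is an eigenvector of $L_j$ with eigenvalue $\lambda_{(j,z_j)} = 4\sin^2(\pi z_j/(2 n_j))$. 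Applying the $j$-th Kronecker summand of $L$ to this tensor product then produces $\lambda_{(j,z_j)}\mathbf{v}_{[z_1,\dots,z_d]}$, and summing over $j$ yields the total eigenvalue $\sum_{j=1}^d \lambda_{(j,z_j)}$, matching \eref{eq:lambdaC}.

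The main obstacle, already absorbed into Theorem \ref{th:componenteigenpairs}, is the boundary behaviour at $x_j = 1$ and $x_j = n_j$, where a path vertex has degree $1$ rather than $2$. The half-integer shift $(x_j - 0.5)$ in the cosine argument is tailored precisely for this: the ``virtual'' values at $x_j = 0$ and $x_j = n_j + 1$ equal those at $x_j = 1$ and $x_j = n_j$ respectively, since $\cos(-\theta) = \cos\theta$ handles the left end and $\cos(\pi z_j + \alpha) = \cos(\pi z_j - \alpha)$ (valid because $z_j$ is an integer, so $\sin(\pi z_j) = 0$) handles the right end. Consequently, removing the nonexistent virtual neighbour is compensated exactly by reducing the degree by one, and the eigenvalue equation at a boundary vertex collapses to that at an interior vertex. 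At interior vertices, the sum-to-product identity $\cos(A-B) + \cos(A+B) = 2\cos A\cos B$ reduces the action of $L_j$ to multiplication by $2 - 2\cos(\pi z_j/n_j) = 4\sin^2(\pi z_j/(2 n_j))$, so the proof closes. No additional technical difficulty arises in the multidimensional step, because the tensor-product machinery handles the combinatorics mechanically.
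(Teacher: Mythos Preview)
Your proposal is correct and follows essentially the same route as the paper: decompose $L$ additively into one operator per coordinate direction, invoke Theorem~\ref{th:componenteigenpairs} for each direction, and sum the resulting eigenvalues. The only difference is presentational: you package the directional decomposition as a Kronecker sum and the eigenvector as a tensor product $\mathbf{u}_{(1,z_1)}\otimes\cdots\otimes\mathbf{u}_{(d,z_d)}$, whereas the paper keeps the directional Laplacians $L_j$ as full $n\times n$ matrices acting on the full vector $\mathbf{v}_{[z_1,\dots,z_d]}$ and verifies the eigenrelation coordinatewise; in particular, the paper's Theorem~\ref{th:componenteigenpairs} is stated for those $n\times n$ blocks rather than for the $n_j\times n_j$ path Laplacians, though its proof is exactly the one-dimensional computation you sketch (including the boundary cancellation via the half-integer shift).
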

\begin{proof}
Note that the similarity matrix 
$S$ 
can be expressed as the sum of similarity matrices 
$$S=\sum_{j=1}^d S_j$$
where $S_j$ is a connectivity matrix of a graph in which 
a node with identity vector $[x_1,\dots,x_d]$ is connected  
with the node $[x_1,\dots,x_j-1,x_d]$ if $x_j>1$ 
and with node $[x_1,\dots,x_j+1,x_d]$ if $x_j<n_j$ and there are no other connections in the graph. 
Let $L_j$ be the Laplacian corresponding to the similarity matrix $S_j$. 
Then clearly
$$L=\sum_{j=1}^d L_j$$
According to the subsequent theorem  \ref{th:componenteigenpairs}
$$L \mathbf{v}_{[z_1,\dots,z_d]}=(\sum_{j=1}^d L_j)\mathbf{v}_{[z_1,\dots,z_d]}
=  \sum_{j=1}^d (L_j\mathbf{v}_{[z_1,\dots,z_d]})
$$ $$
= \sum_{j=1}^d (\lambda_{(j,z_j)} \mathbf{v}_{[z_1,\dots,z_d]})
= \left(\sum_{j=1}^d \lambda_{(j,z_j)}\right) \mathbf{v}_{[z_1,\dots,z_d]} 
=  \lambda_{[z_1,\dots,z_d]} \mathbf{v}_{[z_1,\dots,z_d]} 
$$

\end{proof}
\begin{theorem}\label{th:componenteigenpairs}
For the Laplacian $L_j$, as defined above 
of the grid graph $G_{(n_1,\dots,n_{d})}$
for 
each 
vector of integers 
$[z_1,\dots,z_d]$ such that for each $k=1,\dots,d$ 
$0\le z_k\le n_k-1$, 
the $\lambda_{(j,z_j)}$ is an eigenvalue of $L_j$
and $\mathbf{v}_{[z_1,\dots,z_d]}$ is a corresponding eigenvector. 
\end{theorem}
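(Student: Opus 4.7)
The plan is to reduce the $d$-dimensional claim to the one-dimensional eigenvalue problem for a path graph. Observe that $L_j$ corresponds to a graph whose edges only change the $j$-th coordinate, so for any node $i$ with identity vector $[x_1,\dots,x_d]$ the action
\[
(L_j \mathbf{v}_{[z_1,\dots,z_d]})_i = d^{(j)}_{x_j}\,\nu_{[z_1,\dots,z_d],[x_1,\dots,x_d]} - \sum_{x'_j \sim x_j} \nu_{[z_1,\dots,z_d],[x_1,\dots,x'_j,\dots,x_d]}
\]
touches only the $j$-th factor of the product $\nu_{[z_1,\dots,z_d],[x_1,\dots,x_d]}=\prod_k \cos(\pi z_k(x_k-0.5)/n_k)$. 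The remaining $d-1$ cosine factors, being identical at the node and at all its neighbours in the $j$-direction, factor out of the above expression. Hence proving the theorem reduces to showing the one-dimensional fact: for the path-graph Laplacian on $n_j$ vertices, the vector with components $u_{x_j}=\cos(\pi z_j(x_j-0.5)/n_j)$ is an eigenvector with eigenvalue $\lambda_{(j,z_j)}=4\sin^2(\pi z_j/(2n_j))$, for each integer $0 \le z_j \le n_j-1$.

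For an interior coordinate ($1<x_j<n_j$, degree $2$) the eigenvalue equation at that coordinate reads
\[
2\cos\!\Bigl(\tfrac{\pi z_j}{n_j}(x_j-0.5)\Bigr) - \cos\!\Bigl(\tfrac{\pi z_j}{n_j}(x_j-1.5)\Bigr) - \cos\!\Bigl(\tfrac{\pi z_j}{n_j}(x_j+0.5)\Bigr) = \lambda_{(j,z_j)} \cos\!\Bigl(\tfrac{\pi z_j}{n_j}(x_j-0.5)\Bigr).
\]
I would apply the sum-to-product identity $\cos(A-B)+\cos(A+B)=2\cos A\cos B$ with $A=\pi z_j(x_j-0.5)/n_j$ and $B=\pi z_j/n_j$ to collapse the left-hand side to $2\cos A\,(1-\cos B)$, and then invoke the half-angle identity $1-\cos B = 2\sin^2(B/2)$ to recover the desired factor $4\sin^2(\pi z_j/(2n_j))$.

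The main obstacle, and the reason for the $-0.5$ phase shift in the cosine, is the boundary behaviour at $x_j=1$ and $x_j=n_j$, where the degree is only $1$. At $x_j=1$ the equation becomes $\cos(\pi z_j\cdot 0.5/n_j)-\cos(\pi z_j\cdot 1.5/n_j) \stackrel{?}{=} \lambda_{(j,z_j)}\cos(\pi z_j\cdot 0.5/n_j)$. I would rewrite the left-hand side using $\cos A-\cos B=-2\sin(\tfrac{A+B}{2})\sin(\tfrac{A-B}{2})$ to obtain $2\sin(\pi z_j/n_j)\sin(\pi z_j/(2n_j)) = 4\sin^2(\pi z_j/(2n_j))\cos(\pi z_j/(2n_j))$, matching the right-hand side exactly; the half-unit shift is precisely what cancels the missing neighbour term, analogously to a discrete Neumann condition. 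At $x_j=n_j$ the same manipulation yields a prefactor $\sin(\pi z_j(n_j-1)/n_j)=-(-1)^{z_j}\sin(\pi z_j/n_j)$ on the left and $\cos(\pi z_j(n_j-0.5)/n_j)=(-1)^{z_j}\cos(\pi z_j/(2n_j))$ on the right, so the two $(-1)^{z_j}$ factors cancel and equality is again recovered. Combining the interior calculation with the two boundary calculations gives the one-dimensional statement, which via the tensor-product argument of the first paragraph proves the theorem.
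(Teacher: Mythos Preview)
Your proof is correct and follows essentially the same route as the paper: factor out the $d-1$ cosines not involving coordinate $j$, then verify the one-dimensional path-Laplacian eigenvector identity via the sum-to-product and half-angle formulas. The only cosmetic difference is that the paper handles the boundary vertices by a ghost-point/reflection observation (showing $-\cos(\pi z_j(x_j-1-0.5)/n_j)+\cos(\pi z_j(x_j-0.5)/n_j)=0$ at $x_j=1$, and analogously at $x_j=n_j$), which lets it reuse the interior three-term calculation verbatim, whereas you verify the two endpoint equations directly.
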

\begin{proof}
First note that for $x_j=1$ we have 
$$-\cos\left(\frac{\pi z_j}{n_j} \left(x_j-1-0.5\right)\right) 
+\cos\left(\frac{\pi z_j}{n_j} \left(x_j-0.5\right)\right) 
$$ $$= 
-\cos\left(\frac{\pi z_j}{n_j} \left(1-1-0.5\right)\right) 
+\cos\left(\frac{\pi z_j}{n_j} \left(1-0.5\right)\right) 
$$ $$= 
-\cos\left(\frac{\pi z_j}{n_j} \left(-0.5\right)\right) 
+\cos\left(\frac{\pi z_j}{n_j} \left(+0.5\right)\right) 
$$ $$= 
-\cos\left(\frac{\pi z_j}{n_j} \left(+0.5\right)\right) 
+\cos\left(\frac{\pi z_j}{n_j} \left(+0.5\right)\right) 
=
0
$$
For $x_j=n_j$ we have 
$$+\cos\left(\frac{\pi z_j}{n_j} \left(x_j-0.5\right)\right) 
-\cos\left(\frac{\pi z_j}{n_j} \left(x_j+1-0.5\right)\right) 
$$ $$= 
+\cos\left(\frac{\pi z_j}{n_j} \left(n_j-0.5\right)\right) 
-\cos\left(\frac{\pi z_j}{n_j} \left(n_j+1-0.5\right)\right) 
$$ $$= 
+\cos\left( \pi z_j  -0.5 \pi z_j\right) 
-\cos\left( \pi z_j  +0.5 \pi z_j\right) 
$$ $$= 
+\cos\left(  -0.5 \pi z_j\right) 
-\cos\left(  +0.5 \pi z_j\right) 
$$ $$= 
+\cos\left(  +0.5 \pi z_j\right) 
-\cos\left(  +0.5 \pi z_j\right) 
=0
$$
For a given node $p$ 
with vector identity $[x'_1,\dots,x'_j ,\dots x'_d]$
consider now all the nodes that have identical 
identity vectors at all positions except for the $jth$ one. 
Consider the product 
$L_j\mathbf{v}_{[z_1,\dots,z_d]}$ at a position with $j$th coordinate equal 
$x_j$ for $x_j=1,\dots,n_j$.
It can be expressed as 
$$
-\cos\left(\frac{\pi z_j}{n_j} \left(x_j-1-0.5\right)\right)
\cdot \prod_{k=1,\dots,d, k\ne j}   \cos\left(\frac{\pi z_k}{n_k} \left(x'_k-0.5\right)\right) 
$$ $$+2\cos\left(\frac{\pi z_j}{n_j} \left(x_j-0.5\right)\right)
\cdot \prod_{k=1,\dots,d, k\ne j}   \cos\left(\frac{\pi z_k}{n_k} \left(x'_k-0.5\right)\right) 
$$ $$-\cos\left(\frac{\pi z_j}{n_j} \left(x_j+1-0.5\right)\right)
\cdot \prod_{k=1,\dots,d, k\ne j}   \cos\left(\frac{\pi z_k}{n_k} \left(x'_k-0.5\right)\right) 
$$ $$=
\left(
-\cos\left(\frac{\pi z_j}{n_j} \left(x_j-1-0.5\right)\right)
+2\cos\left(\frac{\pi z_j}{n_j} \left(x_j-0.5\right)\right)
\right.$$ $$\left.
-\cos\left(\frac{\pi z_j}{n_j} \left(x_j+1-0.5\right)\right)
\right)
\cdot \prod_{k=1,\dots,d, k\ne j}   \cos\left(\frac{\pi z_k}{n_k} \left(x'_k-0.5\right)\right) 
$$

Let us consider subsequently only the expression
$$\left(
-\cos\left(\frac{\pi z_j}{n_j} \left(x_j-1-0.5\right)\right)
+2\cos\left(\frac{\pi z_j}{n_j} \left(x_j-0.5\right)\right)
\right.$$ $$\left.
-\cos\left(\frac{\pi z_j}{n_j} \left(x_j+1-0.5\right)\right)
\right)
$$ $$=
-\cos\left(\frac{\pi z_j}{n_j} \left(x_j-0.5\right)-\frac{\pi z_j}{n_j}\right)
+2\cos\left(\frac{\pi z_j}{n_j} \left(x_j-0.5\right)\right)
$$ $$
-\cos\left(\frac{\pi z_j}{n_j} \left(x_j-0.5\right)+\frac{\pi z_j}{n_j}\right)
$$ $$=
-\cos\left(\frac{\pi z_j}{n_j} \left(x_j-0.5\right)\right)\cos\left(\frac{\pi z_j}{n_j}\right)
-\sin\left(\frac{\pi z_j}{n_j} \left(x_j-0.5\right)\right)\sin\left(\frac{\pi z_j}{n_j}\right)
$$ $$
+2\cos\left(\frac{\pi z_j}{n_j} \left(x_j-0.5\right)\right)
$$ $$
-\cos\left(\frac{\pi z_j}{n_j} \left(x_j-0.5\right)\right)\cos\left(\frac{\pi z_j}{n_j}\right)
+\sin\left(\frac{\pi z_j}{n_j} \left(x_j-0.5\right)\right)\sin\left(\frac{\pi z_j}{n_j}\right)
$$ $$=
-\cos\left(\frac{\pi z_j}{n_j} \left(x_j-0.5\right)\right)\cos\left(\frac{\pi z_j}{n_j}\right)
+2\cos\left(\frac{\pi z_j}{n_j} \left(x_j-0.5\right)\right)
$$ $$
-\cos\left(\frac{\pi z_j}{n_j} \left(x_j-0.5\right)\right)\cos\left(\frac{\pi z_j}{n_j}\right)
$$ $$=
2\cos\left(\frac{\pi z_j}{n_j} \left(x_j-0.5\right)\right)
\left(1-\cos\left(\frac{\pi z_j}{n_j}\right)\right)
$$ $$=
2\cos\left(\frac{\pi z_j}{n_j} \left(x_j-0.5\right)\right)
\left(2\sin^2\left(\frac{\pi z_j}{2n_j}\right)\right)
$$ $$=
\left(2\sin\left(\frac{\pi z_j}{2n_j}\right)\right)^2
\cos\left(\frac{\pi z_j}{n_j} \left(x_j-0.5\right)\right)
$$ $$=
\lambda_{(j,z_j)} 
\cos\left(\frac{\pi z_j}{n_j} \left(x_j-0.5\right)\right)
$$
This means that 
$$
-\cos\left(\frac{\pi z_j}{n_j} \left(x_j-1-0.5\right)\right)
\cdot \prod_{k=1,\dots,d, k\ne j}   \cos\left(\frac{\pi z_k}{n_k} \left(x'_k-0.5\right)\right) 
$$ $$+2\cos\left(\frac{\pi z_j}{n_j} \left(x_j-0.5\right)\right)
\cdot \prod_{k=1,\dots,d, k\ne j}   \cos\left(\frac{\pi z_k}{n_k} \left(x'_k-0.5\right)\right) 
$$ $$-\cos\left(\frac{\pi z_j}{n_j} \left(x_j+1-0.5\right)\right)
\cdot \prod_{k=1,\dots,d, k\ne j}   \cos\left(\frac{\pi z_k}{n_k} \left(x'_k-0.5\right)\right) 
$$ $$=
\lambda_{(j,z_j)} 
\cos\left(\frac{\pi z_j}{n_j} \left(x_j-0.5\right)\right)
\cdot \prod_{k=1,\dots,d, k\ne j}   \cos\left(\frac{\pi z_k}{n_k} \left(x'_k-0.5\right)\right) 
$$
that is 
$\lambda_{(j,z_j)} $ times its position in the $\mathbf{v}$ vector.
And it happens so for any node with any index.
So the claim of the theorem is demonstrated. 
\end{proof}

Let us now establish that all eigenvectors
\footnote{
It is well known that eigenvectors
 associated with different eigenvalues are orthogonal, but in the multidimensional grid not all eigenvalues need to be different} are orthogonal to one another. 

But first 
note  that
\begin{theorem}\label{th:zerosum}
The sum of elements of each of the above-mentioned  eigenvectors is zero
except for $\mathbf{v}_{[0,\dots,0]}$. 
\end{theorem}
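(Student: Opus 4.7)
The plan is to exploit the product structure of the eigenvector entries. Since
$$\nu_{[z_1,\dots,z_d],[x_1,\dots,x_d]}= \prod_{j=1}^d \cos\left(\frac{\pi z_j}{n_j}(x_j-0.5)\right),$$
the sum over all nodes factors as
$$\sum_{i=1}^n \mathbf{v}_{[z_1,\dots,z_d],i} = \prod_{j=1}^d \left(\sum_{x_j=1}^{n_j} \cos\left(\frac{\pi z_j}{n_j}(x_j-0.5)\right)\right).$$
Thus it suffices to analyse each one-dimensional sum $T_j=\sum_{x=1}^{n_j}\cos\left(\frac{\pi z_j}{n_j}(x-0.5)\right)$ separately, and to show that $T_j=n_j$ when $z_j=0$, while $T_j=0$ whenever $1\le z_j\le n_j-1$. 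Once this is established, the product vanishes as soon as a single coordinate $z_j$ is nonzero, so the total sum is nonzero only for the all-zero index vector, which yields the claim.

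For $z_j=0$ the claim $T_j=n_j$ is immediate since each summand equals $1$. For $1\le z_j\le n_j-1$, I would evaluate $T_j$ either by the standard closed form for an arithmetic-progression cosine sum, or equivalently via complex exponentials: writing $\cos\theta=\tfrac12(e^{i\theta}+e^{-i\theta})$ reduces $T_j$ to two geometric sums with common ratios $e^{\pm i\pi z_j/n_j}\ne 1$ (the ratios are nontrivial precisely because $0<z_j<n_j$, so $\pi z_j/n_j\notin 2\pi\mathbb{Z}$). After summation and simplification one obtains an expression with $\sin(\pi z_j/2)\cos(\pi z_j/2)=\tfrac12\sin(\pi z_j)$ in the numerator, which vanishes for integer $z_j$, while the denominator $\sin(\pi z_j/(2n_j))$ is nonzero in the allowed range. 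Hence $T_j=0$.

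There is no real obstacle here: the argument is essentially a trigonometric identity combined with the product structure of $\nu$. The only point requiring a little care is to check that the denominator in the closed-form sum does not vanish, which is guaranteed by the range restriction $1\le z_j\le n_j-1$ supplied by Theorem~\ref{th:reductiontoeigenvectors}. Combining the two cases for each coordinate, the product is $\prod_{j=1}^d n_j=n$ only when all $z_j=0$ (giving the trivial eigenvector $\mathbf{v}_{[0,\dots,0]}$, which lies in the kernel of $L$), and is zero in every other admissible case.
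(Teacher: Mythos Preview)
Your proof is correct but takes a genuinely different route from the paper. You exploit the separable (tensor-product) structure of the eigenvector entries to factor the full sum as $\prod_j T_j$ with $T_j=\sum_{x=1}^{n_j}\cos\!\bigl(\tfrac{\pi z_j}{n_j}(x-0.5)\bigr)$, and then evaluate each one-dimensional factor directly via the closed form for a cosine arithmetic progression (or, equivalently, geometric sums of exponentials), obtaining $T_j=0$ whenever $1\le z_j\le n_j-1$. The paper instead argues structurally from the eigenvalue relation $L_j\mathbf{v}=\lambda_{(j,z_j)}\mathbf{v}$ established in Theorem~\ref{th:componenteigenpairs}: summing all entries of $L_j\mathbf{v}$ gives zero because in the Laplacian each off-diagonal contribution along direction $j$ appears once with a plus sign and once with a minus sign and these cancel telescopically; on the other hand the same sum equals $\lambda_{(j,z_j)}$ times the entry-sum of $\mathbf{v}$, so the entry-sum must vanish as soon as some $\lambda_{(j,z_j)}\ne 0$, i.e.\ some $z_j\ne 0$. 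Your argument is fully self-contained and purely trigonometric, not relying on the already-proved eigenpair property; the paper's argument is shorter and conceptually linked to the Laplacian structure (it is essentially the observation that $\mathbf{1}^T L_j=0$), but it presupposes Theorem~\ref{th:componenteigenpairs}. Both are valid; your approach has the minor advantage that it also yields the exact value $n$ of the sum in the excluded case $[0,\dots,0]$.
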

\begin{proof}
Just consider the product $L_j\mathbf{v}_{[z_1,\dots,z_d]}$. 
For a given node $p$ 
with vector identity $[x'_1,\dots,x'_j ,\dots x'_d]$
consider now all the nodes that have identical 
identity vectors at all positions except for the $jth$ one 
and compute their sum.
The transformation $L_j$ transforms this sum by a factor $\lambda_{(j,z_j)} $ 
and lets each node in each  pair  of consecutive nodes  occur once with positive sign (when its own transformation is computed) and once with negative sign (when the other  node transformation is computed). 
This means in practice that the contributions cancel out one another so that the sum is equal 0. 
\end{proof}

\begin{theorem}\label{th:orthogonality}
Any two eigenvectors 
$\mathbf{v}_{[z1_1,\dots,z1_d]},  \mathbf{v}_{[z2_1,\dots,z2_d]}$
 such that the index 
$[z1_1,\dots,z1_d]$ is not identical with 
$[z2_1,\dots,z2_d]$
are orthogonal. 
\end{theorem}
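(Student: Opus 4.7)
The plan is to exploit the tensor-product structure of the eigenvectors. Since each coordinate of $\mathbf{v}_{[z_1,\dots,z_d]}$ factorizes as
$$\mathbf{v}_{[z_1,\dots,z_d],i}=\prod_{j=1}^{d}\cos\!\left(\frac{\pi z_j}{n_j}(x_j-0.5)\right),$$
rewriting the inner product $\sum_{i=1}^{n}\mathbf{v}_{[z1_1,\dots,z1_d],i}\,\mathbf{v}_{[z2_1,\dots,z2_d],i}$ as a $d$-fold sum over the node identity vectors $[x_1,\dots,x_d]$ decomposes it as a product $\prod_{j=1}^{d}\Sigma_j$ of one-dimensional sums
$$\Sigma_j=\sum_{x=1}^{n_j}\cos\!\left(\frac{\pi z1_j}{n_j}(x-0.5)\right)\cos\!\left(\frac{\pi z2_j}{n_j}(x-0.5)\right).$$
Since the two index vectors are distinct, there is at least one coordinate $j$ at which $z1_j\ne z2_j$, so it suffices to show that $\Sigma_j=0$ at any such coordinate.

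At a fixed $j$ with $z1_j\ne z2_j$, I would apply the product-to-sum identity $2\cos A\cos B=\cos(A-B)+\cos(A+B)$ to reduce $2\Sigma_j$ to two elementary sums of the form
$$T_k=\sum_{x=1}^{n_j}\cos\!\left(\frac{\pi k}{n_j}(x-0.5)\right),\qquad k\in\{z1_j-z2_j,\;z1_j+z2_j\}.$$
Each such $T_k$ is evaluated by treating it as the real part of the geometric series $\sum_{x=1}^{n_j}e^{i\pi k(x-0.5)/n_j}$; a short manipulation yields $T_k=\sin(\pi k)/\bigl(2\sin(\pi k/(2n_j))\bigr)$ whenever the denominator is nonzero, and this vanishes for every nonzero integer $k$ that is not a multiple of $2n_j$ because $\sin(\pi k)=0$.

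The final step is a range check. Under the hypothesis $z1_j,z2_j\in[0,n_j-1]$ provided by Theorem \ref{th:reductiontoeigenvectors}, together with $z1_j\ne z2_j$, the difference $z1_j-z2_j$ is a nonzero integer of absolute value at most $n_j-1$, and the sum $z1_j+z2_j$ is an integer in $[1,2n_j-2]$ (it cannot equal zero, else $z1_j=z2_j=0$, contradicting distinctness). Hence neither value of $k$ is a multiple of $2n_j$, the closed form for $T_k$ applies without indeterminacy, both contributions vanish, $\Sigma_j=0$, and the full product $\prod_j\Sigma_j$ is therefore zero.

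The main obstacle, such as it is, lies not in the algebra but in this book-keeping of the index range. The product-to-sum step and the geometric-series evaluation are routine, but one must be careful to work within the canonical range $[0,n_j-1]$, because on the wider auxiliary range $[-n_j+1,2n_j-1]$ mentioned in the notation section one could encounter $k$ equal to $\pm 2n_j$, where the geometric series degenerates and the corresponding eigenvectors are in fact collinear rather than orthogonal, as Theorem \ref{th:reductiontoeigenvectors} itself witnesses.
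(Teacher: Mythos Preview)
Your proof is correct and arguably cleaner than the paper's. Both arguments start from the product-to-sum identity, but you organize the computation differently. You first exploit the separable tensor-product structure of the eigenvector components to factor the full inner product as $\prod_{j}\Sigma_j$, and then show that a single factor $\Sigma_j$ (at any coordinate where $z1_j\ne z2_j$) vanishes via the closed-form geometric-series evaluation of $T_k$. The paper instead applies product-to-sum pointwise at each node and expands the resulting product into $2^d$ terms, each of which it recognizes as the component sum $\sum_{\mathbf{x}}\nu_{\mathbf{c},\mathbf{x}}$ of some (possibly out-of-range) vector $\mathbf{v}_{\mathbf{c}}$; it then invokes Theorem~\ref{th:reductiontoeigenvectors} to bring every $\mathbf{c}$ into the canonical range and Theorem~\ref{th:zerosum} (the zero-sum property) to conclude that each such sum vanishes, after checking that no $\mathbf{c}$ can reduce to $[0,\dots,0]$. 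Your route bypasses Theorem~\ref{th:zerosum} entirely and needs the canonical range only for the final book-keeping, which makes it more self-contained; the paper's route is more structural, tying orthogonality back to the zero-sum property of the eigenvectors themselves, at the cost of having to track all $2^d$ index combinations.
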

\begin{proof}
Consider the product of two eigenvectors 
$\mathbf{v}_{[z1_1,\dots,z1_d]},  \mathbf{v}_{[z2_1,\dots,z2_d]}$
corresponding to distinct eigenvalues. 
$\lambda_{[z1_1,\dots,z1_d]},  \lambda_{[z2_1,\dots,z2_d]}$.
If $[z1_1,\dots,z1_d]$ equals $[0,\dots,0]$, then the corresponding eigenvector is constant 
so that the dot product of with eigenvectors is equal to the second one times a constant.  
As the sum of elements of a vector is zero, so is this dot product. 
Otherwise 
let us have a look at a node with identity vector 
$[x_1,\dots,x_d]$. The dot product at this node will have the contribution to the overall dot product equal
$$
\prod_{j=1}^d 
 \cos\left(\frac{\pi z1_j}{n_j} \left(x_j-0.5\right)\right) 
 \cos\left(\frac{\pi z2_j}{n_j} \left(x_j-0.5\right)\right) 
$$
$$
=0.5^d \prod_{j=1}^d 
\left( \cos\left(\frac{\pi z1_j}{n_j} \left(x_j-0.5\right)+\frac{\pi z2_j}{n_j} \left(x_j-0.5\right)\right) 
\right.$$ $$\left.
+ \cos\left(\frac{\pi z1_j}{n_j} \left(x_j-0.5\right)-\frac{\pi z2_j}{n_j} \left(x_j-0.5\right)\right) 
\right)
$$
$$
=0.5^d \prod_{j=1}^d 
\left(  \cos\left(\frac{\pi (z1_j+z2_j)}{n_j} \left(x_j-0.5\right)\right) 
+ \cos\left(\frac{\pi (z1_j-z2_j)}{n_j} \left(x_j-0.5\right) \right) 
\right)
$$
After multiplying the sums out 
we get a sum  of components 
of $\mathbf{v}$ vectors with indexes ranging from $-n_j+1$ to $2n_j-2$,
which according to the theorem \ref{th:reductiontoeigenvectors} can be transformed to eigenvectors of $L$ or are identical with $\mathbf{0}$.  
As both vector identities are different, none of the eigenvector indices never will have the form
$[0,\dots,0]$, hence they sum up to 0. 
This finishes the proof.
\end{proof}

As all eigenvectors computed by our formulas are orthogonal, 
and the index vectors exhaust the number of nodes,
then the list of eigenvectors and eigenvalues is complete.

\section{Some Properties of Combinatorial Laplacian for a multidimensional unweighted grid graph}\label{sec:COLproperties}

The formula \eref{eq:lambdaC} implies that the combinatorial Laplacian 
ranges from 0 to $2^d$, where $d$ is the dimensionality of the grid graph. 
The upper bound is approached with increase of the lowest number of layers in any dimension ($\min_j n_j \rightarrow \infty$).

Another interesting aspect of the grid graph eigenvalues is whether or not they are uniformly distributed. Though, as mentioned in the Introduction, Compressive Spectral Cluster Analysis is rather interested in uniformity in case of normalized Laplacians, let us nonetheless consider this property for combinatorial Laplacians. 
The formula for computing the eigenvalue denies at an inspection this property. 
\figVer{
But let us investigate this visually. 

\begin{figure}
\centering
 (a)\includegraphics[width=0.4\textwidth]{\figaddr{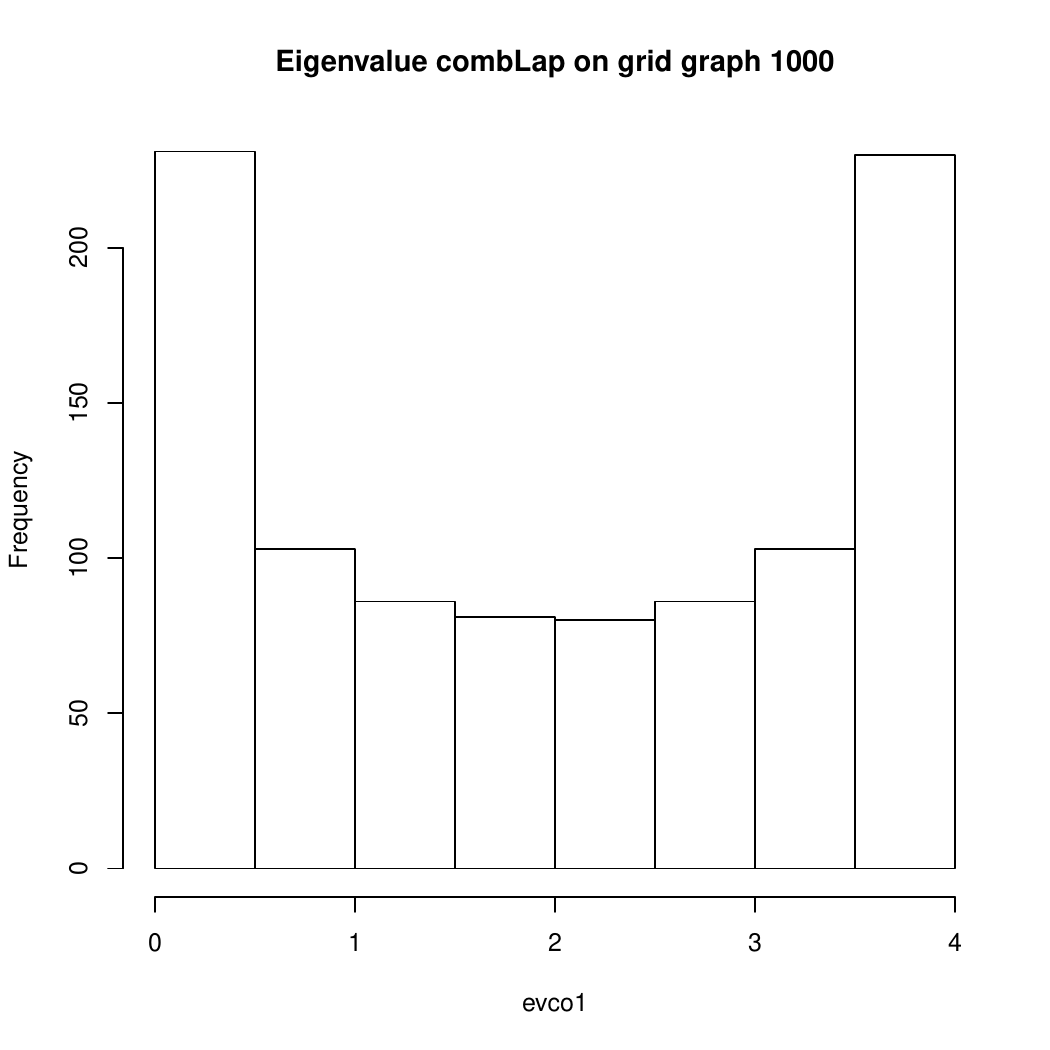}}  %
 (b)\includegraphics[width=0.4\textwidth]{\figaddr{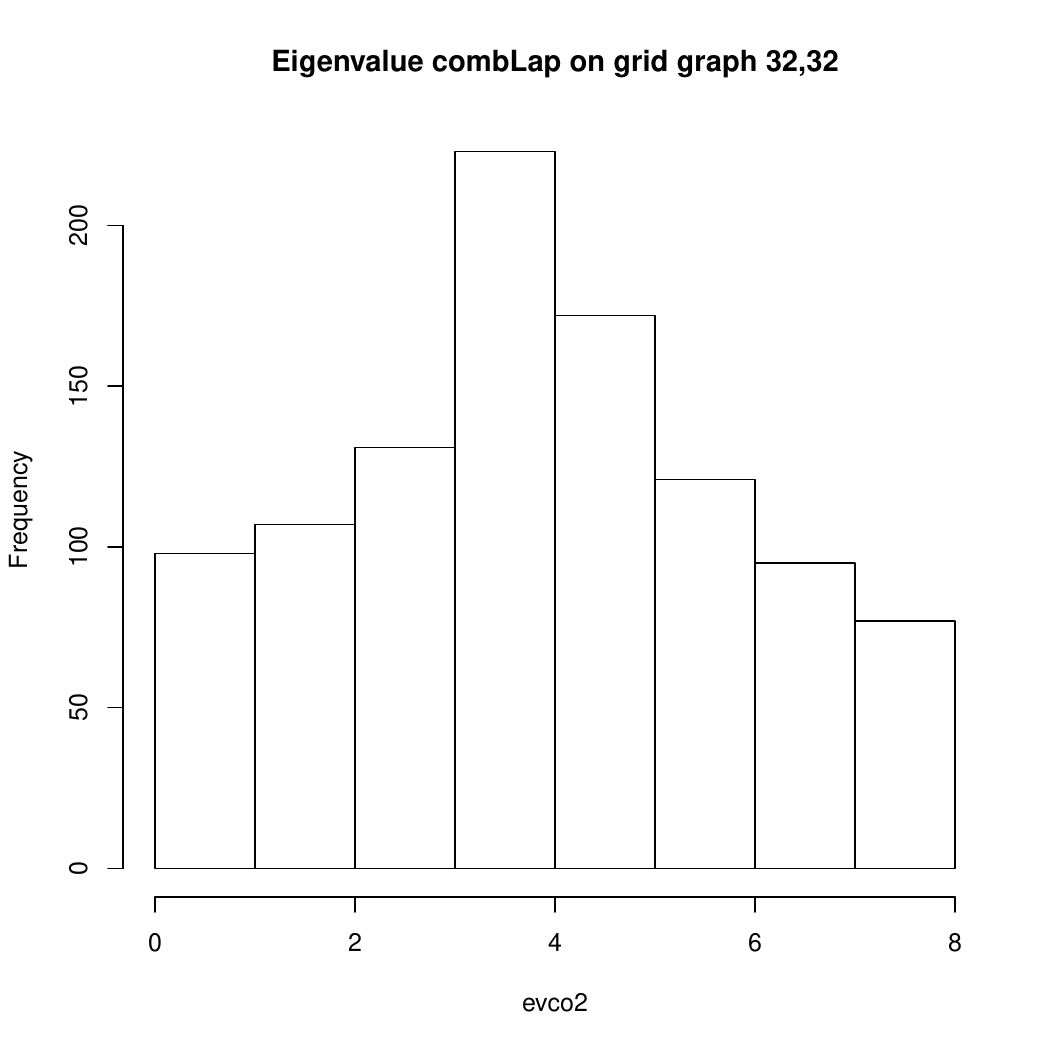}}  %
\\ (c)\includegraphics[width=0.4\textwidth]{\figaddr{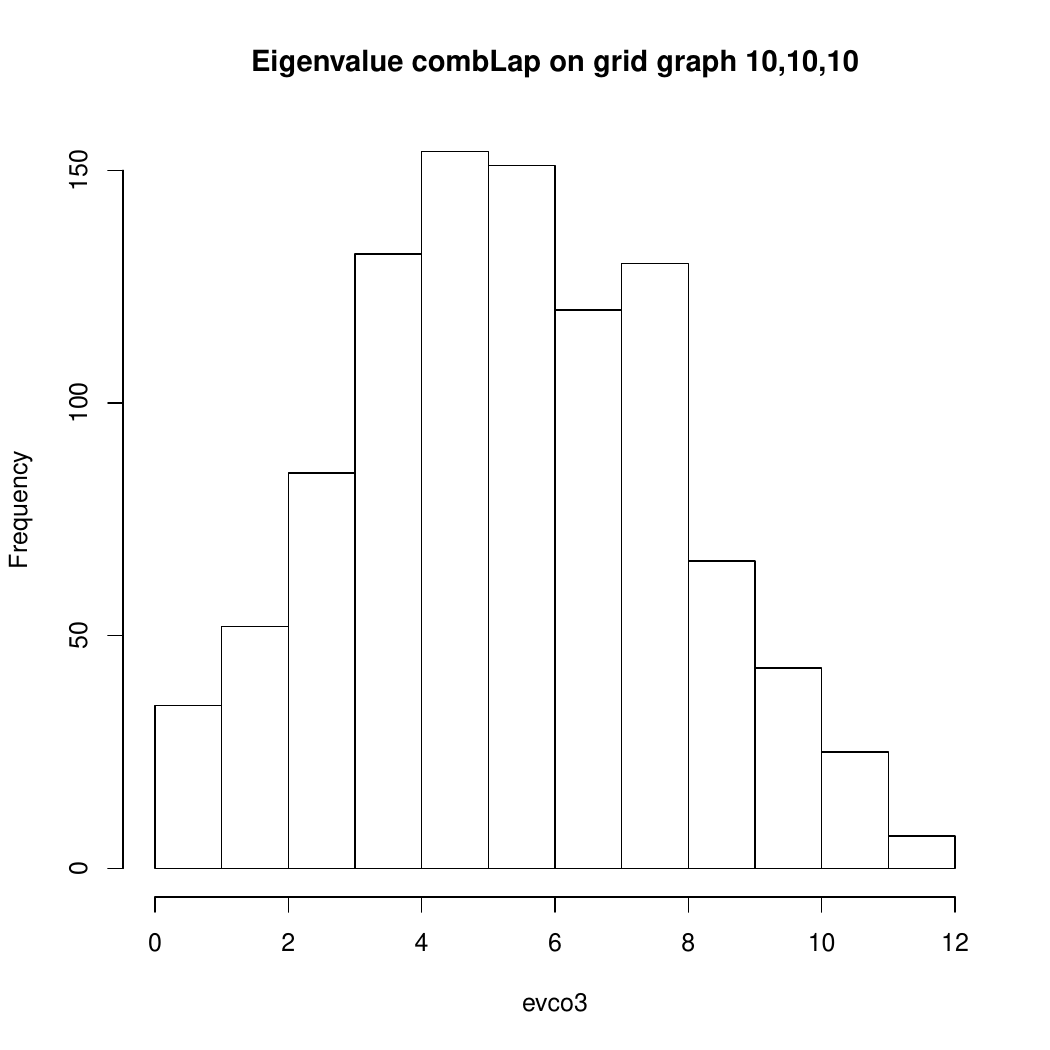}}  %
 (d)\includegraphics[width=0.4\textwidth]{\figaddr{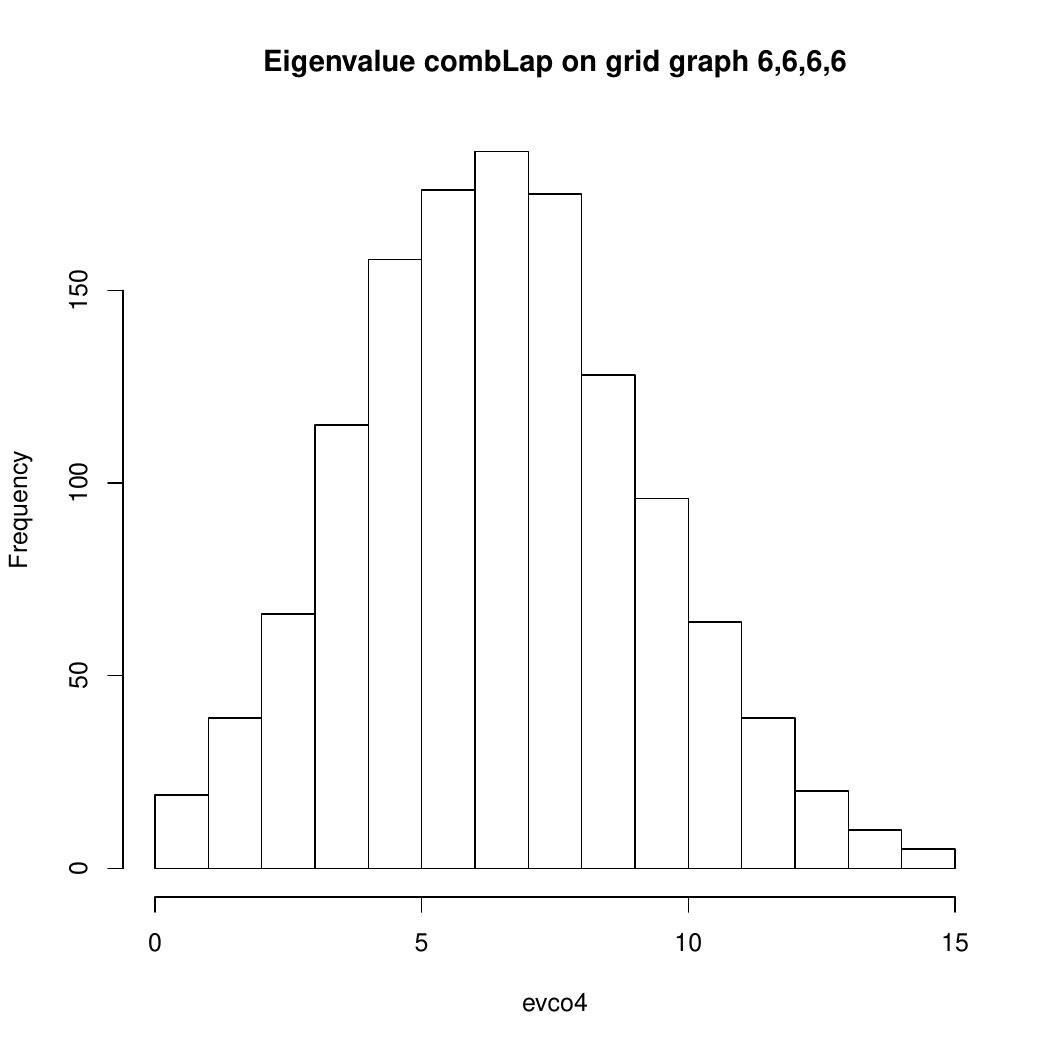}}  %
\caption{The histograms of  eigenvalues of combinatorial Laplacians of grid graphs of approximately 1,000 nodes.
(a) 1-dimensional grid graph, 
(b) 2-dimensional grid graph, 
(c) 3-dimensional grid graph, 
(d) 4-dimensional grid graph. 
}\label{fig:evco1000nodes}
\end{figure}

In Figure \ref{fig:evco1000nodes} you see the histograms of eigenvalue for grid graphs of approximately 1,000 nodes with dimensionality ranging between 1 and 4. 
 Figure \ref{fig:evco10000nodes} depicts analogous   histograms for 10,000 node graphs.  
Obviously, the shapes of histograms for 1,000 nodes and 10,000 nodes are similar and they are in no way uniform, at least for 1 to 4-dimensional grid graphs. 

To look deeper into these issues, a cumulative distribution function of eigenvalues of combinatorial Laplacian of 1,2,3,4,7,8-dimensional grid graphs with the number of nodes "in the limit" have been computed and depicted in Figure  \ref{fig:coLdist}.
A blue line was added in each diagram to indicate how a uniform distribution would have looked like. 
The multidimensional grid graph exhibits no similarity to uniform eigenvalue distribution though it is structureless. 

\begin{figure}
\centering
 (a)\includegraphics[width=0.4\textwidth]{\figaddr{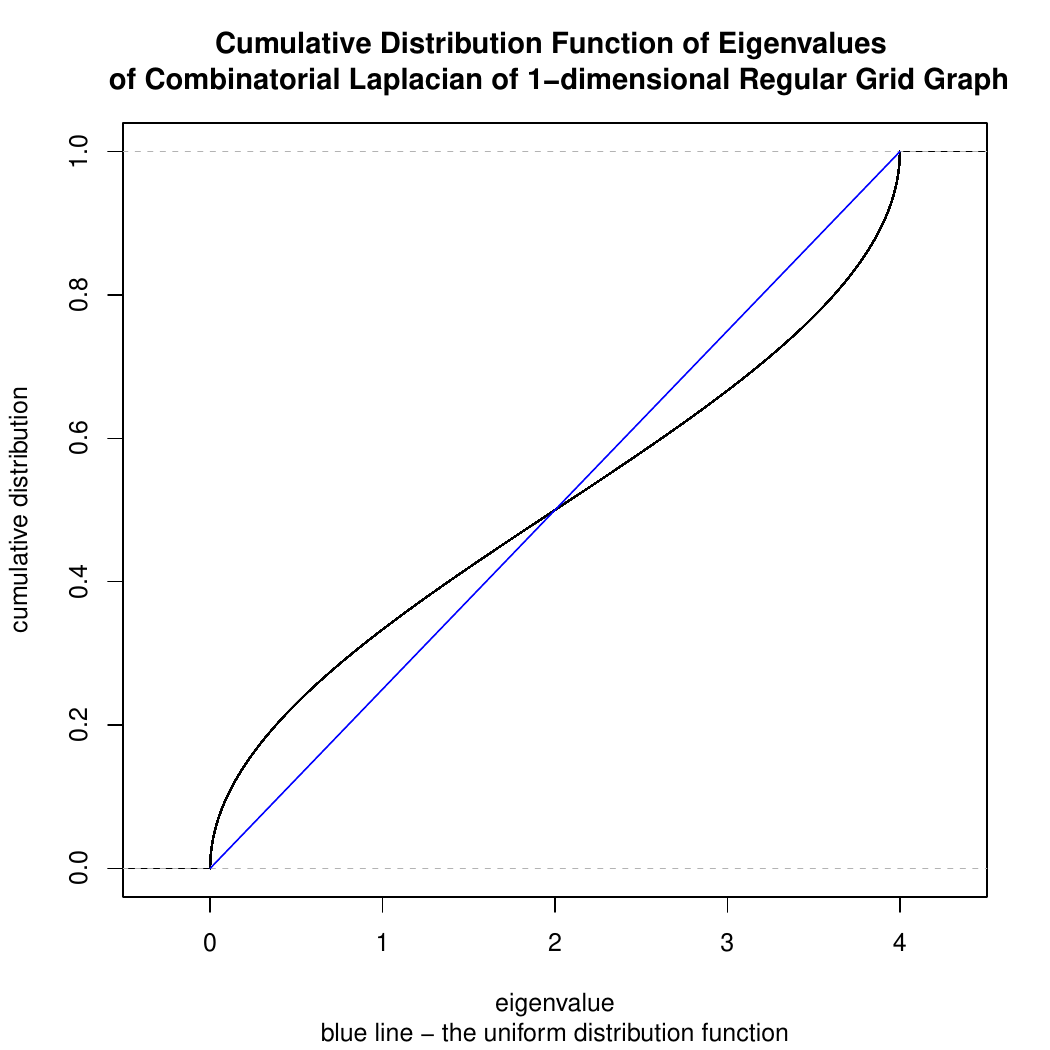}}  %
 (b)\includegraphics[width=0.4\textwidth]{\figaddr{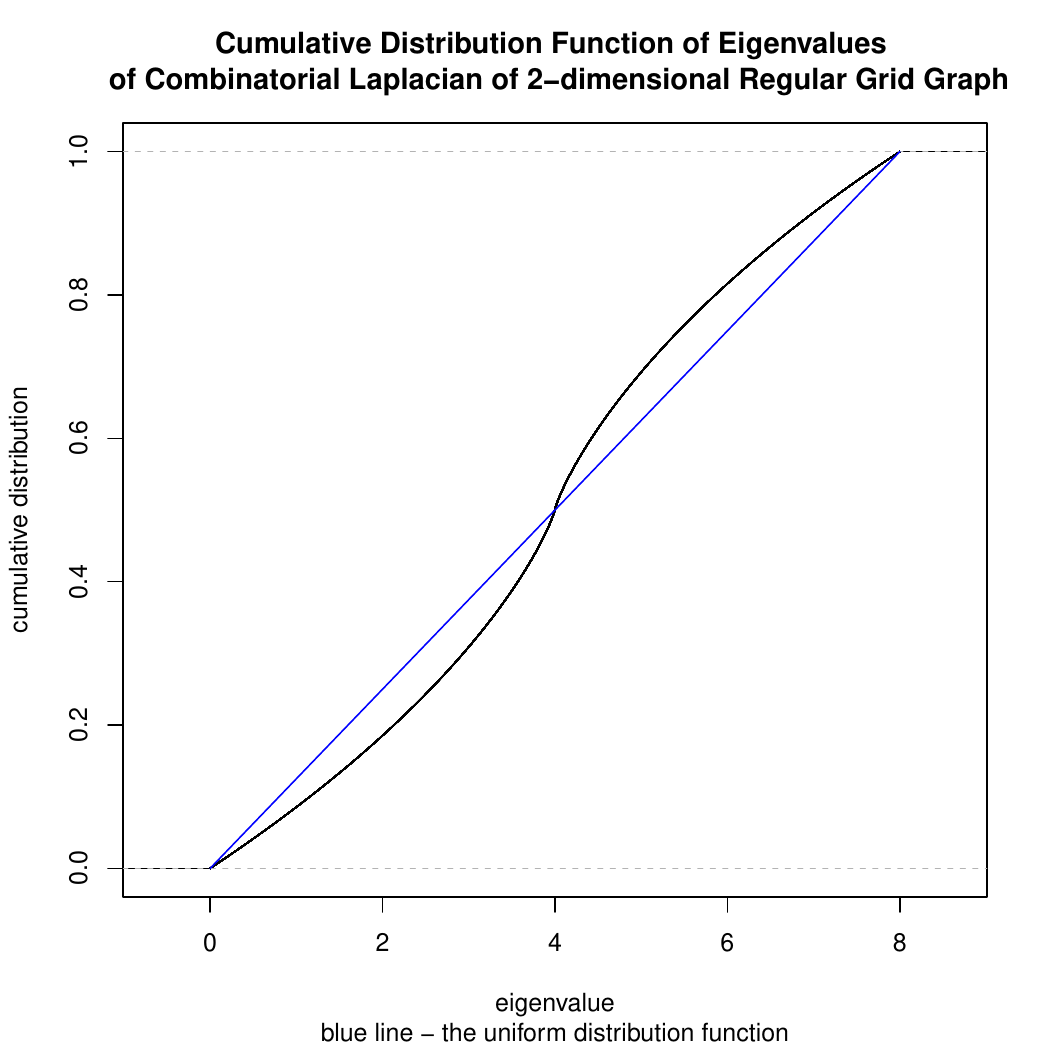}}  %
\\ (c)\includegraphics[width=0.4\textwidth]{\figaddr{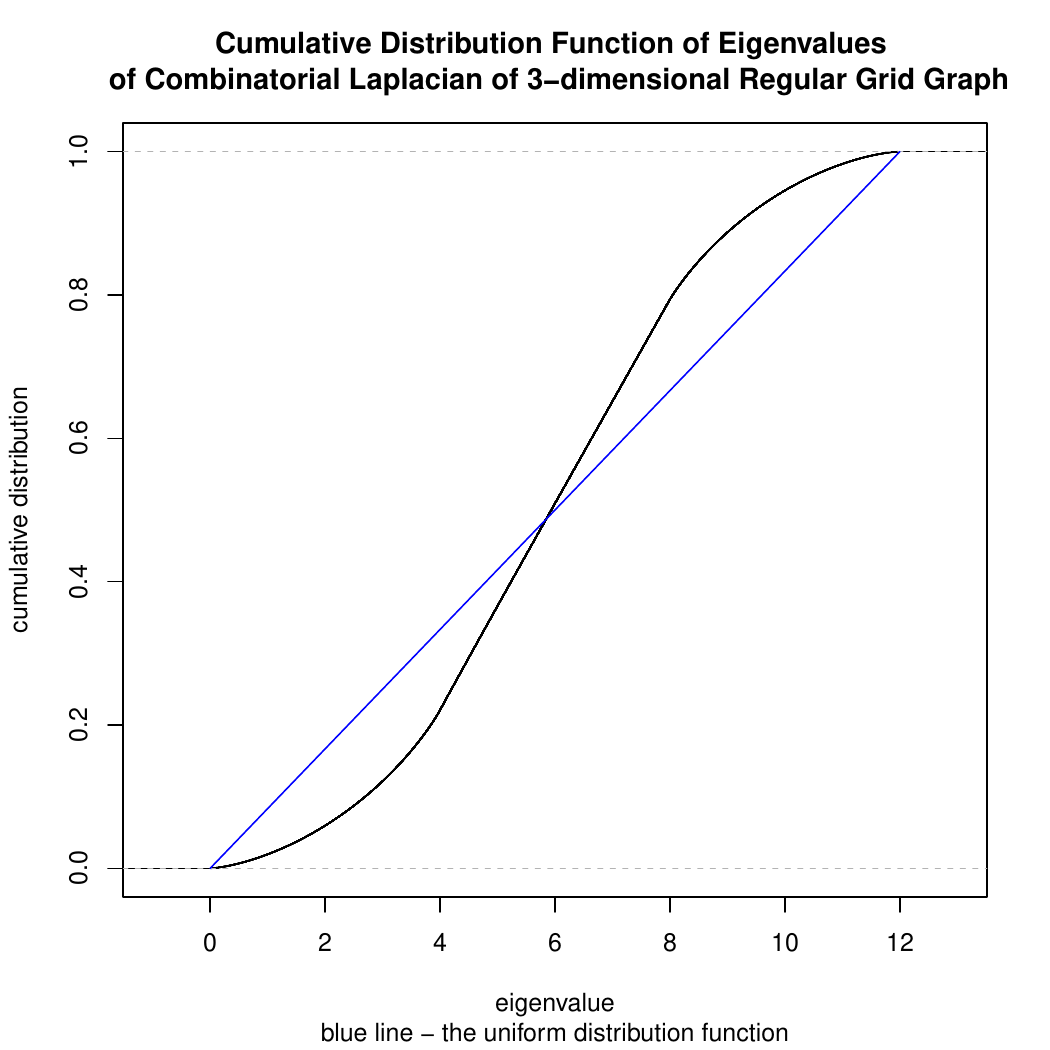}}  %
 (d)\includegraphics[width=0.4\textwidth]{\figaddr{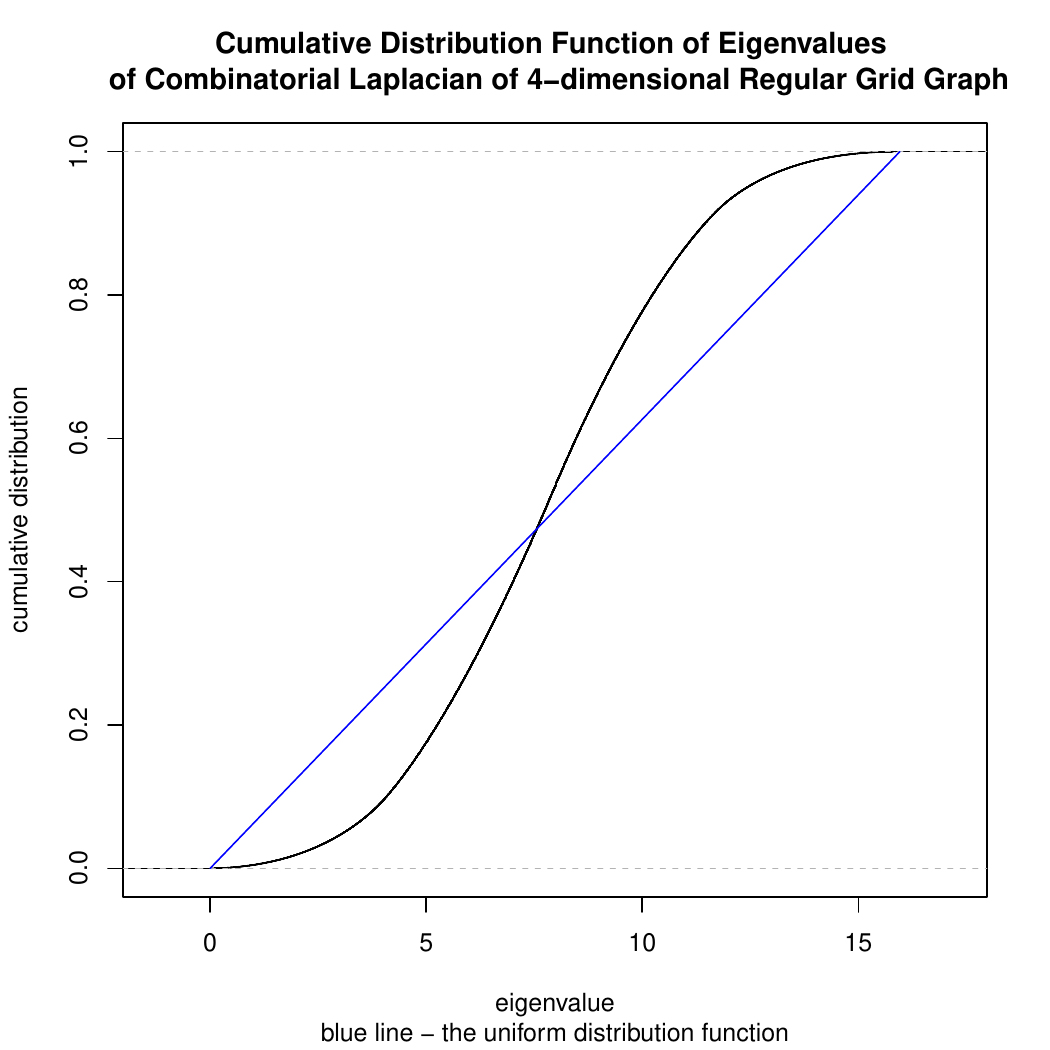}}  %
\\ (e)\includegraphics[width=0.4\textwidth]{\figaddr{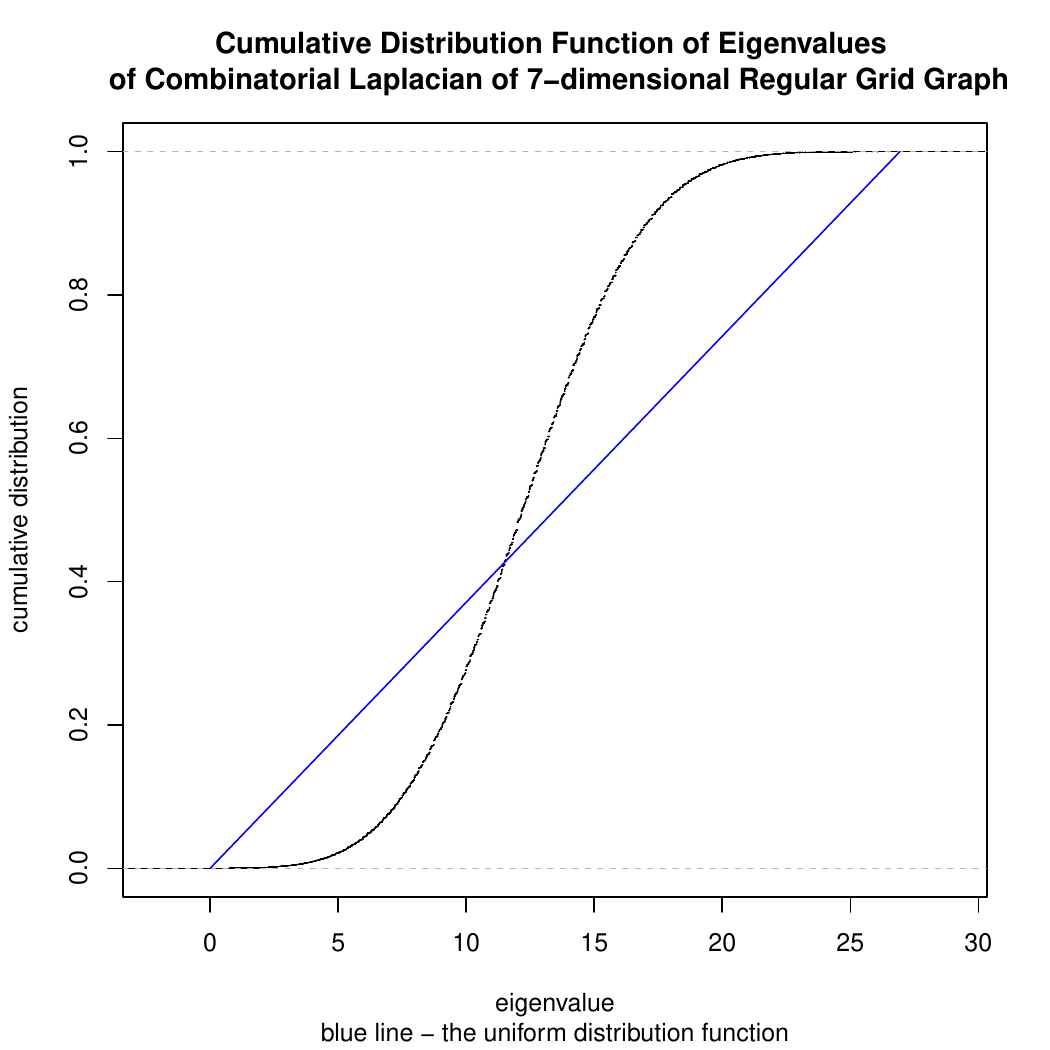}}  %
 (f)\includegraphics[width=0.4\textwidth]{\figaddr{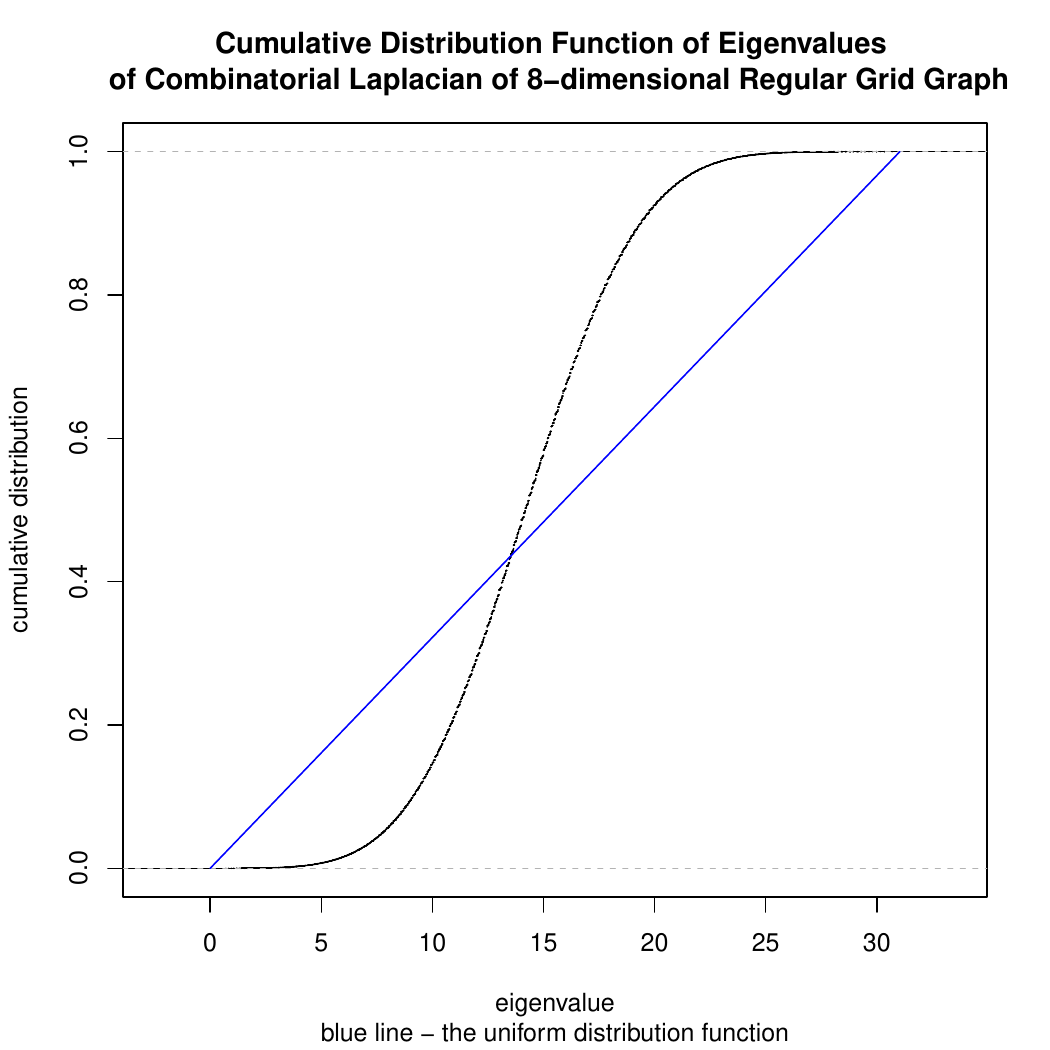}}  %
\caption{The in the limit cumulative distribution function  of  eigenvalues of combinatorial Laplacians of grid graphs.
(a) 1-dimensional grid graph, 
(b) 2-dimensional grid graph, 
(c) 3-dimensional grid graph, 
(d) 4-dimensional grid graph, 
(c) 7-dimensional grid graph, 
(d) 8-dimensional grid graph. 
}\label{fig:coLdist}
\end{figure}

\begin{figure}
\centering
 (a)\includegraphics[width=0.4\textwidth]{\figaddr{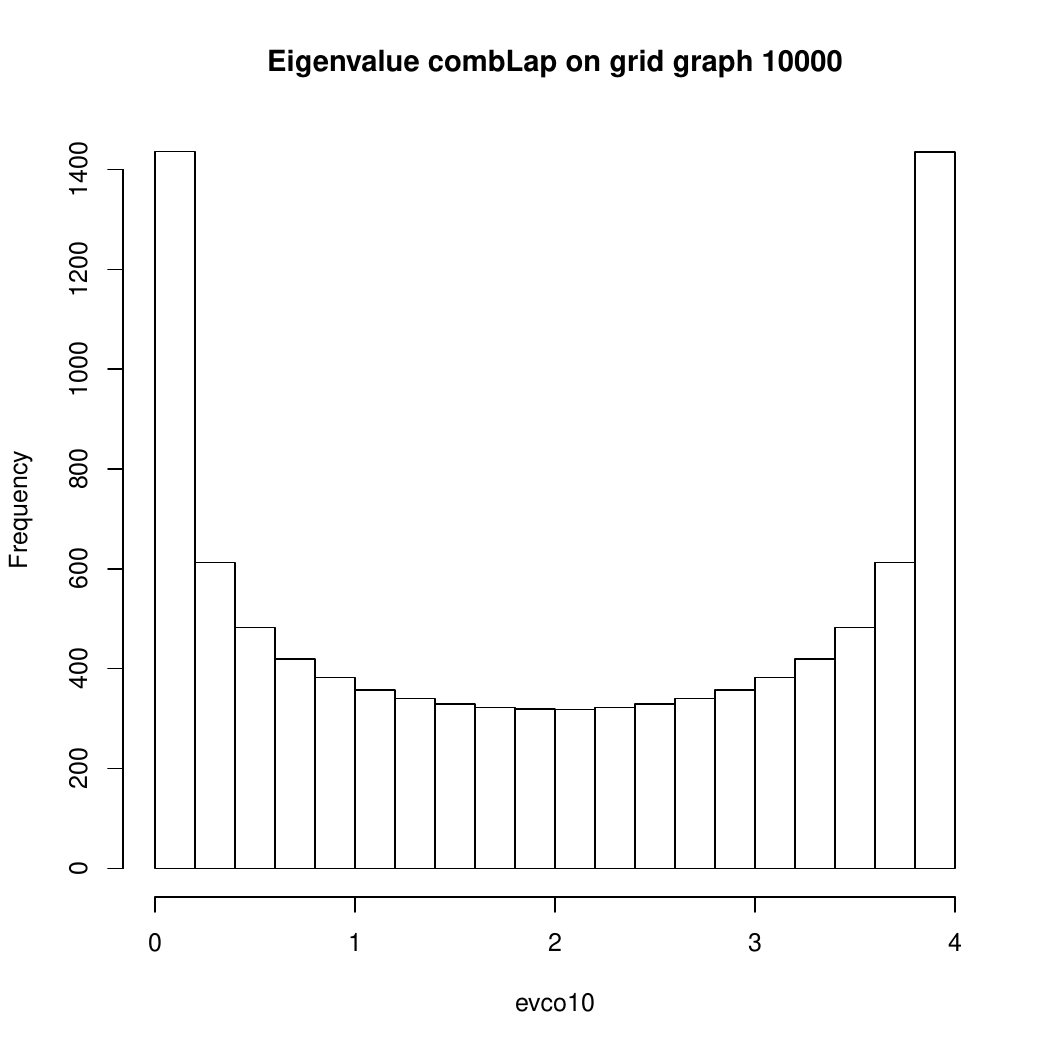}}  %
 (b)\includegraphics[width=0.4\textwidth]{\figaddr{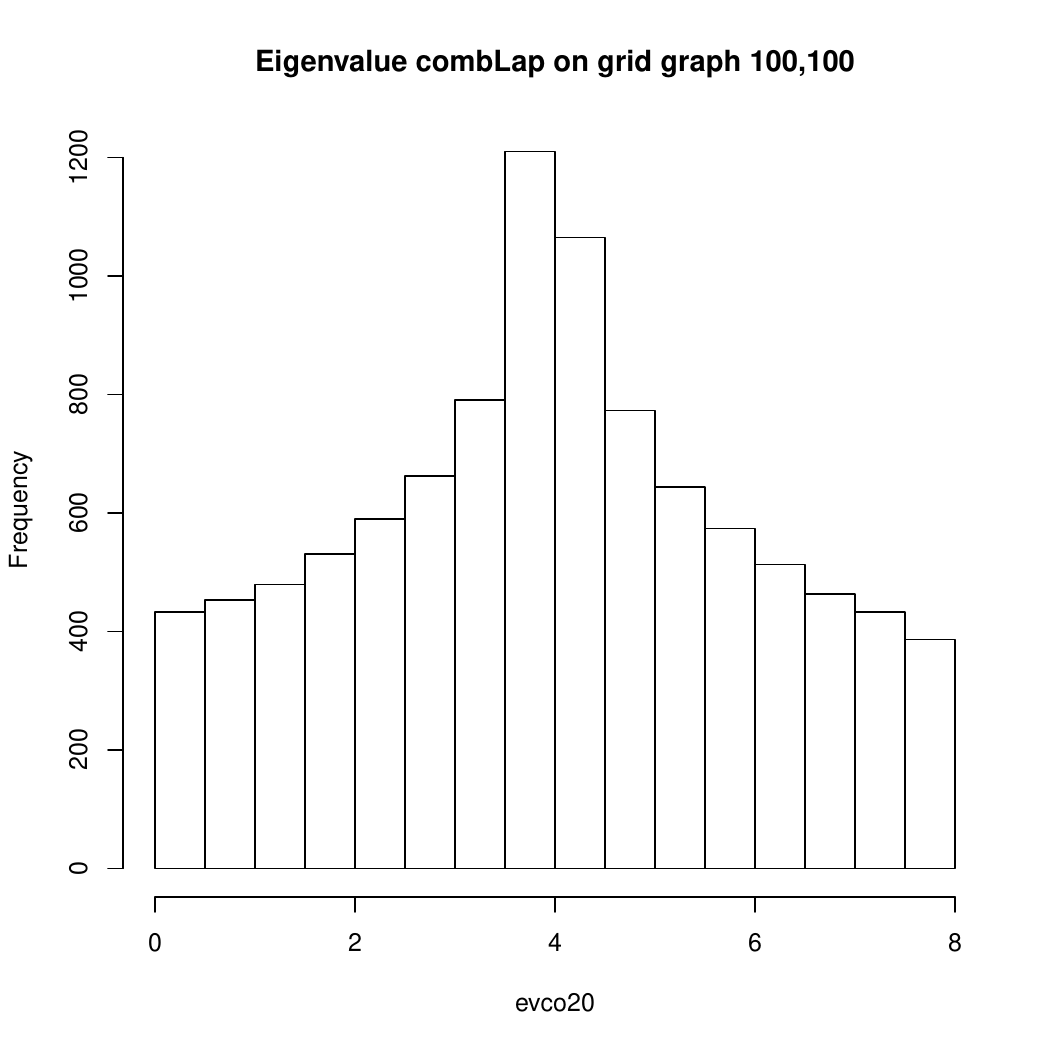}}  %
\\ (c)\includegraphics[width=0.4\textwidth]{\figaddr{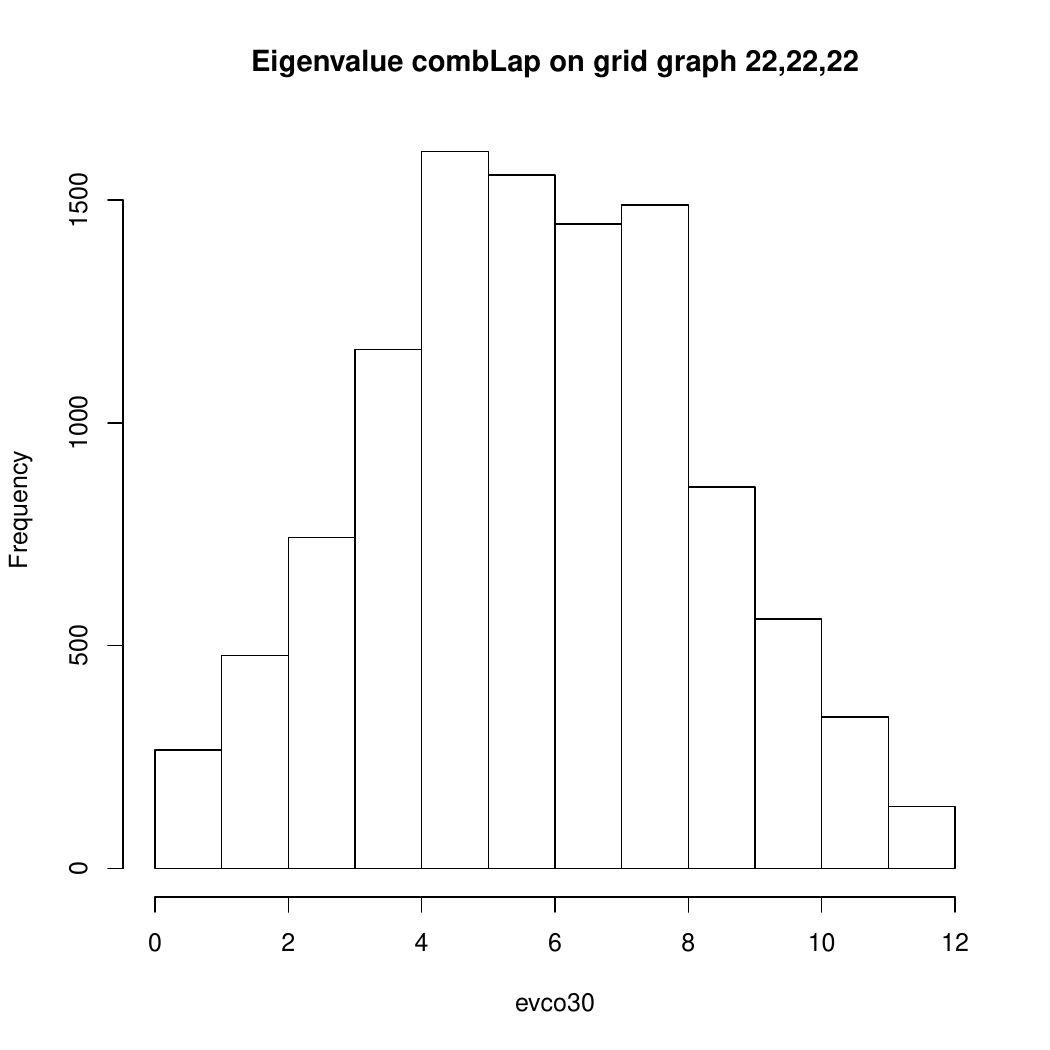}}  %
 (d)\includegraphics[width=0.4\textwidth]{\figaddr{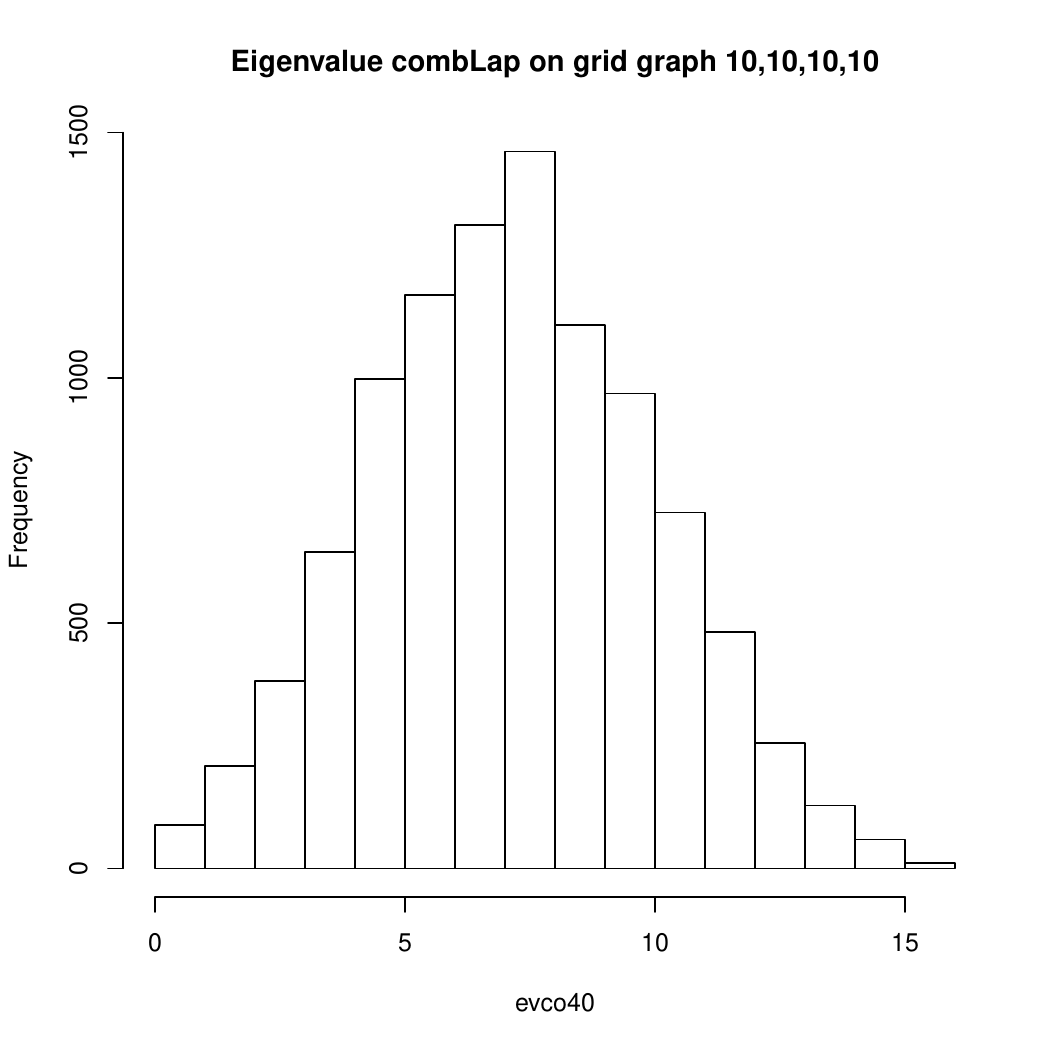}}  %
\caption{The histograms of eigenvalues of combinatorial Laplacians of grid graphs of approximately 10,000 nodes.
(a) 1-dimensional grid graph, 
(b) 2-dimensional grid graph, 
(c) 3-dimensional grid graph, 
(d) 4-dimensional grid graph. 
}\label{fig:evco10000nodes}
\end{figure}

\begin{figure}
\centering
 (a)\includegraphics[width=0.4\textwidth]{\figaddr{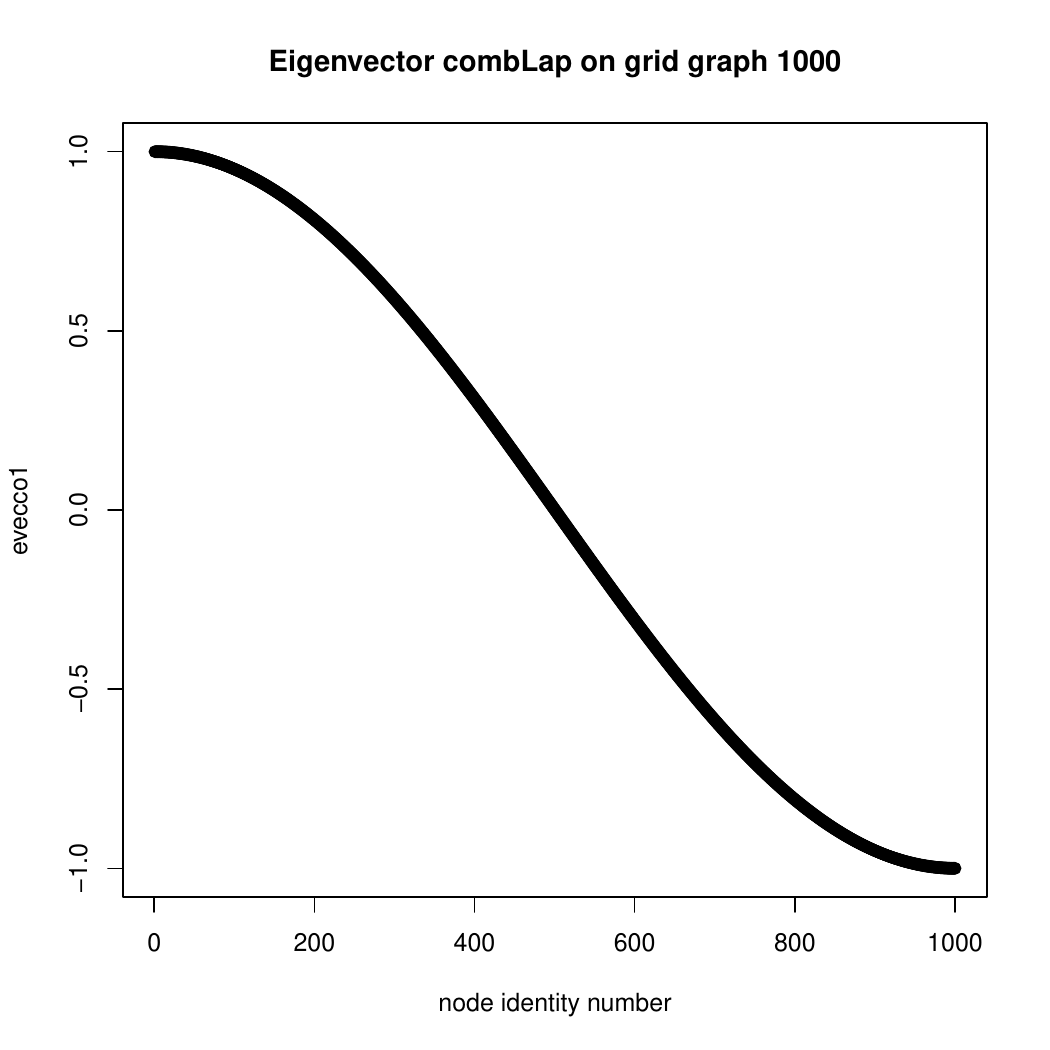}}  %
 (b)\includegraphics[width=0.4\textwidth]{\figaddr{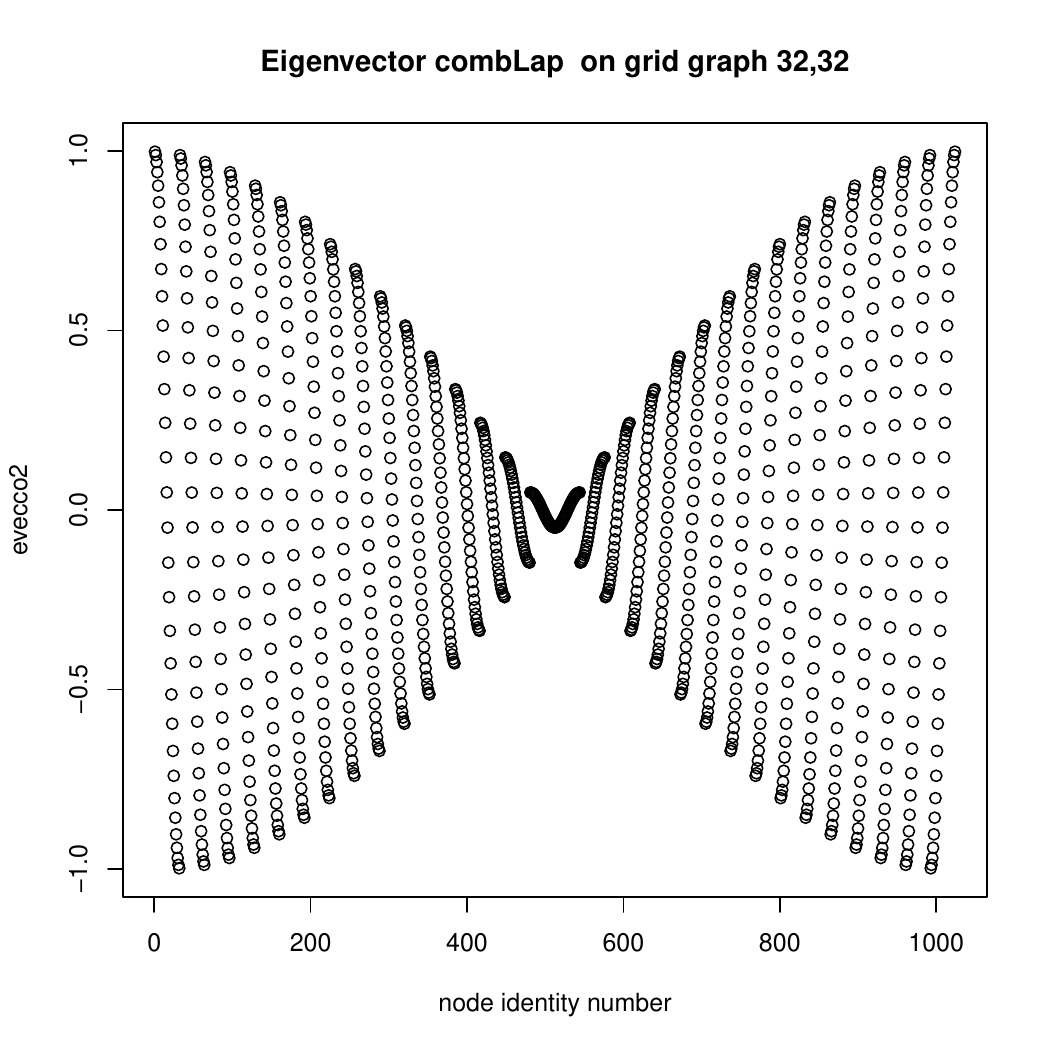}}  %
\\ (c)\includegraphics[width=0.4\textwidth]{\figaddr{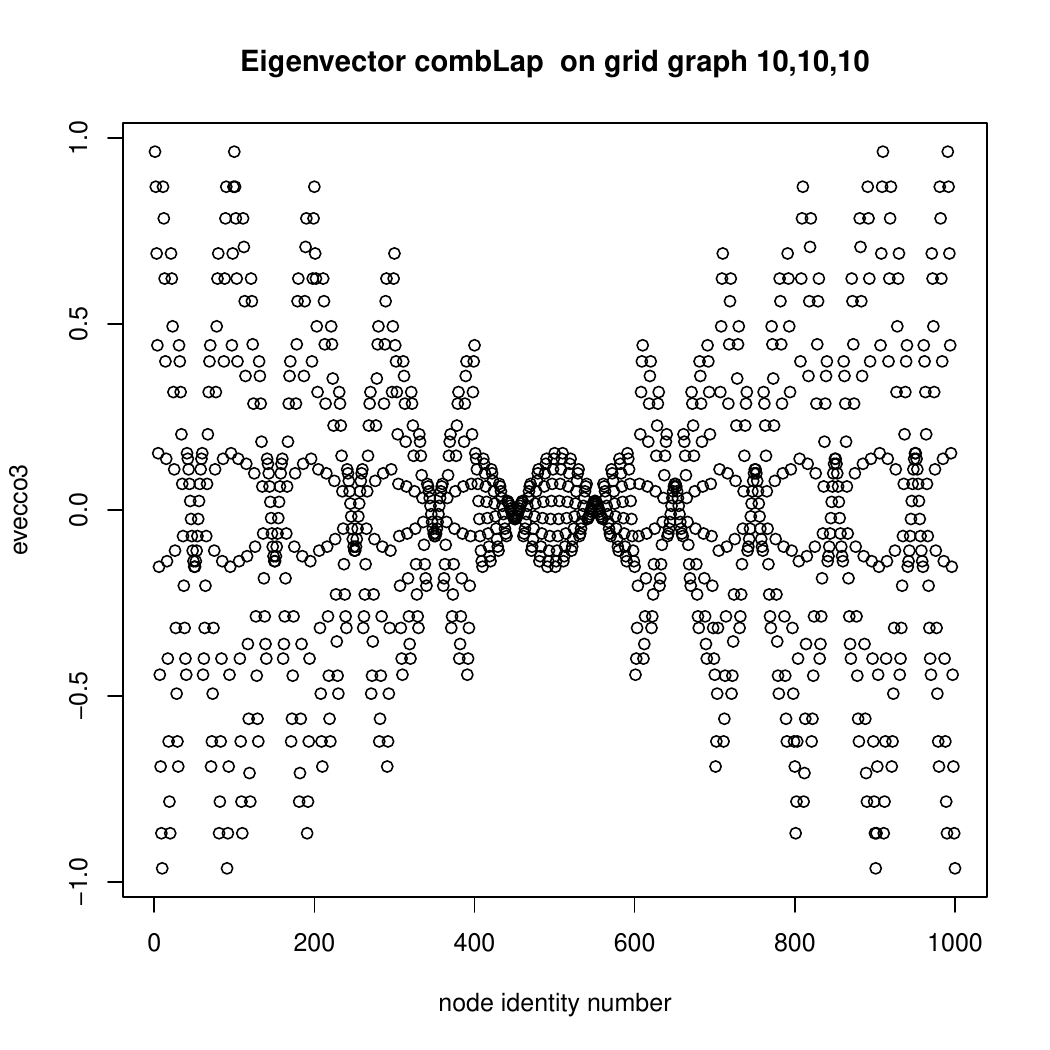}}  %
 (d)\includegraphics[width=0.4\textwidth]{\figaddr{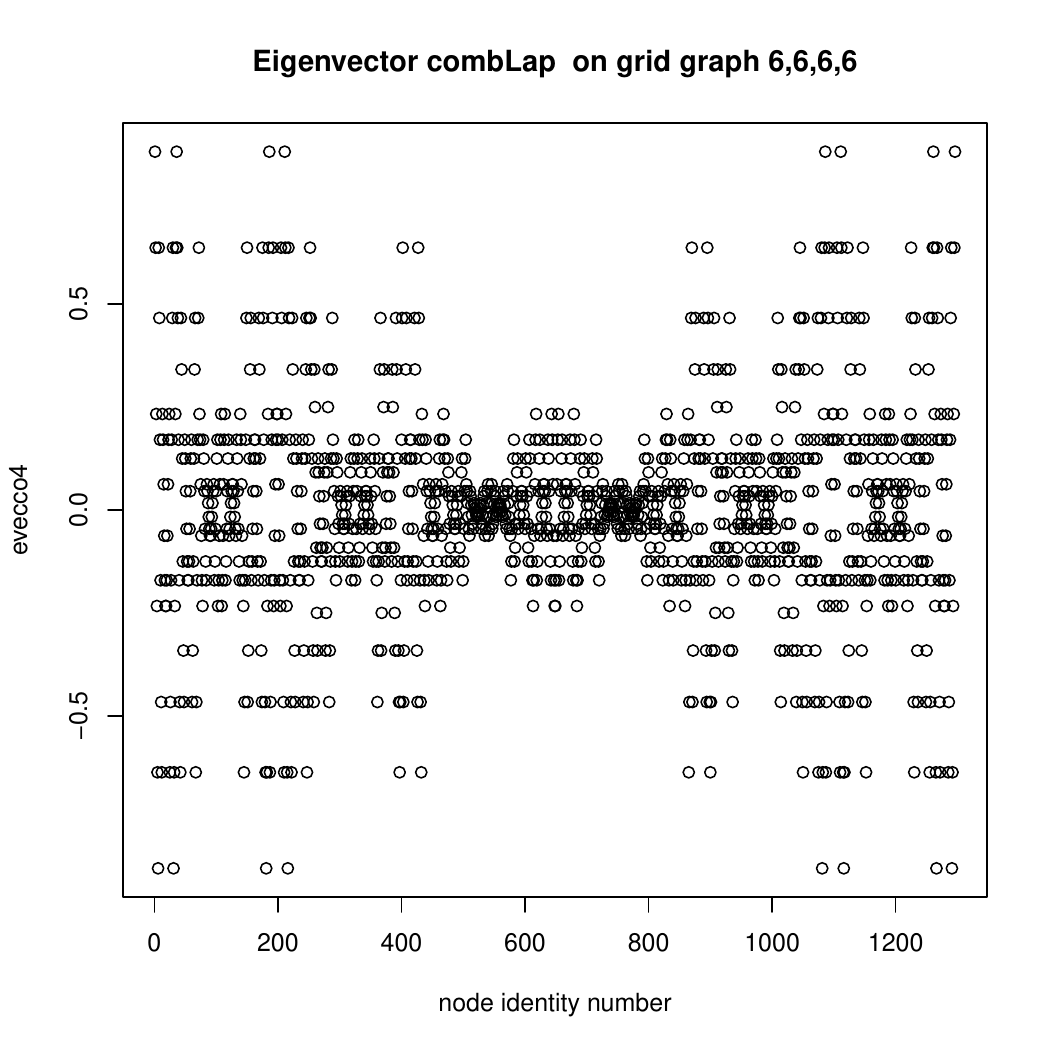}}  %
\caption{The plots of sample  eigenvectors of combinatorial Laplacians of grid graphs of approximately 1,000 nodes.
(a) 1-dimensional grid graph, $\z=[1]$, 
(b) 2-dimensional grid graph, $\z=[1,1]$, 
(c) 3-dimensional grid graph, $\z=[1,1,1]$, 
(d) 4-dimensional grid graph, $\z=[1,1,1,1]$. 
}\label{fig:evecco1000nodes}
\end{figure}

In Figure \ref{fig:evecco1000nodes} you see sample eigenvectors of the afore-mentioned grids. 
For 1-dimensional graph, a clear cosine shape is visible, in two dimensions the cosine product can be seen, in higher dimensions the patterns are not so easily classified by eye inspection.
}
 
\section{Remarks on unoriented Laplacian of  unweighted grid graph}\label{sec:UOLgeneralization}

Interestingly, there exists an elegant solution to the eigen-problem of the unoriented Laplacian. 
The unoriented Laplacian is defined as 
$$K=D+S$$

\begin{theorem}
The unoriented Laplacian eigenvalues for a grid graph are of the same form as for the unnormalised Laplacian that is
\begin{equation}\label{eq:lambdaUO}
\lambda_{[z_1,\dots,z_d]}=\sum_{j=1}^d 
\left(2 \sin\left(\frac{\pi  z_j}{2 n_j}\right)\right)^2
\end{equation}

The corresponding eigenvectors differ slightly. Their components are of the form 

\begin{equation}\label{eq:eigenvectorcomponentUO}
\nu_{[z_1,\dots,z_d],[x_1,\dots,x_d]}= 
\prod_{j=1}^d (-1)^{x_j} \cos\left(\frac{\pi z_j}{n_j} \left(x_j-0.5\right)\right) 
\end{equation}
\end{theorem}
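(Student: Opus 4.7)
The plan is to exploit the bipartiteness of the grid graph to reduce the unoriented Laplacian eigen-problem to that of the combinatorial Laplacian, which has already been solved in Theorem \ref{th:eigenpairs}. Concretely, I would introduce the diagonal sign matrix $P$ with entries $p_{ii} = (-1)^{x_1 + \dots + x_d}$, where $[x_1, \dots, x_d] = \mathbf{i}(i)$ is the node identity vector of vertex $i$. Note that $P$ is symmetric and an involution, so $P^2 = I$.

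The first substantive step is to verify that $PSP = -S$. Since $P$ is diagonal, $(PSP)_{il} = (-1)^{x_1+\dots+x_d + y_1+\dots+y_d} s_{il}$, where $[y_1,\dots,y_d] = \mathbf{i}(l)$. An edge in the grid graph occurs only between nodes whose identity vectors differ in exactly one coordinate by exactly $1$, so the two coordinate sums have opposite parities and $(-1)^{\sum x_j + \sum y_j} = -1$ whenever $s_{il} \ne 0$. Combined with the trivial identity $PDP = D$, this yields $PKP = D - S = L$, i.e.\ $K = PLP$.

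From here, the result drops out: if $L\mathbf{v} = \lambda \mathbf{v}$ then $K(P\mathbf{v}) = PLP \cdot P\mathbf{v} = PL\mathbf{v} = \lambda \cdot P\mathbf{v}$, so $(\lambda, P\mathbf{v})$ is an eigenpair of $K$. Applying this to each eigenpair $(\lambda_{[z_1,\dots,z_d]}, \mathbf{v}_{[z_1,\dots,z_d]})$ from Theorem \ref{th:eigenpairs} and computing the components of $P\mathbf{v}_{[z_1,\dots,z_d]}$ gives exactly $\prod_{j=1}^{d} (-1)^{x_j} \cos\!\left(\frac{\pi z_j}{n_j}(x_j - 0.5)\right)$, matching formula \eref{eq:eigenvectorcomponentUO}; the eigenvalues coincide with formula \eref{eq:lambdaUO} because the similarity transformation by $P$ preserves the spectrum. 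Completeness and mutual orthogonality of the new eigenvectors follow from the corresponding properties established in Theorem \ref{th:orthogonality} for $\mathbf{v}_{[z_1,\dots,z_d]}$, since $P$ is orthogonal: $\langle P\mathbf{v}, P\mathbf{w}\rangle = \langle \mathbf{v}, \mathbf{w}\rangle$.

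There is essentially no hard obstacle here; the only point that requires care is the sign computation $PSP = -S$, which hinges on the observation that adjacency in the grid graph flips the parity of the coordinate sum. An alternative, more computational route would be to mimic the proof of Theorem \ref{th:componenteigenpairs} directly for $K$, absorbing the $(-1)^{x_j}$ factor into the three-term recurrence and checking the boundary conditions $v_0 = -v_1$ and $v_{n_j+1} = -v_{n_j}$ that make the modified recurrence terminate correctly; but this would be considerably longer and essentially repeat work that the bipartite-conjugation argument bypasses in two lines.
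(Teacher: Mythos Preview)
Your proof is correct and takes a genuinely different route from the paper's. The paper does not introduce the global sign-conjugation $K = PLP$; instead it simply asserts that the proofs of Theorems \ref{th:reductiontoeigenvectors}, \ref{th:eigenpairs}, \ref{th:componenteigenpairs} and \ref{th:orthogonality} ``follow the same pattern as above with slight variations,'' observing that when one substitutes the modified vectors into the three-term recurrences and the orthogonality sums, the factors $(-1)^{x_j}$ always appear in matched pairs and cancel, so one is effectively redoing the combinatorial-Laplacian computations verbatim. Your approach packages this cancellation into a single structural identity $PSP=-S$, which buys you the entire spectral correspondence (eigenvalues, eigenvectors, orthogonality, completeness) in one stroke and makes transparent that the result holds for \emph{any} bipartite graph, not just grids. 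The paper's approach, by contrast, stays closer to the explicit trigonometric machinery already developed and requires no new object to be introduced; its cost is that each of the earlier theorems must in principle be revisited. The ``alternative, more computational route'' you sketch at the end is essentially what the paper has in mind.
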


The proofs of these properties follow the same pattern as above with slight 
variations:
the sums of elements in these vectors are not equal zero any more in general
(an analogue of Theorem \ref{th:zerosum} is not there). 
However, as we multiply always pairs of values 
associated with the same $[x_1,\dots,x_d]$ vector, 
the factors $(-1)^{x_j}$ cancel out and the proofs of analogous other four theorems are essentially the same - we can proceed as if the eigenvectors were those of combinatorial Laplacians.

\section{Normalized Laplacians of  Unweighted Grid Graphs}\label{sec:NLtheorems}

Please keep in mind that the normalised Laplacian of a graph is defined as
$$\mathfrak{L}=D^{-1/2}LD^{-1/2}=D^{-1/2}(D-S)D^{-1/2}=I-D^{-1/2} S D^{-1/2}$$

The approach to the eigen-problem of normalised Laplacian would be very similar in spirit, but there exist  technicalities that make out the complexity of the generalization.
It has to be noted also that the solution is not completely closed-form. An iterative component is needed when identifying an eienvalue. Once the eigenvalue is identified, so-called \emph{shifts} or $\delta$s are also identified and then the eigenvalue and eigenvectors are in closed form with respect to these shifts $\delta$.
The problem of only a partial closed-from is strongly related to the fact that the eigen-problem for the normalised Laplacian cannot be decomposed in a way that could be done for the combinatorial Laplacians.

A completely closed-form is possible only in special cases, that are discussed in Subsections \ref{subsec:onDimGrid} 
(on one-dimensional grid) and \ref{subsec:regularGrid} (selected solutions to a regular grid).

This section is essentially devoted to the proof of the Theorem \ref{th:normalizedLap} on the form of eigenvalues and eigenvectors of a normalised Laplacian of a grid graph. 
The proof will be split into two cases of two types of grid graph.
We shall divide the nodes of the grid into two categories: the inner and the border ones. 
The inner ones are those that have two neighbours in the grid in each dimension. 
The border ones are the remaining ones. 
The two types of grid graphs are ones that have inner nodes, and they are handled in Subsection \ref{subsec:generalInner}, while the graphs without inner nodes are treated in Subsection \ref{subsec:generalNoInner}. 

\subsection{The General Case - with inner nodes}\label{subsec:generalInner}

In this subsection we prove the validity of our suggested forms of eigenvalues and eigenvectors of normalized Laplacians of grid graphs, as formulated in the   Theorem \ref{th:normalizedLap}.

The proof will be divided into subsubsections in order not to get lost in the multitude of formulasd. 
So the Subsubsection\ref{subsub:deltas} is devoted to finding a simple equation system allowing to find the values of  shifts $\delta$  occurring in the formulas for eigenvalue and eigenvector based on selected nodes. 
The Subsubsection\ref{subsub:howcompute} contains practical hints on simple solving of the equation system for $\delta$s.
The Subsubsection\ref{subsub:othernodes} is devoted to demonstrating, that once the above equation system is solved, the shifts $\delta$  fit also other nodes, not considered in Subsubsection \ref{subsub:deltas}.
The Subsubsection\ref{subsub:noLorthogonality} demonstrates that all the eigenvectors are orthogonal to each other so that it is assured that all the eigenvectors have been found.

As in the previous sections, we shall index the eigenvalues and eigenvectors with the vector $\z=[z_1,\dots,z_d]$ such that 
$0\le z_j<n_j$ for $j=1,\dots,d$.   

Note that if $\mathbf{v}$ is the eigenvector of $\mathfrak{L}$ for some eigenvector $\lambda$, then 
$\lambda \mathbf{v}=D^{-1/2}LD^{-1/2}  \mathbf{v}$,
$\lambda \mathbf{v}=D^{-1/2}LD^{-1/2}  \mathbf{v}$,
$\lambda  (D^{-1/2} \mathbf{v})=D^{-1}L (D^{-1/2}  \mathbf{v})$,
$\lambda  D (D^{-1/2} \mathbf{v})= L (D^{-1/2}  \mathbf{v})$.
Denote $\mathbf{w}=(D^{-1/2}  \mathbf{v}) $. 
So we seek $\lambda D \mathbf{w} = L \mathbf{w}$,  
$\lambda D \mathbf{w} = (D-S) \mathbf{w}$, 
$(1-\lambda) D \mathbf{w} =  S  \mathbf{w}$, 
$((1-\lambda) D-S) \mathbf{w} = 0$.

We will subsequently show that
\begin{theorem} \label{th:normalizedLap}
For a $d$-dimensional gridc graph with at least one inner node, its normalized Laplacian  $\mathfrak{L}$
has  the 
  eigenvalues  of the   form 
\begin{equation}\label{eq:lambdaN}
\lambda_{\mathbf{z}}=1+\frac{1}{d} \sum_{j=1}^d 
\cos\left(\frac{1}{n_j-1}\left(z_j \pi -2\delta_j\right) \right)
\end{equation}
with the ${\boldsymbol\delta}^\mathbf{z}$ vector defined 
as a solution of the equation system consisting of 
  the subsequent equation \eref{eq:lambdaN2d} and the 
 equations \eref{eq:lambdaN2dm1} for each $l=1,\dots,d$. 
The corresponding eigenvectors $\mathbf{v}_{\mathbf{z}}$ have components of the form  
\begin{equation}\label{eq:eigenvectorcomponentN}
\nu_{\mathbf{z},[x_1,\dots,x_d]}= 
D^{1/2}_{[x_1,\dots,x_d],[x_1,\dots,x_d]}
\prod_{j=1}^d (-1)^{x_j} 
\cos\left(\frac{x_j-1}{n_j-1}\left(z_j \pi -2\delta^\mathbf{z}_j\right)+\delta^\mathbf{z}_j\right)
\end{equation}
\end{theorem}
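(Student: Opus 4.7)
The plan is to mirror the paper's announced subsectional roadmap: transform the eigenequation, read off the eigenvalue formula from inner nodes, pin down the shifts $\delta_j$ using single-border nodes, and conclude with orthogonality plus a counting argument. I begin with the substitution $\mathbf{w} := D^{-1/2}\mathbf{v}$ already carried out in the preamble of the subsection, which rewrites $\mathfrak{L}\mathbf{v}=\lambda\mathbf{v}$ as the row equation $((1-\lambda)D - S)\mathbf{w}=0$. Under this change the proposed eigenvector \eref{eq:eigenvectorcomponentN} becomes the trigonometric product $w_{[x_1,\dots,x_d]} = \prod_{j=1}^d(-1)^{x_j}\cos(\alpha_j(x_j-1)+\delta_j)$, with $\alpha_j := (z_j\pi - 2\delta_j)/(n_j-1)$, so it suffices to verify this row equation at every node.

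At any inner node the degree is $2d$ and each direction contributes two neighbor terms. The $(-1)^{x_j}$ factor flips sign on $x_j \mapsto x_j\pm 1$, and the sum-to-product identity $\cos A + \cos B = 2\cos(\tfrac{A+B}{2})\cos(\tfrac{A-B}{2})$ collapses the pair of neighbor contributions in direction $j$ to $-2\cos(\alpha_j)\,w_p$. Summing over $j$ and dividing by $w_p$ yields $2d(1-\lambda) + 2\sum_j\cos(\alpha_j) = 0$, which is exactly \eref{eq:lambdaN} and holds independently of the $\delta_j$'s; this is the content of \eref{eq:lambdaN2d}.

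Next I turn to nodes which are border in exactly one direction $j$; such nodes exist because the inner-node hypothesis forces $n_k\ge 3$ for every $k$. The degree drops to $2d-1$ and direction $j$ contributes only a single neighbor. Using the identity $\cos(\alpha_j(n_j-1)+\delta_j) = \cos(z_j\pi-\delta_j) = (-1)^{z_j}\cos(\delta_j)$, both the $x_j=1$ and $x_j=n_j$ ends produce the same algebraic equation, which after eliminating $\sum_{k\ne j}\cos(\alpha_k)$ through the inner-node formula reduces to $(\lambda-1)\cos(\delta_j) = \cos(\alpha_j-\delta_j)$; ranging $j$ over $\{1,\dots,d\}$ gives the face equations \eref{eq:lambdaN2dm1}. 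I then check that any node with two or more border coordinates satisfies the row equation automatically: substituting the consequence $\cos(\alpha_j+\delta_j)/\cos(\delta_j) = 2\cos(\alpha_j)-(\lambda-1)$, which follows from the face equation together with $\cos(\alpha_j+\delta_j)+\cos(\alpha_j-\delta_j) = 2\cos(\alpha_j)\cos(\delta_j)$, makes every multi-border equation collapse to an identity, so no new constraint arises.

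Orthogonality then follows from the observation that the $D^{1/2}$ prefactor in $\nu_{\mathbf{z}}$ absorbs the $D^{-1/2}$ change of variables when forming $\mathbf{v}_{\mathbf{z}}^\top\mathbf{v}_{\mathbf{z}'}$, so a dimensionwise product-to-sum expansion in the spirit of Theorem \ref{th:orthogonality} drives the inner product to zero for $\mathbf{z}\ne\mathbf{z}'$; combined with the count $\prod n_j = n$ of admissible index vectors this yields a complete eigenbasis. The main obstacle I anticipate is the shift system itself: although each face equation superficially involves only a single $\delta_j$, the eigenvalue $\lambda$ (and hence every $\alpha_j$) depends on all shifts through the inner-node formula, so the $d$ face equations are genuinely coupled and nonlinear. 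I expect to have to argue existence of a real solution $\boldsymbol{\delta}^{\mathbf{z}}$ for each $\mathbf{z}$, verify that cosine denominators such as $\cos(\delta_j)$ do not vanish at it, and show that distinct $\mathbf{z}$ yield linearly independent eigenvectors --- this is precisely why the theorem is described as only nearly closed form, and why an iterative step is needed in practice to extract the $\delta$'s.
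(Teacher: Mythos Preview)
Your verification of the eigenpair relation at inner nodes, single-border nodes, and multi-border nodes matches the paper's argument essentially line for line, including the reduction of the face equation to $(\lambda-1)\cos\delta_j=\cos(\alpha_j-\delta_j)$ and the observation that multi-border constraints collapse to sums of the single-border ones. The honest caveats you raise at the end about existence of the $\boldsymbol\delta^{\mathbf z}$ solution and nonvanishing denominators are also shared by the paper, which handles them only heuristically via bisection.

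The genuine gap is your orthogonality paragraph. The $D^{1/2}$ prefactor does \emph{not} get absorbed: $\mathbf v_{\mathbf z}^{\!\top}\mathbf v_{\mathbf z'}=\mathbf w_{\mathbf z}^{\!\top}D\,\mathbf w_{\mathbf z'}$, so you are summing a product of shifted cosines against a node-dependent degree weight, and the product-to-sum trick of Theorem~\ref{th:orthogonality} no longer reduces the sum to eigenvector components with zero sum (there is no analogue of Theorem~\ref{th:zerosum} here). The paper's actual argument peels off one coordinate at a time: in the first coordinate the $D$-weight splits into a bulk term $d^{[M]}\sum_{x_1}\cos(\cdots)\cos(\cdots)$ plus a boundary correction $-2\cos\delta^{\mathbf z'}_1\cos\delta^{\mathbf z''}_1$ coming from the two endpoints. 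The bulk term is then shown to vanish via a separate identity (equation \eref{eq:zerosumcoscos}) whose proof is a nontrivial trigonometric calculation that ultimately invokes the face equation \eref{eq:deltafromlambda} for both $\mathbf z'$ and $\mathbf z''$ under the hypothesis $\lambda_{\mathbf z'}=\lambda_{\mathbf z''}$; the surviving boundary correction is then killed by the vanishing bulk sum in the \emph{next} coordinate, which requires $d\ge 2$ (the $d=1$ case is handled separately by showing all eigenvalues are distinct). None of this structure---degree-induced boundary terms, reuse of the $\delta$-equations, or the dimensional induction---is present in your sketch, and a direct appeal to Theorem~\ref{th:orthogonality} would fail.
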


\subsubsection{Defining equations for $\delta$s}\label{subsub:deltas}
Let us now derive the defining equations for the  $\boldsymbol\delta^\mathbf{z}$ vector.
However, instead of the vector $\mathbf{v}$, consider the vector $\mathbf{w}$ 
with the components
$$\omega_{\mathbf{z},[x_1,\dots,x_d]}= 
D ^{-1/2}_{[x_1,\dots,x_d],[x_1,\dots,x_d]} 
\nu_{\mathbf{z},[x_1,\dots,x_d]}$$
$$=\prod_{j=1}^d (-1)^{x_j} 
\cos\left(\frac{x_j-1}{n_j-1}\left(z_j \pi -2\delta^\mathbf{z}_j\right)+\delta^\mathbf{z}_j\right)
$$

Consider an inner node $[x_1,\dots,x_d]$. 
In order for the $\mathbf{v}$ to be a valid  eigenvector, the following must hold:
\begin{align*}
\lambda_{\mathbf{z}} &
D _{[x_1,\dots,x_d],[x_1,\dots,x_d] } 
\omega_{\mathbf{z},[x_1,\dots,x_d]}
\\
=
&\sum_{j=1}^{d}
\left( 
	\left(\omega_{\mathbf{z},[x_1,\dots,x_d]}
-\omega_{\mathbf{z},[x_1,\dots,x_j-1,\dots,x_d]}\right)
\right.
\\
&\left. +	\left(\omega_{\mathbf{z},[x_1,\dots,x_d]}
-\omega_{\mathbf{z},[x_1,\dots,x_j+1,\dots,x_d]}\right)
\right)
\end{align*}
As for any inner node 
$D _{[x_1,\dots,x_d],[x_1,\dots,x_d] } =2d$,  we obtain
\begin{align*}
2d& \lambda_{\mathbf{z}}
\prod_{j=1}^d  
\cos\left(\frac{x_j-1}{n_j-1}\left(z_j \pi -2\delta^\mathbf{z}_j\right)+\delta^\mathbf{z}_j\right)
\\
=
&\sum_{j=1}^{d}
\left( 
\cos\left(\frac{x_j-1}{n_j-1}\left(z_j \pi -2\delta^\mathbf{z}_j\right)+\delta^\mathbf{z}_j\right)
\prod_{i=1,i\ne j}^d  
\cos\left(\frac{x_i-1}{n_i-1}\left(z_i \pi -2\delta^\mathbf{z}_i\right)+\delta^\mathbf{z}_i\right)
\right. \\&+ \left. 
\cos\left(\frac{x_j-1-1}{n_j-1}\left(z_j \pi -2\delta^\mathbf{z}_j\right)+\delta^\mathbf{z}_j\right)
\prod_{i=1,i\ne j}^d  
\cos\left(\frac{x_i-1}{n_i-1}\left(z_i \pi -2\delta^\mathbf{z}_i\right)+\delta^\mathbf{z}_i\right)
\right. \\&+ \left. 
\cos\left(\frac{x_j-1}{n_j-1}\left(z_j \pi -2\delta^\mathbf{z}_j\right)+\delta^\mathbf{z}_j\right)
\prod_{i=1,i\ne j}^d  
\cos\left(\frac{x_i-1}{n_i-1}\left(z_i \pi -2\delta^\mathbf{z}_i\right)+\delta^\mathbf{z}_i\right)
\right. \\&+ \left. 
\cos\left(\frac{x_j-1+1}{n_j-1}\left(z_j \pi -2\delta^\mathbf{z}_j\right)+\delta^\mathbf{z}_j\right)
\prod_{i=1,i\ne j}^d  
\cos\left(\frac{x_i-1}{n_i-1}\left(z_i \pi -2\delta^\mathbf{z}_i\right)+\delta^\mathbf{z}_i\right)
\right)
\end{align*}

As 
\begin{align*}
\cos&\left(\frac{x_j-1-1}{n_j-1}\left(z_j \pi -2\delta^\mathbf{z}_j\right)+\delta^\mathbf{z}_j\right)
\\=& \cos\left(\frac{x_j -1}{n_j-1}\left(z_j \pi -2\delta^\mathbf{z}_j\right)+\delta^\mathbf{z}_j\right)
\cos\left(\frac{1}{n_j-1}\left(z_j \pi -2\delta^\mathbf{z}_j\right) \right)
\\&+ \sin\left(\frac{x_j -1}{n_j-1}\left(z_j \pi -2\delta^\mathbf{z}_j\right)+\delta^\mathbf{z}_j\right)
\sin\left(\frac{1}{n_j-1}\left(z_j \pi -2\delta^\mathbf{z}_j\right) \right)
\end{align*}
and
\begin{align*}
\cos&\left(\frac{x_j-1+1}{n_j-1}\left(z_j \pi -2\delta^\mathbf{z}_j\right)+\delta^\mathbf{z}_j\right)
\\=& \cos\left(\frac{x_j -1}{n_j-1}\left(z_j \pi -2\delta^\mathbf{z}_j\right)+\delta^\mathbf{z}_j\right)
\cos\left(\frac{1}{n_j-1}\left(z_j \pi -2\delta^\mathbf{z}_j\right) \right)
\\&- \sin\left(\frac{x_j -1}{n_j-1}\left(z_j \pi -2\delta^\mathbf{z}_j\right)+\delta^\mathbf{z}_j\right)
\sin\left(\frac{1}{n_j-1}\left(z_j \pi -2\delta^\mathbf{z}_j\right) \right)
\end{align*}
we obtain
\begin{align*}
2d &\lambda_{\mathbf{z}}
\prod_{j=1}^d  
\cos\left(\frac{x_j-1}{n_j-1}\left(z_j \pi -2\delta^\mathbf{z}_j\right)+\delta^\mathbf{z}_j\right)
\\&=
\sum_{j=1}^{d}
\left( 
2\cos\left(\frac{x_j-1}{n_j-1}\left(z_j \pi -2\delta^\mathbf{z}_j\right)+\delta^\mathbf{z}_j\right)
\prod_{i=1,i\ne j}^d  
\cos\left(\frac{x_i-1}{n_i-1}\left(z_i \pi -2\delta^\mathbf{z}_i\right)+\delta^\mathbf{z}_i\right)
\right.\\&+\left.
2\cos\left(\frac{x_j-1}{n_j-1}\left(z_j \pi -2\delta^\mathbf{z}_j\right)+\delta^\mathbf{z}_j\right)
\cos\left(\frac{1}{n_j-1}\left(z_j \pi -2\delta^\mathbf{z}_j\right) \right)
\right.\\&\cdot\left.
\prod_{i=1,i\ne j}^d  
\cos\left(\frac{x_i-1}{n_i-1}\left(z_i \pi -2\delta^\mathbf{z}_i\right)+\delta^\mathbf{z}_i\right)
\right)
\end{align*}
which, upon division by $\prod_{j=1}^d  
\cos\left(\frac{x_j-1}{n_j-1}\left(z_j \pi -2\delta^\mathbf{z}_j\right)+\delta^\mathbf{z}_j\right)
$,  reduces to:
\begin{equation}\label{eq:lambdaN2d}
 2d \lambda_{\mathbf{z}}
 =\sum_{j=1}^{d} \left(2+2 \cos\left(\frac{1}{n_j-1}\left(z_j \pi -2\delta^\mathbf{z}_j\right) \right)
\right)
\end{equation}
which, after dividing by $2d$  
reduces to the formula 
\eref{eq:lambdaN}. 
So for inner nodes the formula \eref{eq:lambdaN} is a valid description of the eigenvalues, without any assumptions on the 
$\boldsymbol{\delta}^\mathbf{z}$. 

Now let us turn 
to the border nodes. 
Consider the ones that have one neighbour less than the inner nodes (one neighbour missing), say along the dimension $l$, $x_l=1$.
The following must hold:
\begin{align*}
\lambda_{\mathbf{z}}&
(2d-1) 
\omega_{\mathbf{z},[x_1,\dots,x_d]}
\\=&
	\left(\omega_{\mathbf{z},[x_1,\dots,x_d]}
-\omega_{\mathbf{z},[x_1,\dots,x_l+1,\dots,x_d]}\right)
\\&+\sum_{j=1,j\ne l}^{d}
\left( 
	\left(\omega_{\mathbf{z},[x_1,\dots,x_d]}
-\omega_{\mathbf{z},[x_1,\dots,x_j-1,\dots,x_d]}\right)
\right.\\&+\left.	\left(\omega_{\mathbf{z},[x_1,\dots,x_d]}
-\omega_{\mathbf{z},[x_1,\dots,x_j+1,\dots,x_d]}\right)
\right)
\end{align*}

Hence
\begin{align*}
(2d-1)& \lambda_{\mathbf{z}}
\prod_{j=1}^d  
\cos\left(\frac{x_j-1}{n_j-1}\left(z_j \pi -2\delta^\mathbf{z}_j\right)+\delta^\mathbf{z}_j\right)
\\
=
&
\left( 
\cos\left(\frac{x_l-1}{n_l-1}\left(z_l \pi -2\delta^\mathbf{z}_l\right)+\delta^\mathbf{z}_l\right)
\prod_{i=1,i\ne l}^d  
\cos\left(\frac{x_i-1}{n_i-1}\left(z_i \pi -2\delta^\mathbf{z}_i\right)+\delta^\mathbf{z}_i\right)
\right. \\&+ \left. 
\cos\left(\frac{x_l-1+1}{n_l-1}\left(z_l \pi -2\delta^\mathbf{z}_l\right)+\delta^\mathbf{z}_l\right)
\prod_{i=1,i\ne l}^d  
\cos\left(\frac{x_i-1}{n_i-1}\left(z_i \pi -2\delta^\mathbf{z}_i\right)+\delta^\mathbf{z}_i\right)
\right)
 \\&+\sum_{j=1,j\ne l }^{d}
\left( 
\cos\left(\frac{x_j-1}{n_j-1}\left(z_j \pi -2\delta^\mathbf{z}_j\right)+\delta^\mathbf{z}_j\right)
\prod_{i=1,i\ne j}^d  
\cos\left(\frac{x_i-1}{n_i-1}\left(z_i \pi -2\delta^\mathbf{z}_i\right)+\delta^\mathbf{z}_i\right)
\right. \\&+ \left. 
\cos\left(\frac{x_j-1-1}{n_j-1}\left(z_j \pi -2\delta^\mathbf{z}_j\right)+\delta^\mathbf{z}_j\right)
\prod_{i=1,i\ne j}^d  
\cos\left(\frac{x_i-1}{n_i-1}\left(z_i \pi -2\delta^\mathbf{z}_i\right)+\delta^\mathbf{z}_i\right)
\right. \\&+ \left. 
\cos\left(\frac{x_j-1}{n_j-1}\left(z_j \pi -2\delta^\mathbf{z}_j\right)+\delta^\mathbf{z}_j\right)
\prod_{i=1,i\ne j}^d  
\cos\left(\frac{x_i-1}{n_i-1}\left(z_i \pi -2\delta^\mathbf{z}_i\right)+\delta^\mathbf{z}_i\right)
\right. \\&+ \left. 
\cos\left(\frac{x_j-1+1}{n_j-1}\left(z_j \pi -2\delta^\mathbf{z}_j\right)+\delta^\mathbf{z}_j\right)
\prod_{i=1,i\ne j}^d  
\cos\left(\frac{x_i-1}{n_i-1}\left(z_i \pi -2\delta^\mathbf{z}_i\right)+\delta^\mathbf{z}_i\right)
\right)
\end{align*}

Hence
\begin{align*}
(2d-1)& \lambda_{\mathbf{z}}
\prod_{j=1}^d  
\cos\left(\frac{x_j-1}{n_j-1}\left(z_j \pi -2\delta^\mathbf{z}_j\right)+\delta^\mathbf{z}_j\right)
\\
=
&
\left( 
\cos\left(
         \frac{x_l-1}{n_l-1}
          \left(z_l \pi -2\delta^\mathbf{z}_l\right)
   +\delta^\mathbf{z}_l
    \right) 
+\cos\left(\frac{x_l-1+1}{n_l-1}\left(z_l \pi -2\delta^\mathbf{z}_l\right)
       +\delta^\mathbf{z}_l
     \right) 
\right)
\\& \cdot 
\prod_{i=1,i\ne l}^d  
\cos\left(\frac{x_i-1}{n_i-1}\left(z_i \pi -2\delta^\mathbf{z}_i\right)+\delta^\mathbf{z}_i\right)
 \\&+\sum_{j=1,j\ne l }^{d}
\left( 
\cos\left(\frac{x_j-1}{n_j-1}\left(z_j \pi -2\delta^\mathbf{z}_j\right)+\delta^\mathbf{z}_j\right)
\prod_{i=1,i\ne j}^d  
\cos\left(\frac{x_i-1}{n_i-1}\left(z_i \pi -2\delta^\mathbf{z}_i\right)+\delta^\mathbf{z}_i\right)
\right. \\&+ \left. 
\cos\left(\frac{x_j-1-1}{n_j-1}\left(z_j \pi -2\delta^\mathbf{z}_j\right)+\delta^\mathbf{z}_j\right)
\prod_{i=1,i\ne j}^d  
\cos\left(\frac{x_i-1}{n_i-1}\left(z_i \pi -2\delta^\mathbf{z}_i\right)+\delta^\mathbf{z}_i\right)
\right. \\&+ \left. 
\cos\left(\frac{x_j-1}{n_j-1}\left(z_j \pi -2\delta^\mathbf{z}_j\right)+\delta^\mathbf{z}_j\right)
\prod_{i=1,i\ne j}^d  
\cos\left(\frac{x_i-1}{n_i-1}\left(z_i \pi -2\delta^\mathbf{z}_i\right)+\delta^\mathbf{z}_i\right)
\right. \\&+ \left. 
\cos\left(\frac{x_j-1+1}{n_j-1}\left(z_j \pi -2\delta^\mathbf{z}_j\right)+\delta^\mathbf{z}_j\right)
\prod_{i=1,i\ne j}^d  
\cos\left(\frac{x_i-1}{n_i-1}\left(z_i \pi -2\delta^\mathbf{z}_i\right)+\delta^\mathbf{z}_i\right)
\right)
\end{align*}

Hence
\begin{align*}
(2d-1)& \lambda_{\mathbf{z}}
\prod_{j=1}^d  
\cos\left(\frac{x_j-1}{n_j-1}\left(z_j \pi -2\delta^\mathbf{z}_j\right)+\delta^\mathbf{z}_j\right)
\\
=
&
\left( 
\cos\left(
         \frac{x_l-1}{n_l-1}
          \left(z_l \pi -2\delta^\mathbf{z}_l\right)
   +\delta^\mathbf{z}_l
    \right) 
\right.\\ &\left.+ \cos\left(\frac{x_l -1}{n_l-1}\left(z_l \pi -2\delta^\mathbf{z}_l\right)+\delta^\mathbf{z}_l\right)
\cos\left(\frac{1}{n_l-1}\left(z_l \pi -2\delta^\mathbf{z}_l\right) \right)
\right.\\ &\left.- \sin\left(\frac{x_l -1}{n_l-1}\left(z_l \pi -2\delta^\mathbf{z}_l\right)+\delta^\mathbf{z}_j\right)
\sin\left(\frac{1}{n_l-1}\left(z_l \pi -2\delta^\mathbf{z}_l\right) \right)
\right)
\\& \cdot 
\prod_{i=1,i\ne l}^d  
\cos\left(\frac{x_i-1}{n_i-1}\left(z_i \pi -2\delta^\mathbf{z}_i\right)+\delta^\mathbf{z}_i\right)
 \\&+\sum_{j=1,j\ne l }^{d}
\left( 
2\cos\left(\frac{x_j-1}{n_j-1}\left(z_j \pi -2\delta^\mathbf{z}_j\right)+\delta^\mathbf{z}_j\right)
\prod_{i=1,i\ne j}^d  
\cos\left(\frac{x_i-1}{n_i-1}\left(z_i \pi -2\delta^\mathbf{z}_i\right)+\delta^\mathbf{z}_i\right)
\right.\\&+\left.
2\cos\left(\frac{x_j-1}{n_j-1}\left(z_j \pi -2\delta^\mathbf{z}_j\right)+\delta^\mathbf{z}_j\right)
\cos\left(\frac{1}{n_j-1}\left(z_j \pi -2\delta^\mathbf{z}_j\right) \right)
\right.\\&\cdot\left.
\prod_{i=1,i\ne j}^d  
\cos\left(\frac{x_i-1}{n_i-1}\left(z_i \pi -2\delta^\mathbf{z}_i\right)+\delta^\mathbf{z}_i\right)
\right)
\end{align*}

 By dividing as previously we get 
\begin{align*}
(2d-1)& \lambda_{\mathbf{z}}
\\
=
&
\left( 
1 
\right.\\ &\left.+ 
\cos\left(\frac{1}{n_l-1}\left(z_l \pi -2\delta^\mathbf{z}_l\right) \right)
\right.\\ &\left.- 
\tan\left(\frac{x_l -1}{n_l-1}\left(z_l \pi -2\delta^\mathbf{z}_l\right)+\delta^\mathbf{z}_j\right)
\sin\left(\frac{1}{n_l-1}\left(z_l \pi -2\delta^\mathbf{z}_l\right) \right)
\right)
 \\&+\sum_{j=1,j\ne l }^{d}
\left( 
2 
\right.\\&+\left.
2 
\cos\left(\frac{1}{n_j-1}\left(z_j \pi -2\delta^\mathbf{z}_j\right) \right)
\right)
\end{align*}

Considerations  above lead to the conclusion (as $x_l -1=0$)  
\begin{align*}
(2d-1) \lambda_{\mathbf{z}}
 =&
1+\cos\left(\frac{1}{n_l-1}\left(z_l \pi -2\delta^\mathbf{z}_l\right)\right)
-\tan(\delta^\mathbf{z}_l)\sin\left(\frac{1}{n_l-1}\left(z_l \pi -2\delta^\mathbf{z}_l\right)\right)
\\&+
\sum_{j=1,j\ne l}^{d} \left(2+2 \cos\left(\frac{1}{n_j-1}\left(z_j \pi -2\delta^\mathbf{z}_j\right) \right)
\right)
\end{align*}

A subtraction of the preceding formula from the expression  \eref{eq:lambdaN2d}   leads to
\begin{equation}\label{eq:lambdaN2dm1}
 \lambda_{\mathbf{z}}
 =
1+\cos\left(\frac{1}{n_l-1}\left(z_l \pi -2\delta^\mathbf{z}_l\right)\right)
+\tan(\delta^\mathbf{z}_l)\sin\left(\frac{1}{n_l-1}\left(z_l \pi -2\delta^\mathbf{z}_l\right)\right)
\end{equation}
By combining the equation \eref{eq:lambdaN2d} 
with equations \eref{eq:lambdaN2dm1} for each $l$ 
we get an equation system of $d+1$ equations  from which 
$\lambda$ and $\delta$s can be determined. 
\subsubsection{Practical considerations for computing $\lambda$ and $\delta$s}\label{subsub:howcompute}
Note that for practical reasons the equation \eref{eq:lambdaN2dm1} can be transformed to:
$$ (\lambda_{\mathbf{z}}-1) \cos(\delta^\mathbf{z}_l)
 =
 +\cos(\delta^\mathbf{z}_l)\cos\left(\frac{1}{n_l-1}\left(z_l \pi -2\delta^\mathbf{z}_l\right)\right)
+\sin(\delta^\mathbf{z}_l)\sin\left(\frac{1}{n_l-1}\left(z_l \pi -2\delta^\mathbf{z}_l\right)\right)
$$
 
\begin{equation}\label{eq:deltafromlambda}
 (\lambda_{\mathbf{z}}-1) \cos(\delta^\mathbf{z}_l)
 =
 \cos(\delta^\mathbf{z}_l-\frac{1}{n_l-1}\left(z_l \pi -2\delta^\mathbf{z}_l\right) 
\end{equation}
which is simpler to solve for $\delta$ knowing $\lambda$. 
So the solution can be obtained using the bisectional method 
on $\lambda$ using the above formula to obtain $\delta$s, and the using \eref{eq:lambdaN} to get the value of $\lambda'$ and then 
reducing bisectionally the difference between $\lambda$ and $\lambda'$ down to zero. 

\figVer{
\begin{figure}
\centering
 (a)\includegraphics[width=0.4\textwidth]{\figaddr{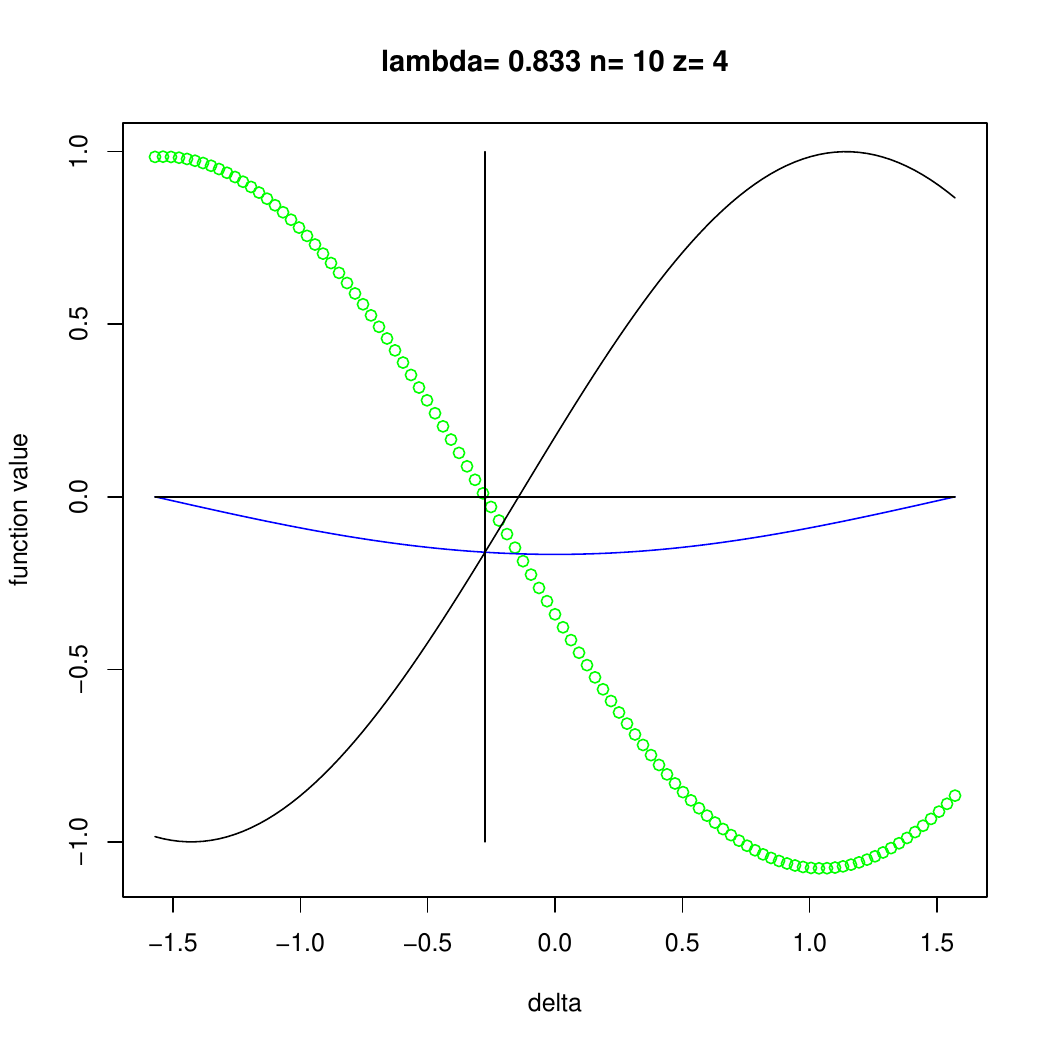}}  %
 (b)\includegraphics[width=0.4\textwidth]{\figaddr{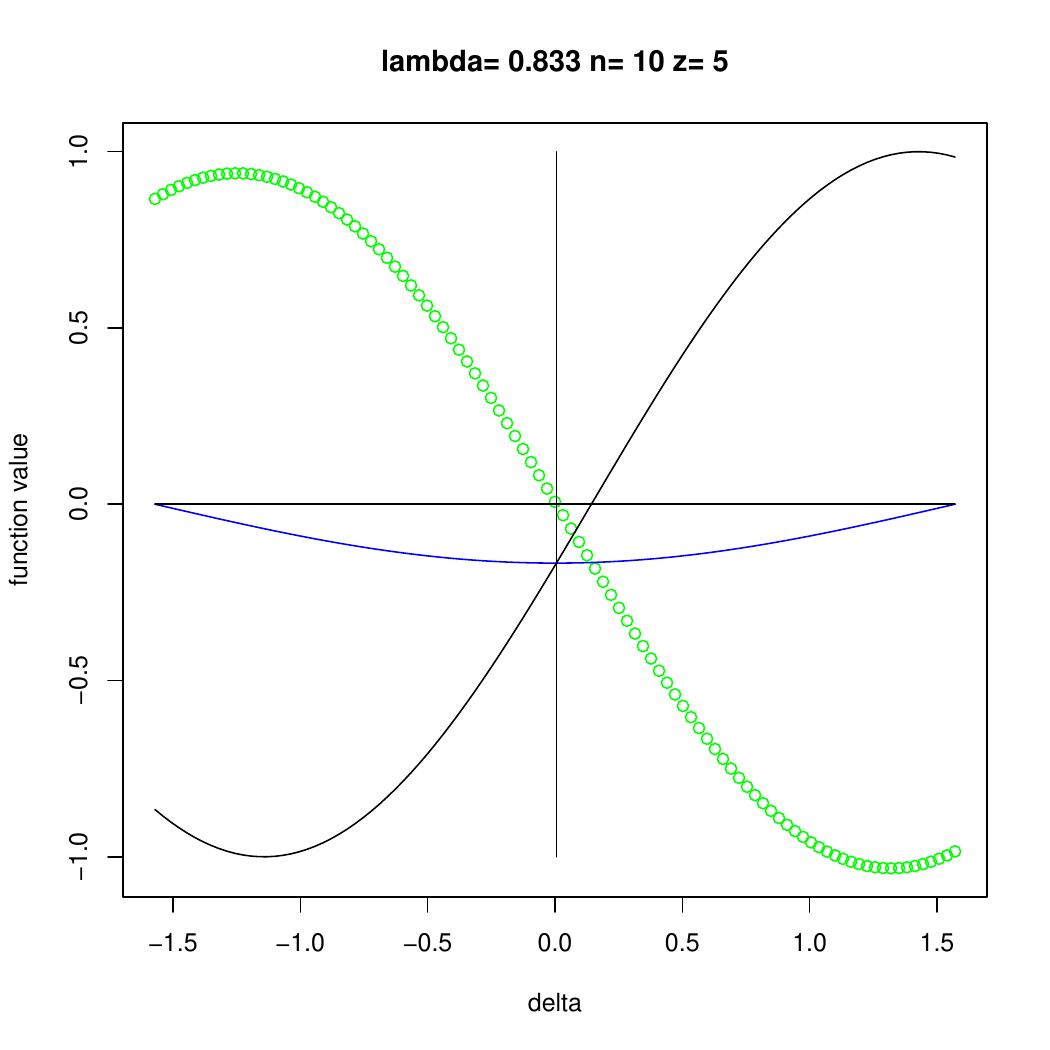}}  %
\\ (c)\includegraphics[width=0.4\textwidth]{\figaddr{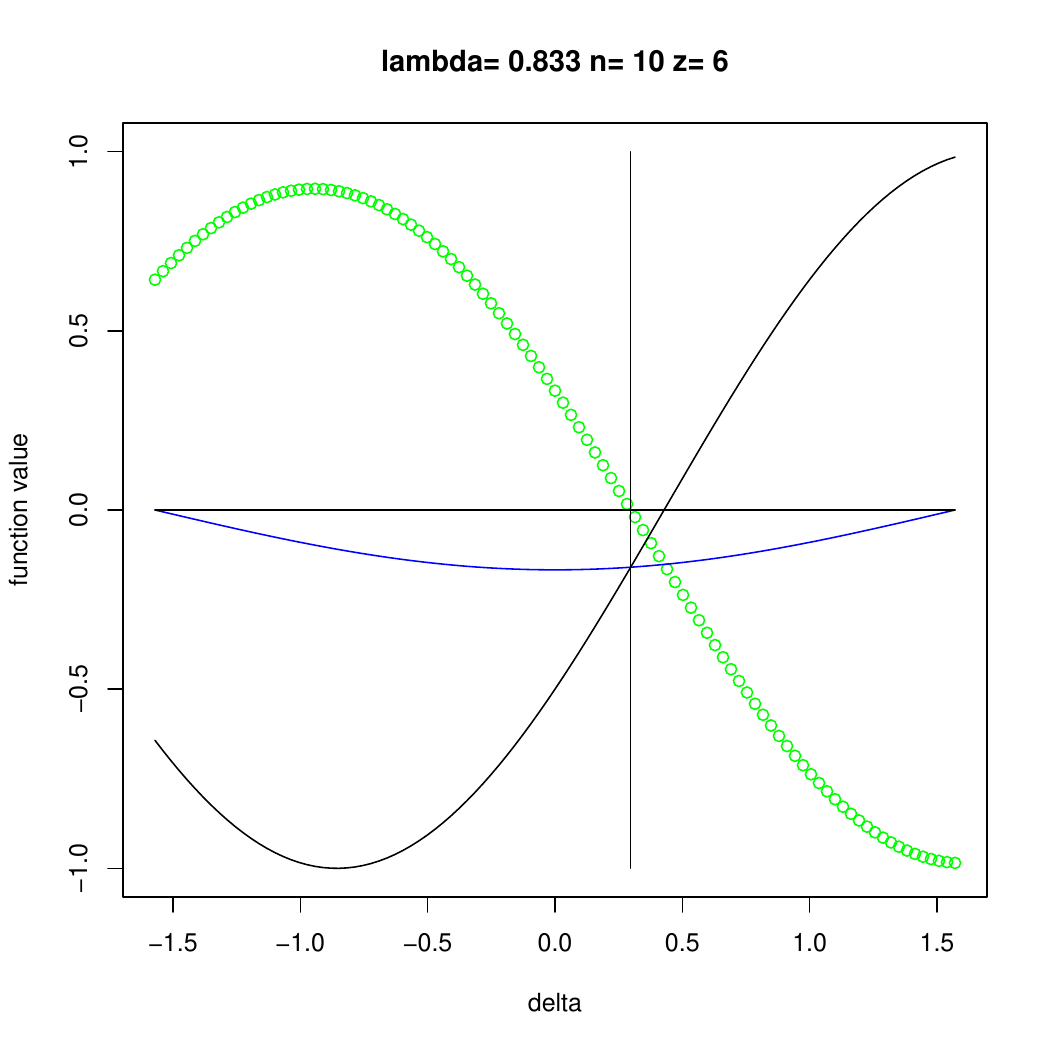}}  %
 (d)\includegraphics[width=0.4\textwidth]{\figaddr{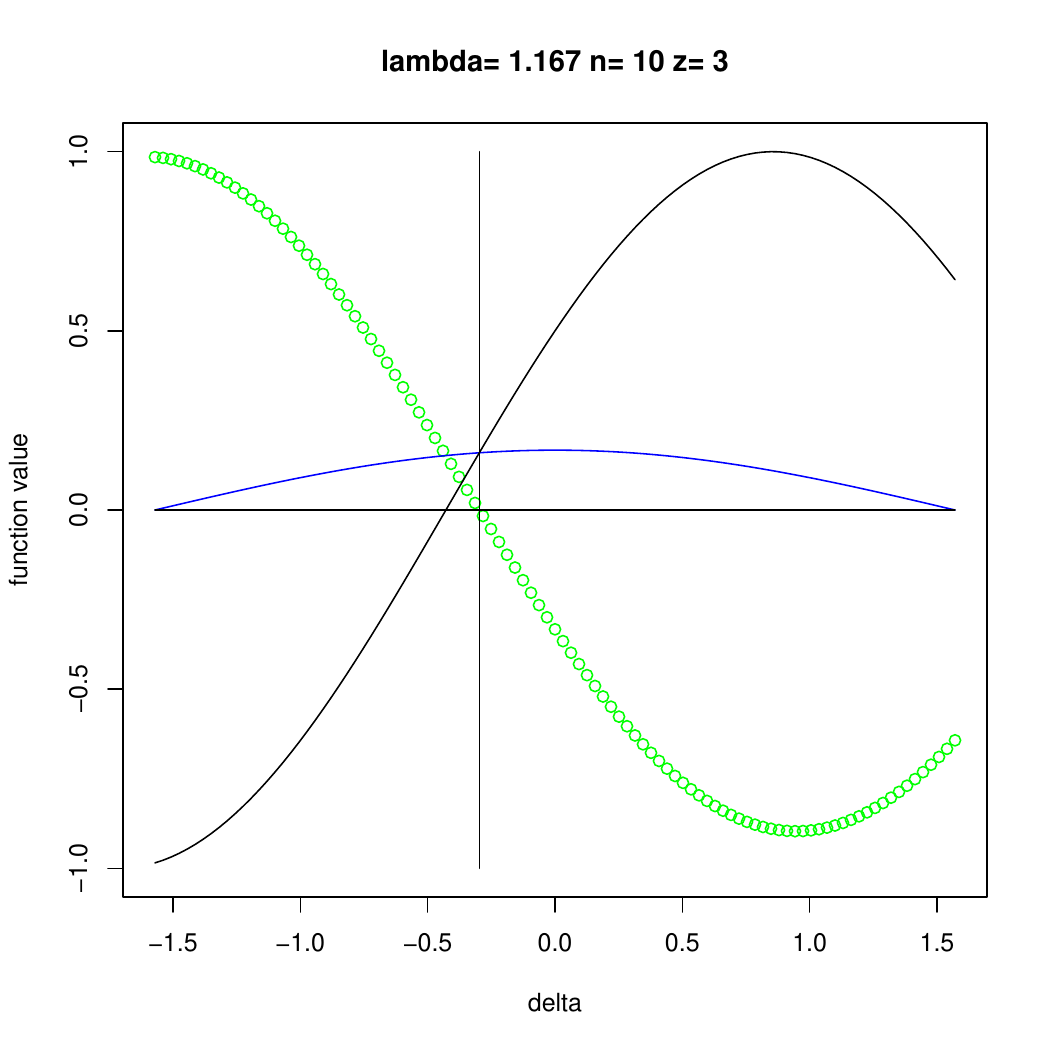}}  %
\\ (e)\includegraphics[width=0.4\textwidth]{\figaddr{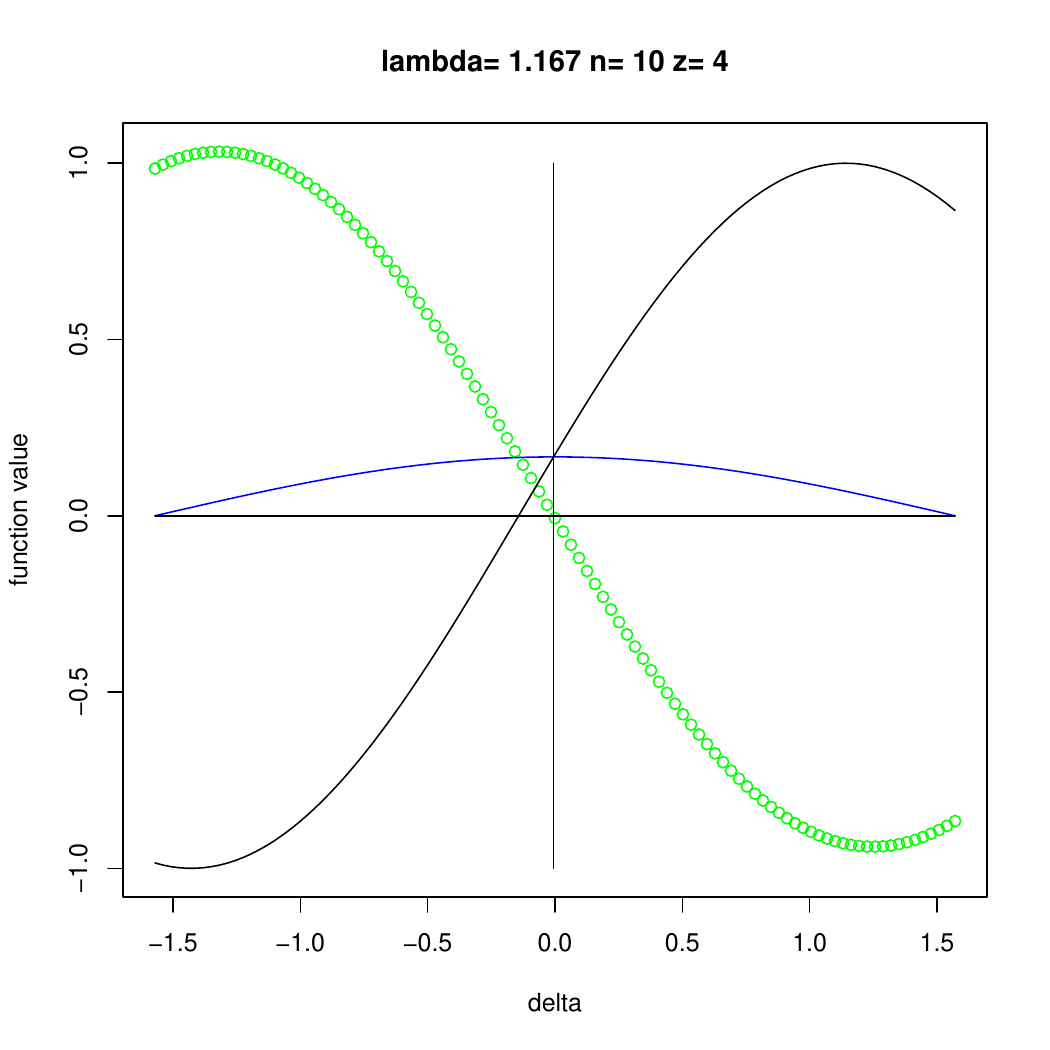}}  %
 (f)\includegraphics[width=0.4\textwidth]{\figaddr{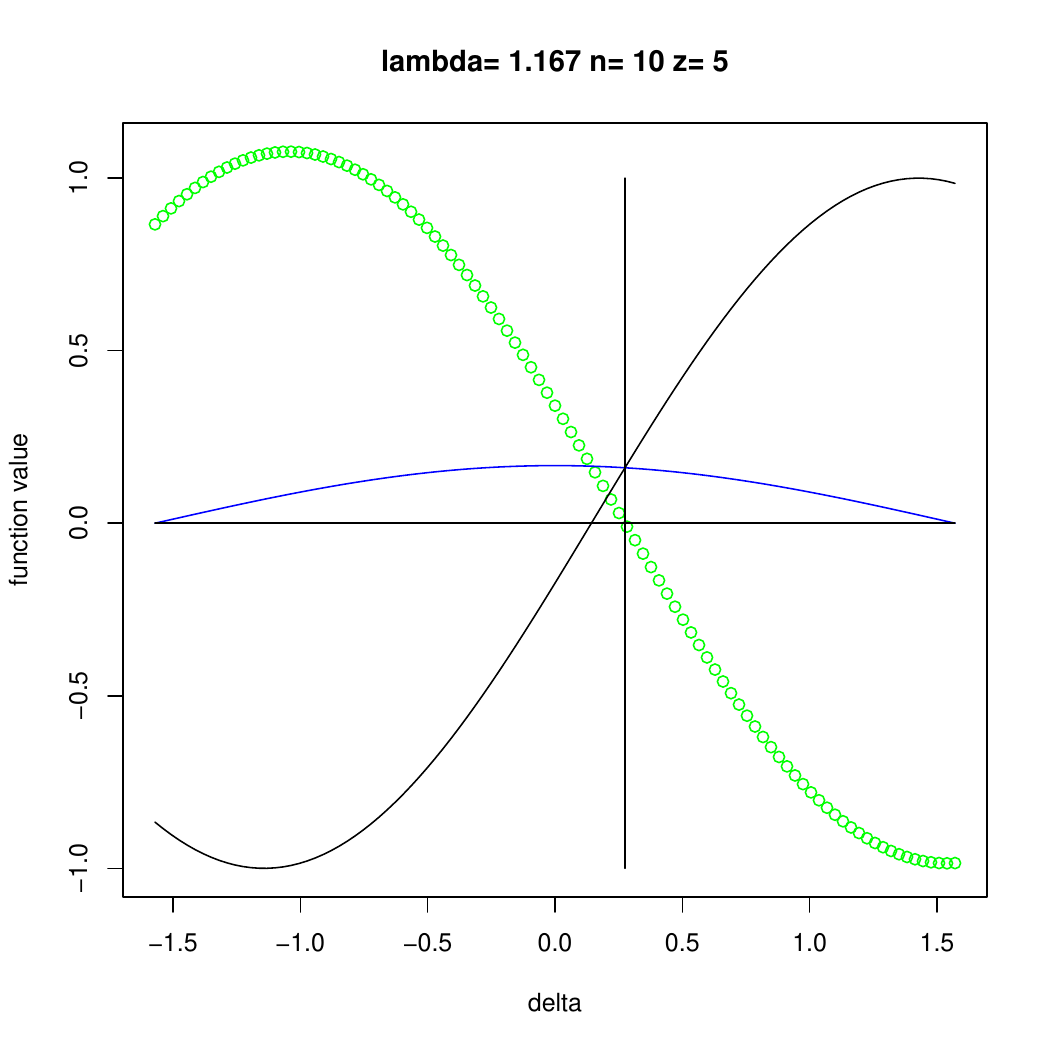}}  %
\caption{Illustration for the formula 
\eref{eq:deltafromlambda} for a 3d grid graph.
The blue curve is $ (\lambda_{\mathbf{z}}-1) \cos(\delta^\mathbf{z}_l)$,
the black curve  illustrates
$ \cos(\delta^\mathbf{z}_l-\frac{1}{n_l-1}\left(z_l \pi -2\delta^\mathbf{z}_l\right) 
$, 
the green one is their difference. 
The figures (a), (b), (c) refer to eigenvalues below 1, 
The figures (d), (e), (f) refer to eigenvalues above 1, 
}\label{fig:drawDeltaByLambda}
\end{figure}

Note that the equation \eref{eq:deltafromlambda} has a special form. 
For $\lambda_{\mathbf{z}}>1$/$\lambda_{\mathbf{z}}<1$, if we restrict ourselves to non-negative/non-positive   valued parts of these functions in the domain $[-\pi,\pi]$  we need to consider the intersection of two convex upward (cases (d-f) in Fig.\ref{fig:drawDeltaByLambda}) or downward (cases (a-c) in Fig.\ref{fig:drawDeltaByLambda})  functions, as illustrated in Figure \ref{fig:drawDeltaByLambda} which cross the $X$ axis at interleaving points, so that they must intersect above/below $X$-axis at exactly one point.
}

\subsubsection{Validity of the derived $\lambda$ and $\delta$s for other nodes}\label{subsub:othernodes}

The question is now if the solution would fit all the other border nodes. 

Consider first 
 the ones that have one neighbour less than the 
inner nodes (one neighbour missing), 
say along the dimension $l$ where $x_l=n_l$.
The following must hold:
\begin{align*}
\lambda_{\mathbf{z}}&
(2d-1) 
\omega_{\mathbf{z},[x_1,\dots,x_d]}
\\=&
	\left(\omega_{\mathbf{z},[x_1,\dots,x_d]}
-\omega_{\mathbf{z},[x_1,\dots,x_l-1,\dots,x_d]}\right)
\\&
+\sum_{j=1,j\ne l}^{d}
\left( 
	\left(\omega_{\mathbf{z},[x_1,\dots,x_d]}
-\omega_{\mathbf{z},[x_1,\dots,x_j-1,\dots,x_d]}\right)
\right.\\&+\left.	\left(\omega_{\mathbf{z},[x_1,\dots,x_d]}
-\omega_{\mathbf{z},[x_1,\dots,x_j+1,\dots,x_d]}\right)
\right)
\end{align*}
Considerations analogous to the above lead to the conclusion 
\begin{align*}
(2d-1)& \lambda_{\mathbf{z}}
\\ = &
1+\cos\left(\frac{1}{n_l-1}\left(z_l \pi -2\delta^\mathbf{z}_l\right)\right)
\\&+\tan(
\frac{n_l-1}{n_l-1}\left(z_l \pi -2\delta^\mathbf{z}_l\right)+
\delta^\mathbf{z}_l)\sin\left(\frac{1}{n_l-1}\left(z_l \pi -2\delta^\mathbf{z}_l\right)\right)
\\&+
\sum_{j=1,j\ne l}^{d} \left(2+2 \cos\left(\frac{1}{n_j-1}\left(z_j \pi -2\delta^\mathbf{z}_j\right) \right)
\right)
\end{align*}
A subtraction of the preceding formula from the expression  \eref{eq:lambdaN2d}   leads to

\begin{equation} 
 \lambda_{\mathbf{z}}
 =
1-\cos\left(\frac{1}{n_l-1}\left(z_l \pi -2\delta^\mathbf{z}_l\right)\right)
-\tan(-\delta^\mathbf{z}_l)\sin\left(\frac{1}{n_l-1}\left(z_l \pi -2\delta^\mathbf{z}_l\right)\right)
\end{equation}
which is the same as equation
\eref{eq:lambdaN2dm1}. 

Now look at other border nodes. 
Consider the ones that have one neighbour less than the inner nodes (one neighbour missing), 
along multiple dimensions, 
say along the dimensions $l^+_{1},l^+_{2},\dots,l^+_{m^+}$, $x_{l^+_k}=1$ 
and along the dimensions $l^-_{1},l^-_{2},\dots,l^-_{m^-}$, 
  $x_{l^-_k}==n_{l^-_k}$, with $1<m^++m^-\le d$.
The following must hold:
\begin{align*}
\lambda_{\mathbf{z}}& 
(2d-m^+-m^-) 
\omega_{\mathbf{z},[x_1,\dots,x_d]}
\\=&
\sum_{k=1}^{m^+}
	\left(\omega_{\mathbf{z},[x_1,\dots,x_d]}
-\omega_{\mathbf{z},[x_1,\dots,x_{l^+_k} + 1,\dots,x_d]}\right)
\\& +
 \sum_{k=1}^{m^-}
	\left(\omega_{\mathbf{z},[x_1,\dots,x_d]}
-\omega_{\mathbf{z},[x_1,\dots,x_{l^-_k} - 1,\dots,x_d]}\right)
\\& +
\sum_{j=1,j\not\in \{ l^+_1,\dots,l^+_{m^+},l^-_1,\dots,l^-_{m^-}\}}^{d}
\left( 
	\left(\omega_{\mathbf{z},[x_1,\dots,x_d]}
-\omega_{\mathbf{z},[x_1,\dots,x_j-1,\dots,x_d]}\right)
\right. \\& + \left.
	\left(\omega_{\mathbf{z},[x_1,\dots,x_d]}
-\omega_{\mathbf{z},[x_1,\dots,x_j+1,\dots,x_d]}\right)
\right)
\end{align*}
This will lead to 
(after subtraction from the expression  \eref{eq:lambdaN2d})  
\begin{align*} 
(m^++m^-)& \lambda_{\mathbf{z}}
\\ = &
\sum_{k=1}^{m^+} \left(1-\cos\left(\frac{1}{n_{l^+_k}-1}\left(z_{l^+_k} \pi -2\delta^\mathbf{z}_{l^+_k}\right)\right)
\right. \\  & + \left.
 \tan(\delta^\mathbf{z}_{l^+_k})\sin\left(\frac{1}{n_{l^+_k}-1}\left(z_{l^+_k} \pi -2\delta^\mathbf{z}_{l^+_k}\right)\right)
\right)
\\  & + 
\sum_{k=1}^{m^-} \left(1-\cos\left(\frac{1}{n_{l^-_k}-1}\left(z_{l^-_k} \pi -2\delta^\mathbf{z}_{l^-_k}\right)\right)
\right. \\  & + \left.
\tan(\delta^\mathbf{z}_{l^-_k})\sin\left(\frac{1}{n_{l^-_k}-1}\left(z_{l^-_k} \pi -2\delta^\mathbf{z}_{l^-_k}\right)\right)
\right)
\end{align*}
This equation results from adding equations \eref{eq:lambdaN2dm1} for respective dimensions
$l  \in \{ l^+_1,\dots,l^+_{m^+},l^-_1,\dots,l^-_{m^-}\}$. 
Hence, once the equation system was solved for $\boldsymbol{\delta}^\mathbf{z}$ for the above-mentioned set of nodes, 
all the other nodes fit. 
So the correctness of the formula for eigenvalues $\lambda$  was proven along with the correctness of eigenvector formulas. 

\subsubsection{The validity of the eigenvectors for identical $\lambda$}\label{subsub:noLorthogonality}

For completeness, as some eigenvalues may be identical because of symmetries, it has to be shown that the eigenvectors proposed are 
orthogonal for different $\mathbf{z}$. 

\Bem{
Consider
\begin{align*} 
\nu_{\mathbf{z},[x_1,\dots,n_l+1-x_l,\dots,x_d]}
\\&= 
D^{-1}_{[x_1,\dots,n_l+1-x_l,\dots,x_d],[x_1,\dots,n_l+1-x_l,\dots,x_d]}
  (-1)^{n_l+1-x_l} 
\cos\left(\frac{n_l+1-x_l-1}{n_l-1}\left(z_l \pi -2\delta^\mathbf{z}_l\right)+\delta^\mathbf{z}_l\right)
\prod_{j=1,j\ne l}^d (-1)^{n_j+1-x_j} 
\cos\left(\frac{ x_j-1}{n_j-1}\left(z_j \pi -2\delta^\mathbf{z}_j\right)+\delta^\mathbf{z}_j\right)
\\&= 
D^{-1}_{[x_1,\dots,x_d],[x_1,\dots,x_d]}
\cos\left(\frac{n_l+1-x_l-1}{n_l-1}\left(z_l \pi -2\delta^\mathbf{z}_l\right)+\delta^\mathbf{z}_l\right)
\prod_{j=1,j\ne l}^d (-1)^{n_j+1-x_j} 
\cos\left(\frac{ x_j-1}{n_j-1}\left(z_j \pi -2\delta^\mathbf{z}_j\right)+\delta^\mathbf{z}_j\right)
\end{align*}
}

As known from the theory, eigenvectors of a symmetric matrix related to different eigenvalues  are orthogonal.

Therefore, to substantiate our claim that we identified all the eigenvectors, we must show that the distinct eigenvectors related to the same eigenvalue are also orthogonal,  that is $ \lambda_{\mathbf{z'}}= \lambda_{\mathbf{z"}}$ 
for $\mathbf{z'} \ne  \mathbf{z"}$. 

\Bem{
This will surely happen when there exists a mapping $m:\{1,\dots,d\}\rightarrow \{1,\dots,d\}$ such that for each $i\in \{1,\dots,d\}$ the following holds: 
$n_i=n_{m(i)}, z'_i=z"_{m(i)}$, and there exists a $k$ such that 
$m(k)\ne k$. 
This implies also that 
$\delta^\mathbf{z'}_j=\delta^\mathbf{z"}_{m(j)}$. 
}

Consider now the orthogonality of 
$\mathbf{v}^\mathbf{z'}$ and  $\mathbf{v}^  \mathbf{z"}$.
The following has to hold:
 $$0 = \sum_\mathbf{x} D_{[\x],[\x]}
\prod_{j=1}^d   
\cos\left(\frac{x_j-1}{n_j-1}\left(z'_j \pi -2\delta^\mathbf{z'}_j\right)+\delta^\mathbf{z'}_j\right)
\cdot
\cos\left(\frac{x_j-1}{n_j-1}\left(z"_j \pi -2\delta^\mathbf{z"}_j\right)+\delta^\mathbf{z"}_j\right)
$$ 
which is equivalent to:
\begin{align}
  0=&  
  \sum_{x_2=1}^{n_2}\dots   \sum_{x_d=1}^{n_d} 
  \prod_{j=2}^d   
\cos\left(\frac{x_j-1}{n_j-1}\left(z'_j \pi -2\delta^\mathbf{z'}_j\right)+\delta^\mathbf{z'}_j\right)
\cdot
\cos\left(\frac{x_j-1}{n_j-1}\left(z"_j \pi -2\delta^\mathbf{z"}_j\right)+\delta^\mathbf{z"}_j\right)
\nonumber \\ &
 \sum_{x_1=1}^{n_1}
 D_{[\x],[\x]}
 \cos\left(\frac{x_1-1}{n_1-1}\left(z'_1 \pi -2\delta^\mathbf{z'}_1\right)+\delta^\mathbf{z'}_1\right)
\cdot
\cos\left(\frac{x_1-1}{n_1-1}\left(z"_1 \pi -2\delta^\mathbf{z"}_1\right)+\delta^\mathbf{z"}_1\right) \label{eq:mainN}
\end{align}
Let $d^{[M]}_{x_2,\dots,x_d}=\max_{x_1 \in \{1,\dots,n_1\}} D_{[x_1,x_2,\dots,x_d],[x_1,x_2,\dots,x_d]}$. (In the special case of $n_1=2$ $d^{[M]}_{x_2,\dots,x_d}$ should be increased by 1.)
Then we claim that 
the sum 
$$ \sum_{x_1=1}^{n_1}
 D_{[\x],[\x]}
 \cos\left(\frac{x_1-1}{n_1-1}\left(z'_1 \pi -2\delta^\mathbf{z'}_1\right)+\delta^\mathbf{z'}_1\right)
\cdot
\cos\left(\frac{x_1-1}{n_1-1}\left(z"_1 \pi -2\delta^\mathbf{z"}_1\right)+\delta^\mathbf{z"}_1\right)
$$
$$=
 -2\cos(\delta^\mathbf{z'}_1)\cos(\delta^\mathbf{z"}_1) ((z'_1+z"_1+1) \mod 2)
$$ $$ +d^{[M]}_{x_2,\dots,\x_d}
\sum_{x_1=1}^{n_1}
 \cos\left(\frac{x_1-1}{n_1-1}\left(z'_1 \pi -2\delta^\mathbf{z'}_1\right)+\delta^\mathbf{z'}_1\right)
\cdot
\cos\left(\frac{x_1-1}{n_1-1}\left(z"_1 \pi -2\delta^\mathbf{z"}_1\right)+\delta^\mathbf{z"}_1\right)
$$
  \begin{equation}\label{eq:res1}=
 -2\cos(\delta^\mathbf{z'}_1)\cos(\delta^\mathbf{z"}_1) ((z'_1+z"_1+1) \mod 2)
\end{equation}
This shall be shown in two steps.
First the validity of the first equal sign will be shown, then we will demonstrate that   
$\sum_{x_j=1}^{n_j}
 \cos\left(\frac{x_j-1}{n_j-1}\left(z'_j \pi -2\delta^\mathbf{z'}_j\right)+\delta^\mathbf{z'}_j\right)
\cdot
\cos\left(\frac{x_j-1}{n_j-1}\left(z"_j \pi -2\delta^\mathbf{z"}_j\right)+\delta^\mathbf{z"}_j\right)=0$

So to the first part. 
By fixing $x_2,\dots,x_d$ we select a path in the graph 
of nodes with identities 
$[1,x_2,\dots,x_d],\dots [n_1,x_2,\dots,x_d]$ in which each node is connected
to the same number of other nodes outside of this path 
and on the path the first and the last node 
have degree 1 on this path and the other have the degree 2.
So in all, the other nodes have the degree  
 $d^{[M]}_{x_2,\dots,x_d}$ and the endpoints have a lower degree 
$d^{[M]}_{x_2,\dots,x_d}-1$. 
So 
$ \sum_{x_1=1}^{n_1}
 D_{[\x],[\x]}
 \cos\left(\frac{x_1-1}{n_1-1}\left(z'_1 \pi -2\delta^\mathbf{z'}_1\right)+\delta^\mathbf{z'}_1\right)
\cdot
\cos\left(\frac{x_1-1}{n_1-1}\left(z"_1 \pi -2\delta^\mathbf{z"}_1\right)+\delta^\mathbf{z"}_1\right)
$
$=(d^{[M]}_{x_2,\dots,x_d}-1)
 \cos\left(\frac{ 1-1}{n_1-1}\left(z'_1 \pi -2\delta^\mathbf{z'}_1\right)+\delta^\mathbf{z'}_1\right)
\cdot
\cos\left(\frac{ 1-1}{n_1-1}\left(z"_1 \pi -2\delta^\mathbf{z"}_1\right)+\delta^\mathbf{z"}_1\right)
+(d^{[M]}_{x_2,\dots,x_d}-1)
 \cos\left(\frac{n_1-1}{n_1-1}\left(z'_1 \pi -2\delta^\mathbf{z'}_1\right)+\delta^\mathbf{z'}_1\right)
\cdot
\cos\left(\frac{n_1-1}{n_1-1}\left(z"_1 \pi -2\delta^\mathbf{z"}_1\right)+\delta^\mathbf{z"}_1\right)
+ 
  \sum_{x_1=2}^{n_1-1}
d^{[M]}_{x_2,\dots,x_d}
 \cos\left(\frac{x_1-1}{n_1-1}\left(z'_1 \pi -2\delta^\mathbf{z'}_1\right)+\delta^\mathbf{z'}_1\right)
\cdot
\cos\left(\frac{x_1-1}{n_1-1}\left(z"_1 \pi -2\delta^\mathbf{z"}_1\right)+\delta^\mathbf{z"}_1\right)
$ \\
$= -  
 \cos\left(\frac{ 1-1}{n_1-1}\left(z'_1 \pi -2\delta^\mathbf{z'}_1\right)+\delta^\mathbf{z'}_1\right)
\cdot
\cos\left(\frac{ 1-1}{n_1-1}\left(z"_1 \pi -2\delta^\mathbf{z"}_1\right)+\delta^\mathbf{z"}_1\right)
- 
 \cos\left(\frac{n_1-1}{n_1-1}\left(z'_1 \pi -2\delta^\mathbf{z'}_1\right)+\delta^\mathbf{z'}_1\right)
\cdot
\cos\left(\frac{n_1-1}{n_1-1}\left(z"_1 \pi -2\delta^\mathbf{z"}_1\right)+\delta^\mathbf{z"}_1\right)
+ 
  \sum_{x_1=1}^{n_1 }
d^{[M]}_{x_2,\dots,x_d}
 \cos\left(\frac{x_1-1}{n_1-1}\left(z'_1 \pi -2\delta^\mathbf{z'}_1\right)+\delta^\mathbf{z'}_1\right)
\cdot
\cos\left(\frac{x_1-1}{n_1-1}\left(z"_1 \pi -2\delta^\mathbf{z"}_1\right)+\delta^\mathbf{z"}_1\right)
$
$= -  2 
 \cos\left( \delta^\mathbf{z'}_1\right)
\cdot
\cos\left( \delta^\mathbf{z"}_1\right) ((z'_1+z"_1+1) \mod 2)
+ 
  \sum_{x_1=1}^{n_1 }
d^{[M]}_{x_2,\dots,x_d}
 \cos\left(\frac{x_1-1}{n_1-1}\left(z'_1 \pi -2\delta^\mathbf{z'}_1\right)+\delta^\mathbf{z'}_1\right)
\cdot
\cos\left(\frac{x_1-1}{n_1-1}\left(z"_1 \pi -2\delta^\mathbf{z"}_1\right)+\delta^\mathbf{z"}_1\right)
$. \\
The factor $((z'_1+z"_1+1) \mod 2)$ occurs because 
when $z'_1+z"_1$ is odd, then 
$ \cos\left(\frac{n_1-1}{n_1-1}\left(z'_1 \pi -2\delta^\mathbf{z'}_1\right)+\delta^\mathbf{z'}_1\right)
\cdot
\cos\left(\frac{n_1-1}{n_1-1}\left(z"_1 \pi -2\delta^\mathbf{z"}_1\right)+\delta^\mathbf{z"}_1\right)$
$= \cos\left(z'_1 \pi -\delta^\mathbf{z'}_1\right) 
\cdot
 \cos\left(z"_1 \pi -\delta^\mathbf{z"}_1\right) 
$ 
$= - \cos\left( \delta^\mathbf{z'}_1\right) 
\cdot
 \cos\left( \delta^\mathbf{z"}_1\right) 
$.

We show now that the following holds:
\begin{equation}\label{eq:zerosumcoscos}
\sum_{x_j=1}^{n_j}
 \cos\left(\frac{x_j-1}{n_j-1}\left(z'_j \pi -2\delta^\mathbf{z'}_j\right)+\delta^\mathbf{z'}_j\right)
\cdot
\cos\left(\frac{x_j-1}{n_j-1}\left(z"_j \pi -2\delta^\mathbf{z"}_j\right)+\delta^\mathbf{z"}_j\right) =0
\end{equation}

Note that 
$\frac{n_j-x_j}{n_j-1}\left(z'_j \pi -2\delta^\mathbf{z'}_j\right)+\delta^\mathbf{z'}_j$
$=\left(1-\frac{x_j-1}{n_j-1}\right)\left(z'_j \pi -2\delta^\mathbf{z'}_j\right)+\delta^\mathbf{z'}_j$
$=z'_j \pi -2\delta^\mathbf{z'}_j-\frac{x_j-1}{n_j-1}\left(z'_j \pi -2\delta^\mathbf{z'}_j\right)+\delta^\mathbf{z'}_j$
$=z'_j \pi -\left(\frac{x_j-1}{n_j-1}\left(z'_j \pi -2\delta^\mathbf{z'}_j\right)+\delta^\mathbf{z'}_j\right)$
So if $z'_j$ is odd, then
$cos\left(\frac{n_j-x_j}{n_j-1}\left(z'_j \pi -2\delta^\mathbf{z'}_j\right)+\delta^\mathbf{z'}_j\right)$
$=-cos\left(\frac{x_j-1}{n_j-1}\left(z'_j \pi -2\delta^\mathbf{z'}_j\right)+\delta^\mathbf{z'}_j\right)$
and if it is even 
then 
$cos\left(\frac{n_j-x_j}{n_j-1}\left(z'_j \pi -2\delta^\mathbf{z'}_j\right)+\delta^\mathbf{z'}_j\right)$
$=cos\left(\frac{x_j-1}{n_j-1}\left(z'_j \pi -2\delta^\mathbf{z'}_j\right)+\delta^\mathbf{z'}_j\right)$.
For this reason, if the sum $z'_j+z"_j$ is odd, then 
$\sum_{x_j=1}^{n_j}
 \cos\left(\frac{x_j-1}{n_j-1}\left(z'_j \pi -2\delta^\mathbf{z'}_j\right)+\delta^\mathbf{z'}_j\right)
\cdot
\cos\left(\frac{x_j-1}{n_j-1}\left(z"_j \pi -2\delta^\mathbf{z"}_j\right)+\delta^\mathbf{z"}_j\right)=0$ because for each $x_j$ there is a complementary $n_j+1-x_j$ element of the same absolute value and inverted sign so that they cancel out. 

Let us consider the other cases now. 

Recall that
\begin{align*}
&\cos\left(\frac{x_j-1}{n_j-1}\left(z'_j \pi -2\delta^\mathbf{z'}_j\right)+\delta^\mathbf{z'}_j\right)
\cdot
\cos\left(\frac{x_j-1}{n_j-1}\left(z"_j \pi -2\delta^\mathbf{z"}_j\right)+\delta^\mathbf{z"}_j\right)
\\&=0.5\cos\left(\frac{x_j-1}{n_j-1}
\left((z'_j+z"_j) \pi -2(\delta^\mathbf{z'}_j + 
\delta^\mathbf{z"}_j)\right)
 +(\delta^\mathbf{z'}_j+\delta^\mathbf{z"}_j) \right)
\\&+
0.5\cos\left(\frac{x_j-1}{n_j-1}\left((z'_j-z"_j) \pi -2(\delta^\mathbf{z'}_j  - 
\delta^\mathbf{z"}_j)\right)
+(\delta^\mathbf{z'}_j-\delta^\mathbf{z"}_j)\right)
\end{align*}
So we need to prove that 
$$0=0.5\sum_{x_j=1}^{n_j}\cos\left(\frac{x_j-1}{n_j-1}
\left((z'_j+z"_j) \pi -2(\delta^\mathbf{z'}_j + 
\delta^\mathbf{z"}_j)\right)
 +(\delta^\mathbf{z'}_j+\delta^\mathbf{z"}_j) \right)
$$ $$+
0.5\sum_{x_j=1}^{n_j}\cos\left(\frac{x_j-1}{n_j-1}\left((z'_j-z"_j) \pi -2(\delta^\mathbf{z'}_j  - 
\delta^\mathbf{z"}_j)\right)
+(\delta^\mathbf{z'}_j-\delta^\mathbf{z"}_j)\right)
$$ 

From Trigonometry we know that
$\sum_{k=1}^{n}\cos(a+(k-1)\cdot d) =
  \cos(a+(n-1)\cdot d/2)\frac{\sin(n\cdot d/2)}{sin(d/2)}$.
  
This allows us to reformulate our problem as 
\begin{align*}
  0=&  0.5 
     \cos\left(
     \left( \delta^\mathbf{z'}_j+\delta^\mathbf{z"}_j  \right)
     +(n_j-1)\cdot \left(\frac{1}{n_j-1}
\left((z'_j+z"_j) \pi -2(\delta^\mathbf{z'}_j + 
\delta^\mathbf{z"}_j)\right)\right)/2\right)
\\ & 
\cdot\frac{
\sin\left(n_j\cdot \left(\frac{1}{n_j-1}
\left((z'_j+z"_j) \pi -2(\delta^\mathbf{z'}_j + 
\delta^\mathbf{z"}_j)\right)\right)/2\right)
}{ 
\sin\left( \left(\frac{1}{n_j-1}
\left((z'_j+z"_j) \pi -2(\delta^\mathbf{z'}_j + 
\delta^\mathbf{z"}_j)\right)\right)/2\right)
}
\end{align*}
\begin{align*}
\\ & +
 0.5 
     \cos\left(
     \left( \delta^\mathbf{z'}_j-\delta^\mathbf{z"}_j  \right)
     +(n_j-1)\cdot \left(\frac{1}{n_j-1}
\left((z'_j-z"_j) \pi -2(\delta^\mathbf{z'}_j - 
\delta^\mathbf{z"}_j)\right)\right)/2\right)
\\ & 
\cdot\frac{
\sin\left(n_j\cdot \left(\frac{1}{n_j-1}
\left((z'_j-z"_j) \pi -2(\delta^\mathbf{z'}_j - 
\delta^\mathbf{z"}_j)\right)\right)/2\right)
}{ 
\sin\left( \left(\frac{1}{n_j-1}
\left((z'_j-z"_j) \pi -2(\delta^\mathbf{z'}_j - 
\delta^\mathbf{z"}_j)\right)\right)/2\right)
}
\end{align*}
 which simplifies to 
 \begin{align*}
  0=&  0.5 
     \cos\left((z'_j+z"_j) \pi/2\right)
\cdot\frac{
\sin\left(n_j\cdot \left(\frac{1}{n_j-1}
\left((z'_j+z"_j) \pi -2(\delta^\mathbf{z'}_j + 
\delta^\mathbf{z"}_j)\right)\right)/2\right)
}{ 
\sin\left( \left(\frac{1}{n_j-1}
\left((z'_j+z"_j) \pi -2(\delta^\mathbf{z'}_j + 
\delta^\mathbf{z"}_j)\right)\right)/2\right)
}
\\ & +
 0.5 
     \cos\left((z'_j-z"_j) \pi/2\right)
\cdot\frac{
\sin\left(n_j\cdot \left(\frac{1}{n_j-1}
\left((z'_j-z"_j) \pi -2(\delta^\mathbf{z'}_j - 
\delta^\mathbf{z"}_j)\right)\right)/2\right)
}{ 
\sin\left( \left(\frac{1}{n_j-1}
\left((z'_j-z"_j) \pi -2(\delta^\mathbf{z'}_j - 
\delta^\mathbf{z"}_j)\right)\right)/2\right)
}
\end{align*}

 We have treated already the case when $(z'_j+z"_j)$ was odd.
 Now either $(z'_j+z"_j)/2$ is either even or odd. 
 So we get
  \begin{align*}
  0=&  \pm 0.5 
\cdot\frac{
\sin\left(n_j\cdot \left(\frac{1}{n_j-1}
\left((z'_j+z"_j) \pi -2(\delta^\mathbf{z'}_j + 
\delta^\mathbf{z"}_j)\right)\right)/2\right)
}{ 
\sin\left( \left(\frac{1}{n_j-1}
\left((z'_j+z"_j) \pi -2(\delta^\mathbf{z'}_j + 
\delta^\mathbf{z"}_j)\right)\right)/2\right)
}
\\ & +
 0.5 
\cdot\frac{
\sin\left(n_j\cdot \left(\frac{1}{n_j-1}
\left((z'_j-z"_j) \pi -2(\delta^\mathbf{z'}_j - 
\delta^\mathbf{z"}_j)\right)\right)/2\right)
}{ 
\sin\left( \left(\frac{1}{n_j-1}
\left((z'_j-z"_j) \pi -2(\delta^\mathbf{z'}_j - 
\delta^\mathbf{z"}_j)\right)\right)/2\right)
}
\end{align*}
With $+$ for the even case and $-$ for the odd one.  
This implies
  \begin{align*}
  0=&  \pm 0.5 
\sin\left(n_j\cdot \left(\frac{1}{n_j-1}
\left((z'_j+z"_j) \pi -2(\delta^\mathbf{z'}_j + 
\delta^\mathbf{z"}_j)\right)\right)/2\right)
\\ & \cdot
\sin\left( \left(\frac{1}{n_j-1}
\left((z'_j-z"_j) \pi -2(\delta^\mathbf{z'}_j - 
\delta^\mathbf{z"}_j)\right)\right)/2\right)
\\ & +
 0.5 
\sin\left(n_j\cdot \left(\frac{1}{n_j-1}
\left((z'_j-z"_j) \pi -2(\delta^\mathbf{z'}_j - 
\delta^\mathbf{z"}_j)\right)\right)/2\right)
\\ & \cdot
\sin\left( \left(\frac{1}{n_j-1}
\left((z'_j+z"_j) \pi -2(\delta^\mathbf{z'}_j + 
\delta^\mathbf{z"}_j)\right)\right)/2\right)
\end{align*}
By applying the formula for the product of sines of two angles we get
\Bem{
PM*cos((pi*(z1+z2)-2*(dv[1]+dvB[1]))/2+ 1/(n-1)*(pi*(z2)-2*(dvB[1])) ) - PM*cos((pi*(z1+z2)-2*(dv[1]+dvB[1]))/2+ 1/(n-1)*(pi*(z1)-2*(dv[1]))  )  +
 cos((pi*(z1-z2)-2*(dv[1]-dvB[1]))/2+ 1/(n-1)*(pi*(-z2)-2*(-dvB[1]))  )   -
 cos((pi*(z1-z2)-2*(dv[1]-dvB[1]))/2+ 1/(n-1)*(pi*(z1)-2*(dv[1]))  ) 
 }
 
  \begin{align*}
  0=&  \pm 
\cos\left(\left((z'_j+z"_j) \pi -2(\delta^\mathbf{z'}_j + 
\delta^\mathbf{z"}_j)\right)/2+ \frac{1}{n_j-1}
\left(z"_j \pi -2
\delta^\mathbf{z"}_j\right) \right)
\\ & - \pm 
\cos\left(\left((z'_j+z"_j) \pi -2(\delta^\mathbf{z'}_j + 
\delta^\mathbf{z"}_j)\right)/2+ \frac{1}{n_j-1}
\left(z'_j \pi -2\delta^\mathbf{z'}_j \right) \right)
\\ &    +
\cos\left(\left((z'_j+z"_j) \pi -2(\delta^\mathbf{z'}_j + 
\delta^\mathbf{z"}_j)\right)/2+ \frac{1}{n_j-1}
\left(-z"_j \pi -2
\delta^\mathbf{z"}_j\right) \right)
\\ & - 
\cos\left(\left((z'_j+z"_j) \pi -2(\delta^\mathbf{z'}_j + 
\delta^\mathbf{z"}_j)\right)/2+ \frac{1}{n_j-1}
\left(z'_j \pi -2\delta^\mathbf{z'}_j \right) \right)
\end{align*}

Now we recombine the first and the third, the second and the forth summand using the cosine sum formula and we get after simplification:
\Bem{
2* cos((PMPI+pi*z1 -2*dv[1] )/2)  * 
 cos((PMPI+pi*z2-2*dvB[1])/2+ 1/(n-1)*(pi*z2-2*  dvB[1]) )  -
2* cos( (PMPI+pi* z1 -2*dv[1])/2+ 1/(n-1)*(pi*(z1)-2*(dv[1]))    )    *
 cos((PMPI+pi*z2-2*dvB[1])/2 )
 }
 
\newcommand{\pmPI}{\pi_\pm}
  \begin{align*}
  0=&   
2\cos\left(\left( \pmPI+ z'_j \pi -2\delta^\mathbf{z'}_j \right)/2  \right)
\\ & \cdot
\cos\left(\left(\pmPI+z"_j \pi -2
\delta^\mathbf{z"}_j\right)/2+
\frac{1}{n_j-1}
\left(z"_j \pi -2\delta^\mathbf{z"}_j \right) \right)
\\ &    -
2\cos\left(\left( \pmPI+ z"_j \pi -2\delta^\mathbf{z"}_j \right)/2  \right)
\\ & \cdot
\cos\left(\left(\pmPI+z'_j \pi -2
\delta^\mathbf{z'}_j\right)/2+
\frac{1}{n_j-1}
\left(z'_j \pi -2\delta^\mathbf{z'}_j \right) \right)
\end{align*}
where $\pmPI$ is equal zero, if $\pm$ was $+$, and equals $\pi$ constant otherwise.  

Taking into account evenness/oddness of $z'_j,z"_j$ we can simplify the above to:
  \begin{align*}
  0=&   
\cos\left(  \delta^\mathbf{z'}_j    \right)
\cdot
\cos\left( -
\delta^\mathbf{z"}_j+
\frac{1}{n_j-1}
\left(z"_j \pi -2\delta^\mathbf{z"}_j \right) \right)
\\ &    -
\cos\left(  \delta^\mathbf{z"}_j   \right)
\cdot
\cos\left(-
\delta^\mathbf{z'}_j+
\frac{1}{n_j-1}
\left(z'_j \pi -2\delta^\mathbf{z'}_j \right) \right)
\end{align*}
which follows directly from equation \eref{eq:deltafromlambda} applied once to $\mathbf{z'}$ and once for $\mathbf{z"}$ vectors assuming that the eigenvalues are equal. 

So, in order to prove \eref{eq:mainN}, after the  substitution of \eref{eq:res1} into it, we are left with proving that 
\begin{align*}
  0=&  
  \sum_{x_2=1}^{n_2}\dots   \sum_{x_d=1}^{n_d} 
  \prod_{j=2}^d   
\cos\left(\frac{x_j-1}{n_j-1}\left(z'_j \pi -2\delta^\mathbf{z'}_j\right)+\delta^\mathbf{z'}_j\right)
\cdot
\cos\left(\frac{x_j-1}{n_j-1}\left(z"_j \pi -2\delta^\mathbf{z"}_j\right)+\delta^\mathbf{z"}_j\right)
\\ &
 (-2)\cdot  
 \cos\left( \delta^\mathbf{z'}_1\right)
\cdot
\cos\left( \delta^\mathbf{z"}_1\right) ((z'_1+z"_1+1) \mod 2)
\end{align*}
That is
\begin{align*}
  0=&(-2)\cdot  
 \cos\left( \delta^\mathbf{z'}_1\right)
\cdot
\cos\left( \delta^\mathbf{z"}_1\right) ((z'_1+z"_1+1) \mod 2)
  \sum_{x_3=1}^{n_3}\dots   \sum_{x_d=1}^{n_d} 
\\ &
  \prod_{j=3}^d   
\cos\left(\frac{x_j-1}{n_j-1}\left(z'_j \pi -2\delta^\mathbf{z'}_j\right)+\delta^\mathbf{z'}_j\right)
\cdot
\cos\left(\frac{x_j-1}{n_j-1}\left(z"_j \pi -2\delta^\mathbf{z"}_j\right)+\delta^\mathbf{z"}_j\right)
\\ &
  \sum_{x_2=1}^{n_2}
\cos\left(\frac{x_2-1}{n_2-1}\left(z'_j \pi -2\delta^\mathbf{z'}_2\right)+\delta^\mathbf{z'}_2\right)
\cdot
\cos\left(\frac{x_2-1}{n_2-1}\left(z"_2 \pi -2\delta^\mathbf{z"}_2\right)+\delta^\mathbf{z"}_2\right)
  \end{align*}
As already shown (by analogy to \eref{eq:zerosumcoscos}),
$  \sum_{x_2=1}^{n_2}
\cos\left(\frac{x_2-1}{n_2-1}\left(z'_j \pi -2\delta^\mathbf{z'}_2\right)+\delta^\mathbf{z'}_2\right)
\cdot
\cos\left(\frac{x_2-1}{n_2-1}\left(z"_2 \pi -2\delta^\mathbf{z"}_2\right)+\delta^\mathbf{z"}_2\right)
=0$
so the entire expression is zero. 
This completes the proof. 

\emph{Note that at this point we were assuming that we have at least two dimensions ($d>1$). 
As shown in Theorem in subsection \ref{subsec:onDimGrid}, 
all eigenvalues of a one-dimensional grid graph are different, 
so the case of equal eigenvalues does not need to be treated. 
}.   
 
So we have shown the validity of the Theorem \ref{th:normalizedLap}.

\subsection{Graphs without Inner Nodes}\label{subsec:generalNoInner}

 Such graphs will occur if one or more 
$n_i$ is equal two.

While proving the Theorem \ref{th:normalizedLap}, we have shown that 
assuming the form \eref{eq:eigenvectorcomponentN} of eigenvectors, 
the eigenvalue can be expressed in the form 
\eref{eq:lambdaN2dm1}. 
So it is easily seen that the same will hold in case of graphs without an inner node. 
Therefore the very same method of computation of $\lambda$s and $\delta$s can be applied and representation of eigenvalue and eighenvectors is the same. 

\begin{theorem}
The Theorem  \ref{th:normalizedLap} is applicable also to grid graphs without inner nodes. Eigenvalues and eigenvectors are the same. 
\end{theorem}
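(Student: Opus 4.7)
The plan is to show that the derivation of the equation system for $\lambda_{\mathbf{z}}$ and $\boldsymbol{\delta}^{\mathbf{z}}$ never truly required the existence of an inner node; the inner-node equation \eqref{eq:lambdaN2d} can be recovered as an additive combination of the boundary equations \eqref{eq:lambdaN2dm1}, and the orthogonality argument in Subsubsection \ref{subsub:noLorthogonality} is insensitive to whether the graph has any inner nodes at all.

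Concretely, I would first revisit the calculation for a border node with missing neighbours in multiple dimensions carried out in Subsubsection \ref{subsub:othernodes}. That derivation already established that the eigenvalue equation at a node missing neighbours in the dimensions indexed by $M = \{l_1^+,\dots,l_{m^+}^+,l_1^-,\dots,l_{m^-}^-\}$ reduces, after dividing by the common cosine product, to the additive relation
\[
(m^+ + m^-)\lambda_{\mathbf{z}} = \sum_{l \in M}\left(1 - \cos\!\Big(\tfrac{1}{n_l - 1}(z_l\pi - 2\delta^{\mathbf{z}}_l)\Big) + \tan(\delta^{\mathbf{z}}_l)\sin\!\Big(\tfrac{1}{n_l - 1}(z_l\pi - 2\delta^{\mathbf{z}}_l)\Big)\right).
\]
This is just the sum over $l \in M$ of the single-dimension equations \eqref{eq:lambdaN2dm1}. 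In the no-inner-node setting every node is a boundary node in at least one dimension, so every node contributes such a sum. The essential observation is that, by picking nodes with different boundary sets $M$, one can isolate every individual equation \eqref{eq:lambdaN2dm1} for $l = 1,\dots,d$. Equation \eqref{eq:lambdaN2d} is then obtained by adding all $d$ of these equations — exactly the reverse of the subtraction that derived \eqref{eq:lambdaN2dm1} from \eqref{eq:lambdaN2d} in the inner-node proof.

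Thus the same $(d+1)$-equation system determines $\lambda_{\mathbf{z}}$ and $\boldsymbol{\delta}^{\mathbf{z}}$, and the same bisectional numerical procedure of Subsubsection \ref{subsub:howcompute} applies. Once the system is solved, the fact that every actual node equation (no matter which subset $M$ of missing-neighbour directions occurs) is a linear combination of the solved equations guarantees that the eigenvalue equation is satisfied everywhere. This is the step that needs the most care in the write-up: one must argue uniformly that at each node the raw equation decomposes additively, exactly as in the multi-boundary calculation in Subsubsection \ref{subsub:othernodes}, and no residual term depending on the existence of inner structure is left unaccounted for.

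The orthogonality of the resulting eigenvectors is the final ingredient, and here no new argument is needed. The proof in Subsubsection \ref{subsub:noLorthogonality} rests on two facts: the trigonometric identity \eqref{eq:zerosumcoscos}, which is purely a statement about sums of products of cosines over $x_j = 1,\dots,n_j$ and is independent of the graph's adjacency structure; and the defining relation \eqref{eq:deltafromlambda}, which is equivalent to \eqref{eq:lambdaN2dm1} and which we have just re-established in the no-inner-node case. The main obstacle I foresee is nothing conceptual but rather the bookkeeping of verifying additivity at every boundary type; once that is done, the entire apparatus of Subsection \ref{subsec:generalInner} transfers verbatim and the theorem follows.
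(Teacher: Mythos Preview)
Your overall strategy—solve the same $(d{+}1)$-equation system, then verify that every node equation is an additive combination of the solved equations—is exactly right and is essentially what the paper does, but one step in your plan is wrong and would not survive a careful write-up.

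You claim that ``equation \eqref{eq:lambdaN2d} is then obtained by adding all $d$ of these equations,'' i.e.\ that summing \eqref{eq:lambdaN2dm1} over $l=1,\dots,d$ reproduces \eqref{eq:lambdaN2d}. It does not. Writing $\alpha_l=\tfrac{1}{n_l-1}(z_l\pi-2\delta_l)$, summing \eqref{eq:lambdaN2dm1} gives
\[
d\lambda \;=\; d+\sum_{l}\cos\alpha_l+\sum_{l}\tan(\delta_l)\sin\alpha_l,
\]
whereas \eqref{eq:lambdaN2d} says $d\lambda=d+\sum_l\cos\alpha_l$. The extra term $\sum_l\tan(\delta_l)\sin\alpha_l$ does not vanish in general, so no ``reverse of the subtraction'' works. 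The derivation of \eqref{eq:lambdaN2dm1} in the paper genuinely used the inner-node equation \eqref{eq:lambdaN2d}; you cannot reconstruct \eqref{eq:lambdaN2d} from the boundary relations alone.

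The fix is to notice that you never needed to recover \eqref{eq:lambdaN2d} from any node equation in the first place. In the statement of Theorem~\ref{th:normalizedLap}, equation \eqref{eq:lambdaN2d} (equivalently \eqref{eq:lambdaN}) is part of the \emph{definition} of the system for $(\lambda,\boldsymbol{\delta})$; it is imposed, not derived. Once you adopt that system and solve it, the raw eigen-equation at a node with missing-neighbour set $M$ reads
\[
(2d-|M|)\lambda \;=\; \sum_{l\in M}\bigl(1+\cos\alpha_l \mp \tan(\delta_l)\sin\alpha_l\bigr)+\sum_{j\notin M}\bigl(2+2\cos\alpha_j\bigr),
\]
and substituting \eqref{eq:lambdaN2d} and \eqref{eq:lambdaN2dm1} into the right-hand side collapses it to $2d\lambda-|M|\lambda$, as in Subsubsection~\ref{subsub:othernodes}. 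That verification nowhere assumes $M$ is a proper subset, so it covers every node of a grid with some $n_i=2$. This is precisely the paper's (very brief) argument: the defining system is the same, its solvability is the same, and the check that every actual node equation follows from the system is already contained in Subsubsection~\ref{subsub:othernodes}. Your orthogonality paragraph is fine as is.
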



\subsection{Special Case - One-Dimnesional Grid Graphs}\label{subsec:onDimGrid}

\begin{theorem}
For one-dimensional grid graph of $n$ nodes we have eigenvalues of the form
\begin{equation}\label{eq:lambdanormalizedonedim}
\lambda_{[z]}=
 2 \left(\cos\left(\frac{\pi  z}{2 (n-1)}\right)\right)^2
\end{equation}  
with $z$ ranging from 0 to $n-1$. 
The corresponding eigenvectors  $\mathbf{v}_{[z]}$ are of the form 

\begin{equation}
\nu_{[z],[x]}= 
 (-1)^x  \cos\left(\frac{\pi z}{n-1} \left(x-1\right)\right) / s
\end{equation}
\noindent
where   $x$ is an integer such that $1\le x\le n$,
and $s=\sqrt{2}$ when $x=1$ or $x=n$, and $s=1$ otherwise.  
Then  
 $\mathbf{v}_{[z]}$ is a vector   
such that 
$\mathbf{v}_{[z],i}=\nu_{[z],i}$. 
\\ All eigenvalues are different. 
\end{theorem}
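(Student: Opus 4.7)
The approach is to specialize Theorem~\ref{th:normalizedLap} to $d=1$, extract the canonical value of the single shift~$\delta$, and then verify monotonicity to obtain distinctness. With $d=1$ the eigenvalue formula \eref{eq:lambdaN} reduces to $\lambda_{[z]} = 1 + \cos\bigl(\frac{z\pi - 2\delta}{n-1}\bigr)$, while the border-node equation \eref{eq:lambdaN2dm1} (with $l=1$) adds the constraint $\lambda_{[z]} = 1 + \cos\bigl(\frac{z\pi - 2\delta}{n-1}\bigr) + \tan(\delta)\sin\bigl(\frac{z\pi - 2\delta}{n-1}\bigr)$. Subtracting forces $\tan(\delta)\sin\bigl(\frac{z\pi - 2\delta}{n-1}\bigr) = 0$, so either $\delta \equiv 0 \pmod \pi$ or the sine factor vanishes.

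The key step is to argue that the branch $\delta = 0$ already accounts for all $n$ eigenvalues. On the alternative branch, $\sin(\cdot)=0$ forces $\cos(\cdot)= \pm 1$, so $\lambda\in\{0,2\}$; but these extreme values are already realised on the $\delta=0$ branch at $z=n-1$ and $z=0$, so nothing is lost. Setting $\delta=0$ in \eref{eq:lambdaN} and using the half-angle identity $1 + \cos\theta = 2\cos^2(\theta/2)$ yields \eref{eq:lambdanormalizedonedim} directly.

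For the eigenvectors, substituting $\delta = 0$ into \eref{eq:eigenvectorcomponentN} gives $\nu_{[z],[x]} = D^{1/2}_{xx}(-1)^x \cos\bigl(\frac{(x-1)z\pi}{n-1}\bigr)$. In a one-dimensional path the endpoints $x\in\{1,n\}$ have degree $1$ and the inner vertices have degree $2$, so $D^{1/2}_{xx}=1$ at the boundary and $D^{1/2}_{xx}=\sqrt{2}$ in the interior. Dividing the whole vector by the scalar $\sqrt{2}$ (permissible since eigenvectors are defined up to nonzero scaling) produces precisely the claimed form, with $s=\sqrt{2}$ at $x\in\{1,n\}$ and $s=1$ at all inner indices.

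Distinctness is then immediate: as $z$ runs through $0,1,\dots,n-1$, the argument $\frac{z\pi}{2(n-1)}$ sweeps $[0,\pi/2]$, where $\cos^2$ is strictly decreasing, so $\lambda_{[0]}>\lambda_{[1]}>\dots>\lambda_{[n-1]}$. The only genuinely delicate point is justifying that the branch $\sin(\cdot)=0$ contributes no new eigenvalues; once that is handled, the rest is specialization of the multidimensional theorem and elementary trigonometric simplification.
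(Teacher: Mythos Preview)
Your proposal is correct and follows essentially the same route as the paper: specialize Theorem~\ref{th:normalizedLap} to $d=1$, subtract \eref{eq:lambdaN2dm1} from \eref{eq:lambdaN} to force $\tan(\delta)\sin(\cdot)=0$, take $\delta=0$, and conclude distinctness from strict monotonicity of $\cos$ on $[0,\pi/2]$. You are in fact more careful than the paper on two points---the paper simply asserts $\delta=0$ without discussing the $\sin(\cdot)=0$ branch, and it does not spell out the $\sqrt{2}$ rescaling needed to pass from \eref{eq:eigenvectorcomponentN} to the stated eigenvector form---so your added remarks are welcome refinements rather than departures.
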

This can be inferred from Theorem \ref{th:normalizedLap} as follows:
According to 
 \eref{eq:lambdaN}
$$\lambda_{\mathbf{z}}=1+  
\cos\left(\frac{1}{n_1-1}\left(z_1 \pi -2\delta_1\right) \right)$$
and at the same time 
due to \eref{eq:lambdaN2dm1}
$$ \lambda_{\mathbf{z}}
 =
1+\cos\left(\frac{1}{n_1-1}\left(z_1 \pi -2\delta^\mathbf{z}_1\right)\right)
+\tan(\delta^\mathbf{z}_1)\sin\left(\frac{1}{n_1-1}\left(z_1 \pi -2\delta^\mathbf{z}_1\right)\right)$$
which imply 
$$\tan(\delta^\mathbf{z}_1)\sin\left(\frac{1}{n_1-1}\left(z_1 \pi -2\delta^\mathbf{z}_1\right)\right)=0$$
As $-\pi < \delta^\mathbf{z}_1 <\pi$ is assumed, this can be true only for 
$\delta^\mathbf{z}_1=0$. Hence the above result. 
So in this case we have an explicit formula for eigenvalue and eigenvector. 
Beside this, 
equation \eref{eq:lambdanormalizedonedim} implies that all eigenvalues are different because 
because  for $z\in \{0,\dots,n-1\}$ the expression 
$\frac{\pi  z}{2 (n-1)}$ 
rnges from $0$ to $\pi/2$ and in this interval $\cos$ function is stricktly decreasing. 

\subsection{Special Case - Regular d-Dimensional Grid Graphs}
\label{subsec:regularGrid}

In a regular $d$-dimensional grid, that is with each dimension identical, some of the eigenvalues may be computed as 
\begin{equation}
\lambda_{[z]}=
 2 \left(\cos\left(\frac{\pi  z}{2 (n-1)}\right)\right)^2
\end{equation}  
with $z$ ranging from 0 to $n-1$. 
The corresponding eigenvectors  $\mathbf{v}_{[z]}$ are of the form 

\begin{equation}
\nu_{[z],[x_1,...,x_d]}= 
\sqrt{deg([x_1,...,x_d])}
\prod_{j=1}^d
 (-1)^x_j  \cos\left(\frac{\pi z}{n-1} \left(x_j-1\right)\right) 
\end{equation}
\noindent
where   $x_j$ is an integer such that $1\le x_j\le n$,
 and  $deg([x_1,...,x_d])$ is the degree of the node 
characterised by $ [x_1,...,x_d] $. This degree can be computed as 
$d+\sum_{j=1}^d(x_j\ne 1 \land x_j \ne n)$. 
Then  
 $\mathbf{v}_{[z]}$ is a vector such that 
$\mathbf{v}_{[z],i}=\nu_{[z],i}$. 
 This result is related to assuming same value of all $z_i=z$. 

As all $n_i=n$, we get for each $\delta$
due to \eref{eq:lambdaN2dm1}
$$ \lambda_{\mathbf{z}}
 =
1+\cos\left(\frac{1}{n-1}\left(z \pi -2\delta^\mathbf{z}_1\right)\right)
+\tan(\delta^\mathbf{z}_1)\sin\left(\frac{1}{n-1}\left(z \pi -2\delta^\mathbf{z}_1\right)\right)$$
which implies that all $\delta$ must be identical, equal to some $\delta$. 

Therefore, according  to 
 \eref{eq:lambdaN}
$$\lambda_{\mathbf{z}}=1+  
\cos\left(\frac{1}{n-1}\left(z \pi -2\delta\right) \right)$$
Using the same reasoning as in previous subsection we get $\delta=0$ which implies the explicit formulas presented. 

\section{Some Properties of Normalized Laplacian for a multidimensional  unweighted grid graph}\label{sec:NOLproperties}

The formula \eref{eq:lambdaN} confirms that the normalized Laplacian 
ranges from 0 to 2 for a grid graph.  
 
Another interesting aspect of the grid graph eigenvalues is whether or not they are uniformly distributed.  Compressive Spectral Cluster Analysis assumes the uniformity. 
By inspecting the histograms obtained from a simulation study based of the formulas for eigenvectors, it is visible that the uniformity is not granted for grid graphs. . 

\figVer{
\begin{figure}
\centering
 (a)\includegraphics[width=0.4\textwidth]{\figaddr{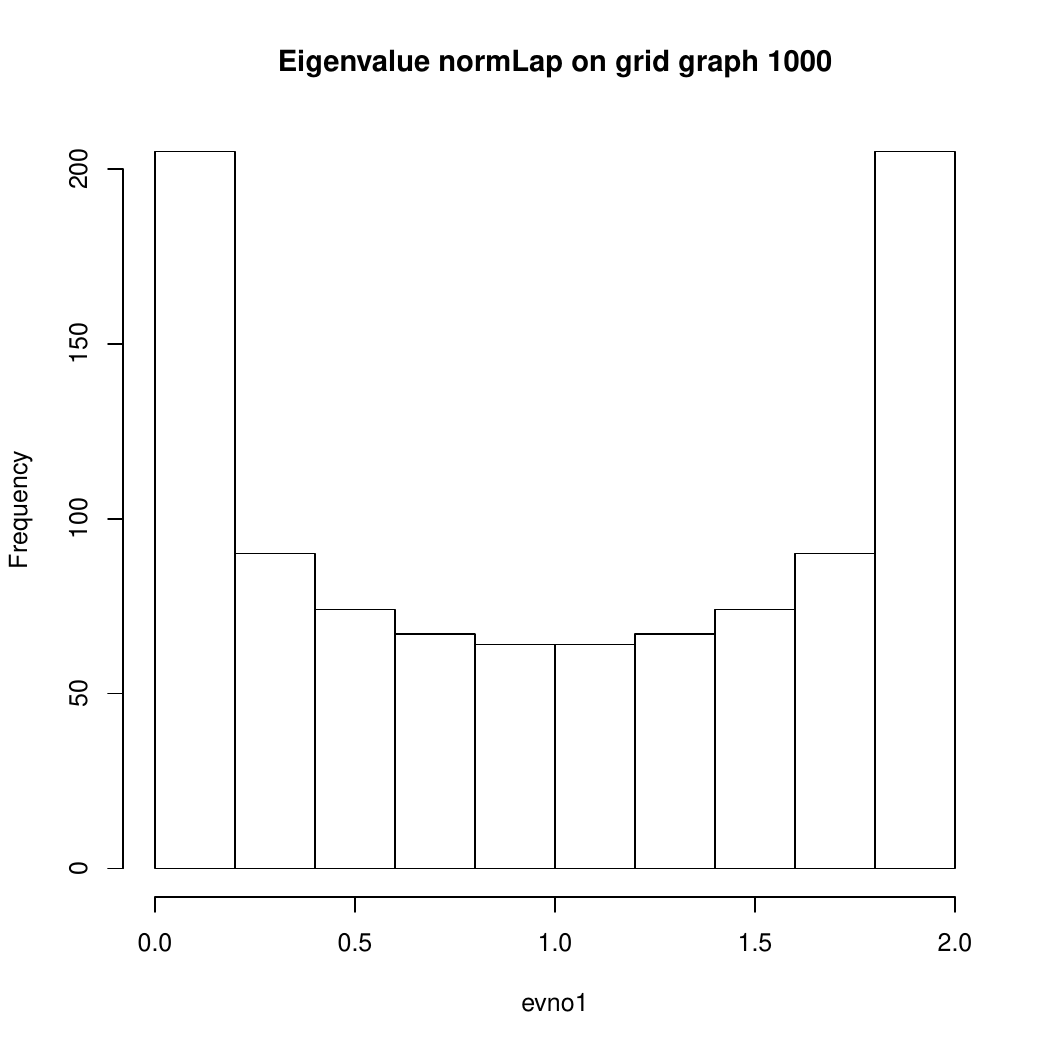}}  %
 (b)\includegraphics[width=0.4\textwidth]{\figaddr{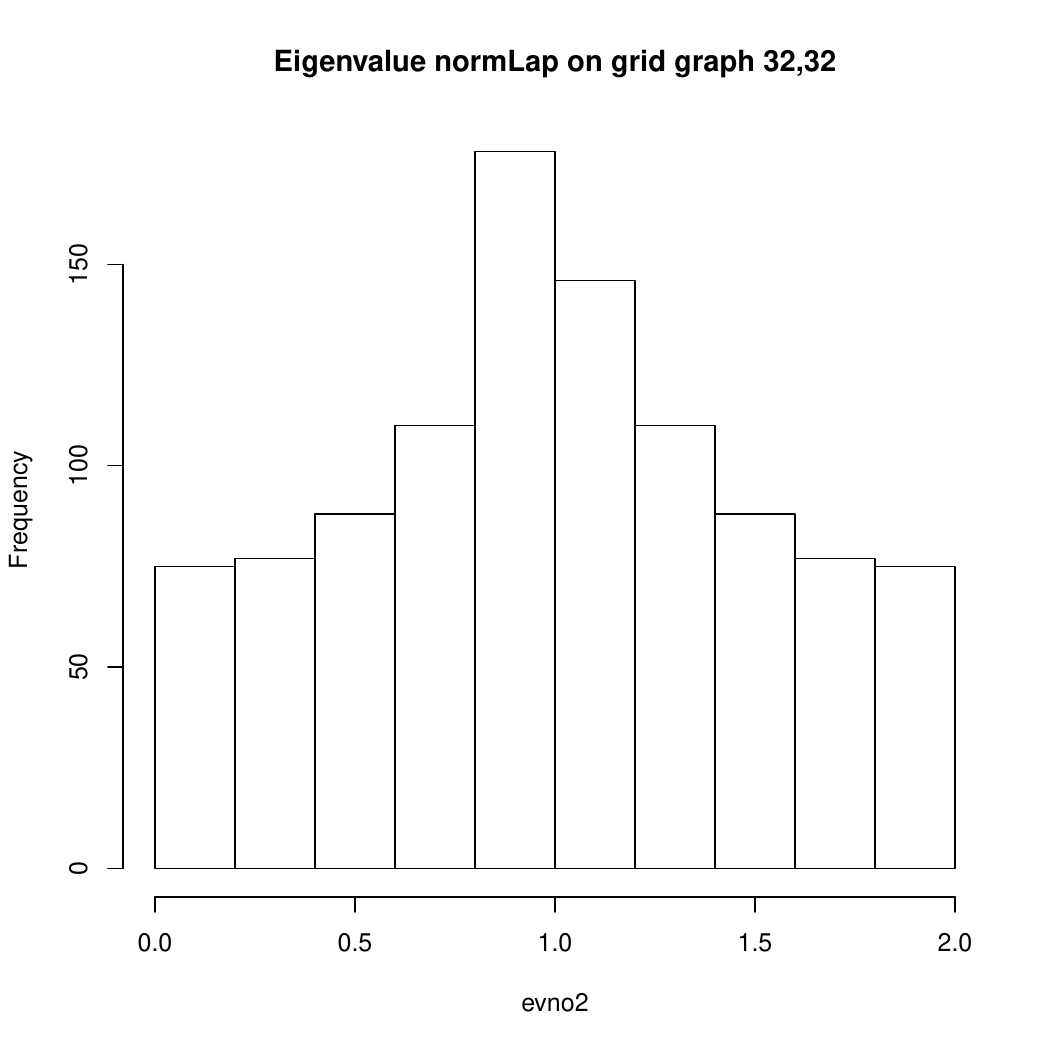}}  %
\\ (c)\includegraphics[width=0.4\textwidth]{\figaddr{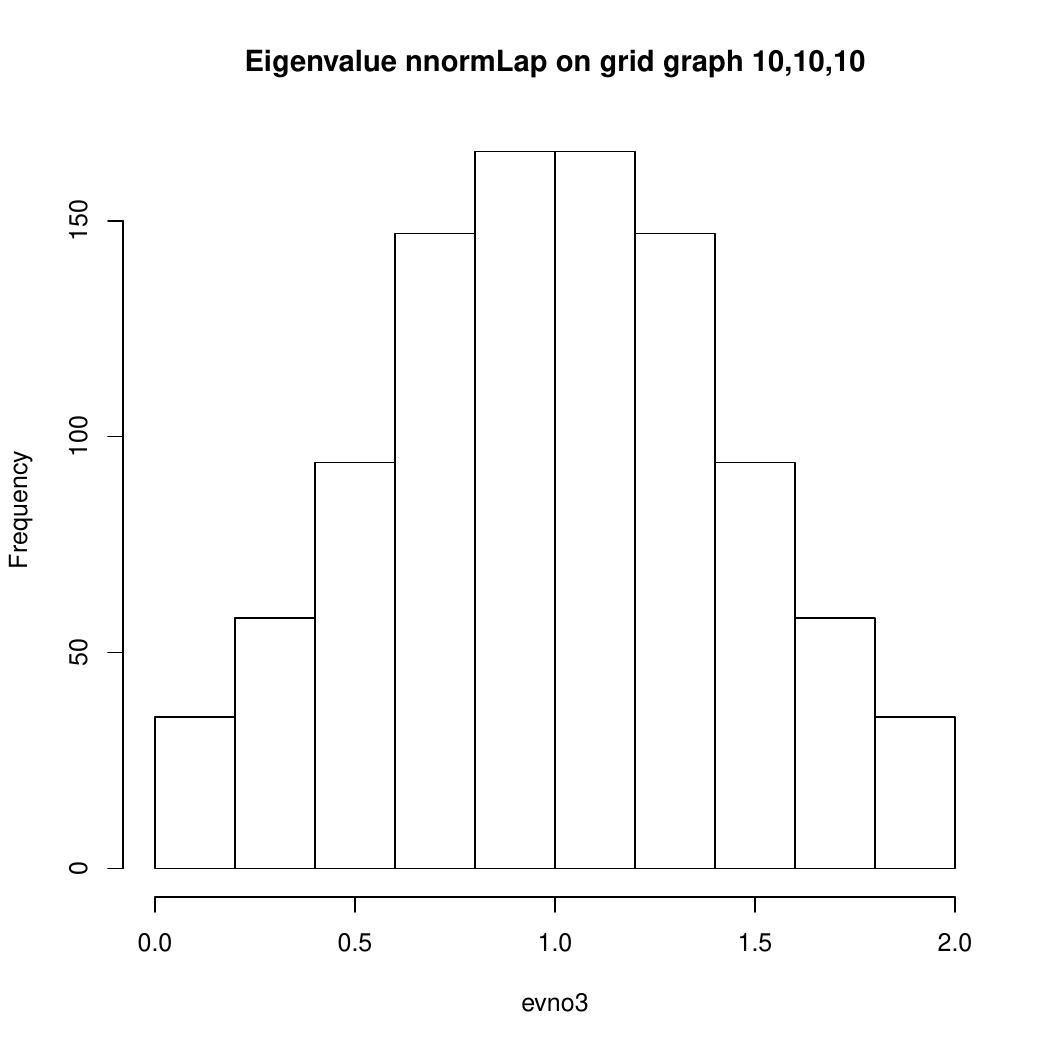}}  %
 (d)\includegraphics[width=0.4\textwidth]{\figaddr{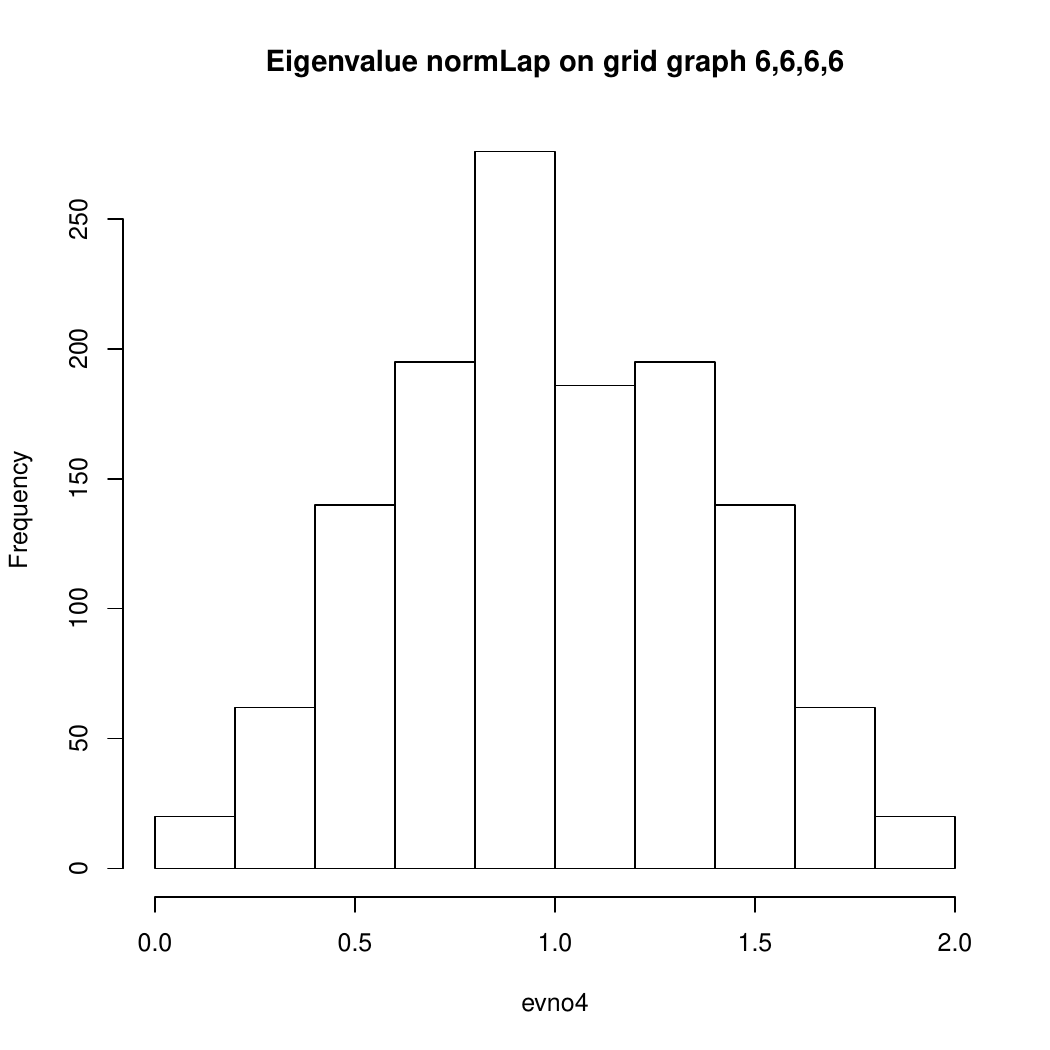}}  %
\caption{The histograms of  eigenvalues of normalized Laplacians of grid graphs of approximately 1,000 nodes.
(a) 1-dimensional grid graph, 
(b) 2-dimensional grid graph, 
(c) 3-dimensional grid graph, 
(d) 4-dimensional grid graph. 
}\label{fig:evno1000nodes}
\end{figure}

In Figure \ref{fig:evno1000nodes} you see the histograms of eigenvalue for grid graphs of approximately 1,000 nodes with dimensionality ranging between 1 and 4. 
The distributions do not resemble uniform distribution, at least for these small graphs.  
Rather, a similarity can be seen to the respective histograms  
of combinatorial Laplacians from  Figure \ref{fig:evco1000nodes}

\begin{figure}
\centering
 (a)\includegraphics[width=0.4\textwidth]{\figaddr{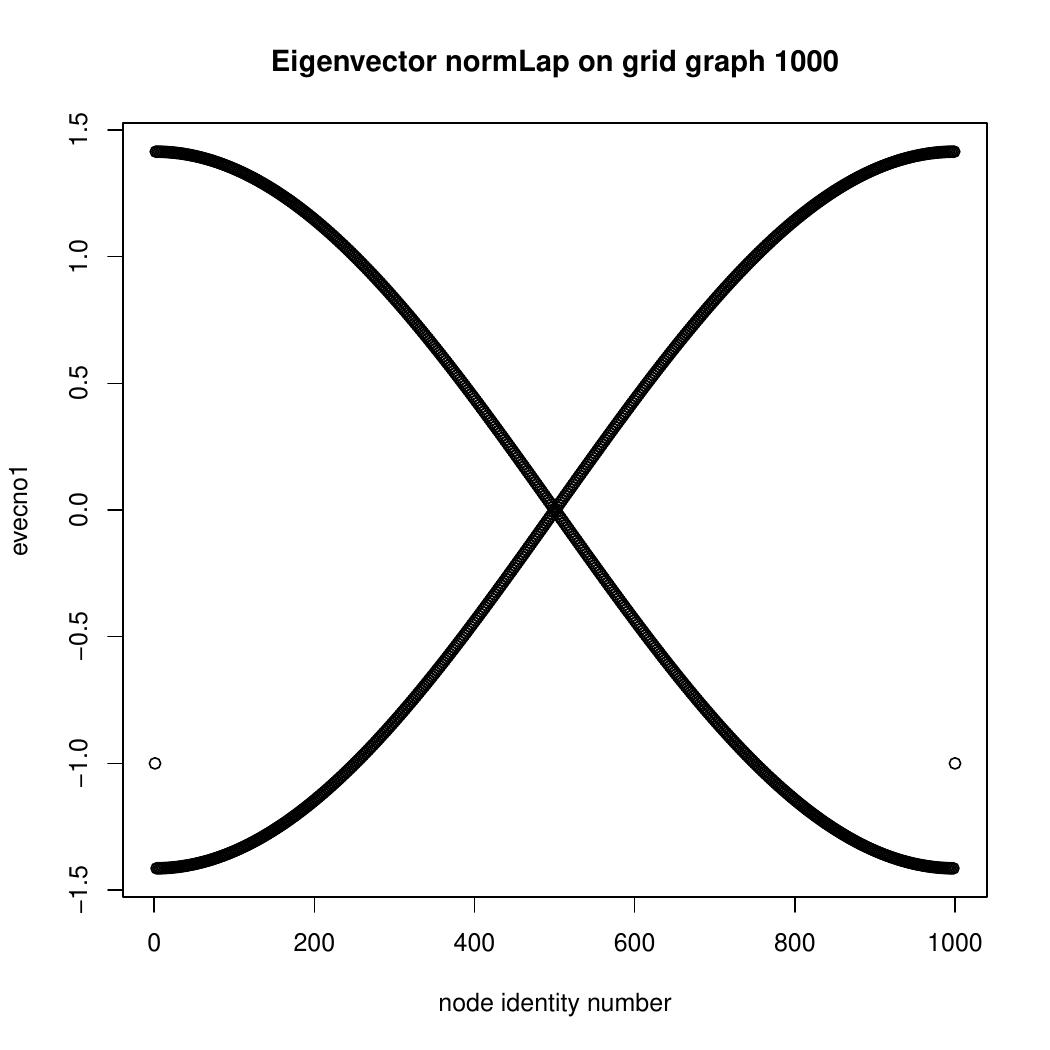}}  %
 (b)\includegraphics[width=0.4\textwidth]{\figaddr{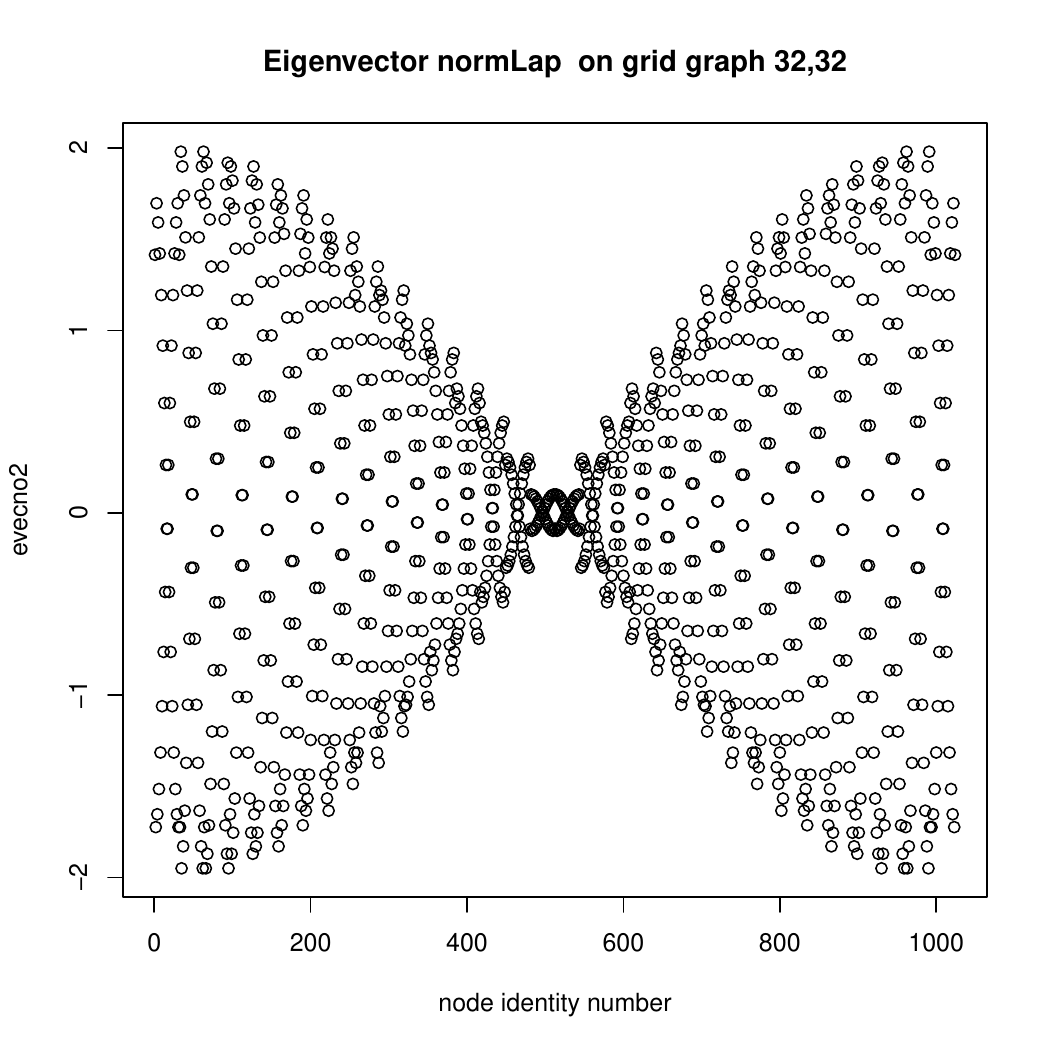}}  %
\\ (c)\includegraphics[width=0.4\textwidth]{\figaddr{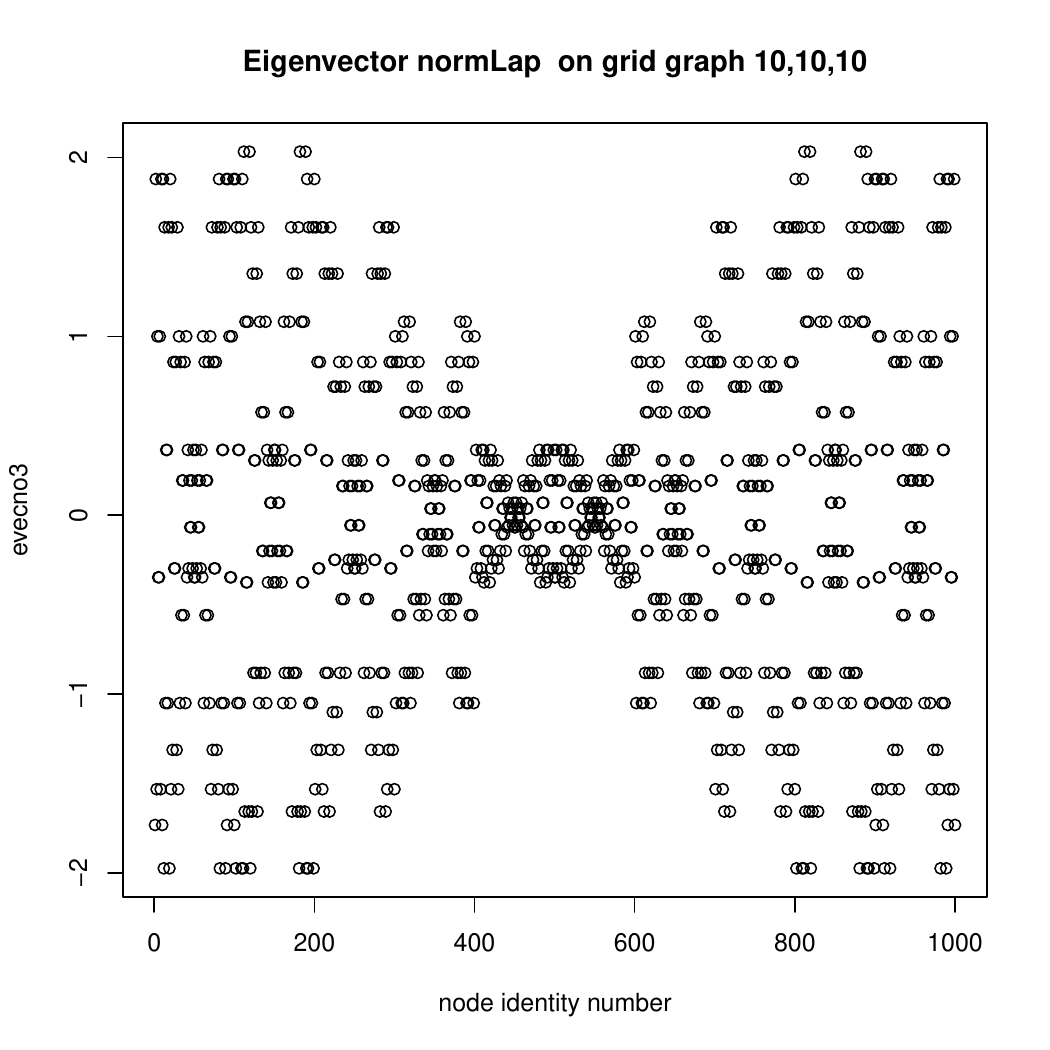}}  %
 (d)\includegraphics[width=0.4\textwidth]{\figaddr{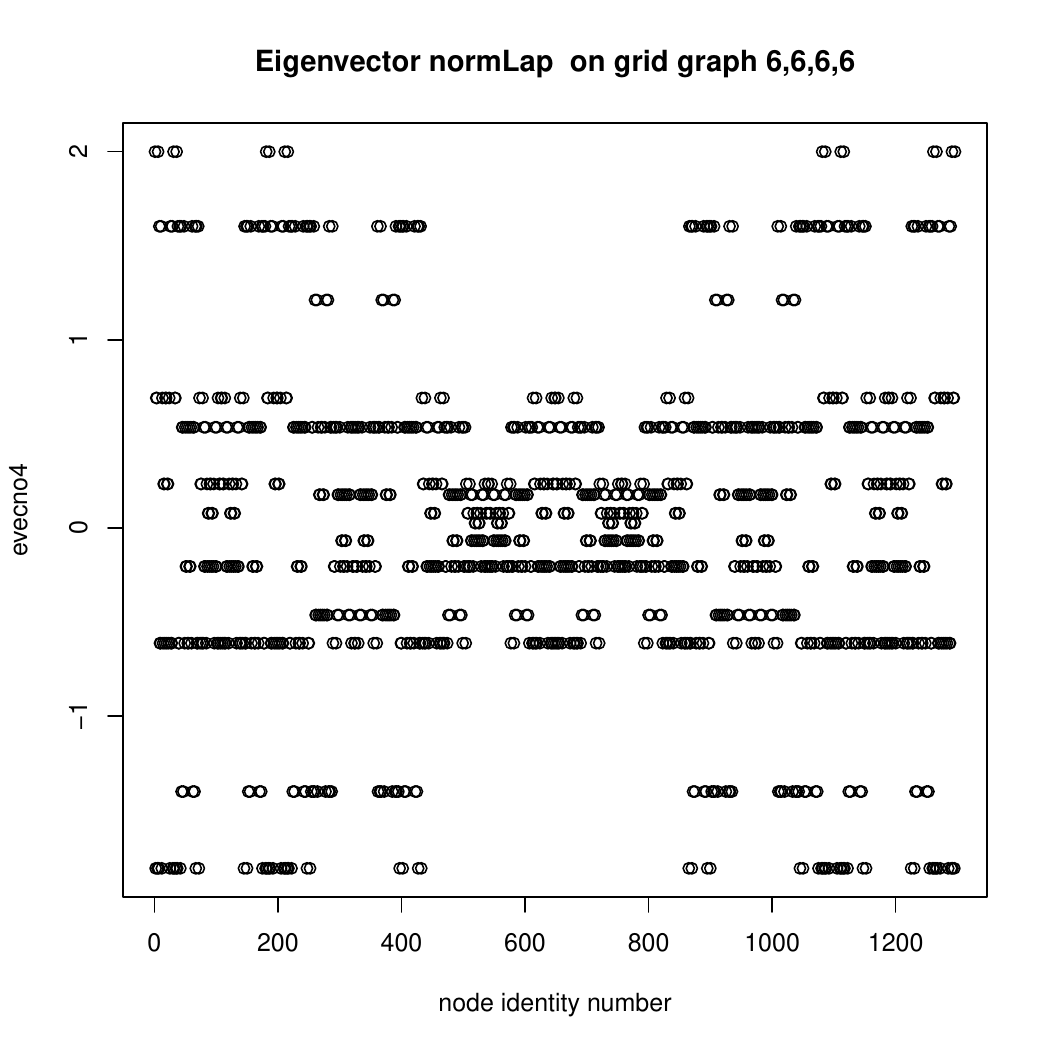}}  %
\caption{The plots of sample  eigenvectors of normalized Laplacians of grid graphs of approximately 1,000 nodes.
(a) 1-dimensional grid graph, $\z=[1]$, 
(b) 2-dimensional grid graph, $\z=[1,1]$, 
(c) 3-dimensional grid graph, $\z=[1,1,1]$, 
(d) 4-dimensional grid graph, $\z=[1,1,1,1]$. 
}\label{fig:evecno1000nodes}
\end{figure}

In Figure \ref{fig:evecno1000nodes} you see sample eigenvectors of the same graphs. 
Though aesthetic similarity can be seen to the respective plots for combinatorial Laplacians from Figure   \ref{fig:evecco1000nodes}, impact of sign alteration and of the shifts can be perceived.

\begin{figure}
\centering
 (a)\includegraphics[width=0.4\textwidth]{\figaddr{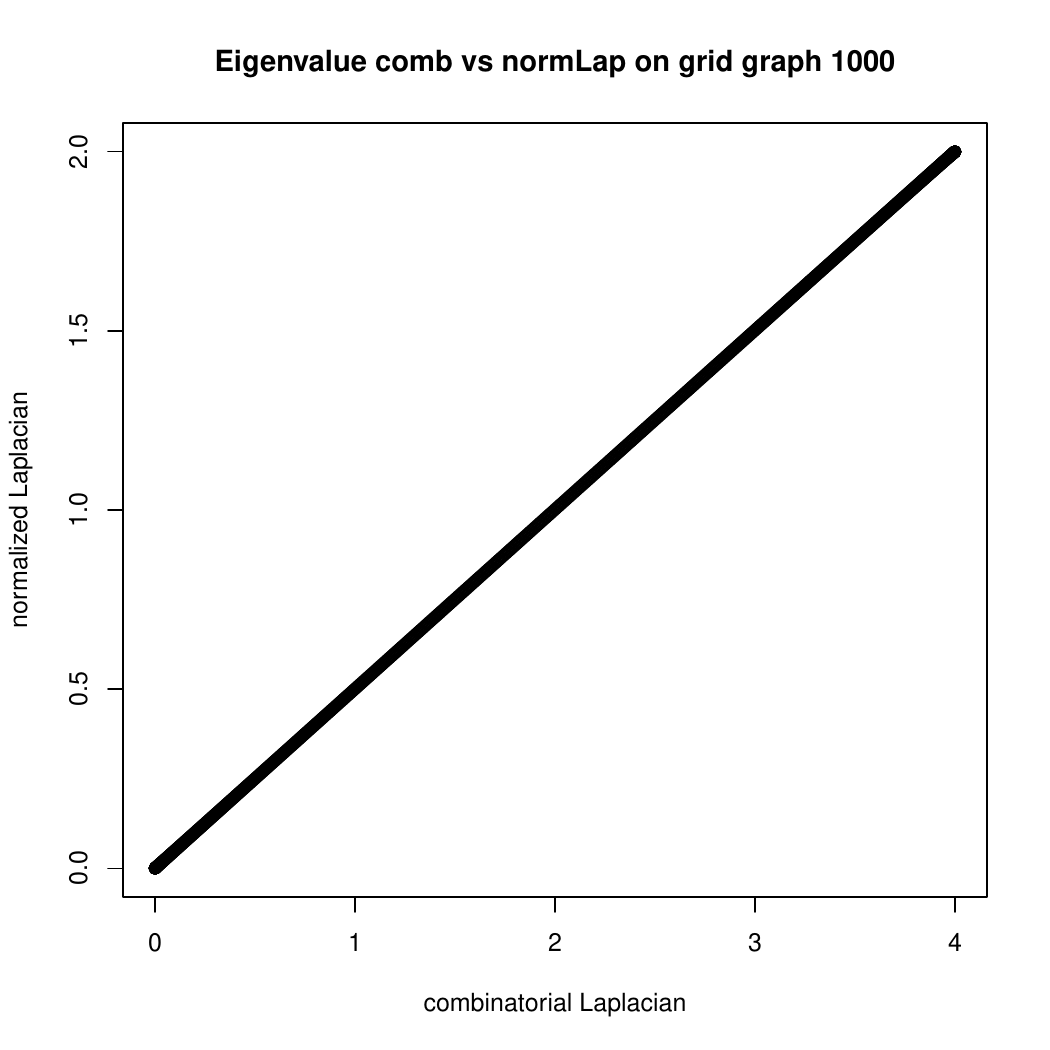}}  %
 (b)\includegraphics[width=0.4\textwidth]{\figaddr{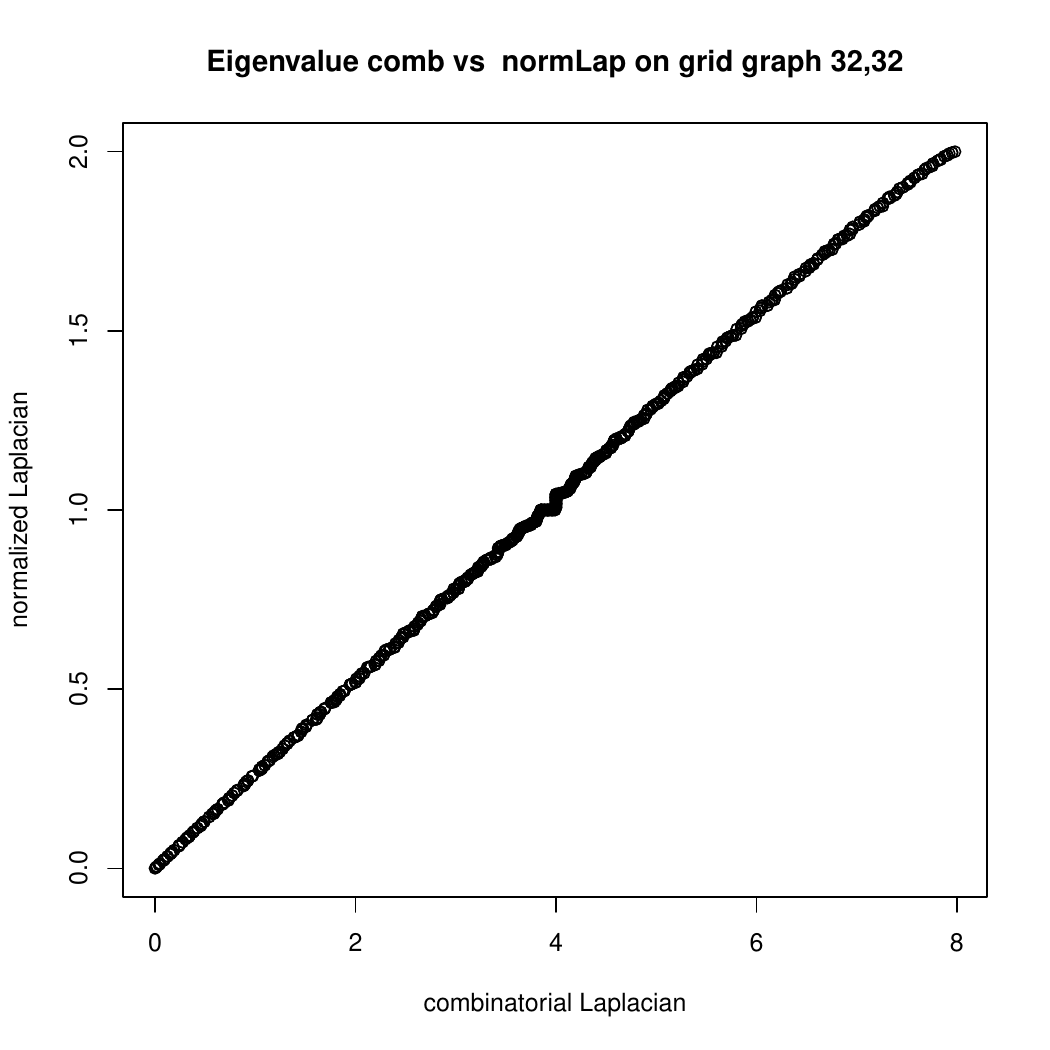}}  %
\\ (c)\includegraphics[width=0.4\textwidth]{\figaddr{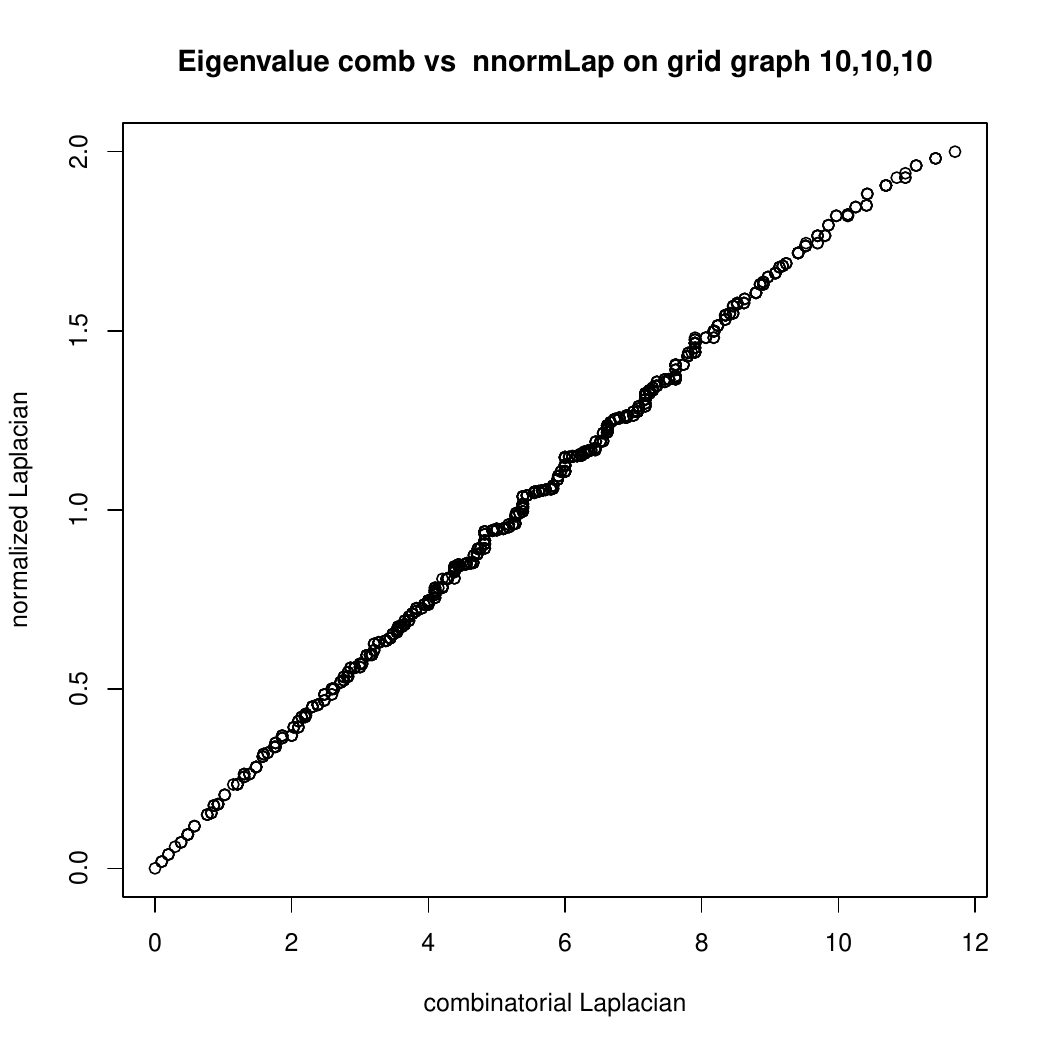}}  %
 (d)\includegraphics[width=0.4\textwidth]{\figaddr{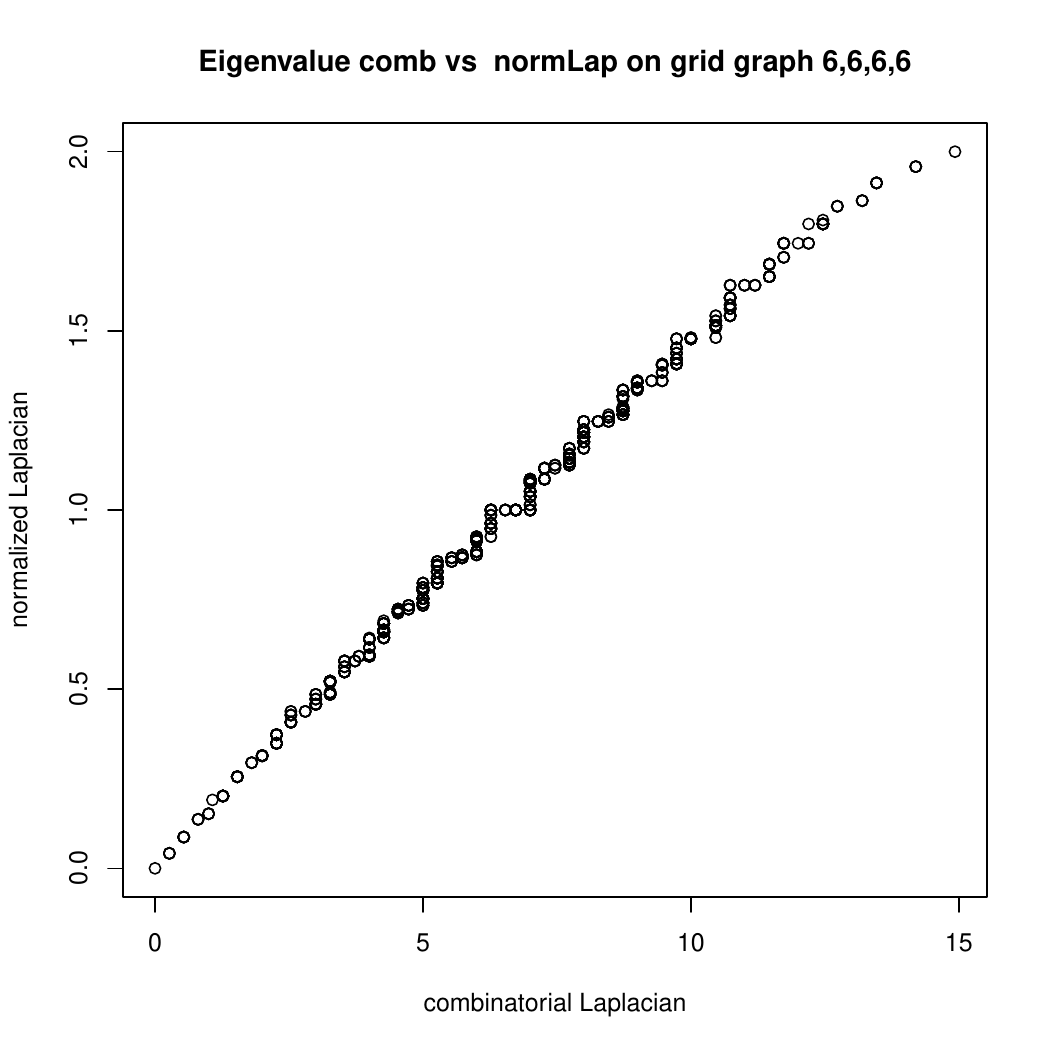}}  %
\caption{The plots of relationships of combinatorial and  normalized Laplacian eigenvalues of grid graphs of approximately 1,000 nodes.
(a) 1-dimensional grid graph, 
(b) 2-dimensional grid graph, 
(c) 3-dimensional grid graph, 
(d) 4-dimensional grid graph. 
}\label{fig:evcono1000nodes}
\end{figure}

The Figure \ref{fig:evcono1000nodes} illustrates the relationship between eigenvalues of combinatorial and normalized Laplacians of very same grid graphs. They do not deviate too much from one another (up to a scaling factor). 
So the observation that the non-uniformity pertains for combinatorial Laplacians of increasing grid graphs is also valid for normalized Laplacians. 

\begin{figure}
\centering
 (a)\includegraphics[width=0.4\textwidth]{\figaddr{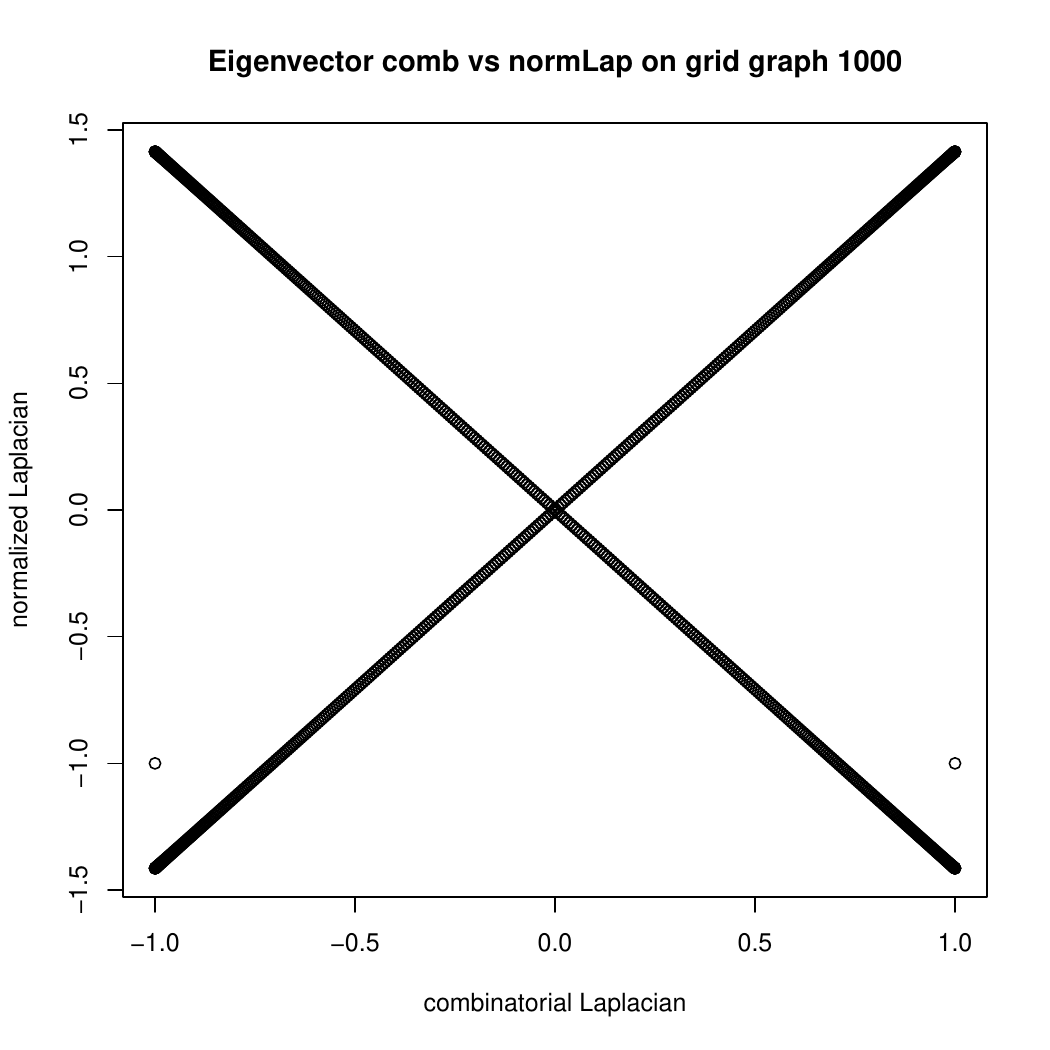}}  %
 (b)\includegraphics[width=0.4\textwidth]{\figaddr{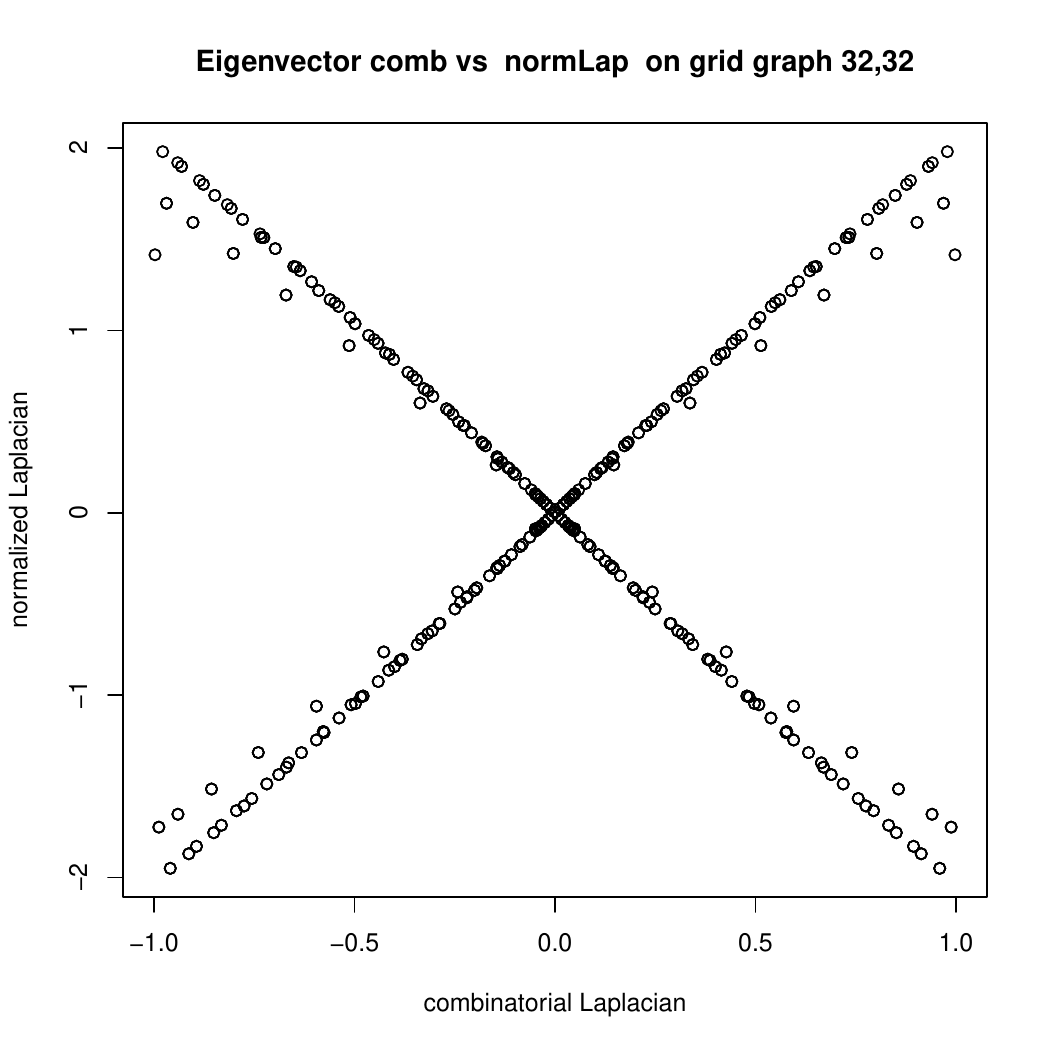}}  %
\\ (c)\includegraphics[width=0.4\textwidth]{\figaddr{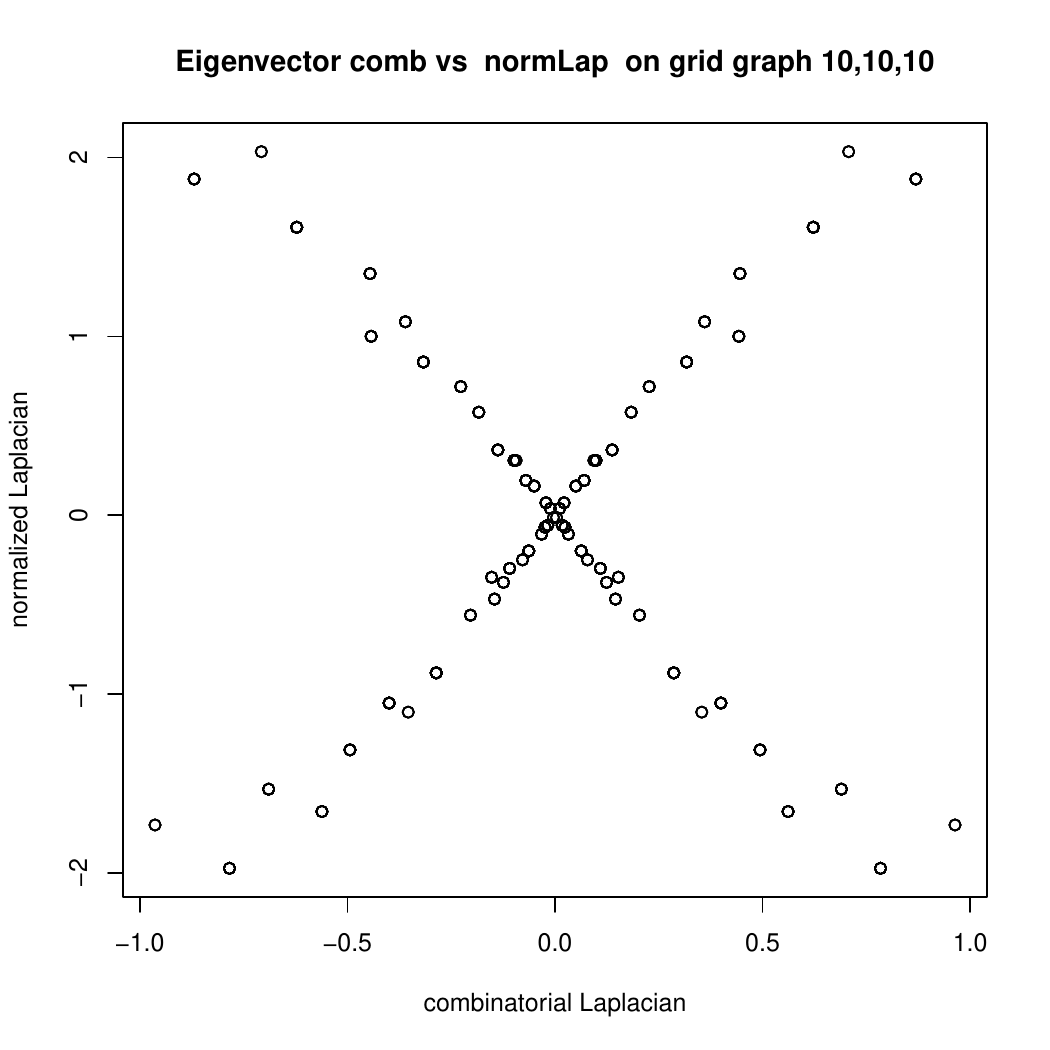}}  %
 (d)\includegraphics[width=0.4\textwidth]{\figaddr{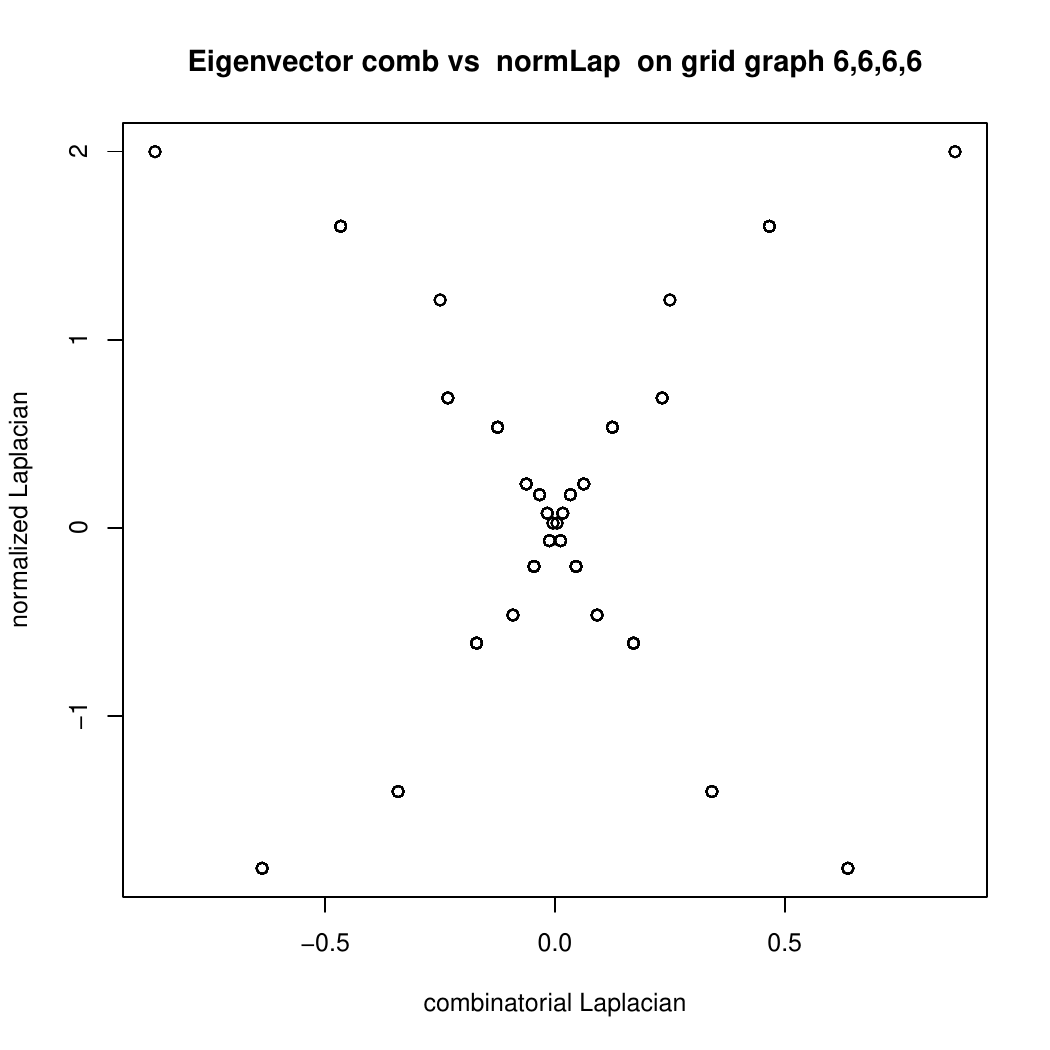}}  %
\caption{The plots of relationships of sample combinatorial and  normalized Laplacian eigenvectors of grid graphs of approximately 1,000 nodes.
(a) 1-dimensional grid graph, $\z=[1]$, 
(b) 2-dimensional grid graph, $\z=[1,1]$, 
(c) 3-dimensional grid graph, $\z=[1,1,1]$, 
(d) 4-dimensional grid graph, $\z=[1,1,1,1]$. 
}\label{fig:eveccono1000nodes}
\end{figure}

The Figure \ref{fig:eveccono1000nodes} illustrates the relationship between sample eigenvectors of combinatorial and normalized Laplacians of very same grid graphs.
One may get the impression   that they really do not differ too much (if one ignores the signs and some deviating points). 

\begin{figure}
\centering
 (a)\includegraphics[width=0.25\textwidth]{\figaddr{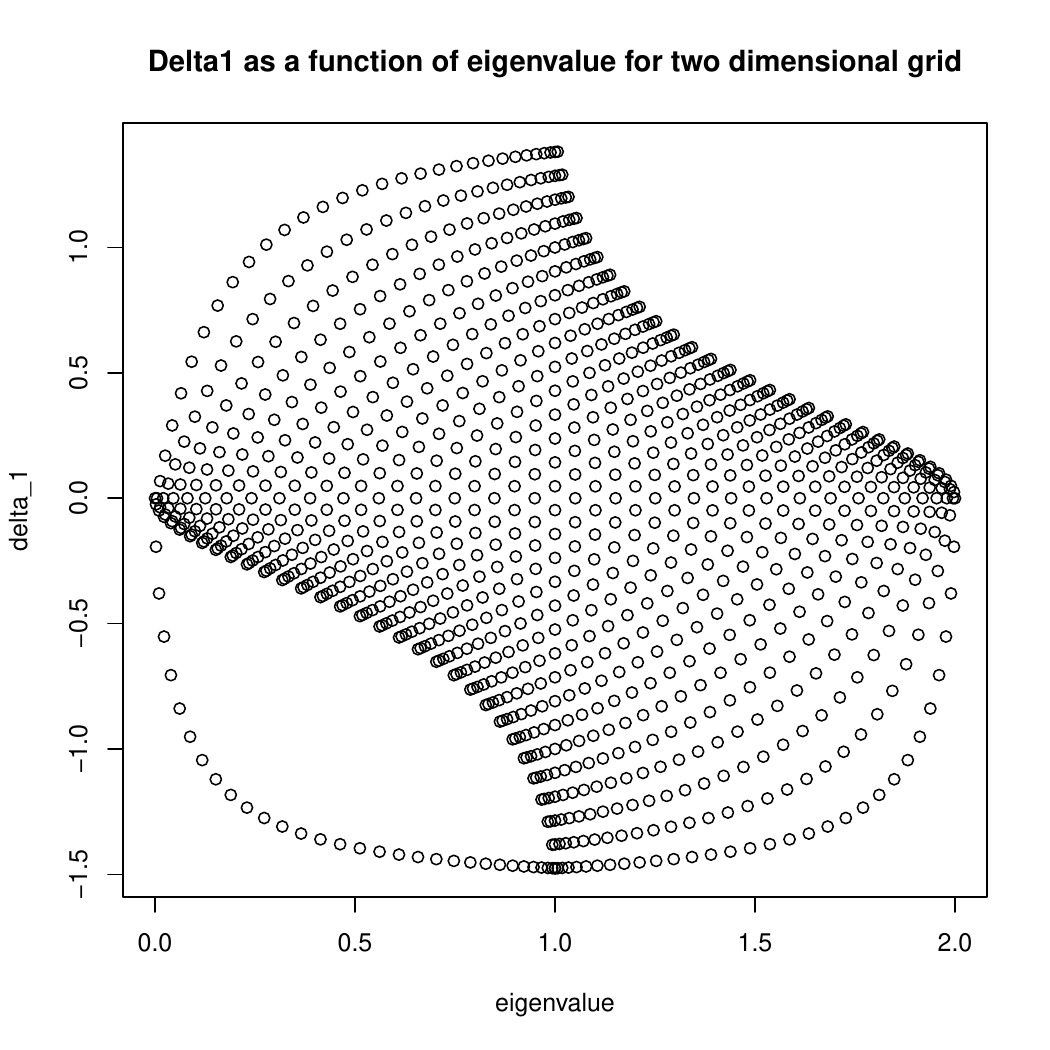}}  %
 (b)\includegraphics[width=0.25\textwidth]{\figaddr{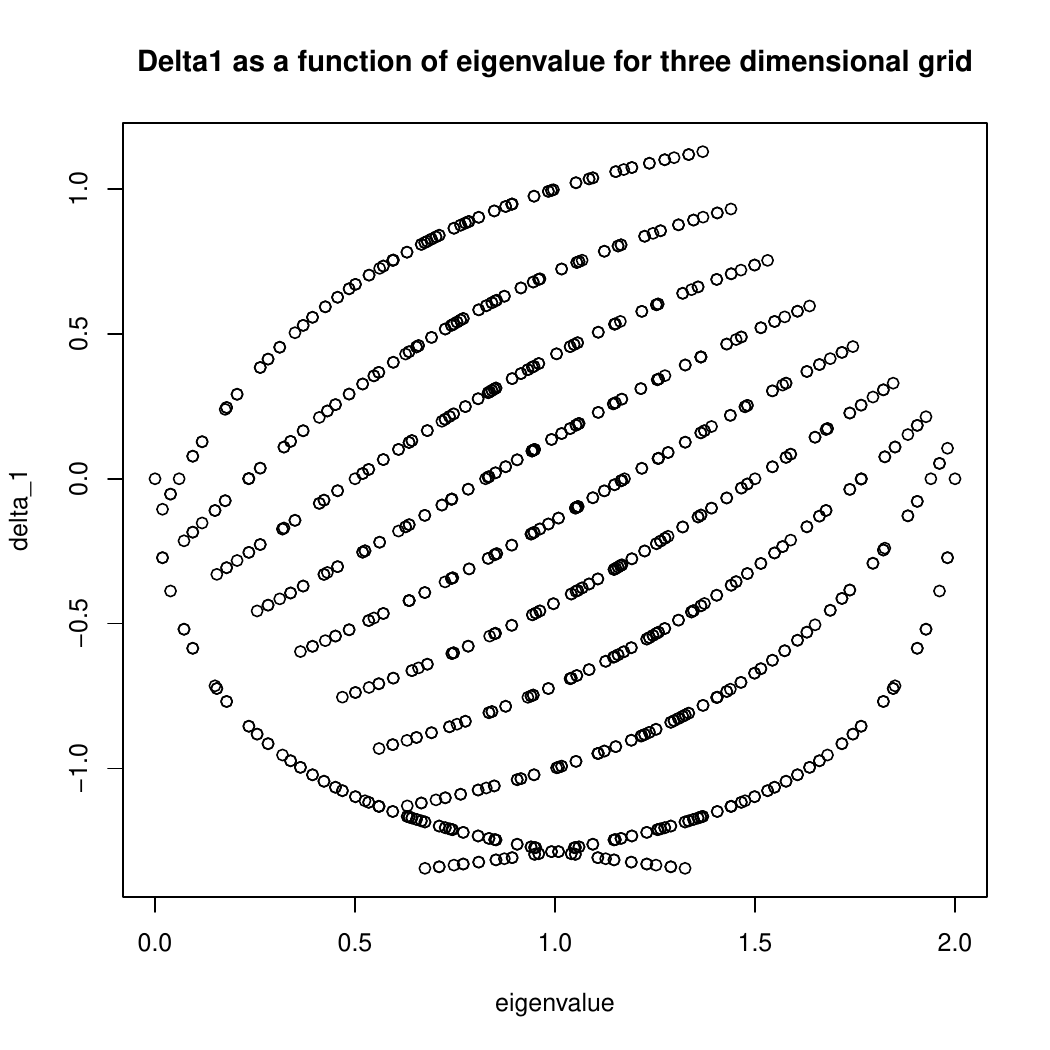}}  %
 (c)\includegraphics[width=0.25\textwidth]{\figaddr{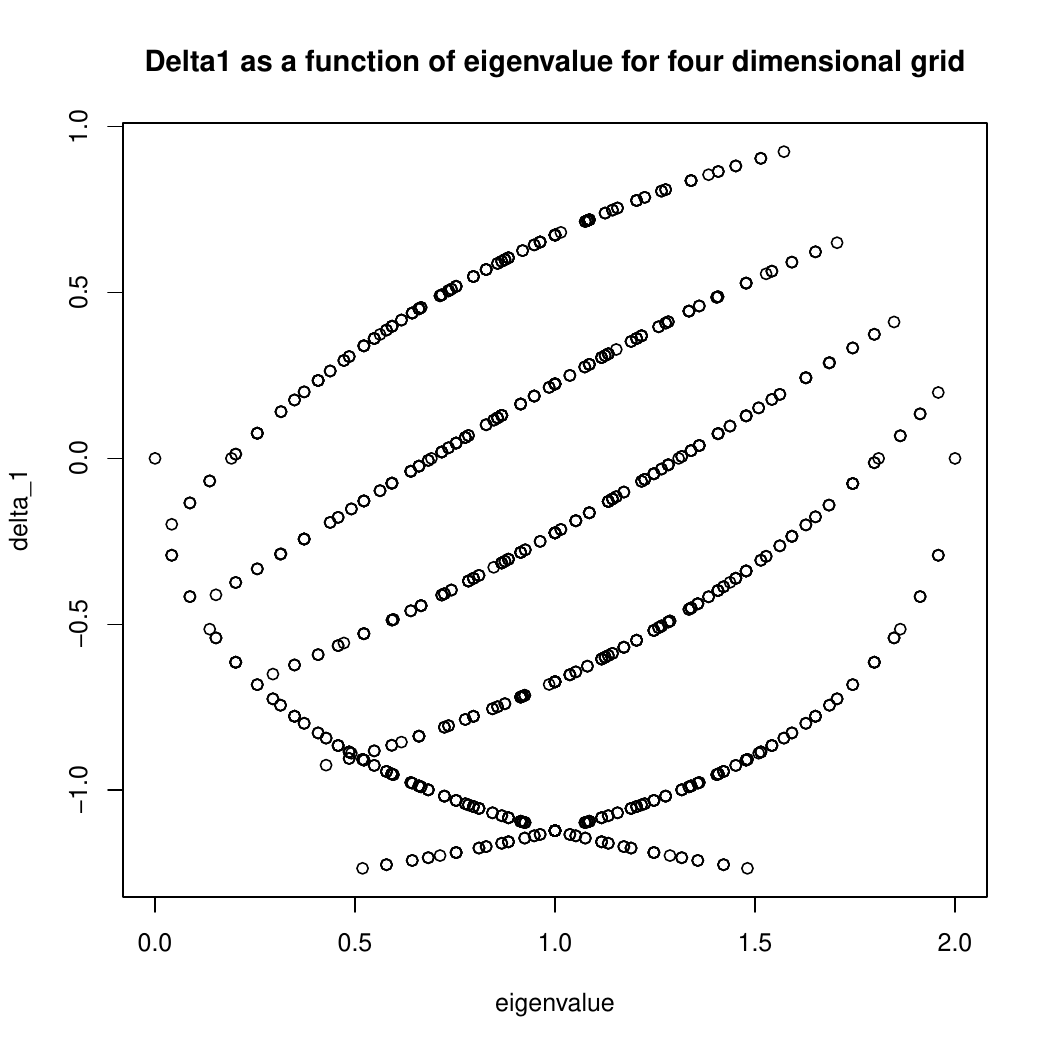}}  %
\caption{The plots of relationships of   normalized Laplacian eigenvalues
$\lambda$ 
 and shifts  $\delta$of grid graphs of approximately 1,000 nodes.
(a) 2-dimensional grid graph, 
(b) 3-dimensional grid graph, 
(c) 4-dimensional grid graph. 
}\label{fig:evnodeltanodes}
\end{figure}

Figure \ref{fig:evnodeltanodes} illustrates the relationship between eigenvalues and the shifts of normalized Laplacians in grid graphs.
This relationship seems not to be simplistic and may at least partially explain why we did not find a closed-form solution  for identifying eigenvalues and shifts. 
We did not plot such a relationship for one-dimensional grid as in this case shifts are equal zero.

\begin{figure}
\centering
 (a)\includegraphics[width=0.25\textwidth]{\figaddr{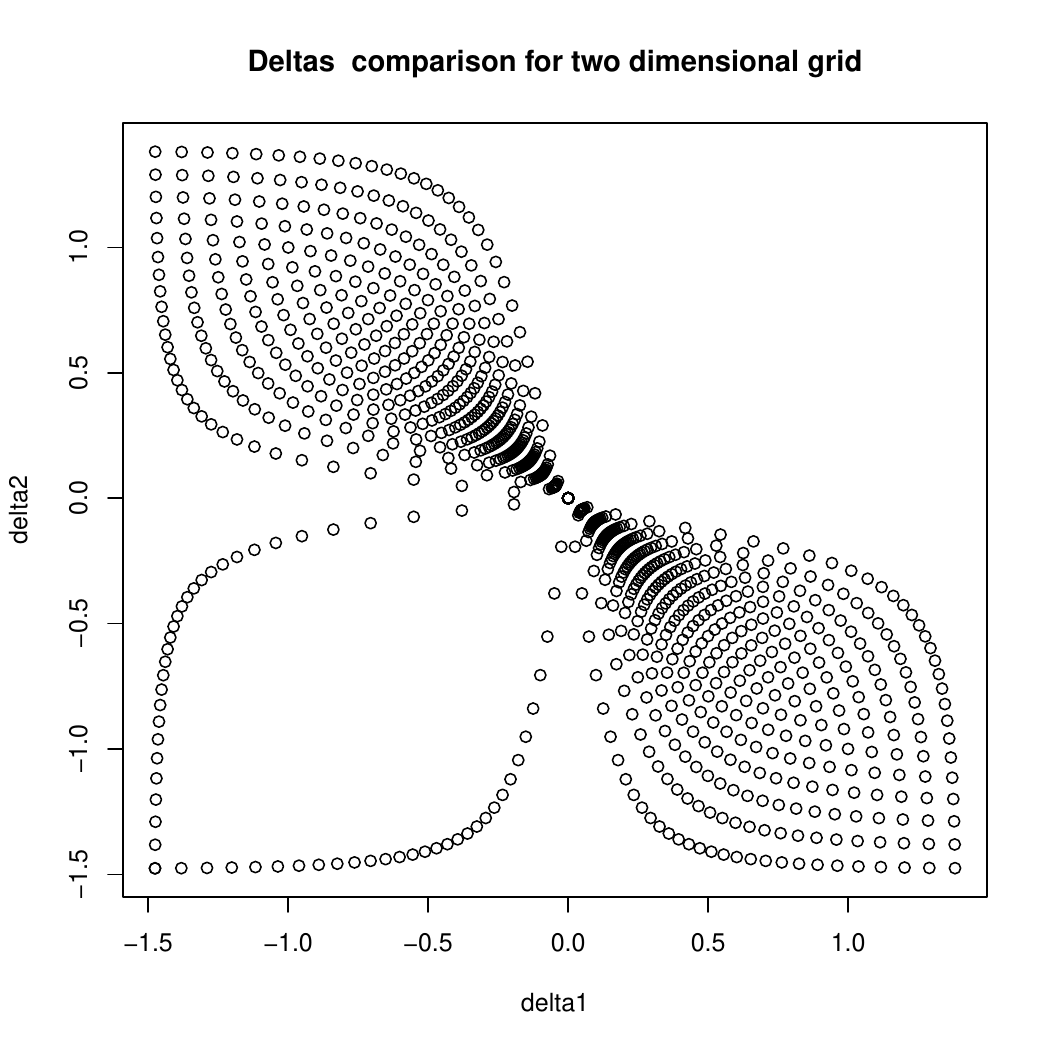}}  %
 (b)\includegraphics[width=0.25\textwidth]{\figaddr{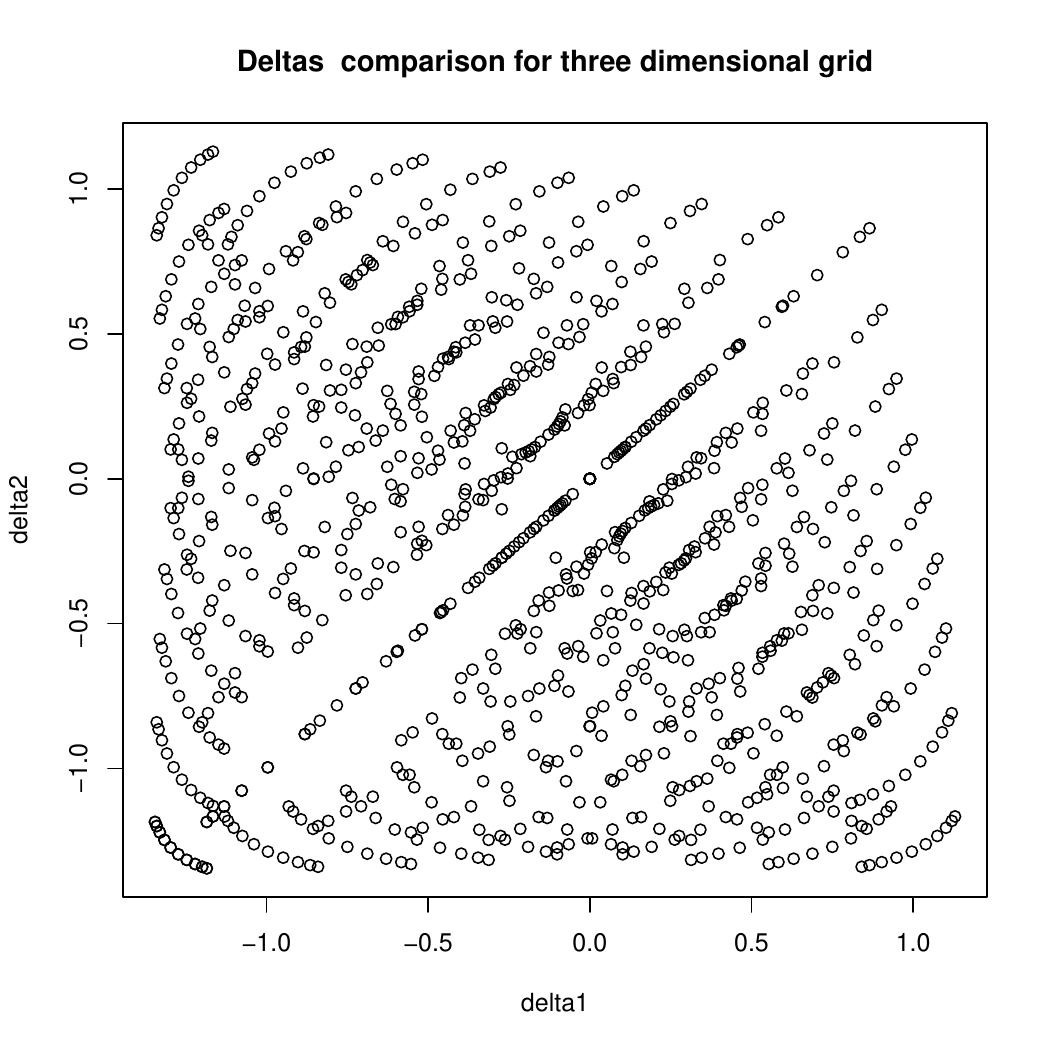}}  %
 (c)\includegraphics[width=0.25\textwidth]{\figaddr{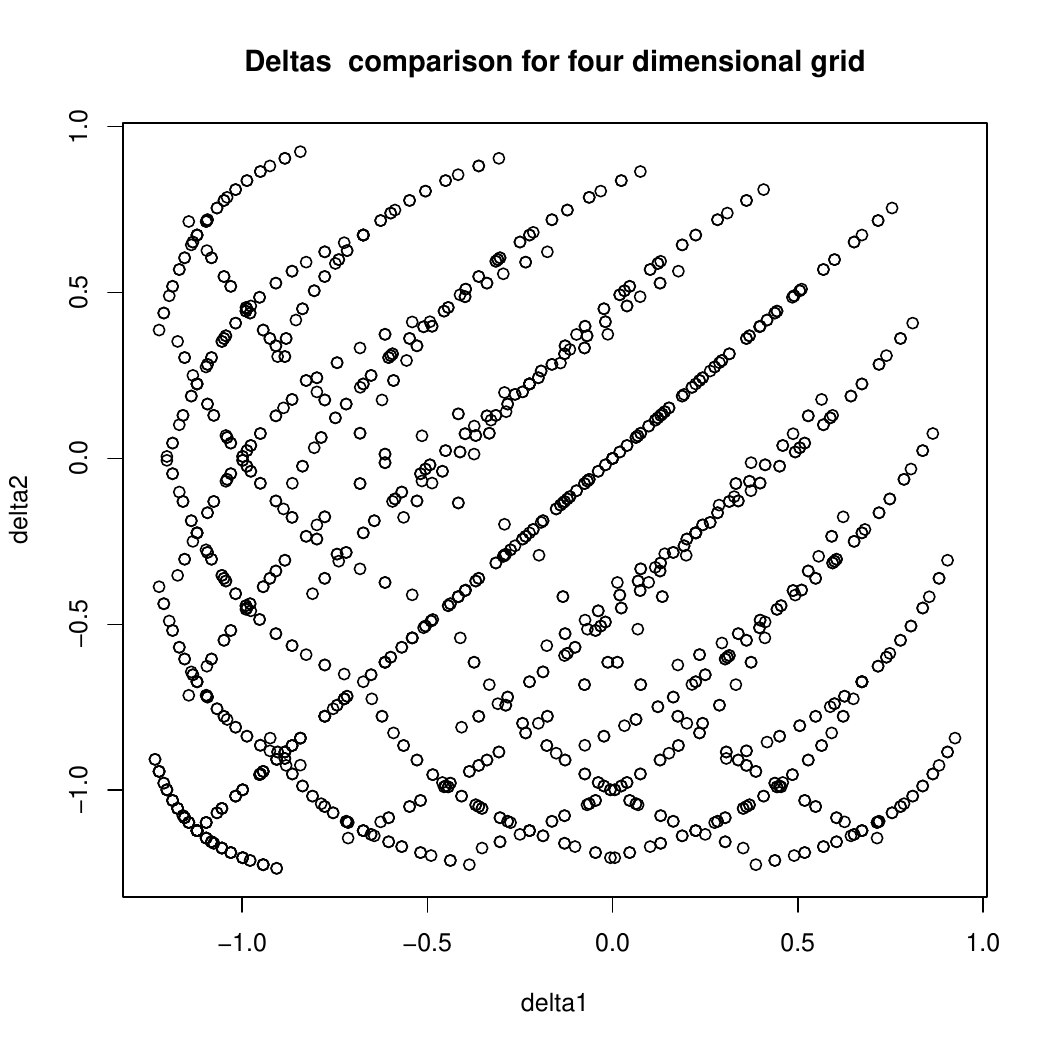}}  %
\caption{The plots of relationships of   normalized Laplacian   shifts of grid graphs of approximately 1,000 nodes in the first two dimensions.
(a) 2-dimensional grid graph, 
(b) 3-dimensional grid graph, 
(c) 4-dimensional grid graph. 
}\label{fig:deltas}
\end{figure}

Figure \ref{fig:deltas} depicts the relationship of shifts in various dimensions. 
In this case between the first and the second dimension of the grid. 
This relationship seems not to be a trivial one, though of its own beauty. The impact of presence of other dimensions can be clearly seen.  

}

Finally, we shall pose the question how the cumulative distribution of eigenvalues 
of a normalized Laplacian of a grid graph would look like in the limit (when the number of nodes grows). 
If we keep in mind that $|delta_j|<\pi$, then for sufficiently high $n_j$ and $z_j$
the contribution of $delta_j$ in the equation \eref{eq:lambdaN} will vanish and
$$ \lambda_{\mathbf{z}} \approx  1+\frac{1}{d} \sum_{j=1}^d 
\cos\left(\frac{z_j \pi}{n_j} \right)
=
 1+\frac{1}{d} \sum_{j=1}^d 
\left( 1-2\sin^2\left(\frac{z_j \pi}{2n_j} \right)\right)
=
 2-2\frac{1}{d} \sum_{j=1}^d 
 \sin^2\left(\frac{z_j \pi}{2n_j} \right) 
$$ 
which up to a scaling factor resembles the defining equation of 
combinatorial Laplacian eigenvalue \eref{eq:lambdaC}.
This means that the in the limit behavior of normalized Laplacian eigenvalues will resemble that combinatorial Laplacian eigenvalues that is \emph{no uniformity can be assumed}. 

\figVer{
\begin{figure}
\centering
 (a)\includegraphics[width=0.45\textwidth]{\figaddr{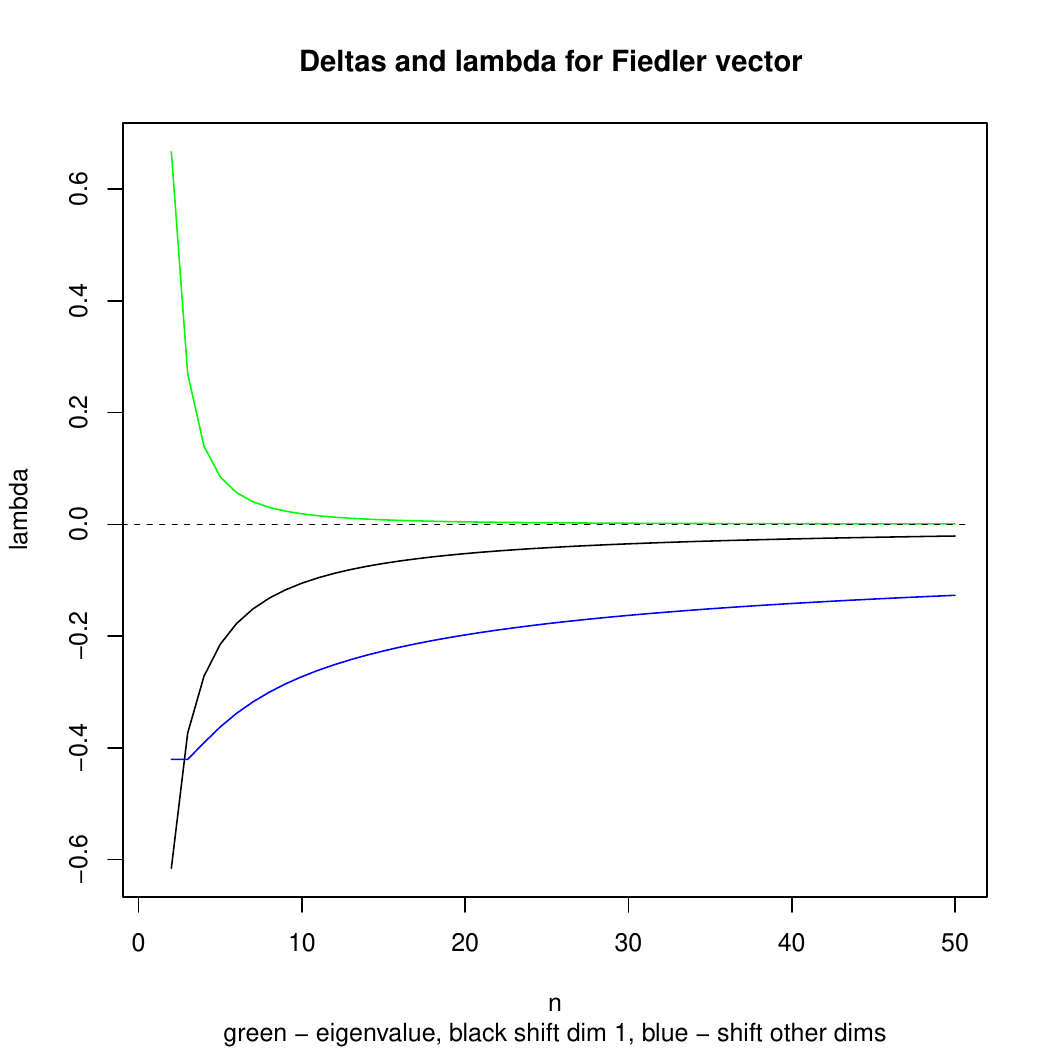}}  %
 (b)\includegraphics[width=0.45\textwidth]{\figaddr{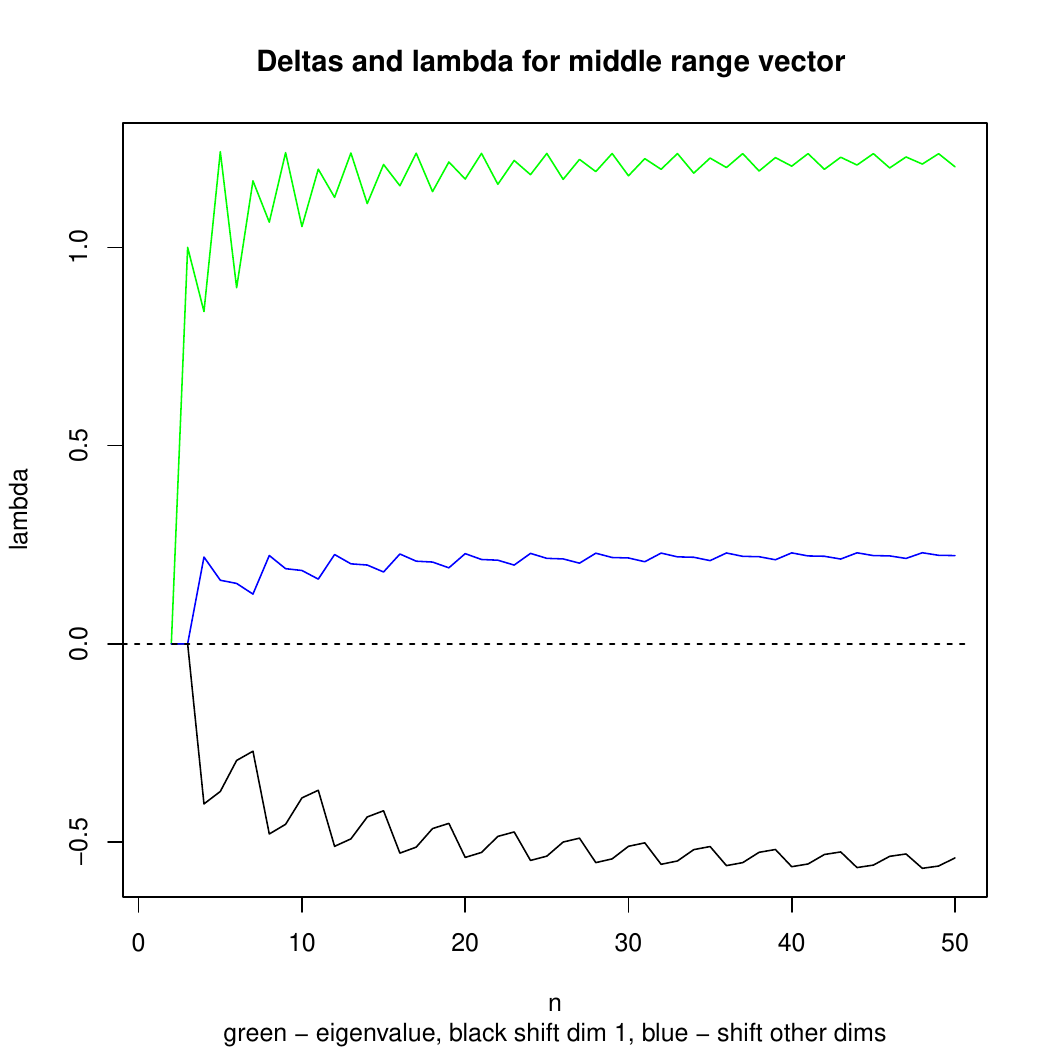}}  %
\\ (c)\includegraphics[width=0.45\textwidth]{\figaddr{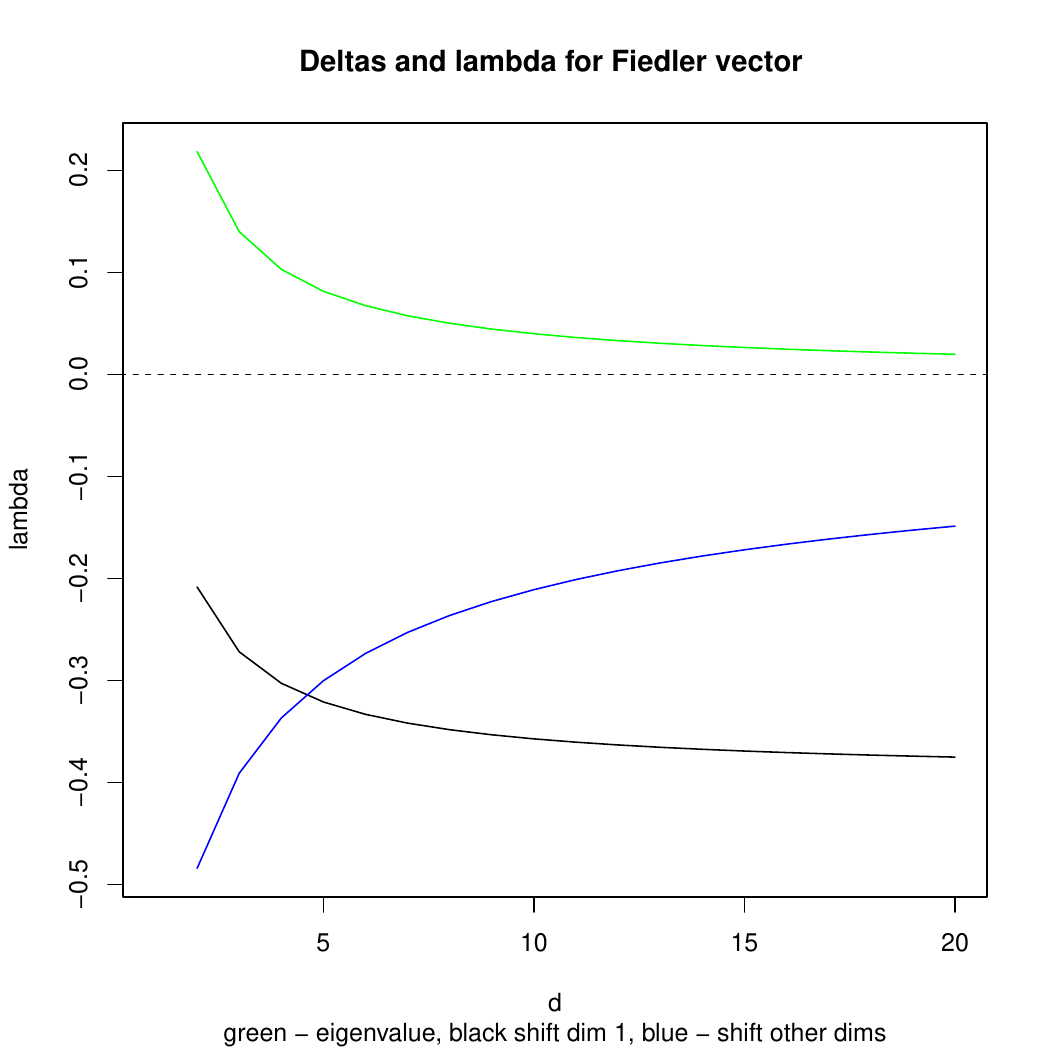}}  %
 (d)\includegraphics[width=0.45\textwidth]{\figaddr{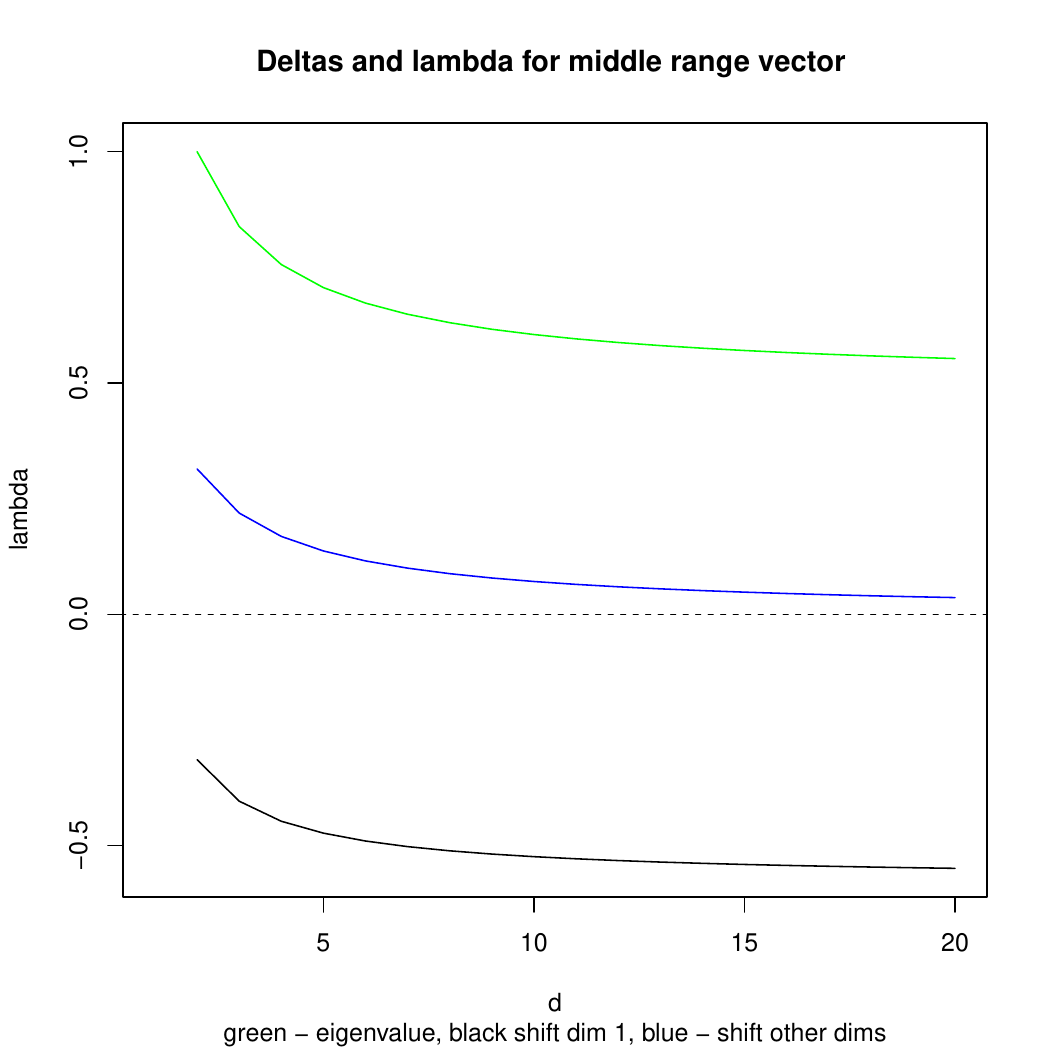}}  %
\caption{The plots of influence of the number layers 
and the  dimensionality  on shifts of   normalized Laplacian   of grid graphs  
(a) the Fiedler vector for a 
 3-dimensional grid graph with $n$ being the number of layers in each dimension, 
(b) a  "middle range" vector for a 
 3-dimensional grid graph with $n$ being the number of layers in each dimension, 
(a) the Fiedler vector for 4 layers in each dimension  
in a  grid graph with $d$ being the number of  dimensions, 
(b) a  "middle range" vector for  4 layers in each dimension   
in a  grid graph with $d$ being the number of  dimensions. 
Green line shows the eigenvalue. 
Black line - the shift in the first dimension. 
Blue line - the shifts in all other dimensions. 
Same number of layers in each direction assumed. 
All $z$s are identical for all dimensions except for the first one. 
For Fiedler vector $z_1=n-2$, and all other  $z_j=n-1$.
For middle vector $z_1=n/4$, and all other  $z_j=n/2$.
}\label{fig:Fiedler_n_d}
\end{figure}
}

Let us consider also the "in the limit" behavior of the 
normalized Laplacian eigenvectors, as described by the expression
\eref{eq:eigenvectorcomponentN}. 
For sufficiently large $\z$ and $\x$ 
$\nu_{\z,\x}= 
D^{1/2}_{\x,\x}
\prod_{j=1}^d (-1)^{x_j} 
\cos\left(\frac{x_j-1}{n_j-1}\left(z_j \pi -2\delta^\mathbf{z}_j\right)+\delta^\mathbf{z}_j\right)
\approx
D^{1/2}_{\x,\x}
\prod_{j=1}^d (-1)^{x_j} 
\cos\left(\frac{z_j \pi}{n_j}\left(x_j-1\right)+\delta^\mathbf{z}_j\right)
$
which differs nevertheless from combinatorial Laplacian eigenvector components 
\eref{eq:eigenvectorcomponentC}
$\nu_{\z,\x}= 
\prod_{j=1}^d  \cos\left(\frac{\pi z_j}{n_j} \left(x_j-0.5\right)\right) 
$ 
in terms of the shift. 

\figVer{
Figure \ref{fig:Fiedler_n_d} can be helpful in understanding the significance of the shift.
Depending on the eigen identity number, the shifts may or may not converge to zero (they may converge to some other value). 
}

\section{Random Walk Laplacians of  Unweighted Grid Graphs}\label{sec:RWLtheorems}
As already mentioned, the eigenvalues and eigenvectors for Random Walk Laplacians can be easily derived from those for Normalized Laplacians (see section \ref{sec:notation}. 
More formally:

\begin{theorem} \label{th:randomwalkLap}
For a regular $d$-dimensional grid with at least one inner node, its random walk  Laplacian  $\mathbb{L}$
has  the 
  eigenvalues  of the   form 
\begin{equation}\label{eq:lambdaRW}
\lambda_{\mathbf{z}}=1+\frac{1}{d} \sum_{j=1}^d 
\cos\left(\frac{1}{n_j-1}\left(z_j \pi -2\delta_j\right) \right)
\end{equation}
with the ${\boldsymbol\delta}^\mathbf{z}$ vector defined 
as a solution of the equation system consisting of 
  the preceding equation \eref{eq:lambdaN2d} and the 
 equations \eref{eq:lambdaN2dm1} for each $l=1,\dots,d$. 
The corresponding eigenvectors $\mathbf{v}_{\mathbf{z}}$ have components of the form  
\begin{equation}\label{eq:eigenvectorcomponentRW}
\nu_{\mathbf{z},[x_1,\dots,x_d]}= 
D_{[x_1,\dots,x_d],[x_1,\dots,x_d]}
\prod_{j=1}^d (-1)^{x_j} 
\cos\left(\frac{x_j-1}{n_j-1}\left(z_j \pi -2\delta^\mathbf{z}_j\right)+\delta^\mathbf{z}_j\right)
\end{equation} 
\end{theorem}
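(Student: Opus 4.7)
\medskip

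\noindent\textbf{Proof plan.} The plan is to obtain this theorem as an immediate corollary of Theorem \ref{th:normalizedLap} via the similarity relationship between $\mathbb{L}$ and $\mathfrak{L}$ that was already recorded in Section \ref{sec:notation}. Recall the computation given there: if $\mathbf{v}$ satisfies $\mathfrak{L}\mathbf{v} = \lambda \mathbf{v}$, then a short manipulation yields $\mathbb{L}(D^{1/2}\mathbf{v}) = \lambda (D^{1/2}\mathbf{v})$. Equivalently, $\mathbb{L} = D^{1/2}\mathfrak{L}D^{-1/2}$, so $\mathbb{L}$ and $\mathfrak{L}$ are similar matrices and therefore share the same spectrum with the same multiplicities. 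In particular, the eigenvalues \eqref{eq:lambdaRW} and the defining equations for the shifts $\boldsymbol{\delta}^{\mathbf{z}}$ are inherited verbatim from Theorem \ref{th:normalizedLap}.

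For the eigenvectors the plan is to apply $D^{1/2}$ componentwise to the formula \eqref{eq:eigenvectorcomponentN}. Because $D$ is diagonal with entries $D_{[x_1,\dots,x_d],[x_1,\dots,x_d]}$ equal to the node degree, the $i$-th component of $D^{1/2}\mathbf{v}_{\mathbf{z}}$ for the normalized-Laplacian eigenvector equals
\[
D^{1/2}_{[x_1,\dots,x_d],[x_1,\dots,x_d]} \cdot D^{1/2}_{[x_1,\dots,x_d],[x_1,\dots,x_d]} \prod_{j=1}^d (-1)^{x_j} \cos\!\left(\frac{x_j-1}{n_j-1}\left(z_j\pi - 2\delta^{\mathbf{z}}_j\right) + \delta^{\mathbf{z}}_j\right),
\]
and the two $D^{1/2}$ factors collapse into the single factor $D_{[x_1,\dots,x_d],[x_1,\dots,x_d]}$ that appears in \eqref{eq:eigenvectorcomponentRW}. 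So the stated eigenvector formula is exactly the image under $D^{1/2}$ of the normalized-Laplacian eigenvector, which we have already established is an eigenvector of $\mathbb{L}$ with the correct eigenvalue.

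The only item requiring a word of justification is completeness of the list: we must check that this procedure produces a full basis of eigenvectors. Since $D$ is a positive diagonal matrix, $D^{1/2}$ is an invertible linear map, so it carries any linearly independent family into a linearly independent family. Theorem \ref{th:normalizedLap} delivers $n = \prod_j n_j$ mutually orthogonal eigenvectors of $\mathfrak{L}$ (one for each admissible index vector $\mathbf{z}$), hence their images under $D^{1/2}$ form $n$ linearly independent eigenvectors of $\mathbb{L}$, exhausting its spectral decomposition.

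No genuine obstacle is expected, because all of the hard analytic work -- solving the transcendental system for $\lambda_{\mathbf{z}}$ and $\boldsymbol{\delta}^{\mathbf{z}}$, verifying the boundary equations for every type of border node, and establishing orthogonality in the case of repeated eigenvalues -- has already been carried out for the normalized Laplacian in Subsection \ref{subsec:generalInner}. The closest thing to a subtlety is a cosmetic one: the theorem is stated for a \emph{regular} $d$-dimensional grid, but the similarity argument itself never uses regularity, so the result as stated follows directly, and in fact extends (with identical proof) to the general and no-inner-node cases handled by Theorem \ref{th:normalizedLap} and Subsection \ref{subsec:generalNoInner}.
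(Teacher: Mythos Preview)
Your proposal is correct and follows exactly the approach the paper takes: the paper does not give a standalone proof of Theorem~\ref{th:randomwalkLap} but simply refers back to the similarity computation in Section~\ref{sec:notation} (namely, that $D^{1/2}\mathbf{v}$ is an eigenvector of $\mathbb{L}$ whenever $\mathbf{v}$ is an eigenvector of $\mathfrak{L}$ with the same eigenvalue) and then restates Theorem~\ref{th:normalizedLap} with the eigenvector formula multiplied through by $D^{1/2}$. Your write-up is in fact more careful than the paper's, since you explicitly address completeness via the invertibility of $D^{1/2}$ and flag the inessential use of the word ``regular'' in the statement.
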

 




\section{Combinatorial Laplacians of Weighted Grid Graphs}\label{secW:CLtheorems}

In this section we extend the closed-form solution for the eigen-problem of combinatorial Laplacian of a $d$-dimensional unweighted grid graph to a weighted one.
The eigenvalues are of the form described by formula \eref{eqW:lambdaC} and the corresponding eigenvecvtors have the form \eref{eqW:coLvector}, 
 
Note that the form of eigenvectors is identical as in case of unweighted graphs \eref{eq:coLvector}, while the eigenvalues differ and are susceptible to scale (they increase when the weights of edges are propertionally increased). 

We will proceed as follows: 
Given the  Theorem \ref{th:reductiontoeigenvectors}, we know that the components of the eigen identity vector can be reduced to the range of $[0,n_j-1]$, because outside of this range the vectors described by formula  \eref{eqW:coLvector} are identical up to the sign to the vectors within this range so that they cannot constitute valid alternative eigenvectors. 
With Theorem \ref{thW:eigenpairs}, we demonstrate that indeed the numbers described by formula \eref{eqW:lambdaC} are eigenvalues and the vectors of the form \eref{eqW:coLvector} are the corresponding eigenvectors.The proof of this theorem is based on the idea of weighted grid graph adjacency matrix decomposition into (additive) parts related to individual directions and the auxiliary Theorem \ref{th:componenteigenpairs} is reused to prove eigenvalue and eigenvectior properties for these parts. 

As the eigenvectors are the same as for the unweighted graphs, 
the Theorems  \ref{th:zerosum} and  Theorem \ref{th:orthogonality} imply the orthogonality of the weighted eigenvectors  which implies that we identified all eigenvectors.  
The respective proofs presented in this section are in fact not so straight forward extension of already presented results for unweighted graphs in Section \ref{secW:CLtheorems}. 
But note that there exists a qualitative differernce in their applicability. 
The unweighted grid graphs represent graphs without an explicit structure, 
while the weighted ones can be tuned to express a graph with some regular structure , where the sharpness of the structure may be regulated by the applied weights. 
Hence the subtle differernces are worth to pay attention. 
 
Let us define  
\begin{equation}\label{eqW:lambdaC}
\lambda_{[z_1,\dots,z_d]}=\sum_{j=1}^d 
2 \waga_j\cdot \left(1-  \cos\left(\frac{\pi  z_j}{n_j}\right)\right) 
\end{equation}
\noindent
where for each $j=1,\dots,d$ $z_j$ is an integer such that $0\le z_j\le n_j-1$.

Define 
 $\lambda_{(j,z_j)}=  
2\waga_j\cdot \left(1-  \cos\left(\frac{\pi  z_j}{n_j}\right)\right) 
$. Then 
$\lambda_{[z_1,\dots,z_d]}=\sum_{j=1}^d 
\lambda_{(j,z_j )}
$. 
Define furthermore 
\begin{equation}\label{eqW:eigenvectorcomponentC}
\nu_{[z_1,\dots,z_d],[x_1,\dots,x_d]}= 
\prod_{j=1}^d  \cos\left(\frac{\pi z_j}{n_j} \left(x_j-0.5\right)\right) 
\end{equation}
\noindent
where for each $j=1,\dots,d$ $x_j$ is an integer such that $1\le x_j\le n_j$.

And finally define the $n$ dimensional vector 
 $\mathbf{v}_{[z_1,\dots,z_d]}$
such that 
 \begin{equation}\label{eqW:coLvector}
 \mathbf{v}_{[z_1,\dots,z_d],i}=\nu_{[z_1,\dots,z_d],[x_1,\dots,x_d] }
\end{equation}


Consider the following claim: 
\begin{theorem}\label{thW:eigenpairs}
Given the combinatorial Laplacian $L$
of the weighted grid graph $G_{(n_1,\dots,n_{d})(\waga_1,\dots,\waga_{d})}$,
for 
each 
vector of integers 
$[z_1,\dots,z_d]$ such that for each $j=1,\dots,d$ 
$0\le z_j\le n_j-1$, 
the $\lambda_{[z_1,\dots,z_d] }$ is an eigenvalue of $L$
and $\mathbf{v}_{[z_1,\dots,z_d]}$ is a corresponding eigenvector. 
\end{theorem}
\begin{proof}
Due to the nature of the grid graph,  
  the similarity matrix 
$S$ 
may be expressed as the sum of similarity matrices 
 $S=\sum_{j=1}^d S_j$,  
where $S_j$ is a connectivity matrix of a graph in which 
a node with identity vector $[x_1,\dots,x_d]$ is connected  
with the node $[x_1,\dots,x_j-1,x_d]$ if $x_j>1$ 
and with node $[x_1,\dots,x_j+1,x_d]$ if $x_j<n_j$ with the dimension $j$ induced weight $\waga_j$ and there are no other connections in the graph. 
Let us denote with  $L_j$  the Laplacian corresponding to the similarity matrix $S_j$. 
Then clearly
$$L=\sum_{j=1}^d L_j$$
As the  Theorem  \ref{th:componenteigenpairs} is applicable also in weighted case, 
$$L \mathbf{v}_{[z_1,\dots,z_d]}=(\sum_{j=1}^d L_j)\mathbf{v}_{[z_1,\dots,z_d]}
=  \sum_{j=1}^d (L_j\mathbf{v}_{[z_1,\dots,z_d]})
$$ $$
= \sum_{j=1}^d (\lambda_{(j,z_j )} \mathbf{v}_{[z_1,\dots,z_d]})
= \left(\sum_{j=1}^d \lambda_{(j,z_j )}\right) \mathbf{v}_{[z_1,\dots,z_d]} 
=  \lambda_{[z_1,\dots,z_d] } \mathbf{v}_{[z_1,\dots,z_d]} 
$$
This is by the way strikingly similar to the results in the Theorem \ref{th:eigenpairs},  but bear in mind that $\lambda$s are carrying the weight related information.  
\end{proof}

We would need now to  establish that all eigenvectors
  are orthogonal to one another. 
As the eigenvectors are identical in weighted and unweighted grid graphs, 
we need only to remind the respective theorems for unweighted graphs, that is  \ref{th:zerosum} and   \ref{th:orthogonality}. 
As all eigenvectors computed by our formulas are orthogonal, 
and the index vectors exhaust the number of nodes,
then the list of eigenvectors and eigenvalues is complete.

\section{Unoriented Laplacian of a Weighted Grid Graph}\label{secW:UOLgeneralization}

Like in case of unweighted grid graphs, 
 there exists an elegant solution to the eigen-problem of the unoriented Laplacian. 
The unoriented Laplacian is defined as 
 $K=D+S$.

\begin{theorem}
The unoriented Laplacian eigenvalues for a weighted grid graph are of the same form as for the combinatorial Laplacian that is
\begin{equation}\label{eqW:lambdaUO}
\lambda_{[z_1,\dots,z_d] }=\sum_{j=1}^d 
\waga_j\left(2 \sin\left(\frac{\pi  z_j}{2 n_j}\right)\right)^2
\end{equation}

The corresponding eigenvectors  have components of the form 

\begin{equation}\label{eqW:eigenvectorcomponentUO}
\nu_{[z_1,\dots,z_d],[x_1,\dots,x_d]}= 
\prod_{j=1}^d (-1)^{x_j} \cos\left(\frac{\pi z_j}{n_j} \left(x_j-0.5\right)\right) 
\end{equation}
\end{theorem}

Pay attention to the fact that the  weighted unoriented Lplacian eigenvectors differ slightly from respective combinatorial Laplacian, but are the same as those for unweighted unoriented Laplacian.   

The proofs of these properties, like in unweighted case, follow the same pattern as above with slight 
variations:
the sums of elements in these vectors are not equal zero any more in general
(an analogue of Theorem \ref{th:zerosum} is not there). 
However, as we multiply always pairs of values 
associated with the same $[x_1,\dots,x_d]$ vector, 
the factors $(-1)^{x_j}$ cancel out and the proofs of analogous other four theorems are essentially the same - we can proceed as if the eigenvectors were those of combinatorial Laplacians.

\section{Normalized Laplacians of Weighted Grid Graphs}\label{secW:NLtheorems}

Please keep in mind that the normalised Laplacian of a graph is defined as
$$\mathfrak{L}=D^{-1/2}LD^{-1/2}=D^{-1/2}(D-S)D^{-1/2}=I-D^{-1/2} S D^{-1/2}$$

The approach to the eigen-problem of normalised Laplacian would be very similar in spirit  to both combinatorial Laplacian for weighted graphs and normalized Laplacians for unweighted graphs. 
As in case of unweighted graph normalized Laplacians,  the solution is not completely closed-form. An iterative component is needed when identifying an eienvalue. Once the eigenvalue is identified, so-called \emph{shifts} or $\delta$s are also identified and then the eigenvalue and eigenvectors are in closed form with respect to these shifts $\delta$.
The problem of only a partial closed-from is strongly related to the fact that the eigen-problem for the normalised Laplacian cannot be decomposed in a way that could be done for the combinatorial Laplacians.

Normalization causes that the eigenvectors of weighted grid graph normalized Laplacians, contrary to their combinatorial counterparts, depend also on weights, because the respective eigenvalues depend on them. 

Therefore the proofs for the  weighted cases cannot be taken over  from the unweighted cases as it was the case with combinatorial Laplacians. Hence this section will be more lengthy.

This section is essentially devoted to the proof of the Theorem \ref{thW:normalizedLap} on the form of eigenvalues and eigenvectors of a normalised Laplacian of a weighted grid graph. 
The proof will be split into two cases of two types of weighted grid graph.
We shall divide the nodes of the weighted grid into two categories: the inner and the border ones. 
The inner ones are those that have two neighbours in the grid in each dimension. 
The border ones are the remaining ones. 
The two types of weighted grid graphs are ones that have inner nodes, and they are handled in Subsection \ref{subsecW:generalInner}, while the graphs without inner nodes are treated in Subsection \ref{subsecW:generalNoInner}. 

\subsection{Weighted Grid Graphs with inner nodes}\label{subsecW:generalInner}

In this subsection we prove the validity of our suggested forms of eigenvalues and eigenvectors of normalized Laplacians of weighted grid graphs, as formulated in the   Theorem \ref{thW:normalizedLap}.
Note that the normalized Laplacian is insensitive to scaling of edge weights and in case of identical weights in all directions it is the same as for unweighted graph. 
Eigenvectors, unlike those for combinatorial Lasplacian, are not identical with ones of unweighted graph. 

The proof will be divided into subsubsections in order not to get lost in the multitude of formulas. 
The Subsubsection\ref{subsubW:deltas} is devoted to finding a simple equation system allowing to find the values of  shifts $\delta$  occurring in the formulas for eigenvalue and eigenvector based on selected nodes. 
The Subsubsection\ref{subsubW:howcompute} contains practical hints on simple solving of the equation system for $\delta$s (shifts).
The Subsubsection\ref{subsubW:othernodes} is devoted to demonstrating, that once the above equation system is solved, the shifts $\delta$  fit also other nodes, not considered in Subsubsection \ref{subsubW:deltas}.
The Subsubsection\ref{subsubW:noLorthogonality} demonstrates that all the eigenvectors are orthogonal to each other so that it is assured that all the eigenvectors have been found.

As in the previous sections, we shall index the eigenvalues and eigenvectors with the vector $\z=[z_1,\dots,z_d]$ such that 
$0\le z_j<n_j$ for $j=1,\dots,d$.   

Note that if $\mathbf{v}$ is the eigenvector of $\mathfrak{L}$ for some eigenvector $\lambda$, then 
$\lambda \mathbf{v}=D^{-1/2}LD^{-1/2}  \mathbf{v}$,
$\lambda \mathbf{v}=D^{-1/2}LD^{-1/2}  \mathbf{v}$,
$\lambda  (D^{-1/2} \mathbf{v})=D^{-1}L (D^{-1/2}  \mathbf{v})$,
$\lambda  D (D^{-1/2} \mathbf{v})= L (D^{-1/2}  \mathbf{v})$.
Denote $\mathbf{w}=(D^{-1/2}  \mathbf{v}) $. 
Consequently we seek $\lambda D \mathbf{w} = L \mathbf{w}$,  
$\lambda D \mathbf{w} = (D-S) \mathbf{w}$, 
$(1-\lambda) D \mathbf{w} =  S  \mathbf{w}$, 
$((1-\lambda) D-S) \mathbf{w} = 0$.

We will subsequently show that
\begin{theorem} \label{thW:normalizedLap}
The normalized Laplacian  $\mathfrak{L}$
of a $d$-dimensional weighted grid graph with at least one inner node  has  the 
  eigenvalues  of the   form

\begin{equation}\label{eqW:lambdaN}
\lambda_{\mathbf{z}}=1+ \sum_{j=1}^d \frac{\waga_j}{\sum_{j=1}^d \waga_j}
\cos\left(\frac{1}{n_j-1}\left(z_j \pi -2\delta_j\right) \right)
\end{equation}
with the ${\boldsymbol\delta}^\mathbf{z}$ vector, called shift vector, defined 
as a solution of the equation system consisting of 
  the subsequent equation \eref{eqW:lambdaN2d} and the 
 equations \eref{eqW:lambdaN2dm1} for each $l=1,\dots,d$. 
The corresponding eigenvectors $\mathbf{v}_{\mathbf{z}}$ have components of the form  
\begin{equation}\label{eqW:eigenvectorcomponentN}
\nu_{\mathbf{z},[x_1,\dots,x_d]}= 
D^{1/2}_{[x_1,\dots,x_d],[x_1,\dots,x_d]}
\prod_{j=1}^d (-1)^{x_j} 
\cos\left(\frac{x_j-1}{n_j-1}\left(z_j \pi -2\delta^\mathbf{z}_j\right)+\delta^\mathbf{z}_j\right)
\end{equation}
\end{theorem}

\subsubsection{Derivation of defining equations for shifts}\label{subsubW:deltas}
Let us now derive the defining equations for the  $\boldsymbol\delta^\mathbf{z}$ vector.
However, instead of the vector $\mathbf{v}$, consider the vector $\mathbf{w}$ 
with the components
\begin{equation}\label{eqW:omegaN}
\omega_{\mathbf{z},[x_1,\dots,x_d]}= 
D ^{-1/2}_{[x_1,\dots,z_d],[x_1,\dots,x_d]} 
\nu_{\mathbf{z},[x_1,\dots,x_d]}$$
$$=\prod_{j=1}^d (-1)^{x_j} 
\cos\left(\frac{x_j-1}{n_j-1}\left(z_j \pi -2\delta^\mathbf{z}_j\right)+\delta^\mathbf{z}_j\right)
\end{equation}

Consider an inner node $[x_1,\dots,x_d]$. 
In order for the $\mathbf{v}$ to be a valid  eigenvector, the next must hold:
\begin{align*}
\lambda_{\mathbf{z}} &
D _{[x_1,\dots,x_d],[x_1,\dots,x_d] } 
\omega_{\mathbf{z},[x_1,\dots,x_d]}
\\
=
&\sum_{j=1}^{d}
\left( 
	\waga_j\left(\omega_{\mathbf{z},[x_1,\dots,x_d]}
-\omega_{\mathbf{z},[x_1,\dots,x_j-1,\dots,x_d]}\right)
\right.
\\
&\left. +	\waga_j\left(\omega_{\mathbf{z},[x_1,\dots,x_d]}
-\omega_{\mathbf{z},[x_1,\dots,x_j+1,\dots,x_d]}\right)
\right)
\end{align*}
As for any inner node 
$D _{[x_1,\dots,x_d],[x_1,\dots,x_d] } =2\sum_{j=1}^d\waga_j$,
and denoting $\sum_{j=1}^d\waga_j$ with $\waga_\Sigma$ 
  we obtain
\begin{align*}
2\waga_\Sigma& \lambda_{\mathbf{z}}
\prod_{j=1}^d  
\cos\left(\frac{x_j-1}{n_j-1}\left(z_j \pi -2\delta^\mathbf{z}_j\right)+\delta^\mathbf{z}_j\right)
\\
=
&\sum_{j=1}^{d} \waga_j
\left( 
\cos\left(\frac{x_j-1}{n_j-1}\left(z_j \pi -2\delta^\mathbf{z}_j\right)+\delta^\mathbf{z}_j\right)
\prod_{i=1,i\ne j}^d  
\cos\left(\frac{x_i-1}{n_i-1}\left(z_i \pi -2\delta^\mathbf{z}_i\right)+\delta^\mathbf{z}_i\right)
\right. \\&+ \left. 
\cos\left(\frac{x_j-1-1}{n_j-1}\left(z_j \pi -2\delta^\mathbf{z}_j\right)+\delta^\mathbf{z}_j\right)
\prod_{i=1,i\ne j}^d  
\cos\left(\frac{x_i-1}{n_i-1}\left(z_i \pi -2\delta^\mathbf{z}_i\right)+\delta^\mathbf{z}_i\right)
\right. \\&+ \left. 
\cos\left(\frac{x_j-1}{n_j-1}\left(z_j \pi -2\delta^\mathbf{z}_j\right)+\delta^\mathbf{z}_j\right)
\prod_{i=1,i\ne j}^d  
\cos\left(\frac{x_i-1}{n_i-1}\left(z_i \pi -2\delta^\mathbf{z}_i\right)+\delta^\mathbf{z}_i\right)
\right. \\&+ \left. 
\cos\left(\frac{x_j-1+1}{n_j-1}\left(z_j \pi -2\delta^\mathbf{z}_j\right)+\delta^\mathbf{z}_j\right)
\prod_{i=1,i\ne j}^d  
\cos\left(\frac{x_i-1}{n_i-1}\left(z_i \pi -2\delta^\mathbf{z}_i\right)+\delta^\mathbf{z}_i\right)
\right)
\end{align*}

As 
\begin{align}
\cos&\left(\frac{x_j-1-1}{n_j-1}\left(z_j \pi -2\delta^\mathbf{z}_j\right)+\delta^\mathbf{z}_j\right)
\nonumber \\=& \cos\left(\frac{x_j -1}{n_j-1}\left(z_j \pi -2\delta^\mathbf{z}_j\right)+\delta^\mathbf{z}_j\right)
\cos\left(\frac{1}{n_j-1}\left(z_j \pi -2\delta^\mathbf{z}_j\right) \right)
\nonumber \\&+ \sin\left(\frac{x_j -1}{n_j-1}\left(z_j \pi -2\delta^\mathbf{z}_j\right)+\delta^\mathbf{z}_j\right)
\sin\left(\frac{1}{n_j-1}\left(z_j \pi -2\delta^\mathbf{z}_j\right) \right)
\label{eqW:cosminus}
\end{align}
and
\begin{align}
\cos&\left(\frac{x_j-1+1}{n_j-1}\left(z_j \pi -2\delta^\mathbf{z}_j\right)+\delta^\mathbf{z}_j\right)
\nonumber \\=& \cos\left(\frac{x_j -1}{n_j-1}\left(z_j \pi -2\delta^\mathbf{z}_j\right)+\delta^\mathbf{z}_j\right)
\cos\left(\frac{1}{n_j-1}\left(z_j \pi -2\delta^\mathbf{z}_j\right) \right)
\nonumber \\&- \sin\left(\frac{x_j -1}{n_j-1}\left(z_j \pi -2\delta^\mathbf{z}_j\right)+\delta^\mathbf{z}_j\right)
\sin\left(\frac{1}{n_j-1}\left(z_j \pi -2\delta^\mathbf{z}_j\right) \right)
\label{eqW:cosplus}
\end{align}
we obtain
\begin{align*}
2\waga_\Sigma &\lambda_{\mathbf{z}}
\prod_{j=1}^d  
\cos\left(\frac{x_j-1}{n_j-1}\left(z_j \pi -2\delta^\mathbf{z}_j\right)+\delta^\mathbf{z}_j\right)
\\&=
\sum_{j=1}^{d}\waga_j
\left( 
2\cos\left(\frac{x_j-1}{n_j-1}\left(z_j \pi -2\delta^\mathbf{z}_j\right)+\delta^\mathbf{z}_j\right)
\prod_{i=1,i\ne j}^d  
\cos\left(\frac{x_i-1}{n_i-1}\left(z_i \pi -2\delta^\mathbf{z}_i\right)+\delta^\mathbf{z}_i\right)
\right.\\&+\left.
2\cos\left(\frac{x_j-1}{n_j-1}\left(z_j \pi -2\delta^\mathbf{z}_j\right)+\delta^\mathbf{z}_j\right)
\cos\left(\frac{1}{n_j-1}\left(z_j \pi -2\delta^\mathbf{z}_j\right) \right)
\right.\\&\cdot\left.
\prod_{i=1,i\ne j}^d  
\cos\left(\frac{x_i-1}{n_i-1}\left(z_i \pi -2\delta^\mathbf{z}_i\right)+\delta^\mathbf{z}_i\right)
\right)
\end{align*}
which, upon division by $\prod_{j=1}^d  
\cos\left(\frac{x_j-1}{n_j-1}\left(z_j \pi -2\delta^\mathbf{z}_j\right)+\delta^\mathbf{z}_j\right)
$,  reduces to:
\begin{equation}\label{eqW:lambdaN2d}
 2\waga_\Sigma \lambda_{\mathbf{z}}
 =\sum_{j=1}^{d}\waga_j \left(2+2 \cos\left(\frac{1}{n_j-1}\left(z_j \pi -2\delta^\mathbf{z}_j\right) \right)
\right)
\end{equation}
which, after dividing by $2\waga_\Sigma$  
reduces to the formula 
\eref{eqW:lambdaN}. 
Thence for inner nodes the formula \eref{eqW:lambdaN} is a valid description of the eigenvalues, without any assumptions on the 
$\boldsymbol{\delta}^\mathbf{z}$. 

Let us turn to the border nodes. 
The peculiarity of these points within a grid is that they have a lower number of neighbours than the inner nodes that have been just considered. 
They may lack the one or more neighbours compared to the inner nodes, depending how many indices $x_l$ in their node identity vector are equal 1 or the maximum number of layers in a given dimension. 
But the issue is not so much the absense of a neighbour but rather the problem in the eigenequation for that node because 
we do not have available 
constituent expressions of the form 
\eref{eqW:cosminus}
 and \eref{eqW:cosplus}
the sinus components of which delete each other as for inner nodes. 

So we will need to compensate for the presence of the uncompensated sinus component in such eigen-equations via special choice of the shifts. As we will see, no explicit formula can be obtained for the computation of $\delta$s, hence implicit methods will be necessary.

Consider the ones that have one neighbour less than the inner nodes (one neighbour missing), say along the dimension $l$, $x_l=1$.
The subsequent formula must hold:
\begin{align*}
\lambda_{\mathbf{z}}&
(2\waga_\Sigma-\waga_l) 
\omega_{\mathbf{z},[x_1,\dots,x_d]}
\\=&
\waga_l	\left(\omega_{\mathbf{z},[x_1,\dots,x_d]}
-\omega_{\mathbf{z},[x_1,\dots,x_l+1,\dots,x_d]}\right)
\\&+\sum_{j=1,j\ne l}^{d}
\waga_j\left( 
	\left(\omega_{\mathbf{z},[x_1,\dots,x_d]}
-\omega_{\mathbf{z},[x_1,\dots,x_j-1,\dots,x_d]}\right)
\right.\\&+\left.	\left(\omega_{\mathbf{z},[x_1,\dots,x_d]}
-\omega_{\mathbf{z},[x_1,\dots,x_j+1,\dots,x_d]}\right)
\right)
\end{align*}

Let us now expand the terms $\omega$ and $\lambda$ via formulas \eref{eqW:omegaN} and  \eref{eqW:lambdaN}. 
We arrive at the formula: 
\begin{align*}
(2\waga_\Sigma-\waga_l)& \lambda_{\mathbf{z}}
\prod_{j=1}^d  
\cos\left(\frac{x_j-1}{n_j-1}\left(z_j \pi -2\delta^\mathbf{z}_j\right)+\delta^\mathbf{z}_j\right)
\\
=
&
\waga_l \left( 
\cos\left(\frac{x_l-1}{n_l-1}\left(z_l \pi -2\delta^\mathbf{z}_l\right)+\delta^\mathbf{z}_l\right)
\prod_{i=1,i\ne l}^d  
\cos\left(\frac{x_i-1}{n_i-1}\left(z_i \pi -2\delta^\mathbf{z}_i\right)+\delta^\mathbf{z}_i\right)
\right. \\&+ \left. 
\cos\left(\frac{x_l-1+1}{n_l-1}\left(z_l \pi -2\delta^\mathbf{z}_l\right)+\delta^\mathbf{z}_l\right)
\prod_{i=1,i\ne l}^d  
\cos\left(\frac{x_i-1}{n_i-1}\left(z_i \pi -2\delta^\mathbf{z}_i\right)+\delta^\mathbf{z}_i\right)
\right)
 \\&+\sum_{j=1,j\ne l }^{d}
\waga_j \left( 
\cos\left(\frac{x_j-1}{n_j-1}\left(z_j \pi -2\delta^\mathbf{z}_j\right)+\delta^\mathbf{z}_j\right)
\prod_{i=1,i\ne j}^d  
\cos\left(\frac{x_i-1}{n_i-1}\left(z_i \pi -2\delta^\mathbf{z}_i\right)+\delta^\mathbf{z}_i\right)
\right. \\&+ \left. 
\cos\left(\frac{x_j-1-1}{n_j-1}\left(z_j \pi -2\delta^\mathbf{z}_j\right)+\delta^\mathbf{z}_j\right)
\prod_{i=1,i\ne j}^d  
\cos\left(\frac{x_i-1}{n_i-1}\left(z_i \pi -2\delta^\mathbf{z}_i\right)+\delta^\mathbf{z}_i\right)
\right. \\&+ \left. 
\cos\left(\frac{x_j-1}{n_j-1}\left(z_j \pi -2\delta^\mathbf{z}_j\right)+\delta^\mathbf{z}_j\right)
\prod_{i=1,i\ne j}^d  
\cos\left(\frac{x_i-1}{n_i-1}\left(z_i \pi -2\delta^\mathbf{z}_i\right)+\delta^\mathbf{z}_i\right)
\right. \\&+ \left. 
\cos\left(\frac{x_j-1+1}{n_j-1}\left(z_j \pi -2\delta^\mathbf{z}_j\right)+\delta^\mathbf{z}_j\right)
\prod_{i=1,i\ne j}^d  
\cos\left(\frac{x_i-1}{n_i-1}\left(z_i \pi -2\delta^\mathbf{z}_i\right)+\delta^\mathbf{z}_i\right)
\right)
\end{align*}

\Bem{
Hence
\begin{align*}
(2\waga_\Sigma-\waga_l)& \lambda_{\mathbf{z}}
\prod_{j=1}^d  
\cos\left(\frac{x_j-1}{n_j-1}\left(z_j \pi -2\delta^\mathbf{z}_j\right)+\delta^\mathbf{z}_j\right)
\\
=
&
\waga_l \left( 
\cos\left(
         \frac{x_l-1}{n_l-1}
          \left(z_l \pi -2\delta^\mathbf{z}_l\right)
   +\delta^\mathbf{z}_l
    \right) 
+\cos\left(\frac{x_l-1+1}{n_l-1}\left(z_l \pi -2\delta^\mathbf{z}_l\right)
       +\delta^\mathbf{z}_l
     \right) 
\right)
\\& \cdot 
\prod_{i=1,i\ne l}^d  
\cos\left(\frac{x_i-1}{n_i-1}\left(z_i \pi -2\delta^\mathbf{z}_i\right)+\delta^\mathbf{z}_i\right)
 \\&+\sum_{j=1,j\ne l }^{d}
\waga_j\left( 
\cos\left(\frac{x_j-1}{n_j-1}\left(z_j \pi -2\delta^\mathbf{z}_j\right)+\delta^\mathbf{z}_j\right)
\prod_{i=1,i\ne j}^d  
\cos\left(\frac{x_i-1}{n_i-1}\left(z_i \pi -2\delta^\mathbf{z}_i\right)+\delta^\mathbf{z}_i\right)
\right. \\&+ \left. 
\cos\left(\frac{x_j-1-1}{n_j-1}\left(z_j \pi -2\delta^\mathbf{z}_j\right)+\delta^\mathbf{z}_j\right)
\prod_{i=1,i\ne j}^d  
\cos\left(\frac{x_i-1}{n_i-1}\left(z_i \pi -2\delta^\mathbf{z}_i\right)+\delta^\mathbf{z}_i\right)
\right. \\&+ \left. 
\cos\left(\frac{x_j-1}{n_j-1}\left(z_j \pi -2\delta^\mathbf{z}_j\right)+\delta^\mathbf{z}_j\right)
\prod_{i=1,i\ne j}^d  
\cos\left(\frac{x_i-1}{n_i-1}\left(z_i \pi -2\delta^\mathbf{z}_i\right)+\delta^\mathbf{z}_i\right)
\right. \\&+ \left. 
\cos\left(\frac{x_j-1+1}{n_j-1}\left(z_j \pi -2\delta^\mathbf{z}_j\right)+\delta^\mathbf{z}_j\right)
\prod_{i=1,i\ne j}^d  
\cos\left(\frac{x_i-1}{n_i-1}\left(z_i \pi -2\delta^\mathbf{z}_i\right)+\delta^\mathbf{z}_i\right)
\right)
\end{align*}
}
Hence
\begin{align*}
(2\waga_\Sigma-\waga_l)& \lambda_{\mathbf{z}}
\prod_{j=1}^d  
\cos\left(\frac{x_j-1}{n_j-1}\left(z_j \pi -2\delta^\mathbf{z}_j\right)+\delta^\mathbf{z}_j\right)
\\
=
&
\waga_l\left( 
\cos\left(
         \frac{x_l-1}{n_l-1}
          \left(z_l \pi -2\delta^\mathbf{z}_l\right)
   +\delta^\mathbf{z}_l
    \right) 
\right.\\ &\left.+ \cos\left(\frac{x_l -1}{n_l-1}\left(z_l \pi -2\delta^\mathbf{z}_l\right)+\delta^\mathbf{z}_l\right)
\cos\left(\frac{1}{n_l-1}\left(z_l \pi -2\delta^\mathbf{z}_l\right) \right)
\right.\\ &\left.- \sin\left(\frac{x_l -1}{n_l-1}\left(z_l \pi -2\delta^\mathbf{z}_l\right)+\delta^\mathbf{z}_j\right)
\sin\left(\frac{1}{n_l-1}\left(z_l \pi -2\delta^\mathbf{z}_l\right) \right)
\right)
\\& \cdot 
\prod_{i=1,i\ne l}^d  
\cos\left(\frac{x_i-1}{n_i-1}\left(z_i \pi -2\delta^\mathbf{z}_i\right)+\delta^\mathbf{z}_i\right)
 \\&+\sum_{j=1,j\ne l }^{d}
\waga_j\left( 
2\cos\left(\frac{x_j-1}{n_j-1}\left(z_j \pi -2\delta^\mathbf{z}_j\right)+\delta^\mathbf{z}_j\right)
\prod_{i=1,i\ne j}^d  
\cos\left(\frac{x_i-1}{n_i-1}\left(z_i \pi -2\delta^\mathbf{z}_i\right)+\delta^\mathbf{z}_i\right)
\right.\\&+\left.
2\cos\left(\frac{x_j-1}{n_j-1}\left(z_j \pi -2\delta^\mathbf{z}_j\right)+\delta^\mathbf{z}_j\right)
\cos\left(\frac{1}{n_j-1}\left(z_j \pi -2\delta^\mathbf{z}_j\right) \right)
\right.\\&\cdot\left.
\prod_{i=1,i\ne j}^d  
\cos\left(\frac{x_i-1}{n_i-1}\left(z_i \pi -2\delta^\mathbf{z}_i\right)+\delta^\mathbf{z}_i\right)
\right)
\end{align*}

 By dividing as previously we get 
\begin{align*}
(2\waga_\Sigma-\waga_l)& \lambda_{\mathbf{z}}
\\
=
&
\waga_l\left( 
1 
\right.\\ &\left.+ 
\cos\left(\frac{1}{n_l-1}\left(z_l \pi -2\delta^\mathbf{z}_l\right) \right)
\right.\\ &\left.- 
\tan\left(\frac{x_l -1}{n_l-1}\left(z_l \pi -2\delta^\mathbf{z}_l\right)+\delta^\mathbf{z}_j\right)
\sin\left(\frac{1}{n_l-1}\left(z_l \pi -2\delta^\mathbf{z}_l\right) \right)
\right)
 \\&+\sum_{j=1,j\ne l }^{d}
\waga_j\left( 
2 
\right.\\&+\left.
2 
\cos\left(\frac{1}{n_j-1}\left(z_j \pi -2\delta^\mathbf{z}_j\right) \right)
\right)
\end{align*}

Considerations  above lead to the conclusion (as $x_l -1=0$)  
\begin{align*}
(2\waga_\Sigma-\waga_l) \lambda_{\mathbf{z}}
 =&
\waga_l(1+\cos\left(\frac{1}{n_l-1}\left(z_l \pi -2\delta^\mathbf{z}_l\right)\right)
-\tan(\delta^\mathbf{z}_l)\sin\left(\frac{1}{n_l-1}\left(z_l \pi -2\delta^\mathbf{z}_l\right)\right) )
\\&+
\sum_{j=1,j\ne l}^{d} \waga_j\left(2+2 \cos\left(\frac{1}{n_j-1}\left(z_j \pi -2\delta^\mathbf{z}_j\right) \right)
\right)
\end{align*}

A subtraction of the preceding formula from the expression  \eref{eqW:lambdaN2d}   leads to
\begin{equation}\label{eqW:lambdaN2dm1000}
\waga_l \lambda_{\mathbf{z}}
 =
\waga_l(1+\cos\left(\frac{1}{n_l-1}\left(z_l \pi -2\delta^\mathbf{z}_l\right)\right)
+\tan(\delta^\mathbf{z}_l)\sin\left(\frac{1}{n_l-1}\left(z_l \pi -2\delta^\mathbf{z}_l\right)\right))
\end{equation}

\begin{equation}\label{eqW:lambdaN2dm1}
 \lambda_{\mathbf{z}}
 =
 1+\cos\left(\frac{1}{n_l-1}\left(z_l \pi -2\delta^\mathbf{z}_l\right)\right)
+\tan(\delta^\mathbf{z}_l)\sin\left(\frac{1}{n_l-1}\left(z_l \pi -2\delta^\mathbf{z}_l\right)\right)
\end{equation}

Interestingly, the last equation \eref{eqW:lambdaN2dm1} does not depend explicitly on weights. 
So it is formally identical with the very same equation for unweighted graphs. 
One shall keep in mind, however, that 
$ \lambda_{\mathbf{z}}$ depends on the weights and therefore the impact of weighting is present also in this equation.

By combining the equation \eref{eqW:lambdaN2d} 
with equations \eref{eqW:lambdaN2dm1} for each $l$ 
we get an equation system of $d+1$ equations  from which 
$\lambda$ and $\delta$s can be determined.

\subsubsection{Computing eigenvalues and shifts}\label{subsubW:howcompute}
The equation \eref{eqW:lambdaN2dm1} may be transformed to:
$$ (\lambda_{\mathbf{z}}-1) \cos(\delta^\mathbf{z}_l)
 =
 +\cos(\delta^\mathbf{z}_l)\cos\left(\frac{1}{n_l-1}\left(z_l \pi -2\delta^\mathbf{z}_l\right)\right)
+\sin(\delta^\mathbf{z}_l)\sin\left(\frac{1}{n_l-1}\left(z_l \pi -2\delta^\mathbf{z}_l\right)\right)
$$
 
\begin{equation}\label{eqW:deltafromlambda}
 (\lambda_{\mathbf{z}}-1) \cos(\delta^\mathbf{z}_l)
 =
 \cos(\delta^\mathbf{z}_l-\frac{1}{n_l-1}\left(z_l \pi -2\delta^\mathbf{z}_l\right) 
\end{equation}
which is simpler to solve for $\delta$ knowing $\lambda$. 
The solution can be obtained using the bisectional method 
on $\lambda$ using the above formula to obtain $\delta$s, and the using \eref{eqW:lambdaN} to get the value of $\lambda'$ and then 
reducing bisectionally the difference between $\lambda$ and $\lambda'$ down to zero. 

\Niepotrzebne{
}

\subsubsection{Validity of the derived eigenvalues and shifts for other nodes}\label{subsubW:othernodes}

The method of computing $\delta$s and  $\lambda$s from section \ref{subsubW:howcompute} is based only on fitting the equations for inner nodes and for selected border nodes.  One has to demonstrate, however, that   the solution would fit all the other border nodes. 

Consider first 
 the ones that have one neighbour less than the 
inner nodes (one neighbour missing), 
say along the dimension $l$ where $x_l=n_l$.
The subsequent relationship must hold:
\begin{align*}
\lambda_{\mathbf{z}}&
(2\waga_\Sigma-\waga_l) 
\omega_{\mathbf{z},[x_1,\dots,x_d]}
\\=&
\waga_l	\left(\omega_{\mathbf{z},[x_1,\dots,x_d]}
-\omega_{\mathbf{z},[x_1,\dots,x_l-1,\dots,x_d]}\right)
\\&
+\sum_{j=1,j\ne l}^{d}
\waga_j \left( 
	\left(\omega_{\mathbf{z},[x_1,\dots,x_d]}
-\omega_{\mathbf{z},[x_1,\dots,x_j-1,\dots,x_d]}\right)
\right.\\&+\left.	\left(\omega_{\mathbf{z},[x_1,\dots,x_d]}
-\omega_{\mathbf{z},[x_1,\dots,x_j+1,\dots,x_d]}\right)
\right)
\end{align*}
Considerations analogous to the above lead to the conclusion 
\begin{align*}
(2\waga_\Sigma-\waga_l)& \lambda_{\mathbf{z}}
\\ = &
\waga_l(1+\cos\left(\frac{1}{n_l-1}\left(z_l \pi -2\delta^\mathbf{z}_l\right)\right)
\\&+\tan(
\frac{n_l-1}{n_l-1}\left(z_l \pi -2\delta^\mathbf{z}_l\right)+
\delta^\mathbf{z}_l)\sin\left(\frac{1}{n_l-1}\left(z_l \pi -2\delta^\mathbf{z}_l\right)\right) )
\\&+
\sum_{j=1,j\ne l}^{d} \waga_j \left(2+2 \cos\left(\frac{1}{n_j-1}\left(z_j \pi -2\delta^\mathbf{z}_j\right) \right)
\right)
\end{align*}
A subtraction of the preceding formula from the expression  \eref{eqW:lambdaN2d}  
and division by $\waga_l$ leads to

\begin{equation} 
 \lambda_{\mathbf{z}}
 =
1-\cos\left(\frac{1}{n_l-1}\left(z_l \pi -2\delta^\mathbf{z}_l\right)\right)
-\tan(-\delta^\mathbf{z}_l)\sin\left(\frac{1}{n_l-1}\left(z_l \pi -2\delta^\mathbf{z}_l\right)\right)
\end{equation}
which is the same as equation
\eref{eqW:lambdaN2dm1}. 

Now look at other border nodes. 
Consider the ones that have one neighbour less than the inner nodes (one neighbour missing), 
along multiple dimensions, 
say along the dimensions $l^+_{1},l^+_{2},\dots,l^+_{m^+}$, $x_{l^+_k}=1$ 
and along the dimensions $l^-_{1},l^-_{2},\dots,l^-_{m^-}$, 
  $x_{l^-_k}==n_{l^-_k}$, with $1<m^++m^-\le d$.
The equation below has to  hold:
\begin{align*}
\lambda_{\mathbf{z}}& 
(2\sum_{j=1}^d\waga_j - \sum_{j=1}^{m^+}\waga_j- \sum_{j=1}^{m^-}\waga_j) 
\omega_{\mathbf{z},[x_1,\dots,x_d]}
\\=&
\sum_{k=1}^{m^+}
\waga_k	\left(\omega_{\mathbf{z},[x_1,\dots,x_d]}
-\omega_{\mathbf{z},[x_1,\dots,x_{l^+_k} + 1,\dots,x_d]}\right)
\\& +
 \sum_{k=1}^{m^-}
\waga_k		\left(\omega_{\mathbf{z},[x_1,\dots,x_d]}
-\omega_{\mathbf{z},[x_1,\dots,x_{l^-_k} - 1,\dots,x_d]}\right)
\\& +
\sum_{j=1,j\not\in \{ l^+_1,\dots,l^+_{m^+},l^-_1,\dots,l^-_{m^-}\}}^{d}
\waga_j	\left( 
	\left(\omega_{\mathbf{z},[x_1,\dots,x_d]}
-\omega_{\mathbf{z},[x_1,\dots,x_j-1,\dots,x_d]}\right)
\right. \\& + \left.
	\left(\omega_{\mathbf{z},[x_1,\dots,x_d]}
-\omega_{\mathbf{z},[x_1,\dots,x_j+1,\dots,x_d]}\right)
\right)
\end{align*}
This will lead to 
(after subtraction from the expression  \eref{eqW:lambdaN2d})  
\begin{align*} 
(\sum_{j=1}^{m^+}\waga_j+ \sum_{j=1}^{m^-}\waga_j)& \lambda_{\mathbf{z}}
\\ = &
\sum_{k=1}^{m^+} \waga_k \left(1-\cos\left(\frac{1}{n_{l^+_k}-1}\left(z_{l^+_k} \pi -2\delta^\mathbf{z}_{l^+_k}\right)\right)
\right. \\  & + \left.
 \tan(\delta^\mathbf{z}_{l^+_k})\sin\left(\frac{1}{n_{l^+_k}-1}\left(z_{l^+_k} \pi -2\delta^\mathbf{z}_{l^+_k}\right)\right)
\right)
\\  & + 
\sum_{k=1}^{m^-} \waga_k \left(1-\cos\left(\frac{1}{n_{l^-_k}-1}\left(z_{l^-_k} \pi -2\delta^\mathbf{z}_{l^-_k}\right)\right)
\right. \\  & + \left.
\tan(\delta^\mathbf{z}_{l^-_k})\sin\left(\frac{1}{n_{l^-_k}-1}\left(z_{l^-_k} \pi -2\delta^\mathbf{z}_{l^-_k}\right)\right)
\right)
\end{align*}
This equation results from adding equations \eref{eqW:lambdaN2dm1}
multiplied by an appropiate weight $\waga_l$ 
 for respective dimensions
$l  \in \{ l^+_1,\dots,l^+_{m^+},l^-_1,\dots,l^-_{m^-}\}$. 
Thus, once the equation system was solved for $\boldsymbol{\delta}^\mathbf{z}$ for the set of nodes mentioned in section \ref{subsubW:howcompute}, 
all the other nodes fit. 
Thus the validity of the formula for eigenvalues $\lambda$  was proven along with the correctness of eigenvector formulas. 

\subsubsection{The validity of the eigenvectors for identical eigenvalues}\label{subsubW:noLorthogonality}

In order to be sure that all eigenvectors have been correctly identified, 
it is sufficient to show that the eigenvectors proposed are 
orthogonal for different $\mathbf{z}$. 
Recall that eigenvectors are orthogonal whenever the eigenvalues are different. 
However, there are cases when they may be identical. 
One possibility is implied by symmetries between various dimensions.
The symmetry means that in both dimensions we have e.g. the same number of layers and in both dimensions the weights are identical. 
Other cases of identical eigenvalues may be due to "unhappy" proportions between the weights of edges. 

Let us demonstrate that whenever $ \lambda_{\mathbf{z'}}= \lambda_{\mathbf{z"}}$ 
and $\mathbf{z'} \ne  \mathbf{z"}$, then 
$\mathbf{v}^\mathbf{z'}$ and  $\mathbf{v}^  \mathbf{z"}$ are orthogonal, that is their dot product is equal zero. This means: 
$$0 = \sum_\mathbf{x} D_{[\x],[\x]}
\prod_{j=1}^d   
\cos\left(\frac{x_j-1}{n_j-1}\left(z'_j \pi -2\delta^\mathbf{z'}_j\right)+\delta^\mathbf{z'}_j\right)
\cdot
\cos\left(\frac{x_j-1}{n_j-1}\left(z"_j \pi -2\delta^\mathbf{z"}_j\right)+\delta^\mathbf{z"}_j\right)
$$ 
or expressed differently:
\begin{align}
  0=&  
  \sum_{x_2=1}^{n_2}\dots   \sum_{x_d=1}^{n_d} 
  \prod_{j=2}^d   
\cos\left(\frac{x_j-1}{n_j-1}\left(z'_j \pi -2\delta^\mathbf{z'}_j\right)+\delta^\mathbf{z'}_j\right)
\cdot
\cos\left(\frac{x_j-1}{n_j-1}\left(z"_j \pi -2\delta^\mathbf{z"}_j\right)+\delta^\mathbf{z"}_j\right)
\nonumber \\ &
 \sum_{x_1=1}^{n_1}
 D_{[\x],[\x]}
 \cos\left(\frac{x_1-1}{n_1-1}\left(z'_1 \pi -2\delta^\mathbf{z'}_1\right)+\delta^\mathbf{z'}_1\right)
\cdot
\cos\left(\frac{x_1-1}{n_1-1}\left(z"_1 \pi -2\delta^\mathbf{z"}_1\right)+\delta^\mathbf{z"}_1\right) \label{eqW:mainN}
\end{align}

Let $d^{[M]}_{x_2,\dots,x_d}=\max_{x_1 \in \{1,\dots,n_1\}} D_{[x_1,x_2,\dots,x_d],[x_1,x_2,\dots,x_d]}$.
(In the special case of $n_1=2$ $d^{[M]}_{x_2,\dots,x_d}$ should be increased by $waga_1$.). 

What we need to demonstrate is:
$$ \sum_{x_1=1}^{n_1}
 D_{[\x],[\x]}
 \cos\left(\frac{x_1-1}{n_1-1}\left(z'_1 \pi -2\delta^\mathbf{z'}_1\right)+\delta^\mathbf{z'}_1\right)
\cdot
\cos\left(\frac{x_1-1}{n_1-1}\left(z"_1 \pi -2\delta^\mathbf{z"}_1\right)+\delta^\mathbf{z"}_1\right)
$$
$$=
 -2\waga_1 \cos(\delta^\mathbf{z'}_1)\cos(\delta^\mathbf{z"}_1)
((z'_1+z"_1+1) \mod 2)$$ $$
 +d^{[M]}_{x_2,\dots,\x_d}
\sum_{x_1=1}^{n_1}
 \cos\left(\frac{x_1-1}{n_1-1}\left(z'_1 \pi -2\delta^\mathbf{z'}_1\right)+\delta^\mathbf{z'}_1\right)
\cdot
\cos\left(\frac{x_1-1}{n_1-1}\left(z"_1 \pi -2\delta^\mathbf{z"}_1\right)+\delta^\mathbf{z"}_1\right)
$$
  \begin{equation}\label{eqW:res1}=
 -2\waga_1\cos(\delta^\mathbf{z'}_1)\cos(\delta^\mathbf{z"}_1) ((z'_1+z"_1+1) \mod 2)
\end{equation}

To prove it, we will proceeed in two consecutive steps.
Validity of the first equal sign will be shown first. 
Subsequently,  we will prove that   
$\sum_{x_j=1}^{n_j}
 \cos\left(\frac{x_j-1}{n_j-1}\left(z'_j \pi -2\delta^\mathbf{z'}_j\right)+\delta^\mathbf{z'}_j\right)
\cdot
\cos\left(\frac{x_j-1}{n_j-1}\left(z"_j \pi -2\delta^\mathbf{z"}_j\right)+\delta^\mathbf{z"}_j\right)=0$

Note that if we 
fix the values $x_2,\dots,x_d$, we select a path in the graph 
of nodes with identities 
$[1,x_2,\dots,x_d],\dots [n_1,x_2,\dots,x_d]$.
On this path,  
  each node is connected
to the same number of  outside nodes with same edge weights. 
If we  confined the graph to this path only, then 
 the first and the last nodes would 
have "degree" $\waga_1$  and the other have the degree $2\waga_1$.
In brief, on this path, 
the endpoints are of degree 
$d^{[M]}_{x_2,\dots,x_d}-\waga_1$, while the remaining nodes of the path are 
of  degree  
 $d^{[M]}_{x_2,\dots,x_d}$. 

Therefore
$ \sum_{x_1=1}^{n_1}
 D_{[\x],[\x]}
 \cos\left(\frac{x_1-1}{n_1-1}\left(z'_1 \pi -2\delta^\mathbf{z'}_1\right)+\delta^\mathbf{z'}_1\right)
\cdot
\cos\left(\frac{x_1-1}{n_1-1}\left(z"_1 \pi -2\delta^\mathbf{z"}_1\right)+\delta^\mathbf{z"}_1\right)
$
$=(d^{[M]}_{x_2,\dots,x_d}-\waga_1)
 \cos\left(\frac{ 1-1}{n_1-1}\left(z'_1 \pi -2\delta^\mathbf{z'}_1\right)+\delta^\mathbf{z'}_1\right)
\cdot
\cos\left(\frac{ 1-1}{n_1-1}\left(z"_1 \pi -2\delta^\mathbf{z"}_1\right)+\delta^\mathbf{z"}_1\right)
+(d^{[M]}_{x_2,\dots,x_d}-\waga_1)
 \cos\left(\frac{n_1-1}{n_1-1}\left(z'_1 \pi -2\delta^\mathbf{z'}_1\right)+\delta^\mathbf{z'}_1\right)
\cdot
\cos\left(\frac{n_1-1}{n_1-1}\left(z"_1 \pi -2\delta^\mathbf{z"}_1\right)+\delta^\mathbf{z"}_1\right)
+ 
  \sum_{x_1=2}^{n_1-1}
d^{[M]}_{x_2,\dots,x_d}
 \cos\left(\frac{x_1-1}{n_1-1}\left(z'_1 \pi -2\delta^\mathbf{z'}_1\right)+\delta^\mathbf{z'}_1\right)
\cdot
\cos\left(\frac{x_1-1}{n_1-1}\left(z"_1 \pi -2\delta^\mathbf{z"}_1\right)+\delta^\mathbf{z"}_1\right)
$
$= -  \waga_1
 \cos\left(\frac{ 1-1}{n_1-1}\left(z'_1 \pi -2\delta^\mathbf{z'}_1\right)+\delta^\mathbf{z'}_1\right)
\cdot
\cos\left(\frac{ 1-1}{n_1-1}\left(z"_1 \pi -2\delta^\mathbf{z"}_1\right)+\delta^\mathbf{z"}_1\right)
$ \\ $
-\waga_1 
 \cos\left(\frac{n_1-1}{n_1-1}\left(z'_1 \pi -2\delta^\mathbf{z'}_1\right)+\delta^\mathbf{z'}_1\right)
\cdot
\cos\left(\frac{n_1-1}{n_1-1}\left(z"_1 \pi -2\delta^\mathbf{z"}_1\right)+\delta^\mathbf{z"}_1\right)
+ $ \\ $
  \sum_{x_1=1}^{n_1 }
d^{[M]}_{x_2,\dots,x_d}
 \cos\left(\frac{x_1-1}{n_1-1}\left(z'_1 \pi -2\delta^\mathbf{z'}_1\right)+\delta^\mathbf{z'}_1\right)
\cdot
\cos\left(\frac{x_1-1}{n_1-1}\left(z"_1 \pi -2\delta^\mathbf{z"}_1\right)+\delta^\mathbf{z"}_1\right)
$
$= -  2 \waga_1 
 \cos\left( \delta^\mathbf{z'}_1\right)
\cdot
\cos\left( \delta^\mathbf{z"}_1\right) ((z'_1+z"_1+1) \mod 2)
+ 
  \sum_{x_1=1}^{n_1 }
d^{[M]}_{x_2,\dots,x_d}
 \cos\left(\frac{x_1-1}{n_1-1}\left(z'_1 \pi -2\delta^\mathbf{z'}_1\right)+\delta^\mathbf{z'}_1\right)
\cdot
\cos\left(\frac{x_1-1}{n_1-1}\left(z"_1 \pi -2\delta^\mathbf{z"}_1\right)+\delta^\mathbf{z"}_1\right)
$

The factor $((z'_1+z"_1+1) \mod 2)$ occurs because 
when $z'_1+z"_1$ is odd, then 
$ \cos\left(\frac{n_1-1}{n_1-1}\left(z'_1 \pi -2\delta^\mathbf{z'}_1\right)+\delta^\mathbf{z'}_1\right)
\cdot
\cos\left(\frac{n_1-1}{n_1-1}\left(z"_1 \pi -2\delta^\mathbf{z"}_1\right)+\delta^\mathbf{z"}_1\right)$
$= \cos\left(z'_1 \pi -\delta^\mathbf{z'}_1\right) 
\cdot
 \cos\left(z"_1 \pi -\delta^\mathbf{z"}_1\right) 
$ 
$= - \cos\left( \delta^\mathbf{z'}_1\right) 
\cdot
 \cos\left( \delta^\mathbf{z"}_1\right) 
$.

To complete the proof of  the Theorem  \ref{thW:normalizedLap}, we need to show only  that the ensuing relation holds:
\begin{equation}\label{eqW:zerosumcoscos}
\sum_{x_j=1}^{n_j}
 \cos\left(\frac{x_j-1}{n_j-1}\left(z'_j \pi -2\delta^\mathbf{z'}_j\right)+\delta^\mathbf{z'}_j\right)
\cdot
\cos\left(\frac{x_j-1}{n_j-1}\left(z"_j \pi -2\delta^\mathbf{z"}_j\right)+\delta^\mathbf{z"}_j\right) =0
\end{equation}

But this has already been demonstrated within the proof of  the  Theorem  \ref{thW:normalizedLap} 
- as that proof above equation does not refer to weights (that is elimination of $\delta$s does not require any reference to $\lambda$s.

\subsection{Grid Graphs without Inner Nodes}\label{subsecW:generalNoInner}

We assumed so far, that for each dimension  $i$
its $n_i\ge 3$. What what if this is not the case. 
Consideration of $n_i=1$ is pointless because we would just throw away such a dimension. So let us discuss   graphs with one or more 
$n_i=2$.

 We have shown in the proof of the Theorem \ref{thW:normalizedLap}, that 
the eigenvalue can be expressed in the form 
\eref{eqW:lambdaN2dm1},
given  the form \eref{eqW:eigenvectorcomponentN} of eigenvectors. 
Thence   it is a matter of an excercise to prove  that the same will hold in case of graphs without an inner node. 

\begin{theorem}
The formulas of the 
 Theorem  \ref{thW:normalizedLap} apply also to weighted grid graphs without inner nodes. Eigenvalues and eigenvectors are the same. 
\end{theorem}

No difference occurs in the representation of eigenvalues and eigenvectors. Therefore  the  method of computation of $\lambda$s and $\delta$s   can be  reused here.

\section{Random Walk Laplacians of Weighted Grid Graph}\label{secW:RWLtheorems}
As already mentioned, the eigenvalues and eigenvectors for Random Walk Laplacians could  be conviniently derived from those for Normalized Laplacians (compare Section \ref{sec:notation}. 
Thus 

\begin{theorem} \label{thW:randomwalkLap}
The random walk   Laplacian  $\mathbb{L}$ of
a weighted $d$-dimensional grid with at least one inner node, 
has  the 
  eigenvalues  of the   form 
\begin{equation}\label{eqW:lambdaRW}
\lambda_{\mathbf{z}}=1+ \sum_{j=1}^d  \frac{\waga_j}{\waga_\Sigma}
\cos\left(\frac{1}{n_j-1}\left(z_j \pi -2\delta_j\right) \right)
\end{equation}
with the ${\boldsymbol\delta}^\mathbf{z}$ vector defined 
as a solution of the equation system consisting of 
  the preceding equation \eref{eqW:lambdaN2d} and the 
 equations \eref{eqW:lambdaN2dm1} for each $l=1,\dots,d$. 
The corresponding eigenvectors $\mathbf{v}_{\mathbf{z}}$ have components of the form  
\begin{equation}\label{eqW:eigenvectorcomponentRW}
\nu_{\mathbf{z},[x_1,\dots,x_d]}= 
D_{[x_1,\dots,x_d],[x_1,\dots,x_d]}
\prod_{j=1}^d (-1)^{x_j} 
\cos\left(\frac{x_j-1}{n_j-1}\left(z_j \pi -2\delta^\mathbf{z}_j\right)+\delta^\mathbf{z}_j\right)
\end{equation} 
\end{theorem}

\section{Some Properties of Laplacians  of weighted grid graphs}\label{sec:weightedLproperties}

\begin{figure}
\centering
  \includegraphics[width=0.9\textwidth]{\figaddr{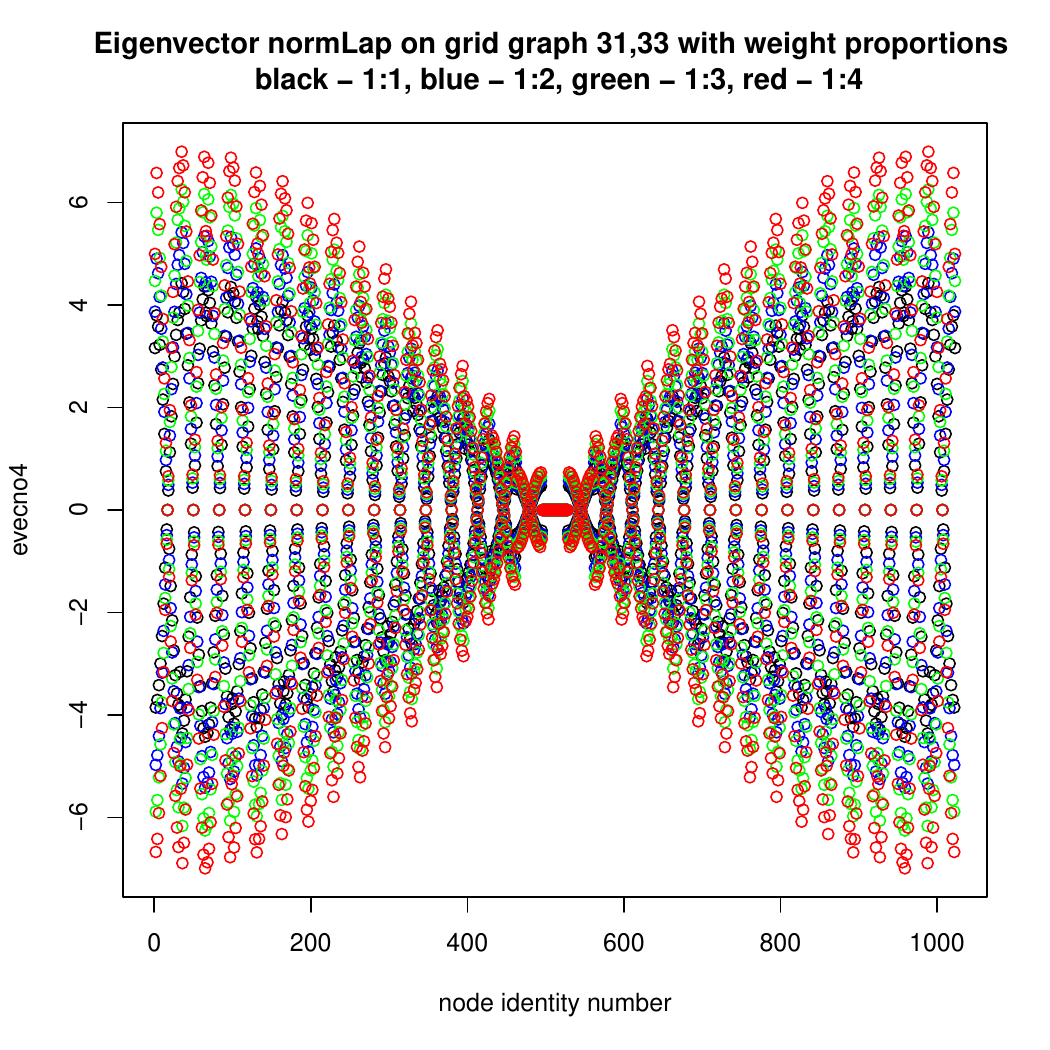}}  %
\caption{ 
The plots of sample  eigenvectors of normalized Laplacians of weighted two-dimensional grid graphs of approximately 1,000 nodes for $\z=[1,1]$ with various propertions of weights in both directions.   
Colors indicate:
black - 1:1 (the unweighted case),
blue - 1:2, green - 1:3, red - 1:4. 
}\label{fig:w_evecno}
\end{figure}

We refrained from drawing sample eigenvctors of combinatorial Laplacians of sample unweighted eigenvectors. 
Instead, in 
  Figure \ref{fig:w_evecno} you see sample eigenvectors of the 
very same two-dimensional grid (32x32 nodes) but for different weight propertions between dimensions. As one can see, higher discrepances between the weights in different directions lead to broader spreading of the values of components of the eigenvector. 
  The effect of the  cosine product can be seen anyway. 

\begin{figure}
\centering
  \includegraphics[width=0.9\textwidth]{\figaddr{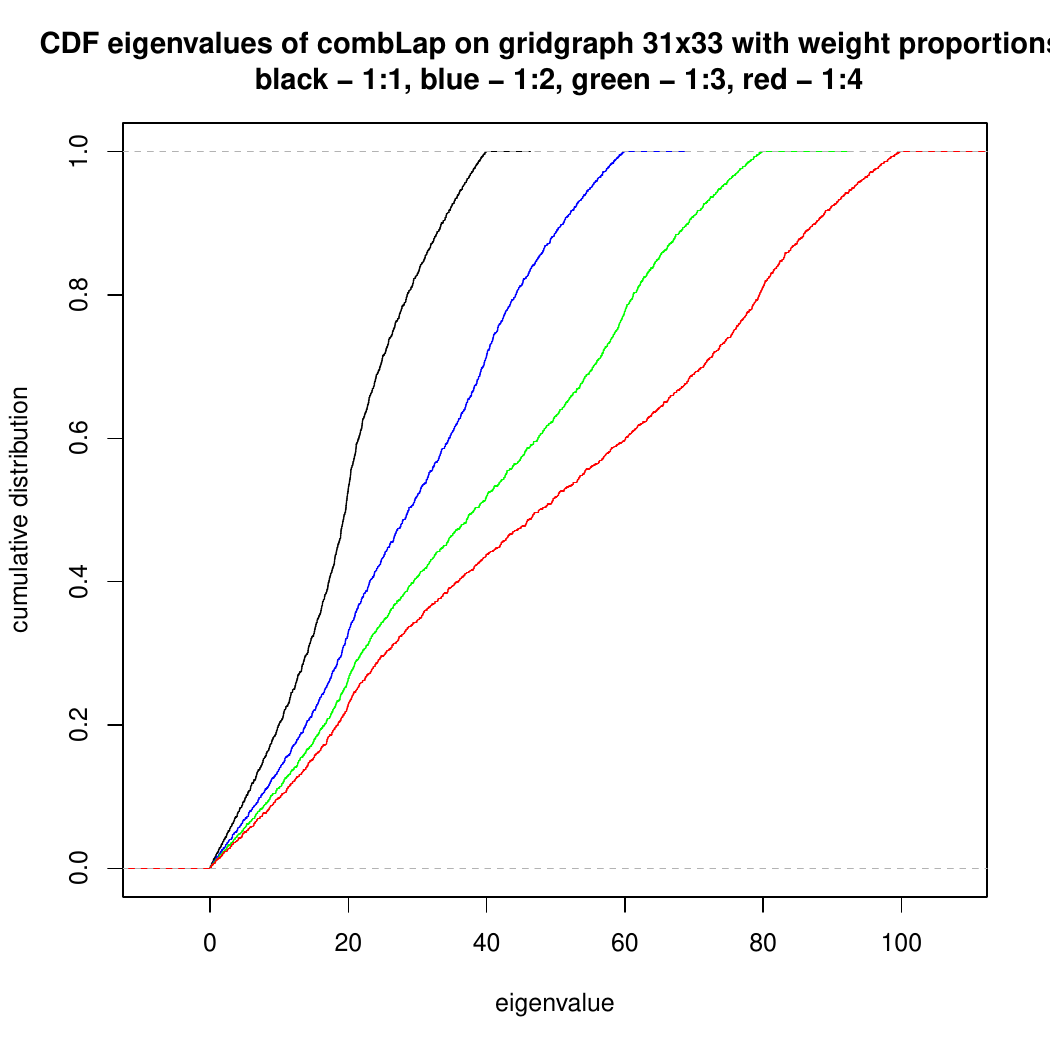}}  %
\caption{ 
The cumulative distributions of  eigenvalues of combinatorial Laplacians of grid graphs of approximately 1,000 nodes  with various propertions of weights in both directions.   
Colors indicate:
black - 1:1 (the unweighted case),
blue - 1:2, green - 1:3, red - 1:4. 
}\label{fig:w_evco_dist}
\end{figure}

The cumulative distribution function of eigenvalues of combinatorial Laplacian of 2-dimensional weighted grid graphs with varying weight proportions, as visible in 
\ref{fig:w_evco_dist}, departs from uniformity more and more as the weights become more unbalanced. 

\begin{figure}
\centering
  \includegraphics[width=0.9\textwidth]{\figaddr{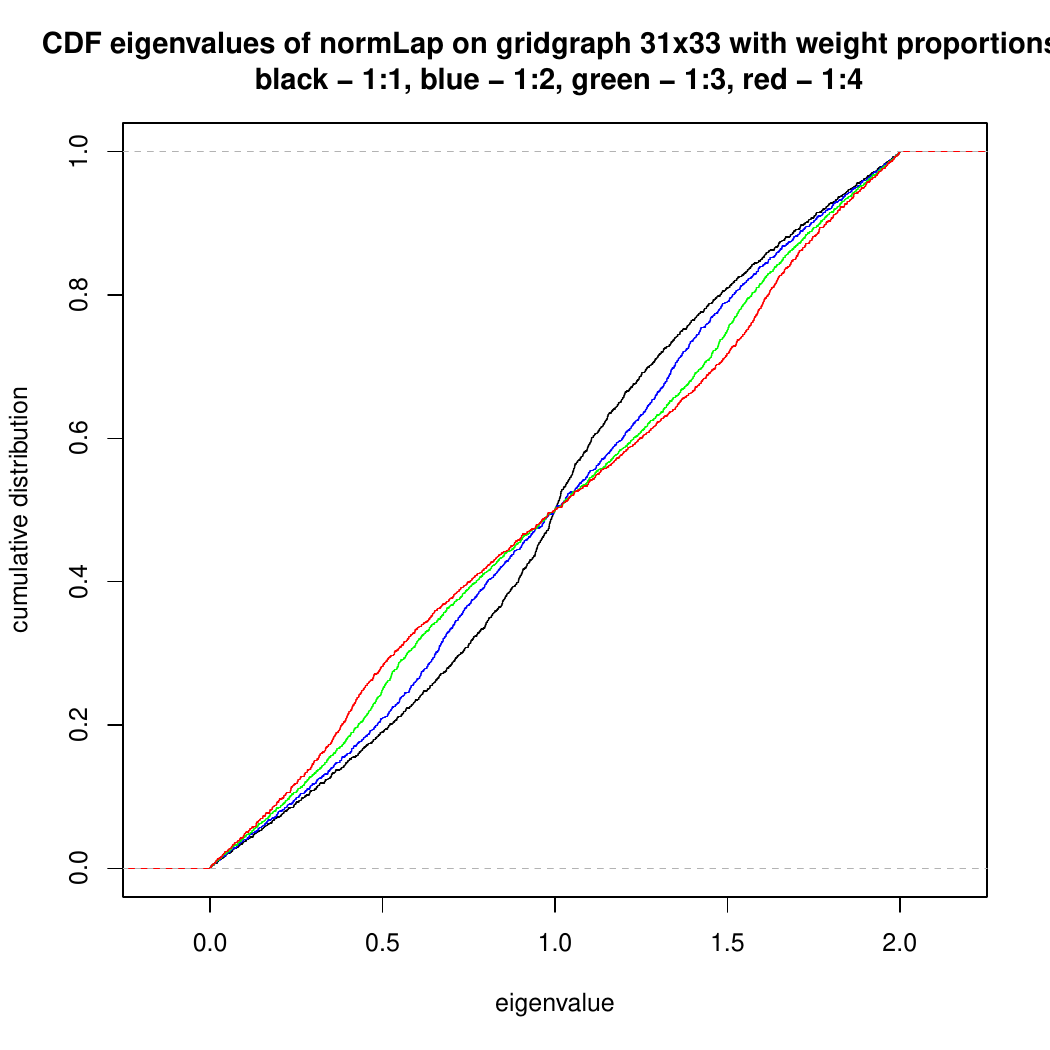}}  %
\caption{ 
The cumulative distributions of  eigenvalues of normalized Laplacians of grid graphs of approximately 1,000 nodes  with various propertions of weights in both directions.   
Colors indicate:
black - 1:1 (the unweighted case),
blue - 1:2, green - 1:3, red - 1:4. 
}\label{fig:w_evno_dist}
\end{figure}

This departure is even better visible for normalized Laplacians  as visible in 
\ref{fig:w_evno_dist}.

\begin{figure}
\centering
  \includegraphics[width=0.9\textwidth]{\figaddr{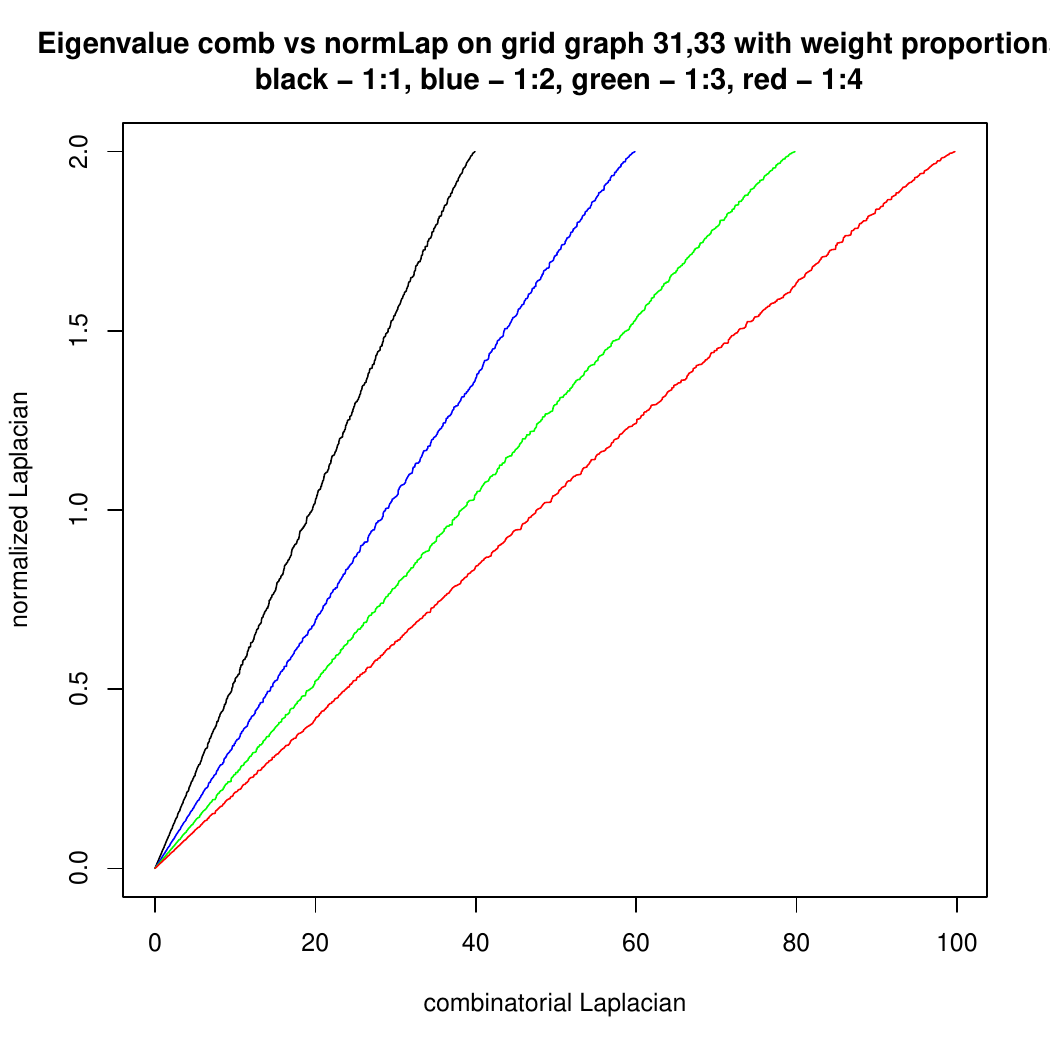}}  %
\caption{ 
A comparison of eigenvalue distributions between combinatorial and normalized Laplacians, for structurally the same grid graph, but  
  with various propertions of weights in both directions.   
Colors indicate:
black - 1:1 (the unweighted case),
blue - 1:2, green - 1:3, red - 1:4. 
}\label{fig:w_evco_evno}
\end{figure}

It is also worth having a look at the comparison of aligned eigenvalues of combinatorial and normalized Laplacians, as visible in Figure \ref{fig:w_evco_evno}.
Though they appear to be nearly placed on a straight line, they are not, they lie above it in the middle.

\begin{figure}
\centering
  \includegraphics[width=0.9\textwidth]{\figaddr{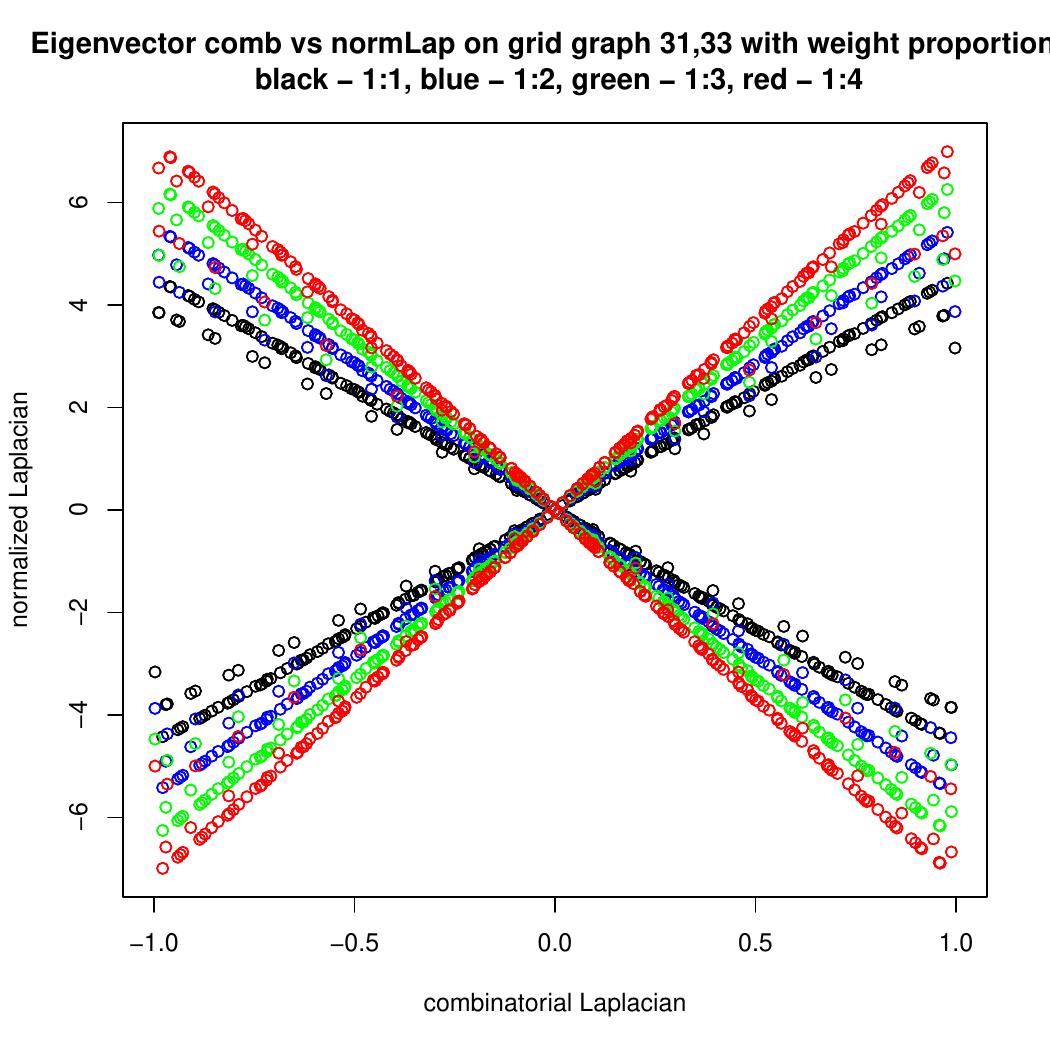}}  %
\caption{ 
A comparison of  sample  eigenvectors of combinatorial and  normalized Laplacians of weighted two-dimensional grid graphs of approximately 1,000 nodes for $\z=[1,1]$ with various proportions of weights in both directions.   
Colors indicate:
black - 1:1 (the unweighted case),
blue - 1:2, green - 1:3, red - 1:4. 
}\label{fig:w_evecco_evecno}
\end{figure}

In Figure \ref{fig:w_evecco_evecno}, one eigenvector for combinatorial and normalized Laplacians is compared for each weighting of edges. They exhibit similar patterns, withv weights being responsible for some spreading of the values.

\begin{figure}
\centering
  \includegraphics[width=0.9\textwidth]{\figaddr{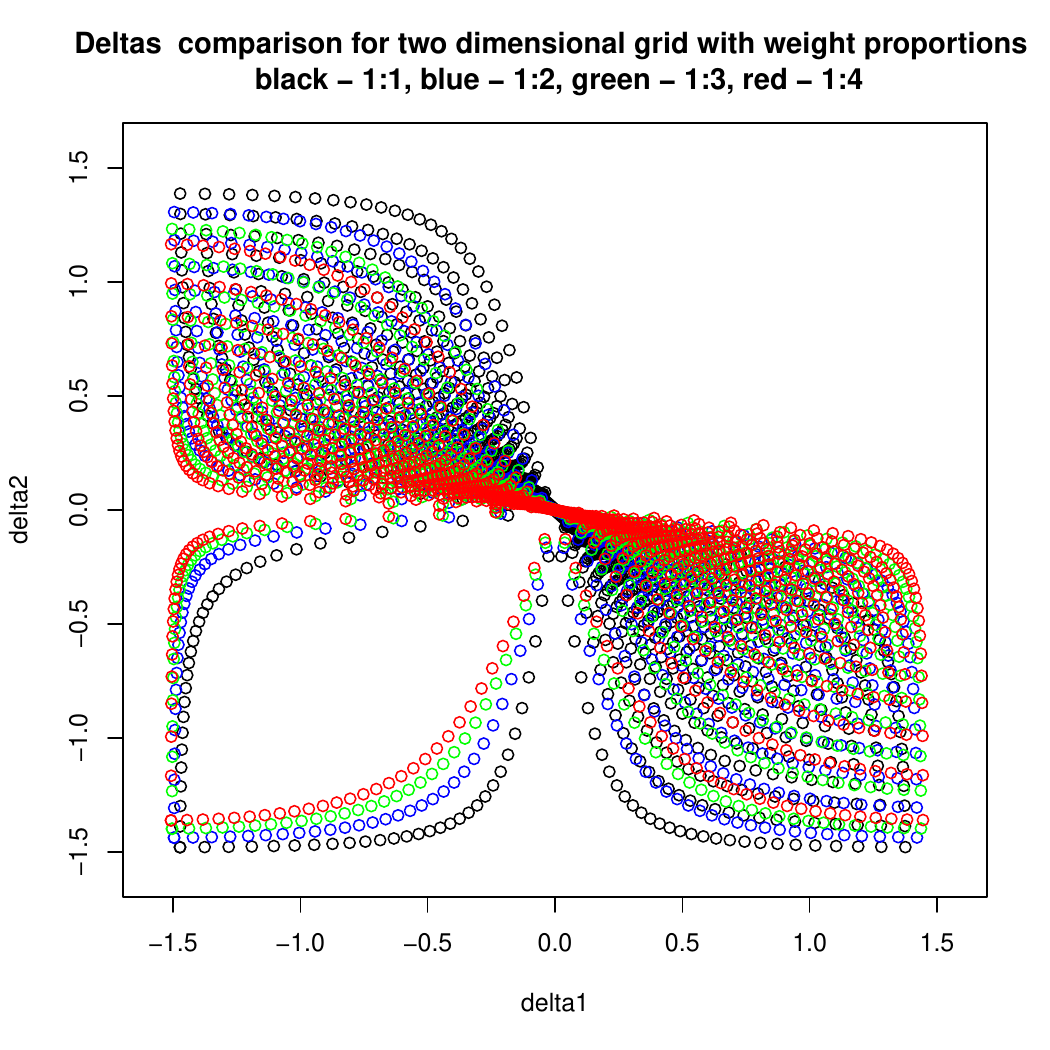}}  %
\caption{ 
The plots of relationships of   normalized Laplacian eigenvalues
$\lambda$ 
 and shifts  $\delta$of grid graphs of approximately 1,000 nodes  with various propertions of weights in both directions.   
Colors indicate:
black - 1:1 (the unweighted case),
blue - 1:2, green - 1:3, red - 1:4. 
}\label{fig:w_evno_delta}
\end{figure}

Figure \ref{fig:w_evno_delta} illustrates the relationship between eigenvalues and the shifts of normalized Laplacians in grid graphs.
This relationship seems not to be simplistic and may at least partially explain why we did not find a closed-form solution  for identifying eigenvalues and shifts. 
We see, however, that the patterns are similar for various proportions of weights of edges of the grid graph.

\ImplicationsForSpectralClustering{
\section{Implications for Spectral Clustering}\label{secW:spectral}

An introduction to Spectral Clustering can be found in \cite{STWMAKSpringer:2018}. 
Spectral clustering encompasses the algorithms that
cluster 
points 
using 
eigenvectors 
of 
matrices 
derived 
from 
the 
data. 

Frequently, the Spectral Clustering is represented as a kind of generalization of graph cuts. 
A graph cut means removal of some edges in order to obtain a disconnected graph. 
Graph clustering into to clusters may be deemed as a graph cut task in which the total weight of removed edges is minimized, or alternatively, to obtain balanced subgraphs, the cut value is normalized via the sum of reciprocals of subgraph cardinalities of subgraph volumes (the latter called normalizede cut). 
Both cut and normalized cut are claimed to be related to second smallest eigenvalue eigenvector (Fiedler vector) of combinatorial Laplacian or normalized Laplacian respectively (in case we cluster into two clusters). 

The argument goes as follows. 
Let $f$   be an indicator vector telling whether a node belongs to the cluster 1 or 2: if a node $i$ belongs  to the cluster 1, then $f_i=1$ , 
and  if a node $i$ belongs  to the cluster 2, then $f_i=-1$. 
So if graph nodes are split into sets $B,\overline{B}$, then 
the cut can be expressed as 
$$cut(B,\overline{B})=\sum_{i\in B}\sum_{j \in  \overline{B}} \waga_{ij} \frac{(f_i-f_j)^2}{4}$$
It turns out that, for the Laplacian $L$ of the connectivity marix of such a graph, 
$$f^TLf= 2\sum_{i\in B}\sum_{j \in  \overline{B}} \waga_{ij} (f_i-f_j)^2$$
One relaxes the condition that $f_i\in \{-1,1\}$ and allows instead $f_i \in [-1,1]$
so that we speak about a kind of fuzzy membership $f$ which may be defuzzified later. Under such circumstances and by 
  imposing the condition that both clusters must be non-empty, 
and imposing the additionasl constraint that the scale of $f$ schould not matter,
minimizing the "cut" $\frac{f^TLf}{f^Tf}$ reduces to finding $f$ being the Fieldler vector. 
Defuzzification can be done taking positive and bnegative values of $f$ as membership assignments. 
This procedure can be performed recursively. 

When we apply this procedure to an unweighted grid graph, then 
it is obvious from the derived analytical formulas that the cut will run along the dimension for which the number of nodes is the biggest. 
See Figure \ref{fig:twoclusters}
\begin{figure}
  \includegraphics[width=0.4\textwidth]{\figaddr{FIG_twoclusters3_9.pdf}}  %
  \includegraphics[width=0.4\textwidth]{\figaddr{FIG_twoclusters12_5.pdf}}  %
\caption{ 
The plots of clustering of unweighted two-dimensional grid graphs. 
Edges are ommited as their placement is obvious.  
}\label{fig:twoclusters}
\end{figure}

 But in case of weighted grid graphs, the proportions between weights of edges start to play a role. 
See Figure \ref{fig:w_twoclusters}
\begin{figure}
  \includegraphics[width=0.4\textwidth]{\figaddr{FIGw_twoclusters3_9_3_20.pdf}}  %
  \includegraphics[width=0.4\textwidth]{\figaddr{FIGw_twoclusters3_9_3_25.pdf}}  %
\caption{ 
The plots of clustering of weighted two-dimensional grid graphs. 
Edges are ommited as their placement is obvious. The smaller the distances, the higher the weights.  
}\label{fig:w_twoclusters}
\end{figure}

We observe a disturbing behavior. 
Higher edge weights lead to lower weight cuts!
More specificly, in the left part of Figure \ref{fig:w_twoclusters}
cutting vertically would lead to a cut weight of 27, while the Fiedler vector chooses a cut of 60. Not to say that cutting out a corner point would yield a cut weight of 23.

This means that the spectral clustering performs a different clustering from the one that is claimed in the literature that is the optimization of $cut$. 

Let us look at the precise formula for the Fiedler eigen value. 
Let this eigenvalue be in the dimension $d_1$. 
\begin{equation} 
\lambda_{[0,\dots,1_{d_1},\dots,0]}= 
2 \waga_{d_1}\cdot \left(1-  \cos\left(\frac{\pi  }{n_{d_1}}\right)\right) 
\end{equation}
Consider a competing dimension $d_0$ in which we would like to have the Fieldler eigen value.
\begin{equation} 
\lambda_{[0,\dots,1_{d_0},\dots,0]}= 
2 \waga_{d_0}\cdot \left(1-  \cos\left(\frac{\pi  }{n_{d_0}}\right)\right) 
\end{equation}
We can acieve this effect that $\lambda_{[0,\dots,1_{d_0},\dots,0]}<\lambda_{[0,\dots,1_{d_1},\dots,0]}$ in two different ways. 
Either we increase $n_{d_0}$ or increase $ \waga_{d_0}$. 
While the former is fixed, let us look what we can do with the latter. 
\begin{equation} 
2 \waga_{d_0}\cdot \left(1-  \cos\left(\frac{\pi  }{n_{d_0}}\right)\right) 
< 2 \waga_{d_1}\cdot \left(1-  \cos\left(\frac{\pi  }{n_{d_1}}\right)\right) 
\end{equation}
\begin{equation} 
2 \waga_{d_0}\cdot \left(2  \sin^2\left(\frac{\pi/2  }{n_{d_0}}\right)\right) 
< 2 \waga_{d_1}\cdot \left(2 \sin^2\left(\frac{\pi/2  }{n_{d_1}}\right)\right) 
\end{equation}
\begin{equation} 
 \waga_{d_0}\cdot \left(  \sin^2\left(\frac{\pi/2  }{n_{d_0}}\right)\right) 
<  \waga_{d_1}\cdot \left( \sin^2\left(\frac{\pi/2  }{n_{d_1}}\right)\right) 
\end{equation}
\begin{equation} 
\frac{ \waga_{d_0}}{\waga_{d_1}}   
<  \frac{  \sin^2\left(\frac{\pi/2  }{n_{d_1}}\right) }
{   \sin^2\left(\frac{\pi/2  }{n_{d_0}}\right)  }
\end{equation}
If both $n_{d_0}$ and $n_{d_1}$ are large, 
\begin{equation} 
\frac{ \waga_{d_0}}{\waga_{d_1}}   
<  \frac{   \frac{\pi^2/4  }{n_{d_1}^2} }
{    \frac{\pi^2/4  }{n_{d_0}^2}  }
\end{equation}
\begin{equation} 
\frac{ \waga_{d_0}}{\waga_{d_1}}   
<  \frac{ n_{d_0}^2}  {n_{d_1}^2}  
\end{equation}

\begin{equation} 
\sqrt{\frac{ \waga_{d_0}}{\waga_{d_1}}   }
<  \frac{ n_{d_0} }  {n_{d_1} }  
\end{equation}

This means that the spectral clustering optimizes "square root cut"

$$srcut(B,\overline{B})=\sum_{i\in B}\sum_{j \in  \overline{B}} \sqrt{\waga_{ij}} \frac{(f_i-f_j)^2}{4}$$
\noindent 
 and not the cut  for large grids (For smaller ones there are some subleties).  


This provides of course only with a partial explanation because not cutting out a single corner point is not explained. We can speculate that the distance from "cluster center" plays a role in calculating the edge weight.

One way to deal with the cutting off small node groups is to make a correction 
for the volumes of the clusters (sums of weights of edges coinciding with nodes of a cluster) 

$$Ncut(B,\overline{B})=cut(B,\overline{B})\left(\frac{1}{(B,\overline{B})}\right)$$


This is also transfrormed to a spectral clustering task with the following argument.
Let $f$   be an indicator vector telling whether a node belongs to the cluster 1 (B) or 2 ($\overline{B}$): if a node $i$ belongs  to the cluster 1, then $f_i=\frac{1}{vol(B)}$ , 
and  if a node $i$ belongs  to the cluster 2, then $f_i=\frac{-1}{vol(\overline{B})}$. 
So if graph nodes are split into sets $B,\overline{B}$, then 
  for the Laplacian $L$ of the connectivity marix of such a graph, 
$$f^TLf= 2\sum_{i\in B}\sum_{j \in  \overline{B}} \waga_{ij} (f_i-f_j)^2$$
$$=2\sum_{i\in B}\sum_{j \in  \overline{B}} \waga_{ij} \left(\frac{1}{(B,\overline{B})}\right)^2$$
On the other hand
$$f^TDf=\sum_i d_{ii}f_i^2=\sum_{i\in B}\frac{d_i}{vol(B)^2}
+\sum_{i\in \overline{B}}\frac{d_i}{vol(\overline{B})^2}=
= \frac{vol(B)}{vol(B)^2}
+  \frac{vol( \overline{B})}{val(\overline{B})^2}=
= \frac{1}{vol(B) }
+  \frac{1}{val(\overline{B}) }$$
This means that $Ncut$ is proportional to $\frac{f^TLf}{f^TDf}$.
Let us introduce $g=D^{1/2}f$.
In such a case 
$$\frac{f^TLf}{f^TDf}= \frac{g^TD^{-1/2}LD^{-1/2}g}{g^Tg}
= \frac{g^T\mathfrak{L} g}{g^Tg}$$
One relaxes the conditions impopsed on $g$  
so that we speak again  about a kind of fuzzy membership $g$ which may be defuzzified later. Under such circumstances and by 
  imposing the condition that both clusters must be non-empty, 
and imposing the additionasl constraint that the scale of $g$ schould not matter,
minimizing the "cut" $\frac{f^TLf}{f^TDf}$ reduces to finding $g$ being the Fieldler vector of $\mathfrak{L}$. 
Defuzzification can be done taking positive and negative values of $g$ (or $f$) as membership assignments. 
 
By applying this procedure to grid graphs, we obtain clusterings identical with those in the combinatorial Laplacian examples. Due to shifts, there are some differences when using concrete examples close to the ones with the change of dimnsion of cut. 
Nonetheless we can show by a similar argument that the square roots of weights matter in minimizing the normalized cut and not the weights themselves.

The spectral clustering theory recommends to proceed as follows if a split in more than 2 clusters is to be produced. 
We take $k$ eigenvectors corresponding to $k$ smallest eigenvalues of the respectiove Laplacian (normalized or combinatorial) , form a matrix with columns being these eigenvectors and cluster rows of this matrix using $k$-means. Cluster assignment of such data means cluster assignment for graph nodes. 
Figure \ref{fig:w_tenclusters} presents such clusterings of a 10 by 10 grid into ten clusters.

 \begin{figure}
  \includegraphics[width=0.4\textwidth]{\figaddr{FIGw_tenclusters10_10_1_1.pdf}}  %
  \includegraphics[width=0.4\textwidth]{\figaddr{FIGw_tenclusters10_10_3_90.pdf}}  %
\caption{ 
The plots of clustering of unweighted and weighted two-dimensional grid graphs. 
 The smaller the distances, the higher the weights.  
}\label{fig:w_tenclusters}
\end{figure}

If you want to split the graph along one dimension, a modified approach needs to be used to determine the weights. 

In the 
 dimension $d_0$, we want to have $n_{d_0}$ clusters. 
Therefore the maximal eigenvalue along this dimension 
\begin{equation} 
\lambda_{[0,\dots,(n_{d_0}-1)_{d_0},\dots,0]}= 
2 \waga_{d_0}\cdot \left(1-  \cos\left(\frac{\pi (n_{d_0}-1) }{n_{d_0}}\right)\right) 
\end{equation}
needs to be smaller than the smallest one in any other dimension 
 $d_1$. 
\begin{equation} 
\lambda_{[0,\dots,1_{d_1},\dots,0]}= 
2 \waga_{d_1}\cdot \left(1-  \cos\left(\frac{\pi  }{n_{d_1}}\right)\right) 
\end{equation}

We can acieve this effect that $\lambda_{[0,\dots,(n_{d_0}-1)_{d_0},\dots,0]}<\lambda_{[0,\dots,1_{d_1},\dots,0]}$ as follows:

\begin{equation} 
2 \waga_{d_0}\cdot \left(1-  \cos\left(\frac{\pi (n_{d_0}-1)  }{n_{d_0}}\right)\right) 
< 2 \waga_{d_1}\cdot \left(1-  \cos\left(\frac{\pi  }{n_{d_1}}\right)\right) 
\end{equation}

\begin{equation} 
2 \waga_{d_0}\cdot \left(1+  \cos\left(\frac{\pi    }{n_{d_0}}\right)\right) 
< 2 \waga_{d_1}\cdot \left(1-  \cos\left(\frac{\pi  }{n_{d_1}}\right)\right) 
\end{equation}

\begin{equation} 
2 \waga_{d_0}\cdot \left(2-2 \sin^2\left(\frac{\pi/2   }{n_{d_0}}\right)\right) 
< 2 \waga_{d_1}\cdot \left(2 \sin^2\left(\frac{\pi/2  }{n_{d_1}}\right)\right) 
\end{equation}

For large graphs

\begin{equation} 
2 \waga_{d_0}\cdot \left(2-2 \left(\frac{\pi/2^2   }{n_{d_0}^2}\right)\right) 
< 2 \waga_{d_1}\cdot \left(2  \left(\frac{\pi^2/2^2  }{n_{d_1}^2}\right)\right) 
\end{equation}

\begin{equation} 
\waga_{d_0}\cdot \left(1- \left(\frac{\pi/2^2   }{n_{d_0}^2}\right)\right) 
<  \waga_{d_1}\cdot    \left(\frac{\pi^2/2^2  }{n_{d_1}^2}\right)
\end{equation}

\begin{equation} 
\frac{\waga_{d_0} }{\waga_{d_1}}
<  \frac{ \frac{\pi^2/2^2  }{n_{d_1}^2}}
{1- \left(\frac{\pi/2^2   }{n_{d_0}^2}\right)}
\end{equation}

\begin{equation} 
\frac{\waga_{d_0} }{\waga_{d_1}}
<  \frac{ \frac{\pi^2/2^2 n_{d_0}^2 }{n_{d_1}^2}}
{n_{d_0}^2- \pi/2^2    }
\end{equation}
For really large $n_{d_0}$
\begin{equation} 
\frac{\waga_{d_0} }{\waga_{d_1}}
<    \frac{\pi^2/2^2 }{n_{d_1}^2}  
\end{equation}

\begin{equation} 
\sqrt{\frac{\waga_{d_0} }{\waga_{d_1}}}
<    \frac{\pi/2 }{n_{d_1}} 
\end{equation}
which means that the competing dimension size alone drives the weight proportion. 

Again the square root cut and not the cut are really optimized. 

}



\Bem{
\subsection*{Software}
Please feel free to experiment with an R package (source code)   demonstrating computation of the eigenvalues and the eigenvectors for the mentioned Laplacians from closed-form or nearly closed-form formulas. It is available at 
\url{install.packages("http://www.ipipan.waw.pl/staff/m.klopotek/ipi_archiv/WeightGridGraph_1.0.tar.gz", repos = NULL, type ="source")}
\subsection*{Acknowledgement}
This research was funded by Polish goverment budget for scientific research. 
}

\newcommand{\bwaga}{\mathfrak{v}}

\section{Biweighted Grid Graphs}\label{secW:biweighted}

Let us define a biweighted  
  generalized grid graph   as 
$G_{(n_1)(\waga_1)(\bwaga_1))}$ being a biweighted path graph of $n_1$ vertices with weight $\waga_1$ for any link in this graph
from an odd node $i$ to the even node $i+1$
and with weight $\bwaga_1$ for any link in this graph
from an odd node $i$ to the even node $i-1$
,
and the $d$ dimensional biweighted grid graph
$G_{(n_1,\dots,n_{d})(\waga_1,\dots,\waga_{d})(\bwaga_1,\dots,\bwaga_{d})}$ being 
the weighted graph Cartesian product 
$G_{(n_1,\dots,n_{d-1})(\waga_1,\dots,\waga_{d-1})(\bwaga_1,\dots,\bwaga_{d-1})}  \times   G_{(n_d)(\waga_d)(\bwaga_d)}$

Thus a $d$-dimensional biweighted grid graph is uniquely defined by a \emph{biweighted grid graph identity vector pair} 
$[n_1,\dots,n_d][\waga_1,\dots,\waga_d][\bwaga_1,\dots,\bwaga_d]$ where $n_j$ is the number of layers in the $j$th dimension and   $\waga_j,\bwaga_j$ are  the alternating weights of links between layers in the $j$th dimension.  
Iinteger  identities
to nodes are assigned as in 
weighted grid graph. 

\subsection{Eigensolutions of Combinatorial Laplacians of Biweighted Grid Graphs}\label{secW:CLdoubleweighted}

Let us consider first biweighted grid path. 
The biweighted grid graph treatment, like in the case of weighted grid graph  is the product of  biweighted grid paths. 

Our working hypothesis is that the eigenvectors are of the form 
$(-1)^{nodeid}\sin(\alpha)$, and the angles $\alpha$ differ between neighbouring  nodes by either $\delta_\waga$ or $\delta_\bwaga$. 
Hence upon multiplication of the Laplacian matrix with the eigenvector the result for  an non-border odd node 
would be 
$$\bwaga \left( \sin(\alpha)+\sin(\alpha-\delta_\bwaga)\right)
+\waga \left( \sin(\alpha)+\sin(\alpha+\delta_\waga)\right)
$$
and for the even non-border nodes of the form 
$$\waga \left( \sin(\alpha)+\sin(\alpha-\delta_\waga)\right)
+\bwaga \left( \sin(\alpha)+\sin(\alpha+\delta_\bwaga)\right)
$$
Let us consider the odd nodes 
$$\bwaga \left( \sin(\alpha)+\sin(\alpha-\delta_\bwaga)\right)
+\waga \left( \sin(\alpha)+\sin(\alpha+\delta_\waga)\right)
$$
$$=\bwaga \left( \sin(\alpha)+\sin(\alpha)\cos(\delta_\bwaga)-\cos(\alpha)\sin(\delta_\bwaga)\right)
+\waga \left( \sin(\alpha)+\sin(\alpha)\cos(\delta_\waga)+\cos(\alpha)\sin(\delta_\waga)\right)
$$
$$=\sin(\alpha) 
\left( 
	\bwaga \left(1+ \cos(\delta_\bwaga)\right)
	+\waga \left(1+ \cos(\delta_\waga)\right)
\right)
+\cos(\alpha)\left(-\bwaga\sin(\delta_\bwaga)
+\waga\sin(\delta_\waga)
\right)
$$
Obviously, the property of eigenvector requires that 
$-\bwaga\sin(\delta_\bwaga)
+\waga\sin(\delta_\waga=0$, which will imply an eigenvalue of
$ 
	\bwaga \left(1+ \cos(\delta_\bwaga)\right)
	+\waga \left(1+ \cos(\delta_\waga)\right)
$.
Same final result is achieved if we consider the even nodes.

In order to ensure that also the eigenproperty holds at the end points of the path, we have to take $\alpha=\frac12\delta_\bwaga$ for the first node. because in this case the result of the product with the Laplacian marix will amount to:
$$ \waga \left( \sin(\frac12\delta_\bwaga)+\sin(\frac12\delta_\bwaga+\delta_\waga)\right)
$$
$$=\bwaga \left( \sin(\frac12\delta_\bwaga)+\sin(\frac12\delta_\bwaga-\delta_\bwaga)\right)
+\waga \left( \sin(\frac12\delta_\bwaga)+\sin(\frac12\delta_\bwaga+\delta_\waga)\right)
$$
so it is immediately visible that we have the same "eigenvalue"
if the condition 
$-\bwaga\sin(\delta_\bwaga)
+\waga\sin(\delta_\waga)=0$ holds. 
By analogy, the last node would have an $\alpha$ 
either $\frac12\delta_\bwaga$ or  $\frac12\delta_\waga$ before a zero point of the $\sin$ function to apply the same trick. 
In summary, if $n$ is the number of nodes on the path, 
the eigenvalues are of the form 
$$\lambda_{[z]}=
	\bwaga \left(1+ \cos(\delta_\bwaga)\right)
	+\waga \left(1+ \cos(\delta_\waga)\right)
$$
and eigenvectors have components ($x$ - the node id)
$$ \nu_{[z],[x]}= 
\sin\left(
\frac12\delta_\bwaga+
+((x-1-(x-1)\mod  2)/2)\delta_{\waga\bwaga}+((x-1)\mod 2)\delta_\waga
\right)
$$
where $\delta_{\waga\bwaga}=\delta_{\waga}+\delta_{\bwaga}$ 
subject to $z\pi=\frac12 n \delta_{\waga\bwaga}$ and $-\bwaga\sin(\delta_\bwaga)
+\waga\sin(\delta_\waga)=0$ 
Here $z$ is ranging from $1$ to $n$. 

Note that 
$$-\bwaga\sin(\delta_\bwaga)
+\waga\sin(\delta_\waga)=0$$
means that 
$$-\bwaga\sin(\delta_{\waga\bwaga}-\delta_\waga)
+\waga\sin(\delta_\waga)=0$$
$$-\bwaga\sin(\delta_{\waga\bwaga})\cos( \delta_\waga)
+\bwaga\cos(\delta_{\waga\bwaga})\sin( \delta_\waga)
+\waga\sin(\delta_\waga)=0$$
$$-\bwaga\sin(\delta_{\waga\bwaga})\cos( \delta_\waga)
+\sin(\delta_\waga)\left(\bwaga\cos(\delta_{\waga\bwaga}) 
+\waga\right)=0$$
$$
+\sin(\delta_\waga)\left(\bwaga\cos(\delta_{\waga\bwaga}) 
+\waga\right)=\bwaga\sin(\delta_{\waga\bwaga})\cos( \delta_\waga)$$
$$
\tan(\delta_\waga)=\frac{\bwaga\sin(\delta_{\waga\bwaga})}{\bwaga\cos(\delta_{\waga\bwaga}) 
+\waga }$$
$$
\delta_\waga=\arctan \frac{\bwaga\sin(\delta_{\waga\bwaga})}{\bwaga\cos(\delta_{\waga\bwaga}) 
+\waga }$$

From computational point of view we compute 
first 
$$ \delta_{\waga\bwaga}=\frac{z\pi}{\frac12 n}$$
Then 
$$
\delta_\waga=\arctan \frac{\bwaga\sin(\delta_{\waga\bwaga})}{\bwaga\cos(\delta_{\waga\bwaga}) 
+\waga }$$
Then 
$$\delta_\bwaga=\delta_{\waga\bwaga}-\delta_\waga$$
Then we substitute for eigenvalue and eigenvectors.
We need to distinuish the special cases: (1) when $n$ is even and 
$z=n/2$, then 
$ \delta_{\waga\bwaga}=\pi, \delta_{\waga}=0, \delta_{\bwaga}=\pi$, 
(1) when $z=n$   
 then 
$ \delta_{\waga\bwaga}=2\pi, \delta_{\waga}=\pi, \delta_{\bwaga}=\pi$, 
(3) otherwise if we obtain $\delta_{\waga}<0$, we need to update it to
$\delta_{\waga}:=\delta_{\waga}+\pi$. 

For multidimensional grids the eigenvalues are sums of component eigenvalues and the eigenvector components are products of component eigenvector components, like in case of weighted eigenvectors and eigenvalues. 

Note that contrary to  unweighted and weighted grids, the eigenvector componets for biweighted grids depend on the weights.

\subsection{Eigensolutions of Unoriented Laplacians of Biweighted Grid Graphs}\label{secW:CLdoubleweighted}
These are easily derived from the combinatorial Laplacian eigenvalues (identical) and eigenvectors (with alternating signs of components), just like in the unweighted and singly weighted case. 
\subsection{A Note on Eigensolutions of Normalized Laplacians of Biweighted Grid Graphs}\label{secW:CLdoubleweighted}

Let us consider first biweighted grid path. 
The biweighted grid graph treatment, unlike in the case of combinatorial Laplacian, does nmot generalize to the multidimensional case, but nonetheless provides with some insights. 

By analogy to the weighted and unweighted case, we do not consider the Laplacian, but rather its transform as previously.

Our working hypothesis is that the eigenvectors are of the form 
$(-1)^{nodeid}\cos(\alpha)$, and the angles $\alpha$ differ between neighbouring  nodes by either $\delta_\waga$ or $\delta_\bwaga$. 
Hence upon multiplication of the modified Laplacian matrix with the modified eigenvector the result for  an non-border odd node 
would be 
$$\bwaga \left( \cos(\alpha)+\cos(\alpha-\delta_\bwaga)\right)
+\waga \left( \cos(\alpha)+\cos(\alpha+\delta_\waga)\right)
$$
and for the even non-border nodes of the form 
$$\waga \left( \cos(\alpha)+\cos(\alpha-\delta_\waga)\right)
+\bwaga \left( \cos(\alpha)+\cos(\alpha+\delta_\bwaga)\right)
$$

Let us consider the odd nodes 
$$(\waga+\bwaga)\lambda = \bwaga \left( \cos(\alpha)+\cos(\alpha-\delta_\bwaga)\right)
+\waga \left( \cos(\alpha)+\cos(\alpha+\delta_\waga)\right)
$$
$$=\bwaga \left( \cos(\alpha)+\cos(\alpha)\cos(\delta_\bwaga)+\sin(\alpha)\sin(\delta_\bwaga)\right)
+\waga \left( \cos(\alpha)+\cos(\alpha)\cos(\delta_\waga)-\sin(\alpha)\sin(\delta_\waga)\right)
$$
$$=\cos(\alpha) 
\left( 
	\bwaga \left(1+ \cos(\delta_\bwaga)\right)
	+\waga \left(1+ \cos(\delta_\waga)\right)
\right)
+\sin(\alpha)\left(\bwaga\sin(\delta_\bwaga)
-\waga\sin(\delta_\waga)
\right)
$$
Obviously, the property of eigenvector requires that 
$\bwaga\sin(\delta_\bwaga)
-\waga\sin(\delta_\waga=0$, as for combinatorial Laplacian, which will imply an eigenvalue of
$ 
	\left(\bwaga \left(1+ \cos(\delta_\bwaga)\right)
	+\waga \left(1+ \cos(\delta_\waga)\right)\right)/(\waga+\bwaga)
$.

We need to compensate at the ends of the path for 
one of the weights is missing.
So consider the first element
$$ \waga  \lambda =   \waga \left( \cos(\alpha)+\cos(\alpha+\delta_\waga)\right)
$$
$$ \left(\bwaga \left(1+ \cos(\delta_\bwaga)\right)
	+\waga \left(1+ \cos(\delta_\waga)\right)\right)/(\waga+\bwaga) =     \left( \cos(\alpha)(1+\cos(\delta_\waga))-\sin(\alpha)\sin(\delta_\waga)\right)
$$

$$  =   \waga \left( \cos(\alpha)+\cos(\alpha)\cos(\delta_\waga)-\sin(\alpha)\sin(\delta_\waga)\right)
$$ 

As we see, the issue gets quite complex even for the path graph, though it is solvable. 
Therefore, derivation of analytical formulas in this case does not make much sense for general biweighted  grid graphs.

\section{Conclusions}\label{sec:conclusions}

In this paper we have presented a (closed-form or nearly-closed form) method of computation of all eigenvalues and eigenvectors 
of a multi-dimensional grid graph for  unnormalised, unoriented, normalized and random walk Laplacians.%
We considered both unweighted and weighted grid graphs, using simple (regular) weighting scheme that is one weight for one direction.

Their properties may be of interest as generalisations of results of other authors discussed in the Introduction. 
Furthermore, note that the multidimensional grid graphs are bipartite graphs
so that they may be exploited in the investigations of properties of Laplacians of bipartite graphs.

In particular, for unweighted grid graphs, one sees that the principal eigenvalue of a $d$-dimensional grid graph is limited from above by 
$4d$ for unnormalized Laplacians and the biggest eigenvalue for normalized and random walk Laplacians is equal 2. 
The Fiedler eigenvalue on the other hand approaches zero with the increase of the number of nodes in such a graph. 

The closed-form or nearly closed-form formulas for eigenvalues and eigenvectors for multidimensional unweighted grid graphs may be of high interest for researchers dealing with cluster analysis of graphs, in particular with spectral cluster analysis, especially compressive spectral clustering (CSC), which are essentially exploiting implicitly or explicitly the eigenvalues and eigenvectors.   
First of all because unweighted grid graphs can be considered as types of graphs that have no intrinsic cluster structure. Hence the spectral clustering algorithms should be checked against such structures getting advantage of the fact that the eigenvectors and eigenvalues are quite easy to obtain even for large graphs. 
But more important is the fact that \emph{eigenvalues of grid graphs are not uniformly distributed}. 
\textbf{
This implies that the theoretical foundations of CSC need to be thoroughly verified.  
}. This can be considered as the most important insight gained by this investigation.

The closed-form or nearly closed-form formulas for eigenvalues and eigenvectors for multidimensional weighted grid graphs 
may be of high interest in this context.
The weighted grid graphs can be considered as types of graphs that have either no intrinsic cluster structure (when the weights are equal)  or the structure of which can be twisted in various ways. 
 The weights permit to simulate node clusters not perfectly separated from each other, with various shades of this imperfection. 
This fact opens new possibilities for exploitation of closed-form or nearly closed form solutions 
eigenvectors and eigenvalues of graphs while testing and/or developing such algorithms and exploring their theoretical properties.

It shall be noted that the eigenvectors of combinatorial and unoriented Laplacians of weighted graphs 
are identical with those for unweighted graphs. 
In case of normalized and random walk Laplacians, the eigenvectors for weighted graphs are superficially identical with those for unweighted ones, but they differ nevertheless because the shifts  are influenced by weights. 

The study of differences between the weighted and unweighted case allow for new insights into the nature of normalized and unnormalized Laplacians.
Edge weights have no impact on the eigenvectors of combinatorial and undirected Laplacians. Only the presence or absence of an edge impacts them. This is not the case with normalized and random walk Laplacians. Here the relative edge weights influence the shifts  in the vector formulas. 
The weighting scheme opens up the possibility of manipulating of the magnitude of eigenvalues of combinatorial and unordered Laplacians, related to various grid dimensions. 
This has an interesting impacty for example on the concept of the Fiedler vector, associated with the second lowest eigenvalue. with the weight changes we can modify our preferences over which eigenvector to choose as Fiedler vector (among those with lowest components). We can change  the order of magnitude of eigenvectors associated with some direction and observe the impact on $k$-means clustering in spectral graph analysis.
We have also the possibility to study the impact of relative weights of various dimensions in a grid graph on the normalized and random walk Laplacians, while for example the connection between various grid layers is fading. 

As increasing interest in weighted graph Laplacians exists, it would be an  interesting research topic to find also closed form solutions to Laplacians of weighted graphs with other weighting schemas than those assumed in this work.

\subsection*{Software}

Please feel free to experiment with an R package (source code)   demonstrating computation of the eigenvalues and the eigenvectors for the mentioned Laplaacians from closed-form or nearly closed-form formulas for unweighted grid graphs. It is available at 
\url{install.packages("http://www.ipipan.waw.pl/staff/m.klopotek/ipi_archiv/GridGraph_1.0.tar.gz", repos = NULL, type ="source")}

\subsection*{Acknowledgement}
This research was funded by Polish goverment budget for scientific research.

 \bibliographystyle{plain}
\bibliography{UnAndWeightedGridGraphLaplacianCUNR_bib}
\end{document}